\documentclass{amsart}
\usepackage{etex}
\usepackage{amssymb,amsmath,amsthm,mathtools,extpfeil}
\usepackage[all,cmtip]{xy}
\usepackage{tikz}
\usetikzlibrary{decorations.markings,matrix,arrows}
\usepackage{pstricks,rotating,graphicx}
\usepackage{fullpage,cite,hyperref}
\newtheorem{thm}{Theorem}[section]
\newtheorem{lem}[thm]{Lemma}
\newtheorem{thmA}{Theorem}

\newtheorem{prop}[thm]{Proposition}
\newtheorem{cor}[thm]{Corollary}
\newtheorem*{conj}{Conjecture}
\newtheorem*{thm*}{Theorem}
\theoremstyle{definition}
\newtheorem*{defn}{Definition}

\newtheorem{ex}[thm]{Example}
\newtheorem{exs}[thm]{Examples}

\numberwithin{equation}{section}

\DeclareMathOperator{\size}{size}
\DeclareMathOperator{\dist}{dist}
\DeclareMathOperator{\Homeo}{Homeo}
\DeclareMathOperator{\Ext}{Ext}
\DeclareMathOperator{\sign}{sign}
\DeclareMathOperator{\eu}{eu}
\DeclareMathOperator{\Tor}{Tor}
\DeclareMathOperator{\Jac}{Jac}
\DeclareMathOperator{\lcm}{lcm}
\DeclareMathOperator{\id}{id}
\DeclareMathOperator{\Hom}{Hom}
\DeclareMathOperator{\Lip}{Lip}

\DeclareMathOperator{\vol}{vol}
\DeclareMathOperator{\img}{im}

\DeclareMathOperator{\mass}{mass}
\DeclareMathOperator{\cel}{cell}

\DeclareMathOperator{\FV}{FV}

\DeclareMathOperator{\LD}{L\delta}
\DeclareMathOperator{\VD}{V\delta}

\DeclareMathOperator{\FVol}{FVol}
\DeclareMathOperator{\Fill}{Fill}
\newcommand{\ph}{\varphi}
\newcommand{\epsi}{\varepsilon}
\newcommand{\dmd}{\diamondsuit}
\newcommand{\lpair}{\langle\langle}
\newcommand{\rpair}{\rangle\rangle}

\begin{document}
\title{Volume distortion in homotopy groups}
\author{Fedor Manin}
\begin{abstract}
  Given a finite metric CW complex $X$ and an element $\alpha \in \pi_n(X)$,
  what are the properties of a geometrically optimal representative of $\alpha$?
  We study the optimal volume of $k\alpha$ as a function of $k$.
  Asymptotically, this function, whose inverse, for reasons of tradition, we
  call the volume distortion, turns out to be an invariant with respect to the
  rational homotopy of $X$.  We provide a number of examples and techniques for
  studying this invariant, with a special focus on spaces with few rational
  homotopy groups.  Our main theorem characterizes those $X$ in which all
  non-torsion homotopy classes are undistorted, that is, their distortion
  functions are linear.
\end{abstract}
\maketitle
The object of quantitative topology is to make more concrete various notions
coming from the existential results of algebraic topology.  Thus, while
classical rational homotopy theory gives an exhaustive family of algebraic
correlates to rational homotopy classes of simply-connected spaces, a
quantitative homotopy theory seeks to give geometric examples or descriptions
linked to the algebraic properties of these objects.

The term ``quantitative homotopy theory'' seems to have first been used by
Gromov in the conference paper \cite{GrQHT} and in Chapter 7 of the
near-simultaneous book \cite{GrMS}, although the ideas date back as far as
\cite{GrDil}.  Construed broadly, however, this program fits into a tradition of
extracting metric information from topological data which includes problems from
systolic geometry, geometric group theory, and other areas.  In particular,
geometric group theorists, as specialists in fundamental groups, have explored a
host of asymptotic invariants whose higher-dimensional analogues may also be of
interest.  These include the Dehn function, growth of groups, and distortion of
group elements and subgroups.

Higher-dimensional isoperimetric functions of groups, that is, of their
Eilenberg--MacLane spaces, have been studied in some detail, notably by Gromov
\cite{GrAsym}, Alonso--Wang--Pride \cite{AWP}, Brady--Bridson--Forester--Shankar
\cite{BBFS}, and Young \cite{Young}.  A common theme of this body of literature
is the plurality of possible definitions, many of which are equivalent in the
one-dimensional case.  The subject of growth of higher homotopy groups was
broached in chapters 2 and 7 of \cite{GrMS} with a number of examples and a
conjecture for simply-connected spaces.

Here we analyze a higher-dimensional analogue of distortion.  Heuristically, the
\emph{distortion} of a group element $\alpha \in G$ is given by
$$\delta_\alpha(k)=\max\{m \mid \size(\alpha^m) \leq k\}.$$
If $G$ is the fundamental group of a space, word length is a natural measure of
size.  On the other hand, if $G=\pi_n(X)$ for a space $X$, we can choose
inequivalent measures of size by taking advantage of different features of a
metric structure on $X$, leading once again to a plurality of definitions.  For
example, suppose that $X$ is a CW complex with a piecewise Riemannian metric.
Then we can choose to minimize the Lipschitz constant of a representative, its
volume, or more generally the $m$-dilation for some $1 \leq m \leq n$, that is,
how much the map $f:S^n \to X$ distorts $m$-dimensional tangent subspaces.
Moreover, the asymptotics of each such function are preserved by Lipschitz
homeomorphisms.  This means that, as long as $X$ is compact, each of these
definitions gives a topological invariant.

In all of these situations, it is natural to consider an element
\emph{undistorted} if the best asymptotics are attained by composing $\alpha$
with a degree $k$ map $S^n \to S^n$.  Thus an element is Lipschitz undistorted
if its Lipschitz distortion is $\sim Ck^n$, and volume undistorted if its volume
distortion is $\sim Ck$.

In \cite{GrQHT}, Gromov states a conjecture about the Lipschitz distortion of
homotopy groups.
\begin{conj}[Gromov]
  A class $\alpha \in \pi_nX$ of a simply-connected finite metric CW complex $X$
  is Lipschitz undistorted if and only if $\alpha$ has nonzero image under the
  rational Hurewicz map.  If $\alpha$ is distorted, then the growth of its
  Lipschitz norm is polynomial with rational exponent, and at most
  $Ck^{\frac{1}{n+1}}$; in the terminology above, the Lipschitz distortion of
  such an $\alpha$ is at least $\sim Ck^{n+1}$.
\end{conj}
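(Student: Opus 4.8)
I prove the two implications separately. \emph{The ``if'' direction.} Suppose $h_{\mathbb Q}(\alpha) \neq 0$ in $H_n(X;\mathbb Q)$. Choose a closed differential form (or a rational simplicial cocycle) $\omega$ on $X$ of degree $n$ pairing nontrivially with $h_{\mathbb Q}(\alpha)$, say $\langle \omega, h_{\mathbb Q}(\alpha)\rangle = c \neq 0$; since $X$ is compact we may take $\|\omega\|_\infty \le C_0$. For any piecewise-smooth $f \colon S^n \to X$ representing $k\alpha$ one has $\int_{S^n} f^*\omega = kc$, whereas pointwise $|f^*\omega| \le C_0\,\Lip(f)^n$, so integrating gives $\Lip(f) \ge \bigl(|k|\,c/(C_0\vol S^n)\bigr)^{1/n}$. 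Thus $\size(k\alpha) \gtrsim |k|^{1/n}$, i.e.\ the Lipschitz distortion of $\alpha$ is $\lesssim k^n$. The matching bound is the standard construction: precompose a fixed representative of $\alpha$ with a degree-$k$ self-map of $S^n$ of Lipschitz constant $\asymp |k|^{1/n}$ (subdivide $S^n$ into $\asymp |k|$ round pieces and map each by a rescaled degree-one map). Hence $\size(k\alpha) \asymp |k|^{1/n}$ and the distortion is $\asymp k^n$: $\alpha$ is undistorted.

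\emph{The ``only if'' direction: upper bound on size.} Now let $h_{\mathbb Q}(\alpha) = 0$ with $\alpha$ non-torsion. Passing to the Sullivan minimal model $(\Lambda V, d)$ of $X$ (legitimate as $X$ is simply connected), the generator $v \in V^n$ dual to $\alpha$ satisfies $dv \in \Lambda^{\ge 2}V \setminus \{0\}$: it is a nonzero polynomial in generators of total degree $n+1$, each of degree $< n$. I would prove $\size(k\alpha) \lesssim |k|^{1/(n+1)}$ by a quantitatively controlled version of the rational-homotopy-theoretic construction of a representative. Because $dv$ decomposes into lower-degree generators, a map realizing $k\alpha$ can be assembled from efficient representatives of those lower classes over the skeleta of $S^n$, glued along an explicit ``filling'' over the top cell $D^n \subset S^n$ realizing the relation $dv$. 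Producing a cup-product/linking number $\asymp k$ out of cycles of controlled size is an isoperimetric problem, and on $S^n$ it is solved with Lipschitz constant $\asymp |k|^{1/(n+1)}$: one builds $k$-fold winding in the lower directions and cones it off over the remaining dimension, gaining the extra $+1$. The model case is $[\iota,\iota]=2\eta \in \pi_3 S^2$, with minimal model $\Lambda(x,y)$, $|x|=2$, $|y|=3$, $dy=x^2$, where $k\eta$ is realized by a Lipschitz-$\asymp|k|^{1/4}$ map whose generic fibres link $k$ times. This already shows $\alpha$ is distorted, with distortion $\gtrsim k^{n+1}$.

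\emph{The ``only if'' direction: matching lower bound.} To see that the growth of $\size(k\alpha)$ is genuinely polynomial with a rational exponent, I would extract from the relation $dv = \sum(\text{lower generators})$ a secondary operation / Massey-type functional $\Phi$ on maps $S^n \to X$ whose value on $k\alpha$ is forced to be $\asymp k$ but which is computed by pairing a bounded form against a \emph{primitive} (a filling) rather than against the form itself. On $S^n$ a primitive of an exact form of bounded $L^\infty$ norm has $L^\infty$ norm essentially bounded by the form's norm times the fixed diameter, so evaluating $\Phi$ picks up one extra power of $\Lip(f)$ relative to the ``if'' computation: $|k| \lesssim \Lip(f)^{n+1}$, i.e.\ $\size(k\alpha) \gtrsim |k|^{1/(n+1)}$. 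Iterating the same bookkeeping through the Postnikov tower of $X$ should show that whatever the depth of the obstruction, the resulting exponent is a ratio of integers read off from $(\Lambda V,d)$.

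\emph{The main obstacle.} The hard part is the construction in full generality. For a general Hurewicz-zero class, $dv$ can be a complicated polynomial, relations stack up through successive stages of the Postnikov tower, and one must assemble the representative compatibly over every skeleton of $S^n$ while keeping all the Lipschitz constants simultaneously controlled and optimizing the scales at each stage — in effect solving a nested family of isoperimetric problems whose combinatorics mirror the structure of $(\Lambda V, d)$, and then extracting a sharp rational exponent from them. A secondary difficulty is making the lower bound rigorous when the detecting operation is of higher order, where $\Phi$ is defined only after auxiliary choices and one must check the estimate is independent of them up to asymptotics.
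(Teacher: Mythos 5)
This statement is a \emph{conjecture} of Gromov, not a theorem of the paper: the author explicitly writes that ``a full proof of the conjecture remains elusive'' and then pivots to volume distortion precisely because the Lipschitz case is open. So there is no proof in the paper to compare against, and your proposal should be judged as an attempted proof of an open problem. Your ``if'' direction is correct and standard: a bounded closed $n$-form pairing nontrivially with $h_{\mathbb{Q}}(\alpha)$ gives $\Lip(f)\gtrsim k^{1/n}$ for any representative of $k\alpha$, and precomposition with a degree-$k$ map of Lipschitz constant $\asymp k^{1/n}$ gives the matching upper bound. (This is the same mechanism the paper uses for volume in Theorem \ref{thm:sc} and Lemma \ref{lem:duh}.)

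The ``only if'' direction, however, is exactly the open part, and your writeup does not close it. The claim that a Hurewicz-trivial class admits representatives of $k\alpha$ with $\Lip \lesssim k^{1/(n+1)}$ is asserted via ``I would prove \dots by a quantitatively controlled version of the rational-homotopy-theoretic construction,'' with only the single model case $[\iota,\iota]\in\pi_3S^2$ sketched; the general construction --- assembling a representative over the skeleta of $S^n$ from a Sullivan model whose differential $dv$ is an arbitrary polynomial in lower generators, with all Lipschitz constants controlled simultaneously through the Postnikov tower --- is precisely what nobody has carried out (Gromov's remark yields only \emph{some} polynomial bound, and lower bounds are known only in special cases such as Whitehead products). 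Likewise, the ``matching lower bound'' via a Massey-type functional $\Phi$ picking up one extra power of $\Lip(f)$ is described but not constructed, and the independence of $\Phi$ from auxiliary choices is flagged by you as a difficulty rather than resolved. In short: the one implication you do prove is the easy one; everything that makes the statement a conjecture remains a sketch, so this is not a proof.
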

Gromov points out that Sullivan's model of rational homotopy can be used to
demonstrate that Lipschitz distortion in this situation is at worst polynomial.
Lower bounds on distortion can be obtained in various cases using constructions
such as generalized Whitehead products.  However, a full proof of the conjecture
remains elusive.  In this paper, we instead focus on volume distortion, which
has not been previously studied.

In contrast with the Lipschitz case, volume distortion is trivial for simply 
connected spaces: every homotopy class is either undistorted or has a multiple
with zero volume, as shown in Theorem \ref{thm:sc}.  But for
non-simply-connected finite complexes, it's an interesting invariant: we
construct examples in which the volume distortion of an element is $k^r$ for
various rational $r$ and where it is $\exp(\sqrt[n]{k})$.  One feature that
makes volume distortion convenient for non-simply-connected spaces is its
invariance up to rational homotopy.  More precisely:
\begin{thmA} \label{thmQ}
  Suppose two compact connected CW-complexes $X$ and $Y$ are
  \emph{rationally equivalent}, that is, there is a space $Z$ and maps $X \to
  Z \leftarrow Y$ which induce isomorphisms on $\pi_1$ and $\pi_n \otimes
  \mathbb{Q}$ for every $n$.  If $\alpha \in \pi_n(X)$ and $\beta \in \pi_n(Y)$
  have the same image in $\pi_n(Z)$, then they have asymptotically equivalent
  volume distortion functions.
\end{thmA}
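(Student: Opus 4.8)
The plan is to reduce the zig-zag to a single map and then transport geometrically optimal representatives across it. Since the rational equivalence is witnessed by maps $X\xrightarrow{f}Z\xleftarrow{g}Y$ with $f_*\alpha=g_*\beta$ in $\pi_n(Z)$ (and, after replacing $Z$ by a finite complex if necessary, we may take all three spaces to be finite), it suffices to prove: if $f\colon X\to Z$ is a map of finite connected CW-complexes inducing an isomorphism on $\pi_1$ and on $\pi_j\otimes\mathbb{Q}$ for every $j$, then $\alpha\in\pi_n(X)$ and $f_*\alpha\in\pi_n(Z)$ have asymptotically equivalent volume distortion; applying this to $f$ and to $g$ then gives the theorem. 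Let $V_\gamma(k)$ denote the infimal volume of a representative of $k\gamma$, so that $\delta_\gamma$ is, up to asymptotic equivalence, the functional inverse of $V_\gamma$; it is then enough to prove $V_\alpha\sim V_{f_*\alpha}$. (The case $n=1$ is immediate, as $f_*$ is an isomorphism of finitely presented groups, so assume $n\ge2$.) One inequality is essentially free: after fixing piecewise Riemannian metrics and homotoping $f$ to a cellular, hence $L$-Lipschitz, map, composing with $f$ sends a volume-$v$ representative of $k\alpha$ to a representative of $kf_*\alpha$ of volume at most $L^nv$, so $V_{f_*\alpha}\preceq V_\alpha$ and hence $\delta_\alpha\preceq\delta_{f_*\alpha}$.

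For the reverse inequality I must turn an efficient representative of $kf_*\alpha$ in $Z$ into an efficient representative of $k\alpha$ in $X$. I would replace $f$ by a Hurewicz fibration $p\colon E\to Z$ together with a homotopy equivalence $X\hookrightarrow E$ admitting a $1$-Lipschitz retraction $r\colon E\to X$ with $r|_X=\id$ — concretely, the mapping path space of $f$, with $r$ the projection to the $X$-coordinate. Let $F$ be the fibre. Since $f$ is a $\pi_1$-isomorphism and a rational isomorphism on all homotopy groups, the groups $\pi_j(F)$ for $1\le j\le n$ are finite (these are torsion; arranging that they are finite is a matter of first refactoring the rational equivalence suitably). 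Now let $v\colon S^n\to Z$ represent $kf_*\alpha$. Because $kf_*\alpha=f_*(k\alpha)$ lies in $\img(p_*)=\ker\bigl(\partial\colon\pi_n(Z)\to\pi_{n-1}(F)\bigr)$, the top obstruction $\partial[v]\in\pi_{n-1}(F)\cong H^n(S^n;\pi_{n-1}F)$ to lifting $v$ through $p$ vanishes, while all lower obstructions vanish for free since $\widetilde H^{j+1}(S^n;\pi_jF)=0$ for $0<j+1<n$. So a section of the pulled-back fibration $v^*E\to S^n$ exists; the real task is to build one of controlled size.

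Using the comparison, established earlier, between the metric volume distortion and its combinatorial model, I may assume $v$ is simplicial for a fixed triangulation of $Z$ and that $S^n$ carries a triangulation with $N\lesssim\vol(v)$ top simplices refining $v$. I would then build a section of $v^*E$ over the skeleta of this triangulation by the usual induction, tracking everything combinatorially. At stage $j$ one meets an obstruction cocycle in $C^{j+1}(S^n;\pi_jF)$ which, by the vanishing just noted, is a coboundary $\delta c$; translating the partial section over the $j$-skeleton by $c$ kills it, and the induction continues. The essential point is that although such a primitive $c$ must be chosen cell-by-cell and may encode long running sums, its values lie in the \emph{finite} groups $\pi_jF$, so each elementary translation — and each extension over a $(j{+}1)$-cell once its obstruction has been cleared — costs only a constant depending on $p$, never on $N$. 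Summing over the bounded number of skeleta and composing with $r$ yields a map $u\colon S^n\to X$ that is combinatorially of size $\lesssim N$, hence has $\vol(u)\lesssim N\lesssim\vol(v)$; a final modification over one top cell by an element of the finite group $\pi_nF$, again at constant cost, arranges $[u]=k\alpha$ in $\pi_n(X)$. Thus $k\alpha$ has a representative of volume $\lesssim\vol(v)$, so $V_\alpha(k)\lesssim V_{f_*\alpha}(k)+1$, whence $\delta_{f_*\alpha}\preceq\delta_\alpha$. Together with the easy direction this gives $\delta_\alpha\sim\delta_{f_*\alpha}$, and the theorem follows.

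The main obstacle is precisely this quantitative lifting: ordinary obstruction theory produces a section but controls none of its geometry, and one must rerun the construction with bookkeeping and observe that the only quantities threatening to blow up — the combinatorial complexities of the primitives $c$ — are harmless, since a primitive is merely a cochain valued in a finite group, so however intricate its description, each of its values names one of finitely many elements and is realizable at bounded cost. The second input, more routine but not negligible, is the dictionary between the geometric volume of a map and the combinatorial size of a model for it, used both to triangulate $S^n$ and to estimate $\vol(u)$ at the end; and I would dispose of the degenerate case — in which some multiple of $\alpha$ is carried by the $(n-1)$-skeleton, so $V_\alpha$ eventually vanishes — separately, by checking that this property too is preserved under the maps above.
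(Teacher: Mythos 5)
Your reduction to a single rationally $\pi_*$-isomorphic map and the easy ``push forward through a Lipschitz map'' direction match the paper, but the reverse direction has a genuine gap: the homotopy fiber $F$ of a rational equivalence of finite complexes does \emph{not} in general have finite homotopy groups in degrees $\leq n$, and no ``refactoring'' of the zigzag can arrange this. The groups $\pi_j(F)\cong\pi_{j+1}(Z,X)$ are only torsion, and for a finite pair they have \emph{bounded exponent} --- already a nontrivial fact, proved in the paper via the rational chain-homotopy inverse (Proposition \ref{prop:chain_ho} and Corollary \ref{cor:bo_pi}), not by pure homotopy theory. The paper's own warm-up example shows finiteness fails: for $Z=(T^3)^{(2)}\cup_{a\circ u}D^3\to T^3$ the homotopy fiber is $\tilde Z$ with $\pi_2(\tilde Z)\cong H_2(\tilde Z)\cong\mathbb{Z}[\mathbb{Z}^3]/2\mathbb{Z}[\mathbb{Z}^3]$, an infinite group of exponent $2$. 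Whenever $\pi_1$ is infinite this is the typical situation.

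This kills the quantitative heart of your argument twice over. First, your ``constant cost per cell'' bookkeeping rests on the claim that each value of the primitive cochain $c$ names one of finitely many elements of $\pi_jF$, each realizable at bounded cost; in an infinite bounded-exponent group, individual elements can require arbitrarily large representatives (in the example above, the elements of $\pi_2(\tilde Z)$ are supported on translates arbitrarily far from a basepoint), so there is no uniform bound. Second, your final correction ``by an element of the finite group $\pi_nF$ over one top cell'' fails for the same reason, and in fact one cannot represent $k\alpha$ itself: the best available statement is that a \emph{fixed multiple} $p_nk\alpha$ acquires a representative of volume $O(\vol v)$, which is what the paper's Lemma \ref{lem:riv} proves (and which still suffices for asymptotic equivalence of distortion functions). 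The paper's route avoids the cell-by-cell obstruction argument entirely: after precomposing with a fixed-degree map $d_{q_n}$, it uses the lifting homomorphism $j_n$ to rearrange the map at the chain level (cancelling oppositely oriented preimages so the $n$-volume is controlled by $\lVert q_nj_n(\alpha)\rVert_1$), and then deforms the single remaining homologically trivial piece, concentrated in one ball, into $K^{(n-1)}$ using the bounded exponent of $\pi_n(\tilde X,\tilde K)$ and $\ker(\pi_{n-1}(\tilde K^{(n-1)})\to\pi_{n-1}(\tilde X^{(n-1)}))$ --- a global correction whose cost is absorbed because its image carries no $n$-cells. To repair your argument you would need to replace ``finite'' by ``bounded exponent'' throughout, prove that bound (this is where the finiteness of $X$ and $Z$ and the chain-level inverse enter), and restructure the induction so that torsion corrections are applied as fixed multiples globally rather than element-by-element over cells.
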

In order to show this, we develop some results in the rational homotopy theory
of non-simply-connected spaces.  As discussed in Example \ref{ex:Lip}, the
equivalent statement in the Lipschitz setting appears to be false.

Our main theorem characterizes those spaces which have no distortion in their
homotopy groups.  It turns out, however, that a natural definition of volume
distortion in this context includes not only individual elements but
finite-dimensional subspaces.  While the definition extends easily, this is a
significantly different concept: in Example \ref{exs:oddD}(1), we see that a
subspace might be distorted even if no element is.  This phenomenon can be
thought of as distortion in an irrational direction in $\pi_n(X) \otimes
\mathbb{R}$, induced by an irrational eigenvector of the action of $\pi_1(X)$ on
$\pi_n(X)$.

In order to state the theorem precisely, we need two invariants which together
classify rational homotopy fibrations $(S^{2n+1})^r \to X \to B$ up to rational
equivalence.  The first of these is the monodromy representation $\rho:\pi_1X
\to GL(r_i,\mathbb{Q})$.  The second is the obstruction to extending a section
from the $(2n+1)$-skeleton to the $(2n+2)$-skeleton, a generalization of the
Euler class for sphere bundles.  This class, which is defined more thoroughly in
section 6, lies in the twisted cohomology group $H^{2n+2}(B;M_\rho)$, where
$M_\rho$ is the $\mathbb{Q}\pi_1B$-module corresponding to $\rho$.  In turn, the
universal covering $\tilde B \to B$ pulls this Euler class back to the
$L_\infty$ cohomology $H^{2n_i+2}_{(\infty)}(\tilde B;\mathbb{Q}^{r_i})$.  While the
definition of $L_\infty$ cohomology relies on a metric on $\tilde B$, it is
insensitive to small-scale geometry, and so only depends on the topology of $B$.
\begin{thmA} \label{thmA}
  A finite CW complex $X$ has no volume distortion in $\pi_n(X)$ for any $n \geq
  2$ if and only if the following conditions hold:
  \begin{enumerate}
  \item $X$ is rationally equivalent to the total space of a fibration over
    $B\pi_1X$ with fiber $\Pi_{i=1}^s (S^{2n_i+1})^{r_i}$, which further decomposes
    as a tower of fibrations
    $$X=X_m \to X_{m-1} \to \cdots \to X_0=B\pi_1X$$
    with fibers of the form $(S^{2n_i+1})^{r_i}$, $1 \leq i \leq m$;
  \item for each $i$, the monodromy representation $\rho:\pi_1X \to
    GL(r_i,\mathbb{Q})$ is elliptic, i.e.\ its image is contained in a conjugate
    of $O(r_i,\mathbb{R})$;
  \item and for each $i$, the Euler class $\eu \in H^{2n_i+2}(X_{i-1};M_\rho)$
    vanishes when considered in the $L_\infty$ cohomology
    $H^{2n_i+2}_{(\infty)}(\tilde X_{i-1};\mathbb{Q}^{r_i})$ of the universal cover.
  \end{enumerate}
\end{thmA}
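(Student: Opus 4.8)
\emph{Setup.} The plan is to begin with Theorem~\ref{thmQ}: since volume distortion is invariant under rational equivalence, I would throughout replace $X$ by a convenient rational model, presenting it as a tower of fibrations over $B\pi_1X$ refining the relative Postnikov filtration of the homotopy fiber $\tilde X$ of $X\to B\pi_1X$. The strategy is then to match each of the three conditions to one of three mechanisms producing distortion. I will use repeatedly: (i) that the optimal volume of $k\alpha$ is subadditive in $k$, so every distortion function is at most linear and ``undistorted'' means optimal volume $\gtrsim k$; (ii) that a $\rho_i$-invariant inner product on $\pi_n\otimes\mathbb{Q}$, forcing all $\pi_1X$-orbits bounded, rules out compressing $k\alpha$ by writing it as a short combination of $\pi_1X$-translates, in every direction; and (iii) that a class in the $L_\infty$ cohomology of $\tilde X$ of the right degree pairing nontrivially with a lift of $\alpha$ forces optimal volume $\gtrsim k$ --- this being the detector of undistortedness invisible to ordinary cohomology.

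\emph{Necessity.} I would prove the contrapositive in three stages. First, if (1) fails --- equivalently, if $\tilde X$ is not rationally elliptic with all rational homotopy in odd degrees --- then either $\tilde X$ is rationally hyperbolic, and a nonzero rational Whitehead product $[\beta,\gamma]$ of non-torsion classes has ``grid'' representatives of $k[\beta,\gamma]$ of volume $o(k)$, or $\tilde X$ has an even-degree generator and a higher rational bracket built from it plays the same role; either way one finds a distorted class. That finiteness of $X$ forces precisely the stated form I would get from the observation that a $K(\mathbb{Q},2m)$-factor, or an exterior algebra on infinitely many generators, makes $H^*(X;\mathbb{Q}\pi_1X)$ infinitely generated; recognizing the result over $B\pi_1X$ as the tower of the theorem uses the non-simply-connected rational homotopy theory developed above. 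Second, granting (1), if some $\rho_i$ is not elliptic then --- refining the tower to make $\rho_i$ semisimple --- it has an eigenvalue off the unit circle, and I would build a spiral: winding $\ell$ times around the loop realizing the relevant group element, carrying a copy of the fiber sphere glued by the monodromy at each turn, represents $k$ times a fiber class with $k$ comparable to $|\lambda|^\ell$ but volume only $O(\ell)$ --- sublinear, indeed roughly logarithmic in $k$. This distorts an element if the eigendirection is rational and a subspace otherwise, which is the irrational-direction phenomenon of Example~\ref{exs:oddD}. Third, granting (1) and (2), if $\eu_i$ is nonzero in $H^{2n_i+2}_{(\infty)}(\tilde X_{i-1};\mathbb{Q}^{r_i})$ then either it is already nonzero in ordinary cohomology --- so the stage is rationally nontrivial and its fiber class is a rational higher bracket, distorted as before --- or it vanishes ordinarily but has no bounded primitive, and I would convert a superlinearly growing primitive of the Euler cocycle into a folding of copies of the fiber sphere realizing $k$ times that class with polynomially sublinear volume.

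\emph{Sufficiency.} Assuming (1)--(3), I would take $X$ to be the tower $X_m\to\cdots\to X_0=B\pi_1X$ and induct on $i$, the base $X_0$ being aspherical. For the step, given non-torsion $\alpha\in\pi_n(X_i)$: if its image in $\pi_n(X_{i-1})$ is non-torsion, composing representatives of $k\alpha$ with the Lipschitz map $X_i\to X_{i-1}$ and using the inductive lower bound gives optimal volume $\gtrsim k$; otherwise $\alpha$ comes from the fiber $(S^{2n_i+1})^{r_i}$, which --- odd spheres being rationally Eilenberg--MacLane --- contributes only in degree $n=2n_i+1$, where $\pi_n\otimes\mathbb{Q}=\mathbb{Q}^{r_i}$ carries $\rho_i$. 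Ellipticity supplies a $\rho_i$-invariant inner product, so by (ii) no direction, rational or not, is compressible via $\pi_1X$; and vanishing of the $L_\infty$ Euler class --- with this invariant norm used to make the construction uniformly bounded over all of $\tilde X_{i-1}$ --- yields a section of $\tilde X_i\to\tilde X_{i-1}$ of bounded geometry, hence nonzero classes in $H^{2n_i+1}_{(\infty)}(\tilde X_i;\mathbb{Q})$ pairing with $\alpha$, so by (iii) $k\alpha$ has optimal volume $\gtrsim k$. The same invariant-norm argument handles finite-dimensional subspaces uniformly, completing the induction.

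\emph{The main obstacle.} I expect the crux to be condition (3), on both sides. For sufficiency one needs an $L_\infty$ analogue of the Serre spectral sequence, or an explicit bounded-section construction, set up carefully enough that vanishing of $\eu_i$ really delivers the volume lower bound, with ellipticity used precisely to globalize a bounded section from a fundamental domain over all of $\tilde X_{i-1}$; for necessity one must turn a merely superlinearly-growing primitive of $\eu_i$ into a quantitatively controlled folding construction. That is where the $L_\infty$-cohomological apparatus does its essential work; by comparison the reductions behind (1) follow fairly standard rational-homotopy patterns once the non-simply-connected theory is in place, and the transport-and-spiral arguments behind (2) are of a familiar type.
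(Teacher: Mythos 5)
Your overall architecture matches the paper's: rational invariance to pass to a tower of (almost) Postnikov stages over $B\pi_1X$, necessity of (1) via Hurewicz-trivial classes, necessity of (2) via unbounded monodromy orbits, and sufficiency via bounded cocycles on $\tilde X$ detecting fiber classes (which is exactly the implication (4)$\Rightarrow$(5)$\Rightarrow$(1) of Theorem \ref{thm:LInfGen}, together with Lemma \ref{lem:duh}). Two remarks on the parts that work: for (1) you do not need Whitehead products or the hyperbolic/elliptic dichotomy --- any class killed by the rational Hurewicz map on $\tilde X$ already has a multiple retracting to the $(n-1)$-skeleton (Proposition \ref{prop:i-1}), hence a representative of \emph{zero} volume, which is both stronger and simpler than your $o(k)$ grid construction; the passage from ``injective Hurewicz'' to ``finite product of odd spheres'' is then Proposition \ref{prop:PiQ} plus the finiteness of $X$. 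Your spiral construction for (2) is the paper's balloon-on-a-string argument and is fine.

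The genuine gap is in the necessity of condition (3), and you have correctly located it but not filled it. Knowing that $\eu_i$ has \emph{some} superlinearly growing primitive tells you nothing --- every coboundary admits primitives of arbitrarily bad growth --- so there is no ``folding construction'' to extract from one. What you actually need is the converse implication: if $\eu_i$ admits \emph{no} bounded primitive on $\tilde X_{i-1}$, then there must exist a sequence of $(2n_i+2)$-chains $\sigma$ with $\langle\eu_i,\sigma\rangle / \vol(\partial\sigma)$ unbounded; these boundaries of small volume and large ``$\eu$-content'' are what lift (via a lifting homomorphism, Lemma \ref{lem:riv}) to small-volume representatives of large homotopy classes. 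This existence statement is not formal: the easy direction (bounded primitive $\Rightarrow$ linear directed isoperimetric inequality) is Stokes, but the direction you need is a genuine duality, which the paper establishes as Theorem \ref{thm:mfmc} / Corollary \ref{cor:IsoDu} via linear programming duality (one could also argue via Hahn--Banach and weak-$*$ compactness). Without this step the equivalence ``undistorted $\iff$ $\eu$ vanishes in $H^{2n_i+2}_{(\infty)}$'' only goes one way, and condition (3) would be sufficient but not shown necessary. A secondary, smaller issue: your sufficiency induction treats $X_i\to X_{i-1}$ as an honest Lipschitz fibration of finite complexes, whereas in the finite-complex world one only has rationally $n$-connected cofibrations; transporting volume bounds across these requires the quantitative lifting statement of Lemma \ref{lem:riv}, not just the qualitative invariance of Theorem \ref{thmQ}.
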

One way of interpreting this result intuitively is as follows: spaces with no
distortion must have universal covers which are \emph{coarsely trivial}
fibrations, that is they are in some sense finite distance from being a metric
product $\prod_i S^{2n_i+1} \times \widetilde{B\pi_1X}$.  In certain cases, this
assertion may be interpreted literally---there is a bilipschitz map from
$\tilde X$ to this product space.  In general, however, the author has not been
able to turn this intuition into a compelling piece of mathematics.

We say volume distortion is \emph{infinite} if there is a finite-dimensional
subspace of $\pi_n(X) \otimes \mathbb{Q}$ in which arbitrarily large vectors
have representatives of bounded volume.  Our second complete characterization,
however, is of when a complex has \emph{weakly infinite} volume distortion.
Here we take the minimum volume functional on $\pi_n(X)$ to be a restriction of
that on $H_n(\tilde X)$; this is natural because for simply connected spaces,
least volume functionals are the same for homology and homotopy, at least for
$n \geq 3$ (see Lemma \ref{lem:BBFS}.)  We say that (a finite-dimensional
subspace of) $\pi_n(X) \otimes \mathbb{Q}$ is weakly infinitely distorted in (a
finite-dimensional subspace of) $H_n(\tilde X;\mathbb{Q})$ if arbitrarily large
vectors in the latter at a bounded distance from the former are represented by
integral chains of bounded volume.
\begin{thmA} \label{thmB}
  A finite CW complex $X$ has no weakly infinite volume distortion in $\pi_n(X)$
  for any $n \geq 2$ if and only if the following conditions hold:
  \begin{enumerate}
  \item $X$ is rationally equivalent to the total space of a fibration over
    $B\pi_1X$ with fiber $\Pi_{i=1}^s (S^{2n_i+1})^{r_i}$;
  \item the monodromy representation $\pi_1X \to GL(\pi_*(X) \otimes
    \mathbb{Q})$ is elliptic;
  \item and for every $n \geq 2$, the group $H_n(\tilde X;\mathbb{Q})$ splits as
    a $\mathbb{Q}\pi_1X$-module into the image of the Hurewicz map and its
    complement.
  \end{enumerate}
\end{thmA}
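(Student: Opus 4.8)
The plan is to prove the two implications separately, in each case first normalizing $X$ by Theorem~\ref{thmQ}: since volume distortion is a rational-homotopy invariant we may replace $X$ by any rationally equivalent complex when showing the conditions are sufficient, and conversely every consequence we draw from the presence of distortion will be phrased so as to be a rational-homotopy invariant. The organizing principle is to translate ``weakly infinite distortion in $\pi_n$'' into a statement about the monodromy representation. Identifying the least-volume functional on $\pi_n(X)$ with the one on $H_n(\tilde X;\mathbb{Q})$ (Lemma~\ref{lem:BBFS}), weakly infinite distortion means there exist classes in $H_n(\tilde X;\mathbb{Q})$ of unbounded size, within bounded distance of the image of the Hurewicz map, carried by integral cycles of bounded volume; because each deck transformation of $\tilde X$ is an isometry, such classes can be produced exactly when some orbit of $\pi_1 X$ acting on $H_n(\tilde X;\mathbb{Q})$ escapes to infinity while the component of the orbit transverse to the Hurewicz image (relative to a fixed vector-space splitting) stays bounded. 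Thus the theorem is, morally, about when linear representations of $\pi_1 X$ have bounded orbits.

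For the direction $(1)\text{--}(3)\Rightarrow$ no weakly infinite distortion: by (1) we may take $X$ to be the total space of a fibration $F\to X\to B\pi_1X$ with $F\simeq_{\mathbb{Q}}\prod_i K(\mathbb{Q}^{r_i},2n_i+1)$, using that odd spheres are rational Eilenberg--MacLane spaces; then $\tilde X$ is the homotopy fiber and $H_*(\tilde X;\mathbb{Q})$ is computed by the Leray--Serre spectral sequence of $F\to\tilde X\to\widetilde{B\pi_1X}$, the Hurewicz image being the fiber-primitive classes. By (3) this image is a $\mathbb{Q}\pi_1X$-direct summand, and by (2) together with the semisimplicity of bounded rational representations the monodromy preserves a positive-definite form on the part of $H_n(\tilde X;\mathbb{R})$ that matters; fix the associated $\pi_1X$-invariant norm $\|\cdot\|$. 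The remaining --- and principal --- step is a geometric lower bound: there is $c>0$ so that every integral $n$-cycle $z$ in $\tilde X$ whose class lies within bounded distance of the Hurewicz submodule satisfies $\vol(z)\geq c\,\|[z]\|$. Granting this, if $k\alpha$ (with $\alpha\in\pi_n(X)\otimes\mathbb{Q}$ lifted to $\tilde X$) were represented by volume $\leq B$ then $k\,\|[\alpha]\|\leq B/c$, so $k$ is bounded, and the same estimate applied to any class in $H_n(\tilde X;\mathbb{Q})$ near a finite-dimensional subspace of the Hurewicz image rules out weakly infinite distortion.

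For the converse, suppose some condition fails. If (1) fails, then since $X$ is finite the homology $H_*(\tilde X;\mathbb{Q})$ is concentrated in boundedly many degrees, which forces the minimal Sullivan model of $\tilde X$ to carry a nonzero differential (a model with $d=0$ would make $\tilde X$ a rational product of Eilenberg--MacLane spaces, necessarily finitely many and all odd if $H_*(\tilde X;\mathbb{Q})$ is to be bounded --- i.e.\ (1) would hold); a short computation with the minimal model at the lowest degree carrying a nonzero differential then shows the rational Hurewicz map $\pi_m(X)\otimes\mathbb{Q}\to H_m(\tilde X;\mathbb{Q})$ is non-injective for some $m\geq2$, and a class in its kernel has Hurewicz image, hence least volume, zero, so all of its multiples are represented with zero volume --- distortion even stronger than weakly infinite. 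If (1) holds but (2) fails, then for some $n$ the monodromy on $\pi_n(X)\otimes\mathbb{Q}$ is unbounded, so by the Banach--Steinhaus principle on this finite-dimensional space there is a single $v\in\pi_n(X)\otimes\mathbb{Q}$ with unbounded $\pi_1X$-orbit; each $\rho(g)v$ is the image of $v$ under the isometric deck transformation $g$, so the arbitrarily large vectors $\rho(g)v$ are all represented at the fixed optimal volume of $v$. Finally, if (1) and (2) hold but (3) fails, then for some $n$ the Hurewicz submodule $A\subseteq H_n(\tilde X;\mathbb{Q})$ is not a direct summand; the monodromy on $A$ is bounded (a quotient of the bounded action on $\pi_n(X)\otimes\mathbb{Q}$), and a non-split extension with bounded sub- and quotient-actions has unbounded total action, so again there is $w\in H_n(\tilde X;\mathbb{Q})$ of unbounded orbit, while boundedness of the action on $H_n(\tilde X;\mathbb{Q})/A$ keeps the components of $\rho(g)w$ in a fixed complement of $A\otimes\mathbb{R}$ bounded; thus these arbitrarily large classes stay near $A$ and, being deck translates of $w$, are represented at $w$'s fixed optimal volume --- weakly infinite distortion in the Hurewicz image.

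The decisive difficulty is the geometric lower bound in the sufficiency direction: one must rule out lowering the volume of $k\alpha$ by spreading a representing cycle thinly across many fundamental domains of $\tilde X$. Bounded monodromy should forbid this --- it makes the fibration $\tilde X\to\widetilde{B\pi_1X}$ ``coarsely trivial'' in the sense discussed after Theorem~\ref{thmA}, so that a low-volume cycle can be fiberwise-deformed into the rational Eilenberg--MacLane model inside a bounded slab, where its class is literally measured by the invariant norm --- but carrying out this deformation with volume control, and separately showing that the geometry of the base cannot be exploited to cheat, is where the real work lies. I expect this to be a metric refinement of the algebraic splitting arguments underlying Theorem~\ref{thmA} and to reuse the $L_\infty$-cohomology machinery appearing there.
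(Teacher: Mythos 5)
Your sufficiency direction rests on a lemma that is false. You assert a \emph{linear} lower bound $\vol(z)\geq c\,\lVert [z]\rVert$ for integral cycles near the Hurewicz submodule, and you deduce non-distortion from it. But the conditions of this theorem are strictly weaker than those of Theorem \ref{thmA}: they permit finite (indeed arbitrarily bad) distortion and only forbid \emph{weakly infinite} distortion. The introduction's example $S^3\to X\to T^4$ with Euler class $[T^4]$ satisfies all three conditions here (trivial monodromy, and $H_3(\tilde X;\mathbb{Q})\cong\mathbb{Q}$ is exactly the Hurewicz image, so (3) is vacuous), yet the boundary of $[0,k]^4$ lifts to a cycle of volume $O(k^3)$ representing $k^4\alpha$, so $\vol(z)\gtrsim c\lVert[z]\rVert$ fails. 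What is actually true, and all that is needed, is that cycles of volume $\leq k$ have \emph{some} finite bound $C(k)$ on an invariant \emph{seminorm} --- the one that vanishes on the complement $P$ supplied by condition (3) and restricts to a $\rho$-invariant norm on the Hurewicz image supplied by condition (2). The paper proves this by an elementary induction on $k$: a connected integral cycle of volume $k$ lies within combinatorial distance $k$ of a fundamental domain, so there are only finitely many such cycles up to the (seminorm-preserving) deck action; a disconnected one splits as a sum handled by induction. No metric deformation into a fibered model and no $L_\infty$ machinery is involved, contrary to your closing speculation; since you both state a false bound and concede you cannot prove it, this half of the theorem is not established.

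Your necessity argument for condition (3) is, by contrast, a genuinely different and simpler route than the paper's, and it is essentially sound: granted (1) and (2), a bounded representation is semisimple (average an inner product over the closure of the image; a real splitting descends to $\mathbb{Q}$ because equivariant projections are cut out by rational linear equations), so failure of (3) forces the deck action on $H_n(\tilde X;\mathbb{Q})$ to be unbounded, and translating a fixed integral cycle representing a vector with unbounded orbit produces arbitrarily large classes of constant volume at bounded distance from the Hurewicz image. The paper instead reaches (3) through the Euler class, via an elaborate construction of $(n+1)$-chains $c_s$ with bounded boundary and linearly growing Euler pairing (Theorem \ref{thm:inf}); that detour buys the equivalent cohomological reformulations ($2$), ($2'$), ($2''$) used elsewhere, but for Theorem \ref{thmB} alone your shortcut suffices. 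Two repairs are needed: you must take $n$ \emph{minimal} with (3) failing, since boundedness of the action on $H_n(\tilde X;\mathbb{Q})/A$ is deduced from the splittings and elliptic actions in degrees $<n$ (the quotient is dual to the decomposables, a quotient of tensor products of lower homology); and you should justify that descent of the splitting from $\mathbb{R}$ to $\mathbb{Q}$ explicitly rather than appeal to semisimplicity over $\mathbb{R}$ alone.
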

Which weakly infinitely distorted classes are in fact infinitely distorted
remains largely open, although it is certainly not all of them, as evidenced by
Theorem \ref{thm:infcyc}.

\subsection*{Examples and methods}
In this section, we give an overview of the proof of Theorem \ref{thmA}.  In
essence, each of the three conditions is aimed at eliminating a particular
potential source of volume distortion; we give examples of each of these three
types in this section.  The proof works by showing that if a space has no volume
distortion of these three types, then it must have no volume distortion at all.
Note that this does not preclude the existence of other, yet undiscovered
sources of volume distortion in spaces that do have volume distortion.

Certain homotopy classes in $\pi_n(X)$ have representatives which retract to the
$(n-1)$-skeleton of the space $X$; this is the simplest possibility, since all
such maps have zero volume.  This case is exemplified by the Hopf map $S^3 \to
S^2$; indeed, in a simply-connected space, all homotopy classes which are zero
homologically are distorted in this way.  Thus to eliminate this effect, our
space $X$ must have the property that the Hurewicz homomorphism $\pi_*(X)
\otimes \mathbb{Q} \to H_*(\tilde X;\mathbb{Q})$ on its universal cover is
injective.  This is equivalent to condition (1).

Distortion may also be caused by the action of the fundamental group on
$\pi_n(X)$.  Thus, suppose $X$ is the mapping torus of a degree 2 map $f:S^n \to
S^n$.  Then for every $k$, $2^k$ times the generator $[\id_{S^n}] \in \pi_n(X)$
has a representative with volume 1, to be thought of as a balloon on a string of
length $k$.  The key observation here is that this is because the action of
$\pi_1(X)$ on $\pi_n(X) \otimes \mathbb{Q}$ sends the generator to an unbounded
set.  Condition (2) serves to eliminate sources of distortion of this type.

Finally and most subtly, distortion can be induced by the Euler class of a
sphere bundle.  For example, suppose that $S^3 \to X \xrightarrow{p} T^4$ is a
bundle with Euler class $[T^4]$, so that the obstruction to extending a section
$s:(T^4)^{(3)} \to X$ to the 4-cell is the generator $\alpha \in \pi_3(X)$.  In
other words, the attaching map of the 4-cell lifts along this section to a map
homotopic to the inclusion of the fiber $S^3 \hookrightarrow X$.  From the
perspective of the universal cover $\tilde X \to \mathbb{R}^4$, this gives us a
Lipschitz section of the 3-dimensional grid which does not extend to all of
$\mathbb{R}^4$.  Consider the map $f_k:S^3 \to \tilde X$ which is the lift along
this section of the boundary of $[0,k]^4$.  This map has volume $O(k^3)$, but
is homotopic to the sum of $k^4$ boundaries of cubes of side length 1, each of
which is in turn homotopic to the inclusion of the fiber.  In other words,
$[f_k]=k^4\alpha$, and so the volume distortion of $\alpha$ is at least
$Ck^{4/3}$.

Indeed, one can see that this is the \emph{only} possible source of distortion
in this space.  Suppose now that $f:S^n \to \tilde X$ is a map of volume $V$.
At the expense of a multiplicative constant increase in volume, we can assume
that its image is in $\tilde s^{-1}\left(\widetilde{(T^4)^{(3)}}\right)$.  It
turns out that $f$ differs from $s \circ p \circ f$ by an amount of twisting
around the fiber which must be linear in $V$.  Thus the element $[f] \in
\pi_3(X)$ has the form
$$[f]=\Fill(p \circ f)+O(V),$$
where $\Fill(p \circ f)$ is the volume of a 4-chain in $\mathbb{R}^4$ filling
$p \circ f$.  The isoperimetric inequality in $\mathbb{R}^4$ leads us to
conclude that $[f]=O(V^{4/3})$.

This key geometric idea can easily be extended to other bundles of the form $S^n
\to X \to M^{n+1}$.  For example, if $M$ is hyperbolic, the linearity of its
isoperimetric inequality means that there is no distortion in $X$.  Condition
(3) of Theorem \ref{thmA}, which turns out to be equivalent to the linearity of
a certain isoperimetric inequality, is aimed at eliminating this source of
distortion.  To prove the theorem in full generality one needs not only to prove
this correspondence, but also to provide an argument of the type we gave for the
bundle over $T^4$---that distortion in the total space of a rational homotopy
fibration corresponds to an isoperimetric function in the base---for a larger
class of \emph{delicate} spaces, that is, those that satisfy conditions (1) and
(2).  However, because fibrations are hard to construct in the world of finite
complexes, this argument relies instead on cofibrations with the same
homotopy-theoretic properties.

\subsection*{Results in specific dimensions}
Theorems \ref{thmA} and \ref{thmB} lay out conditions that are necessary and
sufficient for $\pi_n(X)$ to be undistorted and not weakly infinitely distorted,
respectively, for every $n$.  What can we say about whether a specific
$\pi_n(X)$ is distorted?

Here is an alternate sketch of the proof of Theorem \ref{thmA} which highlights
those aspects which are most relevant for this question.  Let $X$ be a finite
complex.  If for some $n$, $\pi_n(X) \otimes \mathbb{Q}$ is an
infinite-dimensional vector space, then, as we will show in Corollary
\ref{cor:fin}, there is a distorted element in $\pi_N(X)$ for some $N$; but
beyond this, such a situation is hard to analyze.  We can say much more when
$\pi_n(X) \otimes \mathbb{Q}$ is finite-dimensional.  Indeed, more generally, we
will give a criterion for whether a finite-dimensional
$\mathbb{Q}\pi_1(X)$-submodule $V \subseteq \pi_n(X) \otimes \mathbb{Q}$, that
is, a vector subspace invariant under monodromy, is distorted.

Given such a submodule $V$, we can add a finite number of $(n+1)$-cells to get
an inclusion $p:X \to B$ with $V=\ker(p_*:\pi_n(X) \to \pi_n(B)) \otimes
\mathbb{Q}$.  In addition, $p$ is rationally $n$-connected; for our purposes,
this makes $p$ close enough to one step in a Postnikov tower and we call the
resulting system an \emph{almost Postnikov pair}.  In fact, in this situation,
if $\rho:\pi_1(X) \to GL(V)$ is the monodromy representation on $V$, the Euler
class $\eu \in H^{n+1}(B;M_\rho)$ simply sends all $(n+1)$-cells of $X$ to $0$
and each of the extra $(n+1)$-cells to the element of $V$ it kills.  We can now
state our criterion.
\begin{thmA} \label{thmD}
  Fix a finite complex $X$ and an $n \geq 3$, and let $V \subseteq \pi_n(X)
  \otimes \mathbb{Q}$ be a submodule which is finite-dimensional as a vector
  space.  Then $V$ has no volume distortion if and only if the following
  conditions hold:
  \begin{enumerate}
  \item the monodromy representation $\rho:\pi_1(X) \to GL(V)$ is elliptic;
  \item for some (or any) almost Postnikov pair $X \to B$ with
    $V=\ker(\pi_n(X) \to \pi_n(B)) \otimes \mathbb{Q}$, the Euler class
    $\eu \in H^{n+1}(B;M_\rho)$ vanishes when considered in the $L_\infty$
    cohomology $H^{n+1}_{(\infty)}(\tilde B;V)$ of the universal cover.
  \end{enumerate}
\end{thmA}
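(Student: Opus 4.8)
\emph{Setup and strategy.} The plan is to prove Theorem~\ref{thmD} directly on universal covers and then use it as the engine for Theorem~\ref{thmA}. First I would rationalize: since $n\ge 3$, Lemma~\ref{lem:BBFS} lets me compute minimal $n$-volumes of homotopy classes as minimal volumes of their images in $H_n(\tilde X;\mathbb{Q})$, so I may as well study the distortion of $\bar V$, the image of $V$ in $H_n(\tilde X;\mathbb{Q})$ (this also disposes of any element of $V$ killed by the Hurewicz map, whose optimal volume is $0$). Fix a finitely generated cellular model, a norm $\|\cdot\|$ on $V$, and pass to the universal covers $\tilde p\colon\tilde X\to\tilde B$ of the almost Postnikov pair. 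Because $B$ is obtained from $X$ by attaching $(n+1)$-cells along maps into $X^{(n)}$, the pair $(B,X)$ is $n$-connected and $B^{(n)}=X^{(n)}$; hence $s\colon B^{(n)}=X^{(n)}\hookrightarrow X$ is a tautological section of $p$ over the $n$-skeleton, and the only obstruction to extending it over the $(n+1)$-skeleton is the Euler cochain $\eu$, which sends each new cell to the class it kills in $V$ and every old cell to $0$. The proof then has three parts: necessity of (1), necessity of (2) given (1), and sufficiency of (1) and (2).

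\emph{Necessity.} If $\rho$ is not elliptic then $\overline{\rho(\pi_1X)}\subseteq GL(V)$ is noncompact---otherwise averaging over a bounded orbit and taking a convex hull produces a $\rho$-invariant norm---so some orbit $\{\rho(g)v_0:g\in\pi_1X\}$ is unbounded. Representing $v_0$ by a map of volume $O(1)$ and dragging this balloon along a string following words $g_k$ of length $k$ with $\|\rho(g_k)v_0\|\to\infty$ produces representatives of $\rho(g_k)v_0\in V$ of bounded volume (the $1$-dimensional string contributes nothing to $n$-volume), so $V$ is distorted. Now suppose $\rho$ is elliptic---so, using an invariant norm, $\eu$ is an $L_\infty$ cochain on $\tilde B$---but $\eu\ne 0$ in $H^{n+1}_{(\infty)}(\tilde B;V)$. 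Here I would use the dictionary, one direction elementary and the other a Hahn--Banach argument to be developed in the section on $L_\infty$ cohomology, that the class of $\eu$ vanishes in $L_\infty$ cohomology if and only if there is a constant $C$ with $\|\int_c\eu\|\le C\,\vol(\partial c)$ for every $(n+1)$-chain $c$ in $\tilde B$. Failure of this estimate gives chains $c_k$ with $\vol(\partial c_k)\le k$ but $\|\int_{c_k}\eu\|$ superlinear in $k$; since $\partial c_k$ is a cellular $n$-cycle in $B^{(n)}=X^{(n)}\subseteq\tilde X$, a computation with the cellular chains of $\tilde B=\tilde X\cup\{\text{new }(n+1)\text{-cells}\}$ identifies $[\partial c_k]\in H_n(\tilde X;\mathbb{Q})$ with the image of $\int_{c_k}\eu$, a class of optimal volume at most $\vol(\partial c_k)\le k$ but superlinearly large norm. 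So $V$ is distorted.

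\emph{Sufficiency.} Assume (1) and (2). Using an invariant norm, $\eu$ becomes an $L_\infty$ cochain, and by (2) we may write $\eu=d\beta$ for a bounded $V$-valued $n$-cochain $\beta$ on $\tilde B$. Let $f\colon S^n\to\tilde X$ satisfy $\vol(f)\le W$ and $[f]\in\bar V$. A deformation lemma, using only the bounded geometry of $\tilde X$, homotopes $f$ at the cost of a universal multiplicative constant to have image in $\tilde X^{(n)}=\tilde B^{(n)}$; write $W'=O(W)$ for the resulting volume bound. Then $\tilde p\circ f$ represents $0$ in $H_n(\tilde B;\mathbb{Q})$ (its class is the image of $[f]\in\bar V$, which dies under $\tilde p_*$), hence bounds an $(n+1)$-chain $c$ with $\vol(c)\le\Fill_{\tilde B}(\tilde p\circ f)$, and the cellular computation above gives $[f]=\int_c\eu$. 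Finally
\begin{equation*}
  \|[f]\|=\Bigl\|\int_c\eu\Bigr\|=\Bigl\|\int_{\partial c}\beta\Bigr\|=\Bigl\|\int_{\tilde p\circ f}\beta\Bigr\|\le\|\beta\|_\infty\,\vol(\tilde p\circ f)=O(W),
\end{equation*}
so $V$ is undistorted. Condition (2) is precisely what converts a possibly superlinear filling estimate in $\tilde B$ into a linear boundary estimate, while ellipticity enters both in making $\eu$ a bounded cochain and, in the concrete fibration picture, in ensuring that $f$ differs from $s\circ\tilde p\circ f$ only by an amount of twisting around the fiber that is linear in $\vol(f)$ rather than amplified by monodromy.

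\emph{Remaining points and the main obstacle.} That (2) may be tested on ``some or any'' almost Postnikov pair follows because any two such pairs for $V$ are related, after a common cellular refinement, by an $L_\infty$ chain homotopy equivalence of universal covers, hence carry one Euler cochain's $L_\infty$-cohomology class to the other's; conceptually $[\eu]\in H^{n+1}_{(\infty)}(\tilde B;V)$ is a rational-homotopy invariant of $(X,V)$. The step I expect to be hardest is the sufficiency direction's volume-controlled deformation of $f$ into the $n$-skeleton, together with the honest version of the ``twist'' comparison for delicate spaces---proving $[f]=\int_c\eu+O(\vol(f))$ with the error genuinely bounded by ellipticity (the mapping torus of a degree-$2$ map shows exactly what goes wrong when $\rho$ is not elliptic, and the bundle over $T^4$ shows what goes wrong when (2) fails). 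The other nontrivial input, the $L_\infty$-cohomology versus linear-filling dictionary, I would isolate as a lemma and prove by extending the functional $\partial c\mapsto\int_c\eu$ off the space of boundaries via Hahn--Banach.
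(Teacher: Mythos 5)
Your outline reproduces the architecture of the paper's argument: necessity of ellipticity via unbounded monodromy orbits (Corollary \ref{cor:md}), the equivalence between $L_\infty$-vanishing of $\eu$ and a linear directed isoperimetric inequality in $\tilde B$, and the tautological correspondence between fillings of $\tilde p\circ f$ downstairs and the class $[f]$ upstairs (Theorems \ref{thm:genDI} and \ref{thm:LInfGen}). Two of your choices genuinely diverge from the paper. First, you prove the duality ``$[\eu]=0$ in $H^{n+1}_{(\infty)}$ iff $\|\int_c\eu\|\le C\vol(\partial c)$'' by Hahn--Banach, extending the functional $\partial c\mapsto\int_c\eu$ off the space of boundaries; the paper instead proves a more general statement (Theorem \ref{thm:mfmc}) by linear programming duality, which handles arbitrary polyhedral norms and $\mathbb{Q}$-coefficients and yields several other corollaries. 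For the single corollary you need (Corollary \ref{cor:IsoDu}, with $\mathbb{R}$-coefficients), Hahn--Banach is adequate and shorter. Second, by insisting on the concrete model $B=X\cup\{(n+1)\text{-cells}\}$ with $B^{(n)}=X^{(n)}$, you make every admissible $n$-sphere in $\tilde B$ literally a map into $\tilde X$, sidestepping the paper's Lemma \ref{lem:riv}; the paper needs that lemma anyway for rational invariance, but your shortcut is legitimate for this theorem once one knows the answer is independent of the chosen pair.

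The one place your sketch is genuinely thin is the passage from homology back to homotopy, which is exactly where the paper spends its technical capital. In the necessity of (2), Lemma \ref{lem:BBFS} hands you a spherical representative $f_k$ of $\partial c_k$ with $h_n[f_k]=\int_{c_k}\eu\in h_n(V)$, but the definition of $\VD_V$ requires $[f_k]$ itself (rationalized) to lie in $V$; a priori $[f_k]$ differs from an element of $V$ by something in $\ker h_n$ plus something detected in $\pi_n(B)\otimes\mathbb{Q}$. The paper absorbs this via the normality hypothesis, the bounded-exponent results (Proposition \ref{prop:i-1}, Corollary \ref{cor:bo_pi}, Lemma \ref{lem:riv}), and the formal lemma that weak distortion implies distortion; you need one of these mechanisms, not just the cellular identification. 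Relatedly, your parenthetical that elements of $V$ killed by Hurewicz are ``disposed of'' does not yet close the ``only if'' direction in that case: you must check that non-injectivity of $h_n|_V$ forces (2) to fail, which it does --- a rational relation $\sum a_e h_n(\alpha_e)=0$ among the attached classes produces an $(n+1)$-cycle $z$ in $\tilde B$ with $\partial z=0$ but $\int_z\eu\neq 0$, violating the isoperimetric form of (2) --- but this needs to be said. With those two points filled in, the proof goes through.
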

This is the closest we can easily get to isolating individual elements of
$\pi_n(X)$ and testing whether they are distorted.  It's hard to say anything
about the distortion of individual elements of $\pi_nX$ on which $\rho$ acts
nontrivially; even when $V$ is distorted, its elements may not be.  This
distinction is explored at some length in Section 3.

This theorem gives considerable information on top of Theorem \ref{thmA}.
Nevertheless, the condition that $V$ be finite-dimensional excludes many natural
situations.  For example, all higher homotopy groups of $S^1 \vee S^2$ are
infinite-dimensional with $\mathbb{Z}$ acting freely.  So suppose the
$\pi_1$-orbit of $\alpha \in \pi_n(X) \otimes \mathbb{Q}$ is
infinite-dimensional.  What can we say about the distortion of $\alpha$?  If the
Hurewicz homomorphism on the universal cover $\tilde X$ sends $\alpha \mapsto
0$, then $\alpha$ is certainly volume-distorted.  If instead its Hurewicz image
in $\tilde X$ is detected by an $L_\infty$ cocycle, then it is certainly not.
These conditions, however, are by no means exhaustive, and the latter can also
be difficult to determine, as Example \ref{ex:BS} demonstrates.
Infinite-dimensional homotopy groups, then, remain somewhat of a mystery.

\subsection*{Corollaries and other results}
The isoperimetric inequality mentioned above turns out, in the case of
aspherical spaces, to define a new kind of higher-dimensional filling function
for groups.  Any cohomology class $\omega \in H^{n+1}(X;\mathbb{Q})$ gives a
directed isoperimetric function $\FV_{X,\lvert\langle\omega,\cdot\rangle\rvert}^n$.
Groups with interesting directed isoperimetric functions then give rise to
spaces with interesting distortion functions.  For example, we construct a
sequence of groups $\dmd_n$, related to the Baumslag-Solitar group $BS(1,2)$,
which have finite classifying spaces and for which
$\FV_{\dmd_n,\lvert\langle\omega,\cdot\rangle\rvert}^n(k) \cong 2^{\sqrt[n]{k}}$ for
any nonzero $\omega \in H^{n+1}(\dmd_n;\mathbb{Q})$; indeed, for these groups,
this coincides with the usual higher-dimensional Dehn function.  This implies
that nontrivial fibrations $S^n \to X \to B\dmd_n$ have volume distortion of the
form $\exp(\sqrt[n]{k})$ as well.

This correspondence between distortion and filling functions provides more
general connections between this subject and geometric group theory.  Thus one
corollary of Theorem \ref{thmA} is that if $X$ is the unit tangent bundle of a
closed $n$-manifold which is aspherical or has non-amenable fundamental group,
then the class in $\pi_n(X)$ corresponding to the fiber is never distorted.
Conversely, for amenable groups such filling functions are never linear for any
nonzero cohomology class.

\subsection*{Related work and further directions}
Because many of our bounds rely on cellular maps, the results are inherently
asymptotic---we do not aspire to minimize volume of specified maps in specified
geometries.  Others, however, have worked in this direction.  \cite{Gluck} and
\cite{Wen} have found that well-known maps uniquely minimize the Lipschitz
constant of maps between round spheres and from products of spheres to spheres,
respectively.  Larry Guth in \cite{Guth2} and \cite{Guth3} provides bounds,
depending on the metric in the domain and range, on Lipschitz constants of
Hopf-like maps of spheres, as well as on their $k$-dilation for various $k$, a
more general measure of the size of a map that includes both volume and
Lipschitz constant.

Asking about asymptotic growth only makes sense for rational homotopy classes.
For torsion homotopy classes, however, one can also ask a homotopy invariant
question: are certain geometric quantities positive or are they zero?  In a
recent work in this vein \cite{Guth}, Guth addresses the minimal $k$-dilation of
certain torsion homotopy classes of spheres.

One can ask whether results similar to ours hold for Lipschitz distortion or
more generally for distortion with respect to $k$-dilation.  Since Lipschitz
distortion is never infinite, this precludes a result of the genre of Theorem
\ref{thmB}.  On the other hand, there is some promise for a result similar to
Theorem \ref{thmA}.  The reduction to delicate spaces works just as well for
Lipschitz distortion, as does a result similar to Theorem \ref{thm:sph},
relating distortion in a sphere bundle to a certain filling function in the base
space.  However, as far as we know this only holds for actual sphere bundles,
not up to rational homotopy.  Example \ref{ex:Lip} gives pause to attempts to
extend this approach to delicate spaces in general, and so the problem seems
less tractable.  Still, there are a number of rich phenomena related to
Lipschitz distortion which have yet to be explored.

\subsection*{Outline of the paper}
Here we describe the main methods and results of each section of the paper.

Section 1 concerns the development of a rational homotopy theory of
non-simply-connected CW complexes.  We show a rational version of Wall's result
on finiteness properties for CW complexes \cite{Wall} and several other
finiteness results which become useful later for applying results from algebraic
topology in the context of finite complexes.

In section 2, we discuss metric structures on compact spaces and give detailed
definitions for the various types of distortion.  We also prove Theorem
\ref{thmQ}.

In section 3, we discuss distortion in simply-connected spaces as well as
distortion from monodromy, and show that spaces with no infinite distortion must
satisfy conditions (1) and (2) of Theorems \ref{thmA} and \ref{thmB}.  We also
discuss the applicability of our methods to infinite-dimensional spaces of
finite type.

Section 4 reviews previous results surrounding higher-dimensional filling
functions of groups and spaces and introduces a new type of filling function.
We give examples in which this kind of filling function demonstrates various
behaviors, most notably the sequence $\dmd_n$ described above, whose
$n$-dimensional filling functions are asymptotically equivalent to
$2^{\sqrt[n]{k}}$.

In section 5, we show that the filling functions defined in section 4 are
equivalent to certain cohomological isoperimetric inequalities which may be
thought of as dual.  This turns out to generalize several known instances of
this type of duality.

Finally, in section 6 we tackle the class of delicate spaces.  This means first
defining almost Postnikov pairs and their Euler classes.  We then use the
results of sections 4 and 5 to relate distortion in one of the spaces in the
pair to a filling function in the other.  As a special case of this relationship
we complete the proofs of Theorems \ref{thmA} and \ref{thmD} and discuss various
examples and applications.

Section 7 provides a different, more algebraic look at delicate spaces, yielding
a proof of Theorem \ref{thmB}.

\subsection*{Acknowledgements}
I would like to thank my advisor, Shmuel Weinberger, for introducing me to this
area of research and guiding me throughout the work.  Many thanks also to Katie
Mann, Jenny Wilson, Chris Beck, Bena Tshishiku, Kevin Whyte, Daniel Studenmund,
and others for their support and fruitful conversations.  Special thanks to
Jeremy Rickard on MathOverflow for suggesting the counterexample \ref{rickard}
and to Zeb Brady for walking me through some basic algebraic number theory.
Many thanks also to the anonymous referee for a multitude of corrections and
suggestions.

This paper is based on a portion of the author's PhD thesis \cite{thesis}.

\setcounter{tocdepth}{1}
\tableofcontents

\section{Algebraic preliminaries}

We start this section by proving two simple propositions that will find their
uses in later sections, before moving on to a more self-contained discussion
developing the rational homotopy theory of finite non-simply-connected
complexes.  We will tacitly assume all spaces to be connected, and we will
frequently and tacitly make use of the natural $\mathbb{Q}\pi_1X$-module
structure on the group $\pi_n(X) \otimes \mathbb{Q}$.  As a matter of notation,
we write $h_n:\pi_n(X) \otimes \mathbb{Q} \to H_n(X;\mathbb{Q})$ to mean the
rational Hurewicz homomorphism.  We also write $X^{(k)}$ to refer to the
$k$-skeleton of a CW complex $X$.
\begin{prop} \label{prop:i-1}
  Suppose $X$ is a simply connected complex and $[\alpha] \in \pi_n(X)$ is zero
  under the Hurewicz map.  Then $[\alpha]$ has a representative $f:S^n \to
  X^{(n-1)}$.  More generally, if $h_n([\alpha])=0$, then there is some $k>0$ so
  that $k[\alpha]$ has a representative $f:S^n \to X^{(n-1)}$.
\end{prop}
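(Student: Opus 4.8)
The plan is to reduce the statement to the classical relative Hurewicz theorem, applied to the pair $(X,X^{(n-1)})$. First note that the ``more generally'' clause follows formally from the first assertion: if $h_n([\alpha])=0$ in $H_n(X;\mathbb{Q})=H_n(X;\mathbb{Z})\otimes\mathbb{Q}$, then the image of $[\alpha]$ under the \emph{integral} Hurewicz map $\pi_n(X)\to H_n(X;\mathbb{Z})$ is a torsion element, so $k[\alpha]$ maps to $0$ integrally for some $k>0$, and the first assertion applied to $k[\alpha]$ finishes the job. Thus it is enough to handle the case where $[\alpha]$ lies in the kernel of the integral Hurewicz map.

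For this I would proceed as follows. The case $n=2$ is immediate, since for simply connected $X$ the Hurewicz map $\pi_2(X)\to H_2(X;\mathbb{Z})$ is an isomorphism, so the hypothesis forces $[\alpha]=0$, which is represented by the constant map into $X^{(1)}$. So assume $n\geq 3$. Because $\pi_1$ of a CW complex depends only on its $2$-skeleton, $X^{(n-1)}$ is simply connected. The pair $(X,X^{(n-1)})$ is $(n-1)$-connected by cellular approximation, so the relative Hurewicz theorem provides an isomorphism $\pi_n(X,X^{(n-1)})\xrightarrow{\ \sim\ }H_n(X,X^{(n-1)})$ which is part of a morphism from the homotopy long exact sequence of the pair to its homology long exact sequence, compatible with the absolute and relative Hurewicz maps. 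Since $[\alpha]$ maps to $0$ in $H_n(X;\mathbb{Z})$, it maps to $0$ in $H_n(X,X^{(n-1)})$, and the relevant commuting square together with injectivity of the relative Hurewicz isomorphism shows that the image $j_*[\alpha]\in\pi_n(X,X^{(n-1)})$ vanishes. Exactness of $\pi_n(X^{(n-1)})\xrightarrow{i_*}\pi_n(X)\xrightarrow{j_*}\pi_n(X,X^{(n-1)})$ then gives $[\alpha]\in\img(i_*)$, i.e.\ $[\alpha]$ has a representative $S^n\to X^{(n-1)}$.

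The only point requiring care is the hypothesis of the relative Hurewicz theorem that $X^{(n-1)}$ be simply connected, which guarantees that $\pi_1$ acts trivially on the relative homotopy group and the Hurewicz map is a genuine isomorphism rather than an isomorphism onto coinvariants. As observed, this is automatic for $n\geq 3$ and the case $n=2$ is trivial, so there is no real obstacle; should one prefer, one can first replace $X$ by a homotopy-equivalent CW complex with a single $0$-cell and no $1$-cells through a cellular homotopy equivalence, which carries $(n-1)$-skeleta into $(n-1)$-skeleta and commutes with Hurewicz maps, hence preserves both hypothesis and conclusion. Beyond these standard inputs I do not anticipate any difficulty.
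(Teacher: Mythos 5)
Your proof is correct and follows essentially the same route as the paper: apply the relative Hurewicz theorem to the $(n-1)$-connected pair $(X,X^{(n-1)})$, chase the map of long exact sequences to see that $j_*[\alpha]=0$, and conclude via exactness of $\pi_n(X^{(n-1)})\to\pi_n(X)\to\pi_n(X,X^{(n-1)})$. Your extra care with the $n=2$ case and the simple connectivity of $X^{(n-1)}$ is a welcome refinement of a point the paper leaves implicit.
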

\begin{proof}
  The relative Hurewicz theorem and exact sequences for pairs give us the
  diagram
  $$\xymatrix{
    \pi_n(X^{(n-1)}) \ar[r] \ar[d] & \pi_n(X) \ar[r] \ar[d] &
    \pi_n(X,X^{(n-1)}) \ar[d]^\wr \\
    0 \ar[r] & H_n(X) \ar@{^{(}->}[r] & H_n(X,X^{(n-1)}).
  }$$
  Thus the sequence
  $$\pi_n(X^{(n-1)}) \to \pi_n(X) \to H_n(X)$$
  is exact.  This gives us the integral result.  The rational version follows.
\end{proof}
\begin{prop} \label{lem:injpi}
  Suppose $(X,A)$ is a CW pair of finite $n$-complexes such that the inclusion
  $\iota:A \to X$ induces isomorphisms on $\pi_1$ and on $\pi_i \otimes
  \mathbb{Q}$ for $2 \leq i \leq n-1$.  Then the induced map $\iota_*:\pi_n(A)
  \otimes \mathbb{Q} \to \pi_n(X) \otimes \mathbb{Q}$ is injective.
\end{prop}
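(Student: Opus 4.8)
The plan is to use covering spaces and a rational simplification of the pair to reduce to the case in which $X$ is built from $A$ by attaching $n$-cells along rationally trivial maps, and then to run a Blakers--Massey argument.

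First I would pass to universal covers. As $\iota$ is a $\pi_1$-isomorphism, if $p\colon\tilde X\to X$ is the universal cover and $\tilde A:=p^{-1}(A)$, then $\tilde A$ is the universal cover of $A$, both $\tilde A\subseteq\tilde X$ are simply connected $n$-complexes, and the covering identifies $\iota_*\otimes\mathbb Q$ on $\pi_n$ with the original. Since $p$ induces isomorphisms $\pi_i(\tilde X,\tilde A)\cong\pi_i(X,A)$ for $i\ge 2$, since $\pi_2(\tilde X,\tilde A)$ is abelian (because $\tilde A$ is simply connected), and since the long exact sequence of the pair may therefore be tensored with $\mathbb Q$, the hypotheses force $\pi_i(\tilde X,\tilde A)\otimes\mathbb Q=0$ for $2\le i\le n-1$; the pair is moreover $1$-connected. (For $n=2$ there is nothing to do here, and the proposition is immediate: Hurewicz identifies $\iota_*$ on $\pi_2$ with $\iota_*$ on $H_2$, which is injective since $H_3(\tilde X,\tilde A)=0$ by the dimension bound. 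So assume $n\ge 3$.)

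Now the rational relative Hurewicz theorem gives $\pi_n(\tilde X,\tilde A)\otimes\mathbb Q\xrightarrow{\sim}H_n(\tilde X,\tilde A;\mathbb Q)$ and $H_i(\tilde X,\tilde A;\mathbb Q)=0$ for $i<n$; since $\dim\tilde X\le n$ this homology also vanishes for $i>n$, so it is concentrated in degree $n$, of some dimension $r$. Lifting a rational basis through the Hurewicz isomorphism and clearing denominators yields elements of $\pi_n(\tilde X,\tilde A)$, i.e.\ maps $(D^n,S^{n-1})\to(\tilde X,\tilde A)$ with boundaries $\phi_1,\dots,\phi_r\colon S^{n-1}\to\tilde A$; I let $Y:=\tilde A\cup_{\phi_1,\dots,\phi_r}(r\text{ many }n\text{-cells})$ and define $g\colon Y\to\tilde X$ by the inclusion on $\tilde A$ and the chosen disks on the cells. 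By construction $g_*$ is an isomorphism on $H_*(\,\cdot\,,\tilde A;\mathbb Q)$, hence on $H_*(\,\cdot\,;\mathbb Q)$, so $g$ is a rational homotopy equivalence between simply connected spaces. Thus $\iota_*\otimes\mathbb Q$ is injective on $\pi_n$ if and only if $\tilde A\hookrightarrow Y$ is; and since $g$ identifies $\pi_{n-1}(Y)\otimes\mathbb Q$ with $\pi_{n-1}(\tilde X)\otimes\mathbb Q\cong\pi_{n-1}(\tilde A)\otimes\mathbb Q$ while $\ker(\pi_{n-1}(\tilde A)\to\pi_{n-1}(Y))=\langle[\phi_1],\dots,[\phi_r]\rangle$, each $[\phi_i]$ is torsion.

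Finally I would show $\ker(\pi_n(\tilde A)\to\pi_n(Y))=\img\big(\partial\colon\pi_{n+1}(Y,\tilde A)\to\pi_n(\tilde A)\big)$ is torsion. Collapsing $\tilde A$ gives $Y/\tilde A=\bigvee^r S^n$, and Blakers--Massey, applied to the $(n-1)$-connected pair $(Y,\tilde A)$ with $\tilde A$ simply connected, shows that $\pi_{n+1}(Y,\tilde A)\to\pi_{n+1}(\bigvee^r S^n)$ is onto with kernel generated by relative Whitehead products $[\xi,e_i]$, $\xi\in\pi_2(\tilde A)$ and $e_i$ the class of the $i$-th cell in $\pi_n(Y,\tilde A)$. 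For $n\ge 3$ the group $\pi_{n+1}(\bigvee^r S^n)$ is $2$-torsion, so every class of $\pi_{n+1}(Y,\tilde A)$ becomes, after doubling, a sum of such products; and $\partial[\xi,e_i]=\pm[\xi,\phi_i]$ is torsion because $[\phi_i]$ is. Hence $\img\partial$ is torsion, which finishes the proof. The only real difficulty is this last passage: injectivity of $\iota_*$ on $H_n(\tilde A;\mathbb Q)$ falls out at once from the dimension bound, but upgrading it to $\pi_n\otimes\mathbb Q$ needs both the simplification of the pair (so that one is only attaching $n$-cells) and the torsionness of $\pi_{n+1}$ of a wedge of $n$-spheres for $n\ge 3$ — the case $n=2$, where that fails, being precisely the one that has to be handled separately via Hurewicz.
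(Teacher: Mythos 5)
Your proof is correct, but it takes a genuinely different route from the paper's. The paper argues via Postnikov towers: the Serre spectral sequence of the principal fibration $K(\pi_n(X),n)\to\tilde X_{(n)}\to\tilde X_{(n-1)}$ degenerates to a four-term Gysin-type exact sequence $0\to H_{n+1}(\tilde X_{(n-1)};\mathbb{Q})\to\pi_n(X)\otimes\mathbb{Q}\to H_n(\tilde X;\mathbb{Q})\to H_n(\tilde X_{(n-1)};\mathbb{Q})\to 0$, and injectivity of $\iota_*$ then follows by mapping the corresponding sequence for $A$ into this one and chasing; the only geometric inputs are that the $(n-1)$-st Postnikov stages agree rationally and that $H_n(\tilde A;\mathbb{Q})\to H_n(\tilde X;\mathbb{Q})$ is injective because $H_{n+1}(\tilde X,\tilde A)=0$. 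You instead build a cellular model $Y=\tilde A\cup(n\text{-cells})$ of $\tilde X$, observe that the attaching maps $\phi_i$ are rationally nullhomotopic, and bound $\img\bigl(\partial\colon\pi_{n+1}(Y,\tilde A)\to\pi_n(\tilde A)\bigr)$ using the refined Blakers--Massey theorem (the kernel of excision in the critical degree is generated by relative Whitehead products) together with the fact that $\pi_{n+1}(\bigvee S^n)$ is killed by $2$ for $n\geq 3$. Both proofs invoke the rational relative Hurewicz theorem and the dimension hypothesis in exactly the same way. What your route buys is geometric transparency: it shows precisely why $n=2$ must be split off (the Hopf classes in $\pi_3(\bigvee S^2)$ are non-torsion) and where the torsion in $\ker\iota_*$ actually comes from (Whitehead products with the torsion classes $[\phi_i]$). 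Its cost is that the Whitehead-product description of the kernel of excision is a substantially deeper classical theorem (the computation of the first nontrivial triad homotopy group) than the degenerate spectral sequence the paper uses; you also tacitly take $H_n(\tilde X,\tilde A;\mathbb{Q})$ to be finite-dimensional when fixing $r$, which need not hold for universal covers of finite complexes, though nothing breaks if one allows an infinite wedge. The paper's version treats all $n\geq 2$ uniformly and sets up machinery it reuses elsewhere.
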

\begin{proof}
  For any simply connected CW complex $Y$, consider the Serre spectral sequence
  applied to the fibration
  $$K(\pi_k(Y),k) \to Y_{(k)} \to Y_{(k-1)}$$
  within the Postnikov tower of $Y$.  The entry $E^2_{k,0}=H_k(Y_{(k-1)})$ has
  all differentials going to zero groups and thus survives to the $E^\infty$
  page.  This implies that the map
  \begin{equation} \label{obs}
    H_k(Y;\mathbb{Q})=H_k(Y_{(k)};\mathbb{Q}) \twoheadrightarrow
    H_k(Y_{(k-1)};\mathbb{Q})
  \end{equation}
  is a surjection.

  By the rational relative Hurewicz theorem, $\iota_*:H_i(\tilde A;\mathbb{Q})
  \to H_i(\tilde X;\mathbb{Q})$ is also an isomorphism for $i \leq n-1$.
  Moreover, since $H_{n+1}(\tilde X,\tilde A)=0$, $\iota_*:H_n(\tilde A) \to
  H_n(\tilde X)$ is injective.  Consider now the Serre spectral sequence for the
  fibration
  $$K(\pi_n(X),n) \to \tilde X_{(n)} \to \tilde X_{(n-1)}.$$
  In the $E^2$ page, rows $1$ through $n-1$ are zero, which allows us to
  construct the Gysin-like exact sequence illustrated below.  Here, the boxed
  groups are assembled from the terms of the $E^\infty$ page of the spectral
  sequence, giving rise to the arrows going to and from them.
  $$
  \begin{tikzpicture}
    \matrix (m) [matrix of math nodes,
      nodes in empty cells,nodes={minimum width=5ex,
        minimum height=5ex}%,outer sep=-5pt},
    %  column sep=1ex,row sep=1ex
    ]{
      \qquad\quad & & & & \\
      n      & \pi_n(X) \otimes \mathbb{Q} &     &     & \\
      n-1    & 0      &     &     & \\
      \vdots & \vdots &  &  & \\
      1      & 0      & \cdots & 0 & & \\
      0      & H_0(\tilde X_{(n-1)};\mathbb{Q}) & \cdots &
      H_n(\tilde X_{(n-1)};\mathbb{Q}) & H_{n+1}(\tilde X_{(n-1)};\mathbb{Q}) \\
      \qquad\quad\strut & 0 & \cdots & n & n+1 & \strut \\};
    \node[draw,rectangle] (Hnp) at (4,1.5) {$H_{n+1}(\tilde X_{(n)};\mathbb{Q})$};
    \node[draw,rectangle] (Hn) at (0,0.5) {$H_n(\tilde X_{(n)};\mathbb{Q})$};
    \draw[-stealth] (Hnp) to [bend left=30] (m-6-5);
    \draw[-stealth] (m-6-5) edge [bend right=40]
      node[anchor=south west,inner sep=1pt] {$\partial_{n+1}$} (m-2-2);
    \draw[-stealth] (m-2-2) to [bend left=15] (Hn);
    \draw[-stealth] (Hn) to [bend left=45] (m-6-4);
    \draw[-stealth] (m-6-4) edge [bend right=10]
      node[anchor=north east,inner sep=0pt] {$\partial_n$} (m-3-2);
    \draw[thick] (m-1-1.east) -- (m-7-1.east);
    \draw[thick] (m-7-1.north) -- (m-7-6.north);
  \end{tikzpicture}
  $$
  Moreover, the observation that the map \eqref{obs} is surjective for $k=n+1$
  implies that $H_{n+1}(\tilde X_{(n)};\mathbb{Q})=0$, reducing the sequence to
  four nonzero terms.
 %% rows $1$ through $n-1$ are zero, so the only nonzero groups
 %%  contributing to $H_n(\tilde X_{(n)};\mathbb{Q})$ are
 %%  $$E^2_{0,n}=H_0(\tilde X_{(n-1)};H_n(K(\pi_n(X),n);\mathbb{Q})) \cong \pi_n(X)
 %%  \otimes \mathbb{Q}$$
 %%  and
 %%  $$E^2_{n,0}=H_n(\tilde X_{(n-1)};H_0(K(\pi_n(X),n);\mathbb{Q})) \cong H_n(
 %%  \tilde X_{(n-1)};\mathbb{Q}).$$
 %%  Moreover, the only differential which might kill elements of $E^2_{0,n}$ is
 %%  $$\partial_{n+1}:E^{n+1}_{n+1,0} \cong E^2_{n+1,0} \to E^{n+1}_{0,n} \cong E^2_{0,n}.$$

  The same analysis holds replacing $X$ with $A$.  The whole picture is
  functorial, and so we get a homomorphism of exact sequences
  $$
  \xymatrix{
%    H_{n+1}(\tilde A_{(n)};\mathbb{Q}) \ar[r] \ar[d] &
    0 \ar[r] & H_{n+1}(\tilde A_{(n-1)};\mathbb{Q}) \ar[r]^-{\partial_{n+1}}
    \ar[d]^{\text{\rotatebox[origin=c]{-90}{$\cong$}}} &
    \pi_n(A) \otimes \mathbb{Q} \ar[r] \ar[d]^{\iota_*} &
    H_n(\tilde A_{(n)};\mathbb{Q}) \ar[r] \ar@{^{(}->}[d] &
    H_n(\tilde A_{(n-1)};\mathbb{Q}) \ar[r] \ar[d]^{\wr} & 0 \\
%    H_{n+1}(\tilde X_{(n)};\mathbb{Q}) \ar[r] &
    0 \ar[r] & H_{n+1}(\tilde X_{(n-1)};\mathbb{Q}) \ar[r]^-{\partial_{n+1}} &
    \pi_n(X) \otimes \mathbb{Q} \ar[r] & H_n(\tilde X_{(n)};\mathbb{Q}) \ar[r] &
    H_n(\tilde X_{(n-1)};\mathbb{Q}) \ar[r] & 0.
  }
  $$
  Now a diagram chase shows that $\ker(\iota_*)=0$.
\end{proof}

\subsection*{Finiteness conditions and rational homotopy}
In his seminal paper \cite{Wall}, Wall proves the following theorem, stated here
in abridged form:
\begin{thm*}
  The following conditions on a CW complex $X$ are equivalent:
  \begin{enumerate}
  \item $X$ is homotopy equivalent to a complex with finite $n$-skeleton.
  \item The group $\Gamma:=\pi_1(X)$ is finitely presented, and for every $k
    \leq n$, the condition $F_k$ holds: for every finite complex $K^{k-1}$ and
    $(k-1)$-connected map $\ph:K \to X$, $\pi_k(\ph) \otimes \mathbb{Q}$ is a
    finitely generated $\mathbb{Z}\Gamma$-module.
  \end{enumerate}
\end{thm*}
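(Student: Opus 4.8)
This is Wall's theorem from \cite{Wall}, so the honest plan is simply to cite it; but here is how I would organize a self-contained account. Everything will be controlled by the cellular chain complex $D_\bullet := C_\bullet(\tilde X)$ of the universal cover, viewed as a chain complex of free modules over $\Lambda := \mathbb{Z}\Gamma$ (finitely generated in each degree $\leq n$ when $X$ has a finite $n$-skeleton), together with the relative chain complexes attached to $(k-1)$-connected maps. The first step is to translate $F_k$ into homological algebra: given a finite $(k-1)$-complex $K$ and a $(k-1)$-connected map $\ph\colon K\to X$ with $k\geq 3$, I would pull back the universal cover to a connected cover $\widehat K$ (simply connected since $k\geq 3$), so that $C_\bullet(\widehat K)$ is a finitely generated free $\Lambda$-complex equipped with a $\Lambda$-equivariant chain map $C_\bullet(\widehat K)\to D_\bullet$; the relative Hurewicz theorem applied to the simply connected pair $(\widetilde{M_\ph},\widehat K)$ then identifies $\pi_k(\ph)$, as a $\Lambda$-module, with $H_k$ of the algebraic mapping cone $C_\bullet := \mathrm{Cone}(C_\bullet(\widehat K)\to D_\bullet)$, a complex which is moreover acyclic in degrees $\leq k-1$. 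The low-degree case $k=2$ I would treat directly through $\pi_1$ and $\pi_2$, where $F_2$ amounts to finite presentability of $\Gamma$ together with finite generation of $\pi_2(X)=H_2(\tilde X)$.

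For $(1)\Rightarrow(2)$, I may assume $X$ itself has finite $n$-skeleton, so $D_j$ is finitely generated free for $j\leq n$; after making $\ph$ cellular, the cone $C_\bullet$ has $C_j=D_j\oplus C_{j-1}(\widehat K)$ finitely generated free for all $j\leq k$ (using $k\leq n$ and $\dim K=k-1$) and $H_j(C_\bullet)=0$ for $j<k$. Now I would run the usual dimension-shift on the truncation $C_k\to C_{k-1}\to\cdots\to C_0\to 0$: acyclicity below degree $k$ gives short exact sequences $0\to W_j\to C_j\to W_{j-1}\to 0$ with $W_0=C_0$ free and $W_k=Z_k:=\ker\partial_k$, and since each $W_{j-1}$ is projective these split, so every $W_j$, in particular $Z_k$, is a finitely generated projective $\Lambda$-module. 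As $H_k(C_\bullet)=Z_k/\operatorname{im}\partial_{k+1}$ is a quotient of $Z_k$, it is finitely generated over $\Lambda$; this is $F_k$. Finite presentability of $\Gamma\cong\pi_1(X^{(2)})$ is immediate.

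For $(2)\Rightarrow(1)$, I would build a finite complex mapping to $X$ by induction. Since $\Gamma$ is finitely presented, start from a finite graph $K_1$ with a $\pi_1$-surjective (hence $1$-connected) map $\ph_1\colon K_1\to X$. Given a $(k-1)$-connected $\ph_{k-1}\colon K_{k-1}\to X$ with $K_{k-1}$ finite of dimension $\leq k-1$, condition $F_k$ makes $\pi_k(\ph_{k-1})$ finitely generated over $\Lambda$; then the standard procedure for killing relative homotopy by attaching cells (attach one $k$-cell to $K_{k-1}$ along the boundary of a disk representing each of finitely many $\Lambda$-generators of $\pi_k(\ph_{k-1})$, extending $\ph_{k-1}$ over each new cell via the retraction $M_{\ph_{k-1}}\to X$) yields a $k$-connected map $\ph_k\colon K_k\to X$ with $K_k$ finite of dimension $\leq k$. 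Running this through $k=2,\dots,n$ produces an $n$-connected map $\ph_n\colon K_n\to X$ with $K_n$ finite of dimension $\leq n$; since $\ph_n$ is $n$-connected, $X$ can be built from $K_n$ by attaching only cells of dimension $\geq n+1$, so $X$ is homotopy equivalent to a complex whose $n$-skeleton is the finite complex $K_n$.

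The technical heart lies in the first two steps: pinning down the $\Lambda$-module structure in the relative Hurewicz identification, with the low-degree bookkeeping at $k=2$, and the small homological lemma that the first nonvanishing homology of a bounded-below complex of finitely generated free $\Lambda$-modules is finitely generated — this is what substitutes for Noetherianity or coherence of $\mathbb{Z}\Gamma$, which can fail. Everything afterward is the routine ``attach finitely many cells'' bootstrap; the Wall finiteness obstruction in $\widetilde K_0(\mathbb{Z}\Gamma)$ never intervenes precisely because only the $n$-skeleton, not the whole complex, must be finite, so any obstruction is absorbed into the $(n+1)$-cells. (With $\mathbb{Q}$-coefficients in place of $\mathbb{Z}$ throughout, the same scheme yields the analogous statement with homotopy equivalence in (1) relaxed to rational equivalence, which is the form used later.)
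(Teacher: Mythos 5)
You should know that the paper does not prove this statement at all: it is quoted in abridged form from Wall and cited as \cite{Wall}, so ``just cite it'' is precisely what the paper does, and what the paper actually proves is the rational analogue, Theorem \ref{thm:QWall}. For $k\geq 3$ your sketch is a faithful reconstruction of Wall's original argument: identifying $\pi_k(\ph)$ with $H_k$ of the algebraic mapping cone via the relative Hurewicz theorem on covers, the dimension-shifting/splitting lemma showing $Z_k=\ker\partial_k$ is a finitely generated projective summand so that $H_k$ is a finitely generated quotient, and the cell-attaching bootstrap for (2)$\Rightarrow$(1), which is also exactly how the paper runs (3)$\Rightarrow$(1) in Theorem \ref{thm:QWall}. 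It is worth noting that for the finite-generation direction the paper's rational version replaces your projectivity lemma by a different device, the lifting homomorphism of Proposition \ref{prop:chain_ho} (a $\pi_1$-equivariant chain homotopy inverse over $\mathbb{Q}$), from which an explicit surjection of a finitely generated free $\mathbb{Q}\Gamma$-module onto $H_k(\tilde Z,\tilde K;\mathbb{Q})$ is constructed; that route also yields the bounded-denominator control exploited later in Corollary \ref{cor:bo_pi}, which a pure projectivity argument would not give.

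There is, however, one genuine error in your plan: the parenthetical claim that ``$F_2$ amounts to finite presentability of $\Gamma$ together with finite generation of $\pi_2(X)=H_2(\tilde X)$'' is false, and in the direction (1)$\Rightarrow$(2) at $k=2$ it would have you trying to prove a false statement. If $\Gamma$ is finitely presented but not of type $FP_3$ (Stallings, Bieri--Stallings), its finite presentation $2$-complex $K$ has $\pi_2(K)\cong H_2(\tilde K)=\ker\bigl(\partial_2:C_2(\tilde K)\to C_1(\tilde K)\bigr)$ infinitely generated over $\mathbb{Z}\Gamma$ (finite generation of this kernel is exactly the $FP_3$ condition), yet $K$ is itself finite, so condition (1) holds for every $n$ and hence $F_2$ must hold for $X=K$. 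The resolution is that $F_2$ concerns the relative group $\pi_2(\ph)=\pi_2(M_\ph,K)$, which contains not only $\pi_2(X)$ but also the crossed-module data coming from $\ker(\pi_1K\to\Gamma)$, and it is this larger object that is finitely generated (for instance by the $2$-cells of $X$ when $\ph$ is the inclusion of the $1$-skeleton); your mapping-cone translation also breaks down here because the relevant pair of covers is no longer simply connected, so the relative Hurewicz identification is unavailable. So the $k=2$ case must be handled on the level of $\pi_2(\ph)$ itself, using finite presentability to control $\ker(\pi_1K\to\Gamma)$, as Wall does; with that repair, the rest of your outline stands.
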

Intuitively, one can understand this as follows.  The homotopy of finite
complexes, including their rational homotopy, can a priori be very complicated.
At the very least, there are many such spaces with complicated fundamental
groups $\Gamma$ for which higher homotopy groups are infinitely generated as
modules over $\mathbb{Z}\Gamma$.  This complexity, however, is not mere anarchy,
but in some sense is dictated by the fundamental group, give or take a few
cells, i.e.~a finite number of extra generators or relations.

In this section, we develop a similar theory in a rational setting.  Moreover,
we show that the rational isomorphisms produced are effective, in the sense that
the torsion in the difference has bounded exponent, despite perhaps being vastly
infinitely generated.  Our main technical tools are a generalized Hurewicz
theorem introduced by Serre and an invertibility result for rational
equivalences on the level of chain homotopy.

First, we define the appropriate notions of equivalence.
\begin{defn}
  For two CW complexes $X$ and $Y$, we say a map $f:X \to Y$ is
  \emph{rationally $n$-connected} if it induces an isomorphism on $\pi_1$ and
  $\pi_k(f) \otimes \mathbb{Q}:=\pi_k(M_f,X) \otimes \mathbb{Q}=0$, where $M_f$
  is the mapping cylinder, for $2 \leq k \leq n$.  If this is true for every
  $k$, we say $f:X \to Y$ is a \emph{rational equivalence}.

  We say that $X$ is \emph{rationally equivalent} to $Y$ ($X \simeq_{\mathbb{Q}}
  Y$) if there is a CW complex $Z$ with rational equivalences $X \to Z$ and $Y
  \to Z$.  $X$ is \emph{rationally $n$-equivalent} to $Y$ if there is a CW
  complex $Z$ and maps $X \to Z$ and $Y \to Z$ which induce isomorphisms on
  $\pi_1$ and $\pi_k \otimes \mathbb{Q}$ for $2 \leq k \leq n$.
\end{defn}
Note that a map inducing a rational $n$-equivalence is rationally $n$-connected,
and a rationally $n$-connected map induces a rational $(n-1)$-equivalence, but
the converses of these statements are false.

It is not obvious that rational equivalence and $n$-equivalence as we have
defined them are in fact equivalence relations.  In other words, we have yet to
show that any zigzagging sequence of equivalences
$$
\begin{tikzpicture}
  \node (a) at (0,0) {$X$};
  \node (b) at (0.9,-0.75) {$Z_1$};
  \node (c) at (1.8,0) {$Z_2$};
  \node (d) at (2.7,-0.75) {$Z_3$};
  \node (e) at (3.6,0) {$Z_4$};
  \node (f) at (4.5,-0.75) {};
  \node (x) at (5.4,0) {};
  \node (y) at (6.3,-0.75) {$Z_M$};
  \node (z) at (7.2,0) {$Y$};
  \node (dots) at (4.95,-0.375) {$\cdots$};
  \draw[->] (a) -- (b);
  \draw[->] (c) -- (b);
  \draw[->] (c) -- (d);
  \draw[->] (e) -- (d);
  \draw[->] (e) -- (f);
  \draw[->] (x) -- (y);
  \draw[->] (z) -- (y);
\end{tikzpicture}
$$
%% $$X \to Z_1 \leftarrow Z_2 \to Z_3 \leftarrow Z_4 \to \cdots \leftarrow Y$$
collapses into one with just two maps, $X \to Z \leftarrow Y$.  To show this, it
is enough to prove the following lemma.
\begin{lem}
  Suppose that $W$, $X$, and $Y$ are CW complexes with rational
  ($n$-)equivalences $X \xleftarrow{f} W \xrightarrow{g} Y$.  Then there is a
  complex $Z$ with rational ($n$-)equivalences $X \xrightarrow{f^\prime} Z
  \xleftarrow{g^\prime} Y$.
\end{lem}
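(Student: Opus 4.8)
The plan is to build $Z$ as a homotopy pushout of the diagram $X \xleftarrow{f} W \xrightarrow{g} Y$, that is, to form the double mapping cylinder $Z = M_f \cup_W M_g$, with $f' : X \to Z$ and $g' : Y \to Z$ the obvious inclusions of the ends. The point is then to verify that $f'$ and $g'$ are rational ($n$-)equivalences, given that $f$ and $g$ are. First I would check the easy algebraic input: since $f$ and $g$ induce isomorphisms on $\pi_1$, van Kampen's theorem applied to the pushout shows that $\pi_1(Z) \cong \pi_1(W) \cong \pi_1(X) \cong \pi_1(Y)$ and that $f'_*, g'_*$ realize these isomorphisms. So it remains to treat $\pi_k \otimes \mathbb{Q}$ for $2 \le k \le n$ (or all $k$).

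For the higher rational homotopy, I would work on universal covers. Because $\pi_1$ is carried isomorphically across every map, the universal cover $\tilde Z$ is the double mapping cylinder $M_{\tilde f} \cup_{\tilde W} M_{\tilde g}$ of the lifted diagram $\tilde X \xleftarrow{\tilde f} \tilde W \xrightarrow{\tilde g} \tilde Y$, and $\pi_k \otimes \mathbb{Q}$ of each space agrees with that of its cover. By the rational relative Hurewicz theorem (the hypothesis $\pi_k(M_f,W)\otimes\mathbb Q=0$ for $k\le n$ gives $H_k(\tilde M_f, \tilde W;\mathbb Q)=0$ for $k \le n$, hence $\tilde f$ is a rational homology $n$-equivalence, and similarly $\tilde g$), the maps $\tilde f$ and $\tilde g$ induce isomorphisms on $H_k(-;\mathbb Q)$ for $k \le n-1$ and a surjection (in fact, using $H_{n+1}$ vanishing in the relative term when everything is an $n$-complex) the right injectivity for $k=n$ one gets from Proposition~\ref{lem:injpi}. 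Now a Mayer--Vietoris argument for the pushout $\tilde Z$ compares the long exact sequence of the pair $(\tilde Z, \tilde X)$ with that of $(\tilde M_g, \tilde W)$: the inclusion $\tilde W \hookrightarrow \tilde M_g$ is, up to homotopy, $\tilde g$, and excision identifies $H_*(\tilde Z, \tilde X;\mathbb Q)$ with $H_*(\tilde M_g, \tilde W;\mathbb Q) = H_*(\tilde M_g,\tilde W;\mathbb Q)$, which vanishes in degrees $\le n$. Hence $f' : X \to Z$ induces rational homology isomorphisms through degree $n$, and since $\pi_1$ is preserved, $f'$ is rationally $n$-connected; symmetrically for $g'$. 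Passing from "rationally $n$-connected" to the slightly weaker "rational $n$-equivalence" statement demanded by the lemma is then immediate from the remark preceding it, and in the "$n = \infty$" case one simply drops all the degree restrictions.

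The one subtlety — and the step I expect to require the most care — is the claim that $\tilde Z$ is the double mapping cylinder of the lifted diagram, i.e.\ that forming the universal cover commutes with the homotopy pushout here. This is true precisely because each of $f, g$ is a $\pi_1$-isomorphism, so that the pushout does not create new loops and the cover of the pushout is assembled from the covers of the pieces along the (connected, simply-connected-cover) copies of $\tilde W$; but writing this cleanly means being a little careful about basepoints and about the fact that $M_f$ deformation retracts to $X$ so that $\tilde M_f \simeq \tilde X$ equivariantly. An alternative that sidesteps covers entirely is to argue directly with homology local coefficients in $\mathbb{Q}\pi_1$: the relative groups $\pi_k(f)\otimes\mathbb Q$ being zero for $k \le n$ is equivalent, via the relative Hurewicz theorem over $\mathbb{Q}\pi_1$, to $H_k(M_f, W; \mathbb{Q}\pi_1) = 0$ for $k \le n$, and then excision for the pushout gives $H_k(Z, X; \mathbb{Q}\pi_1) \cong H_k(M_g, W;\mathbb{Q}\pi_1) = 0$, whence $\pi_k(Z,X)\otimes\mathbb Q = 0$ for $k \le n$ by Hurewicz again. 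I would present this local-coefficients version, as it is cleaner and makes the excision step transparent, and remark that it handles the $n$-equivalence case and the full rational equivalence case uniformly.
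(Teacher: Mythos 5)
Your construction is the same as the paper's: form the pushout $Z=X\cup_W Y$ (after replacing $X$ and $Y$ by mapping cylinders), identify $H_*(\tilde Z,\tilde X;\mathbb{Q})$ with $H_*(\tilde Y,\tilde W;\mathbb{Q})$ by excision on universal covers, and apply the rational relative Hurewicz theorem. That correctly handles the full rational equivalence case and shows in general that $f'$ and $g'$ are rationally $n$-connected.

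There is, however, a genuine gap at the last step of the $n$-equivalence case. You write that passing from ``rationally $n$-connected'' to ``rational $n$-equivalence'' is immediate from the remark preceding the lemma, calling the latter ``slightly weaker.'' You have the implication backwards: the remark states that a rational $n$-equivalence is rationally $n$-connected, and that a rationally $n$-connected map only induces a rational $(n-1)$-equivalence. Concretely, rational $n$-connectedness of $f'$ gives that $f'_*$ is an isomorphism on $\pi_k\otimes\mathbb{Q}$ for $k\le n-1$ and a \emph{surjection} on $\pi_n\otimes\mathbb{Q}$; injectivity on $\pi_n\otimes\mathbb{Q}$ does not follow and must be argued separately. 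The paper supplies it by comparing the exact sequences
$\pi_{n+1}(X,W)\otimes\mathbb{Q}\to\pi_n(W)\otimes\mathbb{Q}\to\pi_n(X)\otimes\mathbb{Q}\to 0$
and
$\pi_{n+1}(Z,Y)\otimes\mathbb{Q}\to\pi_n(Y)\otimes\mathbb{Q}\to\pi_n(Z)\otimes\mathbb{Q}\to 0$,
using that the first two vertical maps (excision plus Hurewicz, and $g_*$) are isomorphisms, and chasing to conclude $f'_*$ is injective on $\pi_n\otimes\mathbb{Q}$. Your appeal to Proposition~\ref{lem:injpi} does not fill this hole either: that proposition concerns finite $n$-complexes and the injectivity of the inclusion-induced map on $\pi_n$, not the surjection-to-isomorphism upgrade needed here. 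With this five-lemma-style diagram chase added, your argument is complete.
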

\begin{proof}
  By perhaps replacing $X$ and $Y$ with the mapping cylinders of $f$ and $g$
  respectively, we can assume that $W$ is a subcomplex of both $X$ and $Y$.
  Then let $Z=X \cup_W Y$, with $f^\prime:X \to Z$ and $g^\prime:Y \to Z$ the
  corresponding injections.  Consider now the universal covers of all these
  spaces.  By excision, $H_k(\tilde Z,\tilde X) \cong H_k(\tilde Y,\tilde W)$
  for all $k$.  The rational relative Hurewicz theorem then implies that if the
  pair $(Y,W)$ is rationally $n$-connected, then so is the pair $(Z,X)$ and that
  moreover
  $$\pi_{n+1}(Z,X) \otimes \mathbb{Q} \cong H_{n+1}(\tilde Z,\tilde X;\mathbb{Q})
  \cong H_{n+1}(\tilde Y,\tilde W;\mathbb{Q}) \cong \pi_{n+1}(Y,W) \otimes
  \mathbb{Q}.$$
  This argument is symmetric with respect to $X$ and $Y$, so in the case of
  rational equivalence we are done.  In the case that $f$ and $g$ are rational
  $n$-equivalences, this gives us a homomorphism of exact sequences
  $$
  \xymatrix{
    \pi_{n+1}(X,W) \otimes \mathbb{Q} \ar[r] \ar[d]^{\wr} &
    \pi_n(W) \otimes \mathbb{Q} \ar[r]^{\sim} \ar[d]^{\wr} &
    \pi_n(X) \otimes \mathbb{Q} \ar[r] \ar[d]^{f^\prime_*} & 0 \\
    \pi_{n+1}(Z,Y) \otimes \mathbb{Q} \ar[r] &
    \pi_n(Y) \otimes \mathbb{Q} \ar[r]^{g^\prime_*} &
    \pi_n(Z) \otimes \mathbb{Q} \ar[r] & 0.
  }
  $$
  This allows us to conclude that $f^\prime_*$ and $g^\prime_*$ are also
  isomorphisms.  Thus we see that $f^\prime$ and $g^\prime$ are indeed rational
  $n$-equivalences.
\end{proof}

An important technical tool we will use is a wide-ranging generalization of the
Hurewicz theorem introduced by Serre.  Our reference for this is section 9.6 of
Spanier's algebraic topology textbook \cite{Spa}.  First, we need to introduce
some terminology.
\begin{defn}
  A class $\mathcal{C}$ of abelian groups is an \emph{acyclic Serre ideal} if
  \begin{enumerate}
  \item it is closed under subquotients and extensions;
  \item if $A \in \mathcal{C}$ and $B$ is any abelian group, then $A \otimes B,
    \Tor_1(A,B) \in \mathcal{C}$;
  \item if $A \in \mathcal{C}$, then $H_n(A) \in \mathcal{C}$ for every $n>0$.
  \end{enumerate}
  We say that two groups $G$ and $H$ are \emph{isomorphic modulo} $\mathcal{C}$
  if there is a homomorphism $G \to H$ whose kernel and cokernel are in 
  $\mathcal{C}$.
\end{defn}
\begin{thm*}[generalized relative Hurewicz theorem, 9.6.21 in \cite{Spa}]
  Let $\mathcal{C}$ be an acyclic Serre ideal of abelian groups and $(X,A)$ be
  a pair of simply connected CW complexes.  Then the following are equivalent:
  \begin{enumerate}
  \item $\pi_k(X,A) \in \mathcal{C}$ for $2 \leq k \leq n$.
  \item $H_k(X,A) \in \mathcal{C}$ for $2 \leq k \leq n$.
  \end{enumerate}
  Moreover, each of these implies that the Hurewicz homomorphism $\pi_{n+1}(X,A)
  \to H_{n+1}(X,A)$ is an isomorphism modulo $\mathcal{C}$.
\end{thm*}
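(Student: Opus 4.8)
The plan is the classical route due to Serre: reduce the relative statement to the \emph{absolute} mod-$\mathcal{C}$ Hurewicz theorem and prove that with Serre spectral sequences of Postnikov-type fibrations, just as in the proof of Proposition~\ref{lem:injpi}. The three axioms of an acyclic Serre ideal are exactly what the bookkeeping needs: closure under subquotients and extensions moves $\mathcal{C}$-membership along the filtration of an abutment and along differentials; closure under $\otimes$ and $\Tor_1$ controls homology with arbitrary coefficient modules through universal coefficients; and the acyclicity axiom (3) supplies the one external input, that $\tilde H_i(K(\pi,m);\mathbb{Z})\in\mathcal{C}$ for all $i>0$ and $m\geq 1$ whenever $\pi\in\mathcal{C}$. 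I would prove that input first: the case $m=1$ is axiom (3) (homology commutes with the filtered colimit over finitely generated subgroups of $\pi$, so no finiteness hypothesis is needed), and for $m>1$ one inducts on $m$ using the path--loop fibration $K(\pi,m-1)\to PK(\pi,m)\to K(\pi,m)$: its contractible total space forces the bottom row $\tilde H_p(K(\pi,m);\mathbb{Z})$ into $\mathcal{C}$ for $p>0$ once the higher rows, which carry coefficients in $\tilde H_{\ast}(K(\pi,m-1))$, are known to lie there.

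For the absolute case --- $X$ simply connected and $\pi_i(X)\in\mathcal{C}$ for $2\leq i\leq n-1$ implying $\tilde H_i(X;\mathbb{Z})\in\mathcal{C}$ for $i<n$ and $\pi_n(X)\to H_n(X)$ a $\mathcal{C}$-isomorphism --- I would avoid inducting on $n$ in this direction altogether. Instead I would use the single fibration $\widetilde{X}\langle n-1\rangle\to X\to X[n-1]$ whose base is the $(n-1)$st Postnikov section and whose fibre is the $(n-1)$-connected cover. Climbing the finite Postnikov tower of $X[n-1]$ and applying the Eilenberg--MacLane input at each stage gives $\tilde H_{\ast}(X[n-1])\in\mathcal{C}$ in all positive degrees; meanwhile the fibre is $(n-1)$-connected with $\pi_n\cong H_n\cong\pi_n(X)$ by ordinary Hurewicz. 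In the Serre spectral sequence of this fibration, every $E^2$-term in positive total degree at most $n$ except $E^2_{0,n}=H_n(\widetilde{X}\langle n-1\rangle)$ lies in $\mathcal{C}$, so a chase through the filtration and the differentials into and out of the bottom corner yields both $\tilde H_i(X)\in\mathcal{C}$ for $i<n$ and the $\mathcal{C}$-isomorphism in degree $n$ (identified with the Hurewicz map via the edge homomorphism). The reverse implication is then a short induction on $n$: if $\tilde H_i(X)\in\mathcal{C}$ for $i<n$, the case $n-1$ gives $\pi_i(X)\in\mathcal{C}$ for $i<n-1$, the forward direction with parameter $n-1$ makes $\pi_{n-1}(X)\to H_{n-1}(X)$ a $\mathcal{C}$-isomorphism whence $\pi_{n-1}(X)\in\mathcal{C}$, and one more application of the forward direction supplies the degree-$n$ clause.

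For the relative statement I would replace $\iota:A\hookrightarrow X$ by a fibration with fibre $F$, so $\pi_k(X,A)\cong\pi_{k-1}(F)$, and compare $\tilde H_{\ast}(F)$ with $H_{\ast}(X,A)$ through the Serre spectral sequence of $F\to A\to X$: in a Zeeman-style argument, $\mathcal{C}$-triviality of $\tilde H_{\ast}(F)$ in a range is interchangeable with that of $H_{\ast}(X,A)$ in the matching range. Since $X$ and $A$ are simply connected, $F$ is a \emph{nilpotent} space, so the (nilpotent form of the) absolute case just proved applies to it and converts between $\pi_q(F)=\pi_{q+1}(X,A)$ and $\tilde H_q(F)$. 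Running the two ingredients together in either order gives the equivalence of the two conditions, and the corner one degree higher gives the $\mathcal{C}$-isomorphism $\pi_{n+1}(X,A)\to H_{n+1}(X,A)$.

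The obstacle I anticipate is not one hard step but the standing nuisance that $F$ need not be simply connected: $\pi_1(F)=\pi_2(X,A)$ is generally nonzero, though it lies in $\mathcal{C}$ --- which is itself part of what must be proved. One is therefore committed to the forms of the Hurewicz theorem and of the spectral-sequence comparison valid for nilpotent spaces, to tracking the $\pi_1(F)$-action throughout, and to doing the degree-$2$ end of the relative induction by hand. The rest of the labour --- turning the spectral-sequence manipulations into honest ``mod $\mathcal{C}$'' statements across infinitely many pages and filtration quotients --- is routine once the argument is laid out as clean nested inductions in which each ``stays in $\mathcal{C}$'' step is one invocation of one of the three axioms. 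All of this is carried out in Spanier; the point here is only to record the skeleton being invoked.
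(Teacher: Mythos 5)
The paper does not prove this statement: it is imported verbatim from Spanier (9.6.21) and used as a black box, so there is no in-paper argument to compare against. Your sketch is a correct outline of the standard Serre-class proof, and all three axioms of an acyclic Serre ideal are deployed exactly where they are needed (subquotients/extensions for the spectral-sequence bookkeeping, $\otimes$ and $\Tor_1$ for universal coefficients on the $E^2$-page, and axiom (3) as the seed $\tilde H_*(K(\pi,1))\in\mathcal{C}$ for the path--loop induction on Eilenberg--MacLane spaces). Your absolute argument via $X\langle n-1\rangle\to X\to X[n-1]$ checks out: in total degree $\le n$ the only $E^2$-term not forced into $\mathcal{C}$ is $E^2_{0,n}\cong\pi_n(X)$, and the only differential hitting it comes from $E^{n+1}_{n+1,0}$, a subquotient of $H_{n+1}(X[n-1])\in\mathcal{C}$. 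One refinement to your anticipated obstacle: the homotopy fiber $F$ of $A\hookrightarrow X$ is not merely nilpotent but \emph{simple}, because the action of $\pi_1(F)$ on $\pi_n(F)$ is the restriction along $\pi_1(F)\to\pi_1(A)$ of the $\pi_1(A)$-action, and $\pi_1(A)=1$; so the abelian form of the mod-$\mathcal{C}$ Hurewicz theorem suffices and no genuinely nilpotent bookkeeping is required, though you do still need $\pi_1(F)=\pi_2(X,A)\in\mathcal{C}$ as part of the induction and the degree-$2$ case by hand, as you say. For the record, Spanier's own proof of the relative statement proceeds somewhat differently (working with the pair directly rather than passing to the homotopy fiber), but both routes are standard and yours is sound.
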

The most obvious examples of acyclic Serre ideals are the class containing only
the group 0 and the class of torsion groups.  However, there is another such
class that we would like to use.  Suppose that $(X,K)$ is a rationally highly
connected CW pair.  This means that for each element $\alpha \in \pi_n(X)$,
there is some $p>0$ such that $p\alpha$ can be homotoped into $K$.  However, a
priori this $p$ can grow without bound depending on the $\alpha$ we select.  On
the other hand, for a comparison between the metric behavior of $\pi_n(X)$ and
$\pi_n(K)$ to be meaningful, we would prefer to find a universal such $p$, and
indeed this turns out to be possible for finite complexes.  To this end, we
define the class of groups of bounded exponent.
\begin{defn}
  An abelian group $A$ has \emph{bounded exponent} if $pA=0$ for some $p \in
  \mathbb{N}$.  We call the smallest such $p$ the \emph{period} of the group.
\end{defn}
\begin{prop} \label{prop:Serre}
  The class $\mathcal{BE}$ of abelian groups of bounded exponent is an acyclic
  Serre ideal.
\end{prop}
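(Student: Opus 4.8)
The plan is to check the three defining properties of an acyclic Serre ideal in turn, with essentially all the work concentrated in the last one. Closure under subquotients is immediate: a subgroup or quotient of a group killed by $p$ is killed by $p$. For closure under extensions, given an exact sequence $0 \to A \to B \to C \to 0$ with $pA = 0$ and $qC = 0$, for each $b \in B$ the element $qb$ lies in $A$, so $pqb = 0$, and hence $B$ is killed by $pq$. For property (2), if $pA = 0$ then $p(a \otimes b) = (pa) \otimes b = 0$ shows $p(A \otimes B) = 0$; and since multiplication by $p$ is the zero endomorphism of $A$, functoriality of $\Tor_1(-,B)$ makes it the zero endomorphism of $\Tor_1(A,B)$ as well, so $p\Tor_1(A,B) = 0$. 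Each of these I expect to be at most a one-liner.

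The substance is property (3): if $pA = 0$, then $H_n(A) := H_n(K(A,1);\mathbb{Z})$ lies in $\mathcal{BE}$ for every $n > 0$; I will in fact show it has exponent dividing $p$. The first step is to reduce to the case of finite $A$. Writing $A$ as the filtered colimit of its finitely generated subgroups $A_\lambda$ — each of which is finite, being finitely generated and of exponent dividing $p$ — and using that the normalized bar complex computing group homology is a functor commuting with filtered colimits, while homology of chain complexes commutes with filtered colimits, one obtains $H_n(A) = \varinjlim_\lambda H_n(A_\lambda)$. Since a filtered colimit of groups of exponent dividing $p$ again has exponent dividing $p$, it suffices to bound $H_n(A_\lambda)$ for finite $A_\lambda$.

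For finite $A$ with $pA = 0$, use the structure theorem to write $A \cong \bigoplus_{j=1}^m \mathbb{Z}/m_j$, where each $m_j \mid p$ because $\mathbb{Z}/m_j$ embeds in $A$; then $K(A,1) \simeq \prod_{j=1}^m K(\mathbb{Z}/m_j,1)$. The integral homology of $K(\mathbb{Z}/m,1)$ is standard: it is $\mathbb{Z}$ in degree $0$, $\mathbb{Z}/m$ in positive odd degrees, and $0$ in positive even degrees, so it has exponent dividing $p$ in every positive degree. The iterated Künneth theorem now expresses $H_n(K(A,1);\mathbb{Z})$, for $n > 0$, as a finite direct sum of iterated tensor products and first Tor groups of the $H_{i_j}(K(\mathbb{Z}/m_j,1);\mathbb{Z})$, where in each summand the leaf degrees satisfy $\sum_j i_j \geq n > 0$ (each nested $\Tor_1$ drops the total degree by one). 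Hence in every summand some factor sits in positive degree and so has exponent dividing $p$, and applying $\otimes$ or $\Tor_1$ against arbitrary abelian groups preserves the property of having exponent dividing $p$ (this is property (2), applied repeatedly along the tree of operations, using $A \otimes B \cong B \otimes A$ and $\Tor_1(A,B) \cong \Tor_1(B,A)$). Therefore every summand, and with it $H_n(K(A,1);\mathbb{Z})$, has exponent dividing $p$.

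The point I expect to be the crux is precisely the interplay between the colimit reduction and the sharpness of the finite-case bound: the transfer argument only yields that $|A_\lambda|$ annihilates $\widetilde H_n(K(A_\lambda,1))$, and $|A_\lambda|$ is unbounded over $\lambda$, so it is essential that the \emph{exponent} of $A_\lambda$, rather than its order, controls the exponent of its homology. Everything else is bookkeeping.
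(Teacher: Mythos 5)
Your proof is correct and follows the same overall route as the paper's: the first two closure properties are dispatched in a line each, and property (3) is handled by reducing to finite groups via filtered colimits and then combining the structure theorem, the known homology of $K(\mathbb{Z}/m,1)$, and the K\"unneth formula. The one place you genuinely diverge is the $\Tor$ half of property (2): the paper computes $\Tor_1(\mathbb{Z}/k\mathbb{Z},B)=\{x\in B: kx=0\}$ and then assembles the general case from the commutation of $\Tor$ with filtered colimits and direct sums, whereas you simply observe that multiplication by $p$ is the zero endomorphism of $A$ and hence, by additivity of $\Tor_1(-,B)$, the zero endomorphism of $\Tor_1(A,B)$. Your argument is shorter and avoids any appeal to the structure of finite abelian groups at that step; the paper's buys the explicit identification of $\Tor_1$ with a torsion subgroup, which it does not actually need later. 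One small slip: in the K\"unneth bookkeeping the leaf degrees satisfy $\sum_j i_j = n - (\#\text{ of }\Tor\text{'s}) \leq n$, not $\geq n$; but the conclusion you actually use, namely that every nonvanishing summand in positive total degree has some leaf in positive degree, is still correct, since a summand all of whose leaves are $H_0=\mathbb{Z}$ is either in degree $0$ or contains a $\Tor_1$ with a free argument and hence vanishes.
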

\begin{proof}
  Closure under subquotients, extensions, and tensor products is immediate.

  Suppose $A$ has period $p$.  Since $\Tor_1(\mathbb{Z}/k\mathbb{Z},B)=
  \{x \in B:kx=0\}$, it has period $k$.  Now, the Tor functor commutes with
  filtered colimits and direct sums; in particular,
  $$\Tor_1(A,B)=\bigcup_{G \subset A\text{ finite}} \Tor_1(G,B)=
  \bigcup_{G \subset A\text{ finite}} \bigoplus_{G=\oplus_i \mathbb{Z}/k_i\mathbb{Z}}
  \Tor_1(\mathbb{Z}/k_i\mathbb{Z},B)$$
  has period $p$.

  Since $H_n(\mathbb{Z}/k\mathbb{Z})=0$ for even $n$ and $\mathbb{Z}/k
  \mathbb{Z}$ for odd $n$, the K\"unneth formula tells us that for finite $G$,
  the period of $H_n(G)$ is at most that of $G$.  According to Theorem 9.5.9 of
  \cite{Spa}, for any group $A$, $H_*(A) \cong \varinjlim \{H_*(G): G \subset A
  \text{ finite}\}$.  Hence if $A$ has period $p$ then $H_n(A)$ has period at
  most $p$.
\end{proof}
For the future, we need one more property of groups of bounded exponent.
\begin{prop} \label{prop:bo_mod}
  Let $\Gamma$ be a group and $M$ a finitely generated torsion
  $\mathbb{Z}\Gamma$-module.  Then $M$ is of bounded exponent as a group.
\end{prop}
\begin{proof}
  Let $M=\langle a_1,\ldots,a_m\rangle$.  Then $k_ia_i=0$ for some $k_i>0$.  Let
  $r=\lcm\{k_i:1 \leq i \leq n\}$.  A general element $a \in M$ is given by $a=
  \sum_j t_jg_ja_{i_j}$, where $t_j \in \mathbb{Z}$ and $g_j \in \Gamma$.  Thus
  $ra=0$.
\end{proof}
Now we come to our second technical tool.  From an algebraic-topological
perspective, any map can be replaced by the inclusion into its mapping cylinder.
Thus the next proposition may be thought of as a very weak invertibility result
for rational $n$-equivalences: they have homotopy inverses in the category of
chain complexes with rational coefficients.  In fact, this will be our main tool
for proving rational invariance both in this section and in the rest of the
paper.  Furthermore, a map is rationally $n$-connected if and only if its
rational homotopy fiber is $(n-1)$-connected; thus this proposition also turns
out to be an important tool for analyzing rational homotopy fibrations.
\begin{prop} \label{prop:chain_ho}
  Let $(X,K)$ be a rationally $n$-connected CW pair.  Then the inclusion of
  cellular chain complexes $i_k:C_k(\tilde K;\mathbb{Q}) \to
  C_k(\tilde X;\mathbb{Q})$ is a chain homotopy equivalence through dimension
  $n$.  In particular, for $0 \leq k \leq n$, there is a $\pi_1$-equivariant
  homotopy inverse $j_k:C_k(\tilde X;\mathbb{Q}) \to C_k(\tilde K;\mathbb{Q})$
  such that $j_k \circ i_k=\id$ and $i_k \circ j_k$ is homotopic to the identity
  via a $\pi_1$-equivariant chain homotopy $u_k:C_k(\tilde X;\mathbb{Q}) \to
  C_{k+1}(\tilde X;\mathbb{Q})$ which is zero on the image of $i$.  We call
  $j_k$ a \emph{lifting homomorphism}.
\end{prop}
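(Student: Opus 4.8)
The plan is to reduce the statement to a purely algebraic fact about chain complexes over $\mathbb{Q}\pi_1$, then invoke it. First I would pass to universal covers and recall that $C_*(\tilde K;\mathbb{Q})$ and $C_*(\tilde X;\mathbb{Q})$ are chain complexes of free $\mathbb{Q}\pi_1$-modules (here I use that $\pi_1 K \cong \pi_1 X$ via the inclusion, so both carry actions of the same group). The hypothesis that $(X,K)$ is rationally $n$-connected says precisely that $\pi_k(\tilde X,\tilde K)\otimes\mathbb{Q}=0$ for $k\le n$, and by the rational relative Hurewicz theorem this gives $H_k(\tilde X,\tilde K;\mathbb{Q})=0$ for $k\le n$, i.e. $i_*:H_k(\tilde K;\mathbb{Q})\to H_k(\tilde X;\mathbb{Q})$ is an isomorphism for $k<n$ and surjective for $k=n$. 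So the inclusion $i_*:C_*(\tilde K;\mathbb{Q})\to C_*(\tilde X;\mathbb{Q})$ is a map of complexes of free modules which is a homology isomorphism through dimension $n-1$ and a homology epimorphism in dimension $n$.

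The key algebraic step is then: a map of nonnegatively-graded chain complexes of free (hence projective) modules over any ring $R$ which induces a homology isomorphism in degrees $<n$ and an epimorphism in degree $n$ admits, through dimension $n$, a chain homotopy inverse — and if the map is a degreewise split injection (which it is here, since $C_k(\tilde K)$ is a summand of $C_k(\tilde X)$ as a free module on a subset of the cells), one can arrange $j_k\circ i_k=\mathrm{id}$ and take the homotopy $u_k$ to vanish on the image of $i$. I would prove this by an explicit induction on $k$, building $j_k$ and $u_{k-1}$ simultaneously: having defined $j_{<k}$ and the homotopy in lower degrees satisfying $j\partial = \partial j$ and $\mathrm{id}-ij = \partial u + u\partial$, one extends over the free generators of $C_k(\tilde X)$ not coming from $C_k(\tilde K)$, using projectivity to lift through the surjections $\ker\partial \twoheadrightarrow H$ supplied by the homology hypotheses, exactly the standard "comparison theorem" diagram chase; on generators coming from $C_k(\tilde K)$ we simply set $j_k := \mathrm{id}$ and $u_k:=0$, which is consistent because $i$ is an inclusion of subcomplexes. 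All constructions respect the $R=\mathbb{Q}\pi_1$-action since everything is done freely on $\pi_1$-bases of cells.

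The main obstacle is bookkeeping at the top degree $k=n$: there we only have surjectivity of $i_*$ on $H_n$, not injectivity, so $j_n$ can be constructed with $j_n i_n = \mathrm{id}$ and $i_n j_n \simeq \mathrm{id}$ rel lower degrees, but one cannot in general push the homotopy into degree $n+1$ in a controlled way — which is exactly why the statement only claims a homotopy equivalence "through dimension $n$" rather than an honest one, and why $u_n$ is the last homotopy we assert. I would be careful to phrase the inductive hypothesis so that at stage $k\le n$ we have $j_k i_k=\mathrm{id}$, $\partial j_k = j_{k-1}\partial$, and $\mathrm{id}_{C_k(\tilde X)} - i_k j_k = \partial u_k + u_{k-1}\partial$ with $u_k i_k = 0$, and check that the $k=n$ step closes using only the homology epimorphism in degree $n$ (surjectivity of $\ker(\partial:C_{n-1}(\tilde X)\to C_{n-2}(\tilde X))$ onto what it needs to hit follows from $H_{n-1}$ being an iso and $H_n$ an epi). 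Equivariance, freeness, and the splitting $C_*(\tilde K)\subseteq C_*(\tilde X)$ make every lift in the diagram chase automatic, so no further subtlety arises; the result is the asserted lifting homomorphism $j_k$.
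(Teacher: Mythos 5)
Your proposal is correct and follows essentially the same route as the paper: an equivariant induction over the free $\mathbb{Q}\pi_1$-generators (the cells), setting $j=\id$ and $u=0$ on cells of $\tilde K$ and using the vanishing of $H_k(\tilde X,\tilde K;\mathbb{Q})$ for $k\le n$ (via the rational relative Hurewicz theorem) to produce $j_k(e)$ and $u_k(e)$ on the remaining generators. The paper simply phrases the lifting step directly as "$e+u_{k-1}(\partial e)$ is a relative cycle, hence a relative boundary," rather than packaging it as the comparison theorem for complexes of projectives, but the diagram chase is identical.
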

\begin{proof}
  We produce $j_k$ and $u_k$ by induction on $k$.  For 0-cells $e$ of
  $\tilde X$ outside $\tilde K$, we can set $j_0(e)$ to be any 0-cell of
  $\tilde K$ in an equivariant way, and $u_0(e)$ to be a path between $e$ and
  $j_0(e)$.  Now suppose that we have constructed $j_{k-1}$ and $u_{k-1}$.  Let
  $e$ be a $k$-cell of $\tilde X$.  Then $e+u_{k-1}(\partial e)$ represents an
  element of $H_k(\tilde X,\tilde K;\mathbb{Q})$ and so there's a chain $j_e \in
  C_k(\tilde K;\mathbb{Q})$ such that $i_k(j_e)-e-u_{k-1}(\partial e)=
  \partial u_e$ for some $u_e \in C_{k+1}(\tilde X;\mathbb{Q})$.  We set $j_k(e)=
  j_e$ and $u_k(e)=u_e$.  We then extend equivariantly over the equivalence
  class of $e$.  If $e$ is a cell of $\tilde K$, we can take $j_e=e$ and $u_e=0$
  by induction.
\end{proof}
With the two main tools in place, we can show that rationally highly connected
finite CW pairs have homotopy groups with bounded exponent.
\begin{cor} \label{cor:bo_pi}
  Suppose $n \geq 2$ and $(X,K)$ is a rationally $n$-connected finite CW pair.
  Then for $i \leq n$, $H_i(\tilde X,\tilde K)$ and $\pi_i(X,K) \cong
  \pi_i(\tilde X,\tilde K)$ have bounded exponent.  Moreover, $H_{n+1}(\tilde X,
  \tilde K) \cong \pi_{n+1}(X,K) \mod \mathcal{BE}$.
\end{cor}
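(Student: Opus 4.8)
The plan is to get the bounded-exponent statement out of Proposition~\ref{prop:chain_ho} by keeping careful track of denominators, and then to feed the result into the generalized relative Hurewicz theorem applied to the acyclic Serre ideal $\mathcal{BE}$ (which is legitimate by Proposition~\ref{prop:Serre}).

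First I would pass to universal covers. Write $\Gamma=\pi_1X$; since rational $n$-connectedness of $(X,K)$ includes an isomorphism $\pi_1K\cong\pi_1X$, the preimage $\tilde K\subseteq\tilde X$ of $K$ under the universal covering $p\colon\tilde X\to X$ is connected, hence is the universal cover of $K$. Thus $(\tilde X,\tilde K)$ is a pair of simply connected complexes with $\pi_i(X,K)\cong\pi_i(\tilde X,\tilde K)$ for $i\ge2$, by the usual covering-space argument. For $i\le 1$ both $H_i(\tilde X,\tilde K)$ and $\pi_1(X,K)$ are trivial, so there is nothing to prove in those degrees, and I may restrict attention to $2\le i\le n$ from here on.

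Next comes the one step that takes real work, namely promoting ``torsion'' to ``bounded exponent''. Proposition~\ref{prop:chain_ho} supplies, for $0\le k\le n$, $\pi_1$-equivariant maps $j_k\colon C_k(\tilde X;\mathbb{Q})\to C_k(\tilde K;\mathbb{Q})$ and $u_k\colon C_k(\tilde X;\mathbb{Q})\to C_{k+1}(\tilde X;\mathbb{Q})$ with $j_ki_k=\id$, with $i_kj_k-\id=\partial u_k+u_{k-1}\partial$, and with $u_k$ vanishing on $\img(i_k)$. Because $X$ is a finite complex, in each of the finitely many degrees $k\le n$ the equivariant maps $j_k$ and $u_k$ are determined by their (finitely many) values on a set of orbit representatives, each a finite $\mathbb{Q}$-combination of cells; so there is a single $N\in\mathbb{N}$ clearing all their denominators, i.e.\ $Nj_k$ and $Nu_k$ carry integral chains to integral chains for every $k\le n$. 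Now take a relative cycle, i.e.\ $x\in C_i(\tilde X;\mathbb{Z})$ with $\partial x\in C_{i-1}(\tilde K;\mathbb{Z})$ and $i\le n$, and apply $N$ times the degree-$i$ homotopy identity to $x$. Since $\partial x\in\img(i_{i-1})$ and $u_{i-1}$ kills that image, the term $Nu_{i-1}(\partial x)$ vanishes, and we are left with $Nx=i_i\bigl(Nj_i(x)\bigr)-\partial\bigl(Nu_i(x)\bigr)$ inside $C_i(\tilde X;\mathbb{Z})$. Projecting to the relative complex, the term $i_i(Nj_i(x))$ dies because $Nj_i(x)\in C_i(\tilde K;\mathbb{Z})$, so $N[x]=-\partial[\,Nu_i(x)\,]$ is a boundary, and hence $N\cdot H_i(\tilde X,\tilde K;\mathbb{Z})=0$ for every $i\le n$. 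This is exactly bounded exponent, with period dividing $N$.

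Finally I would invoke the generalized relative Hurewicz theorem for $\mathcal{C}=\mathcal{BE}$ applied to the simply connected pair $(\tilde X,\tilde K)$: since $H_i(\tilde X,\tilde K)\in\mathcal{BE}$ for $2\le i\le n$ by the previous paragraph, the theorem gives $\pi_i(\tilde X,\tilde K)\cong\pi_i(X,K)\in\mathcal{BE}$ for $2\le i\le n$, and it also asserts that the Hurewicz map $\pi_{n+1}(\tilde X,\tilde K)\to H_{n+1}(\tilde X,\tilde K)$ is an isomorphism modulo $\mathcal{BE}$; combining this with $\pi_{n+1}(X,K)\cong\pi_{n+1}(\tilde X,\tilde K)$ yields the last claim. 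The only genuinely delicate point is the denominator bookkeeping in the middle paragraph --- in particular, observing that ``$u_{i-1}$ vanishes on the image of $i$'' is precisely what keeps the $\partial x$ correction term from spoiling integrality and lets one $N$ work uniformly; everything else is formal once Propositions~\ref{prop:chain_ho} and~\ref{prop:Serre} and the generalized Hurewicz theorem are in hand.
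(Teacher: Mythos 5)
Your argument is correct and follows essentially the same route as the paper: clear denominators in the lifting homomorphism and chain homotopy of Proposition~\ref{prop:chain_ho} using the finiteness of cell orbits, conclude that $H_i(\tilde X,\tilde K)$ has bounded exponent, and then apply the generalized relative Hurewicz theorem for $\mathcal{BE}$. Your middle paragraph merely spells out more explicitly (via the vanishing of $u_{i-1}$ on $\img(i)$) what the paper compresses into ``$q_kc$ is integrally homologous to a $k$-chain in $\tilde K$.''
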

\begin{proof}
  Let $j_k:C_k(\tilde X;\mathbb{Q}) \to C_k(\tilde K;\mathbb{Q})$ and $u_k:
  C_k(\tilde X;\mathbb{Q}) \to C_{k+1}(\tilde X;\mathbb{Q})$ be a lifting
  homomorphism and the corresponding chain homotopy.  There are a finite number
  of equivalence classes of $k$-cells $e$ in $\tilde X$, and for each such cell,
  there are elements $\alpha_e \in C_k(\tilde K;\mathbb{Z})$ and $\beta_e \in
  C_{k+1}(\tilde X;\mathbb{Z})$ and integers $q_e$ and $q^\prime_e$ such that
  $$j_k(e)=\frac{1}{q_e}\alpha_e\text{ and }
  u_k(e)=\frac{1}{q_e^\prime}\beta_e.$$
  Taking the least common denominator $q_k$ of all the $q_e$'s and
  $q_e^\prime$'s, we get that for any $k$-chain $c$ with boundary in $\tilde K$,
  $q_kc$ is integrally homologous to a $k$-chain in $\tilde K$.  This proves
  that $H_i(\tilde X,\tilde K)$ have bounded exponent.  The rest of the
  conclusion follows by the generalized relative Hurewicz theorem.
\end{proof}
We can also now prove a rational version of Wall's theorem.  We will use it to
prove two corollaries on the finiteness of certain complexes which will in turn
be applied later in the paper.  Corollary \ref{lem:QFin} shows that rationally
equivalences can be kept within the category of complexes with finite skeleta,
while Theorem \ref{cor:XandY} relates the finiteness properties of the three
spaces in a rational homotopy fibration.
\begin{thm}[rational Wall theorem] \label{thm:QWall}
  Let $X$ be a CW complex and $n \geq 2$.  Then the following are equivalent:
  \begin{enumerate}
  \item $X$ is rationally equivalent to a CW complex $Y$ with finite
    $n$-skeleton.
  \item There is a CW complex $Y$ with finite $n$-skeleton and a rational
    equivalence $Y \to X$.  (Equivalently, there is an $n$-complex with an
    $n$-connected map to $X$.)
  \item The group $\Gamma:=\pi_1(X)$ is finitely presented, and for every $k
    \leq n$, the condition $F_k(\mathbb{Q})$ holds: for every finite complex
    $K^{k-1}$ and rationally $(k-1)$-connected map $\ph:K \to X$, $\pi_k(\ph)
    \otimes \mathbb{Q}$ is a finitely generated $\mathbb{Q}\Gamma$-module.
  \end{enumerate}
\end{thm}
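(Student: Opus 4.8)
The plan is to adapt Wall's original proof of his finiteness theorem, replacing integral homology and $\mathbb Z\Gamma$-modules throughout by rational homology and $\mathbb Q\Gamma$-modules, and using the tools assembled above — the generalized relative Hurewicz theorem with respect to $\mathcal{BE}$, the lifting homomorphisms of Proposition~\ref{prop:chain_ho}, and Corollary~\ref{cor:bo_pi} — wherever the integral argument invokes ordinary Hurewicz or ordinary cellular chains. The implications $(1)\Rightarrow(2)$ and $(2)\Rightarrow(3)$ are the easy directions, and the substantive work is $(3)\Rightarrow(2)$, built up one skeleton at a time.

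\textbf{Easy directions.} For $(1)\Rightarrow(2)$: if $X\simeq_{\mathbb Q}Y$ with $Y$ having finite $n$-skeleton, then by definition there is a space $Z$ with rational equivalences $X\to Z\leftarrow Y$; replacing $Y$ by $Y^{(n)}$ gives a rationally $n$-connected map $Y^{(n)}\to Z$ whose source is a finite $n$-complex, and it suffices to note that the parenthetical reformulation of $(2)$ — an $n$-complex with an $n$-connected map — follows after composing through $Z$ and appealing to the fact (already used in the excerpt) that a rationally $n$-connected map induces a rational $(n-1)$-equivalence; one also uses that rational equivalence is genuinely an equivalence relation, which is exactly the Lemma proved just above. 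For $(2)\Rightarrow(3)$: given a rational equivalence $q:Y\to X$ with $Y^{(n)}$ finite, $\pi_1(X)\cong\pi_1(Y)$ is finitely presented since $Y^{(2)}$ is a finite complex; and given any rationally $(k-1)$-connected $\varphi:K\to X$ from a finite $(k-1)$-complex, one forms the homotopy pullback (or simply compares $K$ with $Y^{(k)}$ using the lifting-homomorphism machinery and Corollary~\ref{cor:bo_pi}) to identify $\pi_k(\varphi)\otimes\mathbb Q$ with a subquotient of $H_k(\tilde Y^{(k)},\widetilde{?};\mathbb Q)$, which is finitely generated over $\mathbb Q\Gamma$ because $Y^{(k)}$ has finitely many cells in each dimension $\le k$.

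\textbf{The main direction $(3)\Rightarrow(2)$.} I would build $Y$ and the $n$-connected map $f:Y\to X$ inductively over skeleta, exactly mirroring Wall. Start with a finite $2$-complex $K_2$ realizing a finite presentation of $\Gamma$ and a map $f_2:K_2\to X$ inducing an iso on $\pi_1$. Suppose inductively that $f_{k-1}:K_{k-1}\to X$ is a rationally $(k-1)$-connected map from a finite $(k-1)$-complex. By hypothesis $F_k(\mathbb Q)$, the module $\pi_k(f_{k-1})\otimes\mathbb Q=\pi_k(M_{f_{k-1}},K_{k-1})\otimes\mathbb Q$ is finitely generated over $\mathbb Q\Gamma$; by Corollary~\ref{cor:bo_pi}, $\pi_k(f_{k-1})$ is isomorphic mod $\mathcal{BE}$ to $H_k(\widetilde{M_{f_{k-1}}},\tilde K_{k-1})$, and the lifting homomorphisms let us choose finitely many elements of $\pi_k(f_{k-1})$ whose images generate $\pi_k(f_{k-1})\otimes\mathbb Q$ as a $\mathbb Q\Gamma$-module. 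Attach one $k$-cell to $K_{k-1}$ for each such generator, with attaching map the boundary of a representative, to form a finite complex $K_k$ and an extended map $f_k:K_k\to X$; by construction $\pi_k(f_k)\otimes\mathbb Q=0$, so $f_k$ is rationally $k$-connected. Running this to $k=n$ produces a finite $n$-complex $K_n$ with a rationally $n$-connected $f_n:K_n\to X$. Finally, attach cells of dimension $>n$ to $K_n$ to kill the higher relative rational homotopy, obtaining $Y\supseteq K_n$ with a rational equivalence $Y\to X$ and $Y^{(n)}=K_n$ finite; this is statement $(2)$, and its parenthetical equivalent follows since $f_n$ itself is already an $n$-connected map out of a finite $n$-complex.

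\textbf{The main obstacle} will be verifying that the finite-generation hypothesis $F_k(\mathbb Q)$ is genuinely available at the inductive step, i.e.\ that it applies to the specific map $f_{k-1}:K_{k-1}\to X$ we have built — this is a matter of checking that $K_{k-1}$ is a finite $(k-1)$-complex and $f_{k-1}$ is rationally $(k-1)$-connected, which the induction guarantees — and, more delicately, that choosing $\mathbb Q\Gamma$-module generators of $\pi_k(f_{k-1})\otimes\mathbb Q$ and clearing denominators yields finitely many \emph{integral} homotopy classes in $\pi_k(f_{k-1})$ that still generate after tensoring with $\mathbb Q$. This is where Corollary~\ref{cor:bo_pi} does the real work: it says the kernel and cokernel of $\pi_k(f_{k-1})\to H_k$ have bounded exponent, so generators upstairs in rational homology pull back to finitely many homotopy classes whose rational span is everything, and attaching the corresponding cells suffices even though the integral module may be infinitely generated. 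The remaining points — that attaching finitely many $k$-cells along these maps does not disturb $\pi_1$ or lower rational homotopy, and that the process terminates appropriately — are routine, handled exactly as in \cite{Wall} once the rational Hurewicz statements are in hand.
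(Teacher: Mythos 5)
Your implication $(3)\Rightarrow(2)$ --- the skeleton-by-skeleton cell attachment, with Corollary~\ref{cor:bo_pi} used to clear denominators so that $\mathbb{Q}\Gamma$-module generators of $\pi_k(f_{k-1})\otimes\mathbb{Q}$ are realized by finitely many integral classes --- is exactly the paper's argument, and your $(2)\Rightarrow(3)$ is (a sketch of) the paper's lifting-homomorphism argument in the special case $Z=X$. The gap is in your ``easy direction'' $(1)\Rightarrow(2)$, and it breaks the logical cycle. Condition (1) gives only a zigzag $X\to Z\leftarrow Y$; both arrows point \emph{into} $Z$. Restricting to $Y^{(n)}$ gives a rationally $n$-connected map $Y^{(n)}\to Z$, but (2) demands a rational equivalence (equivalently, an $n$-connected map from an $n$-complex) landing in $X$, and there is no map $Z\to X$ to compose with --- ``composing through $Z$'' is not available, and a rational equivalence $X\to Z$ cannot be inverted by an actual map; that is the whole reason the relation is defined by zigzags. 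As written, your argument establishes $(2)\Leftrightarrow(3)$ and (implicitly) $(2)\Rightarrow(1)$, but nothing gets you out of (1).

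The repair is to prove $(1)\Rightarrow(3)$ instead, which is the route the paper takes: given a rationally $(k-1)$-connected $\ph:K\to X$, compose with $X\to Z$ and extract the finite generation of $\pi_k\otimes\mathbb{Q}\cong H_k(\tilde Z,\tilde K;\mathbb{Q})$ from the zigzag by combining the lifting homomorphisms of the two pairs $(\tilde Z,\tilde K)$ and $(\tilde Z,\tilde Y)$ into an explicit surjection from the finitely generated free module $C_k(\tilde Y;\mathbb{Q})\oplus C_{k-1}(\tilde K;\mathbb{Q})$, namely $F(e',e)=[e'+u_{k-1}(\partial e')-u'_{k-1}(e)-u_{k-1}(\partial u'_{k-1}(e))]$. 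This uses only that $Y$ maps into $Z$, never that anything maps into $X$. With $(1)\Rightarrow(3)$ in hand, your $(3)\Rightarrow(2)$ and the trivial $(2)\Rightarrow(1)$ close the cycle. Your $(2)\Rightarrow(3)$ sketch should also be tightened to this form: identifying $\pi_k(\ph)\otimes\mathbb{Q}$ with ``a subquotient of $H_k(\tilde Y^{(k)},\widetilde{?};\mathbb{Q})$'' is not yet an argument, since one must actually exhibit finitely many $\mathbb{Q}\Gamma$-generators of $H_k(\tilde Z,\tilde K;\mathbb{Q})$, which is what the surjection $F$ accomplishes.
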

\begin{proof}
  (2) clearly implies (1).

  Suppose (1) is true.  That is, there are a finite complex $Y$ and a complex
  $Z$ such that $f:X \to Z$ and $g:Y \to Z$ are rational equivalences.  Since
  $\Gamma=\pi_1(Y)$, it must be finitely presented.  Now suppose that $K$ is a
  $(k-1)$-complex and $\ph:K \to X$ is a rationally $(k-1)$-connected map.  Then
  $\psi:=f \circ \ph$ is as well, so that, by Hurewicz,
  $$\pi_k(\psi) \otimes \mathbb{Q}=\pi_k(Z,K) \otimes \mathbb{Q} \cong
  \pi_k(\tilde Z,\tilde K;\mathbb{Q}) \cong H_k(\tilde Z,\tilde K;\mathbb{Q}).$$
  Moreover, we can assume by taking mapping cylinders that $K$ and $Y$ are
  subcomplexes of $Z$.  Let $j_\bullet:C_\bullet(\tilde Z;\mathbb{Q}) \to
  C_\bullet(\tilde K;\mathbb{Q})$ and $u_\bullet:C_\bullet(\tilde Z;\mathbb{Q})
  \to C_{\bullet+1}(\tilde Z;\mathbb{Q})$ be the lifting homomorphism and
  associated chain homotopy for the pair $(\tilde Z,\tilde K)$, and let
  $j^\prime_\bullet$ and $u^\prime_\bullet$ be the lifting homomorphism and chain
  homotopy for the pair $(\tilde Z,\tilde Y)$.
  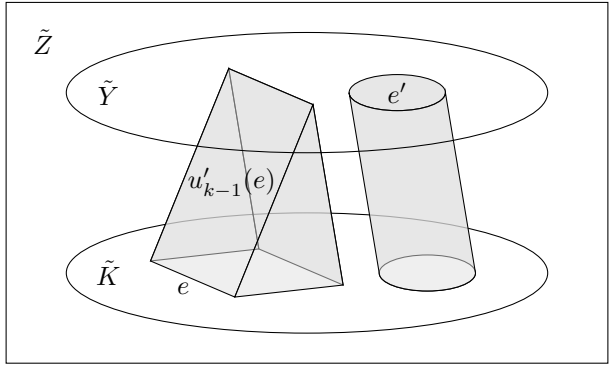
\begin{figure}
    \centering
    \begin{tikzpicture}[scale=0.8]
      \tikzstyle{chainfill}=[fill=gray!25, fill opacity=0.5];
      % the spaces
      \draw (0,0) circle [y radius=1, x radius=4];
      \node (K) at (-3.3,0) {$\tilde K$};
      \draw (-5,-1.5) -- (5,-1.5) -- (5,4.5)
      -- (-5,4.5) node[anchor=north west,inner sep=10pt] {$\tilde Z$} -- cycle;
      % the chains
      \fill[chainfill] (0.7,3) arc (180:0:0.8 and 0.3) -- (2.8,0)
      arc (0:180:0.8 and 0.3) -- cycle;
      \filldraw[chainfill] (1.5,3) circle [y radius=0.3, x radius=0.8]
      node[opacity=1]{$e^\prime$};
      \draw (2,0) circle [y radius=0.3, x radius=0.8];
      \filldraw[chainfill] (0.7,3) arc (180:360:0.8 and 0.3) -- (2.8,0)
      arc (360:180:0.8 and 0.3) -- cycle;
      \coordinate (L1) at (-2.6,0.2);
      \coordinate (R1) at (-1.2,-0.4);
      \coordinate (L2) at (-1.3,3.4);
      \coordinate (R2) at (0.1,2.8);
      \coordinate (L3) at (-0.8,0.4);
      \coordinate (R3) at (0.6,-0.2);
      \filldraw[chainfill] (L1) -- (L2) -- (L3) -- cycle;
      \filldraw[chainfill] (L2) -- (R2) -- (R3) -- (L3) -- cycle;
      \filldraw[chainfill] (L1) -- (R1)
      node[pos=0.5, anchor=north east, inner sep=1pt, opacity=1]{$e$}
      -- (R2) -- (L2) -- cycle;
      \filldraw[chainfill] (R1) -- (R2) -- (R3) -- cycle;
      \node (a) at (-1.25,1.5) {$u_{k-1}^\prime(e)$};
      % more spaces
      \draw (0,3) circle [y radius=1, x radius=4];
      \node (Y) at (-3.3,3) {$\tilde Y$};
    \end{tikzpicture}

    \caption{
      A schematic illustrating $F(e^\prime,e)$ for an arbitrary pair of chains
      $e^\prime \in C_k(\tilde Y;\mathbb{Q})$ and $e \in
      C_{k-1}(\tilde K;\mathbb{Q})$.
    } \label{fig:cycle}
  \end{figure}
  Then there is a homomorphism
  $$F:C_k(\tilde Y;\mathbb{Q}) \oplus C_{k-1}(\tilde K;\mathbb{Q}) \to
  H_k(\tilde Z,\tilde K;\mathbb{Q}),$$
  given on cells by
  $$F(e^\prime,e)=[e^\prime+u_{k-1}(\partial e^\prime)-u_{k-1}^\prime(e)-
  u_{k-1}(\partial u_{k-1}^\prime(e))].$$
  The boundary of this chain is
  $$j_{k-1}(\partial e^\prime)-j_{k-1}(\partial u_{k-1}^\prime(e)) \in
  C_{k-1}(\tilde K;\mathbb{Q}),$$
  so it is in fact a cycle in $(\tilde Z,\tilde K)$.  Moreover, if $c$ is a
  chain in $\tilde Z$ with boundary in $\tilde K$, then
  \begin{align*}
    F(j^\prime_k(c),\partial c) &= [(j^\prime_k(c)-u_{k-1}^\prime(\partial c))
      +u_{k-1}(\partial j^\prime_k(c))
      -u_{k-1}(\partial u_{k-1}^\prime(\partial c))] \\
    &= [(c+\partial u_k^\prime(c))+u_{k-1}(j^\prime_{k-1}(\partial c))+
      (-u_{k-1}(j_{k-1}^\prime(\partial c))+u_{k-1}(\partial c))] \\
    &=[c+\partial u_k^\prime(c)]=[c] \in H_k(\tilde Z,\tilde K;\mathbb{Q}),
  \end{align*}
  showing that $F$ is onto.  Since the domain of $F$ is a finitely generated
  free $\mathbb{Q}\Gamma$-module, this means that
  $H_k(\tilde Z,\tilde K;\mathbb{Q})$ is finitely generated, proving (3).

  On the other hand, suppose (3) is true.  We will inductively construct a $Y$
  with finite $n$-skeleton with a rational equivalence $Y \to X$.  Since
  $\Gamma$ is finitely presented, we can construct a finite 2-complex $A$ with
  $\pi_1(A)=\Gamma$.  This gives a 1-connected map $\ph:A \to X$, so $\pi_2(\ph)
  \otimes \mathbb{Q}$ is finitely generated.  We can add a finite number of
  2-cells to kill it, building $Y^{(2)}$ and extending $\ph$.  By induction, we
  can build a finite complex $Y^{(n)}$ and extend $\ph$ to it.  Finally, we can
  add cells to build the rest of $Y$.
\end{proof}
\begin{cor} \label{lem:QFin}
  Suppose, for some $n \geq 2$, that $K$ and $L$ are CW-complexes with finite
  $n$-skeleta and $K \xrightarrow{f} Y \xleftarrow{g} L$ are rational
  equivalences.  Then indeed there are rational equivalences $K \xrightarrow{f}
  Z \xleftarrow{g} L$ where $Z$ has finite $n$-skeleton.
\end{cor}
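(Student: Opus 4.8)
The plan is to build $Z$ by attaching cells to the wedge $K \vee L$, steered by the maps $f$ and $g$, in direct imitation of the construction that derives part (2) from part (3) in the proof of Theorem~\ref{thm:QWall}. First I would choose basepoints so that $f$ and $g$ are based maps landing at a common point of $Y$, form the complex $A = K \vee L$ --- which has finite $n$-skeleton since $K$ and $L$ do --- and set $h = f \vee g \colon A \to Y$, noting that $h$ restricts to $f$ on $K$ and to $g$ on $L$. Because $K$ has finite $n$-skeleton and $f$ is a rational equivalence, $Y$ satisfies condition (1) of Theorem~\ref{thm:QWall}, hence also condition (3): $\Gamma := \pi_1 Y$ is finitely presented and $F_k(\mathbb{Q})$ holds for every $k \le n$.

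Next I would run an inductive construction. The homomorphism $\pi_1 A = \Gamma * \Gamma \to \Gamma$ induced by $h$ is the fold map, whose kernel is normally generated by finitely many elements; attaching one $2$-cell along each of these, along loops that $h$ carries to null-homotopic loops in $Y$, produces a finite-$n$-skeleton complex $A_1 \supseteq A$ with $\pi_1 A_1 = \Gamma$ and an extension $h_1 \colon A_1 \to Y$ inducing an isomorphism on $\pi_1$. Then, for $k = 2, 3, \dots, n$ in turn, given a finite-$n$-skeleton complex $A_{k-1} \supseteq A$ and a rationally $(k-1)$-connected extension $h_{k-1} \colon A_{k-1} \to Y$ of $h$, I would observe that $\pi_k(h_{k-1}) \otimes \mathbb{Q}$ is a finitely generated $\mathbb{Q}\Gamma$-module: it is computed from the \emph{finite} $k$-complex $A_{k-1}^{(k)}$, which maps rationally $(k-1)$-connectedly to $Y$, so the conditions $F_j(\mathbb{Q})$ for $j \le k$ bound it exactly as in the proof of Theorem~\ref{thm:QWall}. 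Killing a finite $\mathbb{Q}\Gamma$-generating set by attaching finitely many $k$-cells to $A_{k-1}$ yields $A_k \supseteq A$ with finite $n$-skeleton and a rationally $k$-connected extension $h_k$. Having reached $k = n$, I would keep attaching cells in dimensions $> n$ (with no finiteness constraint now) to kill the remaining relative rational homotopy, arriving at $Z = \bigcup_k A_k$: a complex with finite $n$-skeleton carrying a rational equivalence $q \colon Z \to Y$ that extends $h$.

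To finish, the inclusions $K \hookrightarrow A \hookrightarrow Z$ and $L \hookrightarrow A \hookrightarrow Z$ exhibit $K$ and $L$ as subcomplexes of $Z$, and $q$ restricts (up to homotopy) to $f$ on $K$ and to $g$ on $L$. Since a rational equivalence is the same thing as a map inducing an isomorphism on $\pi_1$ and on every $\pi_k \otimes \mathbb{Q}$ (via the long exact sequence of the mapping cylinder pair), the class of rational equivalences has the two-out-of-three property; applied to the factorization $f = q \circ (K \hookrightarrow Z)$, in which $f$ and $q$ are rational equivalences, this shows $K \hookrightarrow Z$ is a rational equivalence, and similarly for $L \hookrightarrow Z$. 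Thus $K \to Z \leftarrow L$ are rational equivalences with $Z$ of finite $n$-skeleton.

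The step I expect to be the main obstacle is the inductive step of the second paragraph: one must check that the construction of Theorem~\ref{thm:QWall}(2) still functions when started from $A = K \vee L$ rather than from a finite $2$-complex with fundamental group $\Gamma$. Two features of $A$ need handling --- its fundamental group $\Gamma * \Gamma$ is too large, and it carries more rational homotopy than $Y$. The first is dispatched by the initial batch of $2$-cells. The second is absorbed automatically, because at stage $k$ what gets killed is the relative module $\pi_k(h_{k-1}) \otimes \mathbb{Q}$, which records simultaneously the surplus rational homotopy of the source (as $\ker h_{k-1,*}$) and the rational homotopy of $Y$ not yet accounted for (as $\operatorname{coker} h_{k-1,*}$); the conditions $F_k(\mathbb{Q})$, which hold for $Y$ by Theorem~\ref{thm:QWall} because $Y \simeq_{\mathbb{Q}} K$ with $K$ of finite $n$-skeleton, keep this module finitely generated over $\mathbb{Q}\Gamma$, so that only finitely many $k$-cells are needed for each $k \le n$.
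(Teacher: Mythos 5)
Your proposal is correct and takes essentially the same route as the paper's proof: build $Z$ over $Y$ by attaching finitely many cells in each dimension $\leq n$, with the rational Wall theorem (the conditions $F_k(\mathbb{Q})$) guaranteeing finiteness at each stage, and conclude that the inclusions of $K$ and $L$ are rational equivalences because they factor the given rational equivalences to $Y$. The only differences are cosmetic: the paper starts from a double mapping cylinder and takes $Z^{(2)}=K^{(2)} \cup (L^{(1)} \times [0,1]) \cup L^{(2)}$ to handle $\pi_1$ and $\pi_2$-surjectivity, where you use the wedge $K \vee L$ plus finitely many $2$-cells killing the kernel of the fold map.
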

\begin{proof}
  As usual, let $\Gamma$ refer to $\pi_1K=\pi_1L=\pi_1Y$.

  By taking a double mapping cylinder, we can assume that $f$ and $g$ are both
  inclusions.  By inducting on dimension, we will build a complex $Z$ with maps
  $$\xymatrix{
    K \ar@{_{(}->}@/_/[rdd]_f \ar@{^{(}->}[rd] && L \ar@{^{(}->}@/^/[ldd]^g
    \ar@{_{(}->}[ld] \\
    & Z \ar[d]^h & \\
    & Y, &
  }$$
  where $h$ is a rational equivalence, and hence so are the inclusions into $Z$.
  For the base case, let $\ph:L^{(1)} \times [0,1] \to Y$ be a homotopy taking
  $g|_{L^{(1)}}$ into $K^{(1)}$, and let $Z^{(2)}=K^{(2)} \cup_{\ph|L^{(1) \times \{1\}}}
  L^{(1)} \times [0,1] \cup_{\ph|L^{(1) \times \{0\}}} L^{(2)}$.  Then the inclusion
  $Z^{(2)} \hookrightarrow Y$ induces an isomorphism on $\pi_1$.  Moreover, any
  element in $\pi_2(K)$ has a representative which maps to $Z^{(2)}$, so
  $\pi_2(Z) \otimes \mathbb{Q}$ surjects onto $\pi_2(Y) \otimes \mathbb{Q}$.  In
  particular, $Z^{(2)} \hookrightarrow Y$ is rationally $2$-connected.

  Now suppose we have constructed $Z^{(k)}$ with rationally $k$-connected
  $h_k:Z^{(k)} \to Y$.  Then since $Y$ has the rational homotopy type of $K$,
  the rational Wall theorem tells us that $\pi_{k+1}(h_k) \otimes \mathbb{Q}$ is
  a finitely generated $\mathbb{Q}\Gamma$-module, and so we can add in a finite
  number of $(k+1)$-cells to kill it, in such a way that the map $h_k$ extends
  to an $h_{k+1}$ which is then rationally $(k+1)$-connected.  To ensure that
  $K$ and $L$ are included in our final $Z$, we make sure that all their
  $(k+1)$-cells are on the list.

  By induction, $h$ is a rational equivalence, and hence so are the inclusions
  $K \hookrightarrow Z \hookleftarrow L$.
\end{proof}
\begin{thm} \label{cor:XandY}
  Let $n \geq 2$, and suppose $f:X \to Y$ is a $\pi_1$-isomorphic map of CW
  complexes such that $\pi_k(f) \otimes \mathbb{Q}$ is finite-dimensional for
  $2 \leq k \leq n+1$ and $\pi_2(f) \otimes \mathbb{Q}=0$.   Then $Y$ is
  rationally equivalent to a complex with finite $n$-skeleton if and only if $X$
  is.
\end{thm}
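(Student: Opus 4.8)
First I would replace $Y$ by the mapping cylinder of $f$, so that $f$ is the inclusion of a subcomplex $X\hookrightarrow Y$; then $\pi_1X\cong\pi_1Y=:\Gamma$, the relative groups $\pi_k(Y,X)\otimes\mathbb Q$ are finite--dimensional $\mathbb Q$--vector spaces for $2\le k\le n+1$, and $\pi_2(Y,X)\otimes\mathbb Q=0$. If $\Gamma$ is not finitely presented, neither $X$ nor $Y$ can be rationally equivalent to a complex with finite $n$--skeleton, so I may assume $\Gamma$ is finitely presented. By the rational Wall theorem (Theorem~\ref{thm:QWall}) it then suffices to compare the conditions $F_k(\mathbb Q)$ for $X$ and for $Y$ in the range $k\le n$; equivalently, using Proposition~\ref{prop:chain_ho} along the defining zig--zag, to show that $C_*(\tilde X;\mathbb Q)$ admits a chain model by finitely generated free $\mathbb Q\Gamma$--modules through dimension $n$ if and only if $C_*(\tilde Y;\mathbb Q)$ does. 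A purely algebraic fact I would use repeatedly: over $\mathbb Q\Gamma$ with $\Gamma$ finitely generated, a submodule of finite $\mathbb Q$--codimension in a finitely generated module is again finitely generated (reduce to a finite--codimension left ideal $I$; picking a finite $S\subseteq\Gamma$ spanning $\mathbb Q\Gamma/I$ and rewriting each $g\cdot s$ modulo $I$ using the generators of $\Gamma$ shows $I$ is generated by $\mathbb Q[S]\cap I$ together with finitely many rewriting relations).

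\textbf{Direction $X$ finite $\Rightarrow$ $Y$ finite.} Pick a finite $n$--complex $K$ and a rationally $n$--connected map $\psi\colon K\to X$, and let $\bar\psi\colon K\to Y$ be the composite with the inclusion. The long exact sequence of the triple $(Y,X,K)$ tensored with $\mathbb Q$, together with $\pi_k(X,K)\otimes\mathbb Q=0$ for $k\le n$, shows that $\pi_k(\bar\psi)\otimes\mathbb Q$ embeds in $\pi_k(Y,X)\otimes\mathbb Q$ for $k\le n$, hence is finite--dimensional and vanishes for $k=2$. I would then kill these groups by attaching finitely many cells of dimensions $3,\dots,n$ to $K$, exactly as in the proof of Theorem~\ref{thm:QWall}(3)$\Rightarrow$(2); the point at which one must be careful is that after each such surgery the next relative homotopy group must remain finitely generated, and this is where the finite--codimension lemma above (applied to the finitely generated free module of newly attached relative cells surjecting onto a finite--dimensional group) is needed. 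The outcome is a finite $n$--complex with a rationally $n$--connected map to $Y$, so $Y$ is rationally equivalent to a complex with finite $n$--skeleton by Theorem~\ref{thm:QWall}(2)$\Rightarrow$(1).

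\textbf{Direction $Y$ finite $\Rightarrow$ $X$ finite.} This is the main obstacle, because there is no map $Y\to X$ with which to run the previous argument. Instead I would make $f$ into a fibration with homotopy fibre $F$. From $\pi_k(f)\otimes\mathbb Q\cong\pi_{k-1}(F)\otimes\mathbb Q$ one sees that $F$ is connected, that $\pi_1(F)$ is a quotient of $\pi_2Y$ and hence an abelian, locally finite torsion group, and that $\pi_j(F)\otimes\mathbb Q$ is finite--dimensional for $j\le n$; passing to the simply connected cover $\tilde F$ and using the rational Hurewicz theorem, and then the fact that a locally finite group contributes nothing to rational homology, gives $\dim_{\mathbb Q}H_j(F;\mathbb Q)<\infty$ for $j\le n$. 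Since $f$ is a $\pi_1$--isomorphism, $\tilde X=X\times_Y\tilde Y$ and $\tilde X\to\tilde Y$ is again a fibration with fibre $F$ over the simply connected base $\tilde Y$; by the twisted tensor product (Serre) chain model there is a $\Gamma$--equivariant quasi--isomorphism through dimension $n$ of $C_*(\tilde X;\mathbb Q)$ with $C_*(\tilde Y;\mathbb Q)\otimes_{\mathbb Q}\Phi$ for a suitable twisted differential, where $\Phi$ is a complex of $\mathbb Q$--vector spaces with $\Phi_0=\mathbb Q$ and $\dim_{\mathbb Q}\Phi_j<\infty$ for $j\le n$ modelling $C_*(F;\mathbb Q)$. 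Now if $C_*(\tilde Y;\mathbb Q)$ has a finitely generated free $\mathbb Q\Gamma$--model through dimension $n$, then tensoring it with the finite--dimensional spaces $\Phi_j$ ($j\le n$) and noting that the twisted differential preserves finite generation and freeness of the terms, $C_*(\tilde X;\mathbb Q)$ also has such a model; combined with $\pi_1X=\Gamma$ finitely presented, and appealing again to Theorem~\ref{thm:QWall} and Corollary~\ref{cor:bo_pi} to rebuild a finite $n$--complex, this shows $X$ is rationally equivalent to a complex with finite $n$--skeleton. The delicate part throughout is the module--theoretic bookkeeping over the possibly non--Noetherian ring $\mathbb Q\Gamma$, for which both the finite--codimension lemma and the finite--dimensionality of $H_*(F;\mathbb Q)$ in the relevant range are essential, and keeping the twisted chain model honest enough to transfer finite generation is the step I expect to require the most care.
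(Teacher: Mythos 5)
Your proposal departs from the paper's proof in both directions, and the serious gap is in your direct construction for the implication ``$X$ finite $\Rightarrow$ $Y$ finite.'' Your finite-codimension lemma is correct and handles the first stage of the induction, where $\pi_3(Y,K)\otimes\mathbb{Q}$ really is finite-dimensional. But at the next stages the module you kill, $\pi_j(Y,K_{j-1})\otimes\mathbb{Q}$, is only finitely generated (it already contains a finite-codimension submodule of a free module coming from the previously attached cells), so the kernel of $\pi_j(K_j,K_{j-1})\otimes\mathbb{Q}\cong\mathbb{Q}\Gamma^{r_j}\twoheadrightarrow\pi_j(Y,K_{j-1})\otimes\mathbb{Q}$ no longer has finite codimension; over the (possibly non-Noetherian) ring $\mathbb{Q}\Gamma$ its finite generation requires the target to be finitely presented, and iterating the argument requires the finite-dimensional modules $H_i(\tilde Y,\tilde X;\mathbb{Q})$ to be of type $FP_{n-1}$ over $\mathbb{Q}\Gamma$, which for a finite-dimensional module is a finiteness condition on $\Gamma$ itself. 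That is not a hypothesis and cannot be assumed: in the paper's main application ($Y=B\Gamma$) the finiteness properties of $\Gamma$ are deduced \emph{from} this theorem. The same obstruction appears if you run the induction homologically: you need the top relative homology $H_j(\tilde K_j,\tilde K;\mathbb{Q})=\ker\partial_j$ to be finitely generated, and kernels of maps of finitely generated free $\mathbb{Q}\Gamma$-modules need not be. This is exactly why the paper proves this implication indirectly: it takes the least $k$ for which $Y$ needs infinitely many $k$-cells, pulls $f$ back over the finite skeleton $Y^{(k-1)}$, applies the already-established converse direction to obtain a finite model $K$ of the pullback, and derives a contradiction from exact sequences of triples showing $\pi_k(Y,K^{(k-1)})\otimes\mathbb{Q}$ is both finitely and infinitely generated.

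For the converse direction your equivariant twisted-tensor-product model is a legitimate alternative to the paper's argument, which builds an honest finite CW approximation fiberwise over the skeleta of a finite model of $Y$ using the isomorphism \eqref{eq:HxH} at each stage. However, two steps are asserted rather than proved: the existence of a $\Gamma$-equivariant twisted tensor product model of $C_*(\tilde X;\mathbb{Q})$ with finitely generated free terms, and, more importantly, your opening reduction claiming that rational equivalence to a complex with finite $n$-skeleton is \emph{equivalent} to $C_*(\tilde Y;\mathbb{Q})$ admitting a finitely generated free chain model through dimension $n$. Theorem \ref{thm:QWall} characterizes finiteness by the homotopy-theoretic conditions $F_k(\mathbb{Q})$, not by a chain-level condition; the passage from a chain model back to a space is the realization step of Wall's finiteness theory and needs its own Schanuel-type argument (and is false without the finite presentability of $\pi_1$ that you do assume). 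So the $Y\Rightarrow X$ half is salvageable with substantial extra work, but the $X\Rightarrow Y$ half needs the paper's indirect argument, or an equivalent, rather than a direct cell-by-cell construction.
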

\begin{proof}
  Suppose first that $Y$ is rationally equivalent to a complex with finite
  $n$-skeleton.  We will show that the homotopy fiber of $f$ is also rationally
  equivalent to a complex with finite $n$-skeleton, and use this to construct an
  $n$-complex with an $n$-connected map to $X$.

  By adding cells in dimensions 3 and 4 to $Y$, we can kill the torsion subgroup
  of $\pi_2Y$ without affecting its rational homotopy type.  Thus we can assume
  that $\pi_2(f)=0$.  Using the standard path space construction, we can assume
  that $f$ is a fibration; let $F$ be its fiber, which is simply connected.  By
  assumption, $\pi_k(F) \otimes \mathbb{Q}$ is finite-dimensional for $k \leq
  n$, so by the generalized Hurewicz theorem, so is $H_k(F;\mathbb{Q})$.  This
  means that $F$ is rationally equivalent to a complex with finite $n$-skeleton,
  for example by Theorem 9.11 of \cite{FHT}; by the rational Wall theorem we can
  find a finite complex $F^\prime$ such that the map $F^\prime \to F$ is
  rationally $n$-connected.

  Let $B \to Y$ be a rationally $n$-connected map from a finite complex with one
  0-cell to $Y$.  We will construct finite rational approximations $A_k$ to
  $E_k:=f^{-1}(B^{(k)})$ by induction on $k$.  We write $\tilde B$ for the
  universal and let $\tilde E_k$, etc., be the corresponding covers of our other
  spaces mapping to $B$; note that these are not always universal covers.

  Clearly, $F^\prime \to E_0 \cong F$ is a rationally $n$-connected map, so we
  can set $A_0=F^\prime$.  Moreover, we can construct $A_1$ as follows.  For each
  1-cell $c:[0,1] \to B$, we have two rationally $n$-connected maps $F^\prime
  \to c^*X$ corresponding to the two endpoints of the interval.  We can use the
  proof of Corollary \ref{lem:QFin} to find a complex which includes them both
  and which maps to $c^*X$ via a rationally $n$-connected map.  To construct
  $A_1$ as desired, we construct such a complex for each 1-cell and glue them
  together along all the copies of $F^\prime$.  Then the map $A_1 \to E_1$ is
  rationally $n$-connected since the universal covers $\tilde A_1$ and
  $\tilde E_1$ live in a commutative square
  $$\xymatrix{
    F^\prime \ar[r] \ar[d] & \tilde A_1 \ar[d] \\
    F \ar[r] & \tilde E_1
  }$$
  where the other three maps are rationally $n$-connected.

  Now let $k \geq 2$, and suppose we have constructed a rationally $n$-connected
  map $A_{k-1} \to E_{k-1}$.  We will add cells to $A_{k-1}$, again by induction
  on dimension, in order to build $A_k$.  We will use the following fact, which
  is a standard step in deriving the Serre spectral sequence, as for example in
  \cite{HatSS}:
  \begin{equation} \label{eq:HxH}
    H_i(\tilde E_k,\tilde E_{k-1}) \cong H_k(\tilde B^{(k)},\tilde B^{(k-1)})
    \otimes H_{i-k}(F).
  \end{equation}
  We start with the base case, which is slightly different for $k=2$ and $k>2$.
  For $k>2$, the map $A_{k-1} \to E_k$ is already rationally $(k-1)$-connected,
  and so by the generalized relative Hurewicz theorem,
  $$\pi_k(E_k,A_{k-1}) \otimes \mathbb{Q} \cong H_k(\tilde E_k,\tilde A_{k-1};
  \mathbb{Q}) \cong H_k(\tilde E_k,\tilde E_{k-1};\mathbb{Q}).$$
  By \eqref{eq:HxH}, this is a finitely generated module, so it can be killed by
  adding a finite number of $k$-cells.  This gives us a finite complex $A_k(0)$
  with a rationally $k$-connected map $A_k(0) \to E_k$.

  In the case $k=2$, we instead have that $A_1 \to E_2$ is an (integrally)
  1-connected map, and we can use another form of the Hurewicz theorem (Theorem
  4.37 in \cite{Hatc}) to show in a similar way that $\pi_2(E_2,A_1)$ is
  finitely generated over $\mathbb{Z}\pi_1E_1$.

  Now let $0<i \leq n-k$, and suppose we have constructed a finite complex
  $A_k(i-1)$ with a rationally $(k+i-1)$-connected map $A_k(i-1) \to E_k$.
  Consider the exact sequence
  $$H_{k+i}(\tilde E_k,\tilde A_{k-1};\mathbb{Q}) \to
  H_{k+i}(\tilde E_k,\tilde A_k(i-1);\mathbb{Q}) \to
  H_{k+i-1}(\tilde A_k(i-1),\tilde A_{k-1};\mathbb{Q}).$$
  The last module is finitely generated because $A_k(i-1)$ and $A_{k-1}$ are
  finite complexes; the first, by \eqref{eq:HxH}.  Therefore
  $H_{k+i}(\tilde E_k,\tilde A_k(i-1);\mathbb{Q}) \cong \pi_{k+1}(\tilde E_k,
  \tilde A_k(i-1)) \otimes \mathbb{Q}$ is finitely generated.  Killing it gives
  us $A_k(i)$ as desired.

  In the end, we obtain a rationally $n$-connected map $A_n \to X$.  We can
  complete $A_n$ to a complex rationally equivalent to $X$ by adding cells in
  dimensions $n+1$ and higher.  This completes one direction of the proof.

  Now we tackle the other direction.  Suppose that $X$ is rationally equivalent
  to a complex with finite $n$-skeleton, and suppose that $Y$ is not.  Let $3
  \leq k \leq n$ be the first dimension in which a complex rationally equivalent
  to $Y$ necessarily has infinitely many cells, and so $\pi_k(Y,Y^{(k-1)})
  \otimes \mathbb{Q}$ is infinitely generated.  Consider the homotopy pullback
  $g:Z \to Y^{(k-1)}$ of the homotopy fibration $f:X \to Y$; in particular,
  $\pi_i(g)=\pi_i(f)$ for every $i$ and the map $Z \to X$ is rationally
  $(k-1)$-connected.  By the $Y \Rightarrow X$ direction, $Z$ is rationally
  equivalent to a complex $K$ with finite skeleta and indeed there is a rational
  equivalence $K \to Z$.  Let $\ph:K^{(k-1)} \to X$ and $\psi:K^{(k-1)} \to
  Y^{(k-1)}$ be the maps which factor through $Z$, so that $\pi_k(\ph) \otimes
  \mathbb{Q}$ is finitely generated by Theorem \ref{thm:QWall}, whereas
  $\pi_k(\psi) \otimes \mathbb{Q}$ is finitely generated by the exact sequence
  of triples
  $$\pi_k(K,K^{(k-1)}) \otimes \mathbb{Q} \to \pi_k(\psi) \otimes \mathbb{Q}
  \to \pi_k(f) \otimes \mathbb{Q} \to 0.$$
  The exact sequence of triples
  $$\pi_k(\ph) \otimes \mathbb{Q} \to \pi_k(Y,K^{(k-1)}) \otimes \mathbb{Q}
  \to \pi_k(f) \otimes \mathbb{Q} \to 0$$
  shows that $\pi_k(Y,K^{(k-1)}) \otimes \mathbb{Q}$ is finitely generated, while
  the exact sequence of triples
  $$\pi_k(\psi) \otimes \mathbb{Q} \to \pi_k(Y,K^{(k-1)}) \otimes \mathbb{Q} \to
  \pi_k(Y,Y^{(k-1)}) \otimes \mathbb{Q} \to \pi_{k-1}(\psi) \otimes \mathbb{Q}$$
  shows that $\pi_k(Y,K^{(k-1)}) \otimes \mathbb{Q}$ is infinitely generated,
  since $\pi_{k-1}(\psi) \otimes \mathbb{Q} \cong \pi_{k-1}(f) \otimes
  \mathbb{Q}$.  Thus we have a contradiction.
\end{proof}

\section{Basic properties}

In this section, we define distortion functions and discuss relationships
between different definitions.  As mentioned in the introduction, what we mean
by ``distortion'' can be defined for any measure of ``size'' in a group, that
is, for any subadditive functional on that group.  We will start with this
purely formal definition before specifying it to Lipschitz and volume distortion
in higher homotopy and homology groups.

The Lipschitz and volume functionals we use are only defined up to additive and
multiplicative constants, forcing us to discuss distortion functions only up to
asymptotic equivalence.  That is, when comparing functions $\mathbb{N} \to
\mathbb{R} \cup \{\infty\}$, we will use the relations
\begin{align*}
  f \lesssim g &\iff \text{for some $A, B, C, D$, }f(n) \leq Ag(Bn+C)+D \\
  f \sim g &\iff f \lesssim g\mbox{ and }f \gtrsim g.
\end{align*}
We now give a number of formal definitions relating to distortion functions.
\begin{defn}
  Let $G$ be an abelian group, let $\ph:G \to G \otimes \mathbb{Q}$ be the
  rationalization homomorphism, and $F:G \to  \mathbb{R}^+$ a subadditive
  functional.  Define the $F$-\emph{distortion function} of $\alpha \in G$ to be
  the functon $\delta_{\alpha,F}:\mathbb{N} \to \mathbb{R}^+ \cup \{\infty\}$
  given by
  $$\delta_{\alpha,F}(k)=\sup\{m \mid F(m\alpha) \leq k\}.$$
  We say $\alpha$ is \emph{distorted} if $F(k\alpha)/k \to 0$ as $k
  \to \infty$, and that $\alpha$ is \emph{infinitely distorted} if there is a
  $k$ such that $\delta_{\alpha,F}(k)=\infty$.

  Similarly, given a norm $\lVert\cdot\rVert$ on a finite-dimensional vector
  subspace $V \subseteq G \otimes \mathbb{Q}$, we can define the $F$-distortion
  function of $V$ to be
  $$\delta_{V,F}(k)=\sup\{\lVert\ph(\alpha)\rVert \mid \alpha \in G\text{ with }
  \ph(\alpha) \in V\text{ and }F(\alpha) \leq k\}.$$
  Note that the asymptotics of $\delta_V$ do not depend on the norm.  We say
  that $V$ is \emph{distorted} if $k/\delta_{V,F}(k) \to 0$ as $k \to \infty$,
  and \emph{infinitely distorted} if there is a $k$ such that
  $\delta_{V,F}(k)=\infty$.

  Finally, we will say that $V$ is \emph{weakly distorted} in another
  finite-dimensional subspace $U$ if for some constant $C$,
  $$\sup \{\lVert\ph(\alpha)\rVert \mid \dist(\ph(\alpha),V)<C\text{ and }F(
  \alpha) \leq k\} \gnsim k.$$
  Similarly, $V$ is \emph{weakly infinitely distorted} in $U$ if there are
  constants $C$ and $k$ such that
  $$\sup \{\lVert\ph(\alpha)\rVert \mid \dist(\ph(\alpha),V)<C\text{ and }F(
  \alpha) \leq k\}=\infty.$$
  These conditions also clearly do not depend on $\lVert\cdot\rVert$.
\end{defn}
That is, a subspace is weakly distorted if there are vectors which close to it,
but not necessarily in it, on which $F$ is small.  It turns out, as we show
below, that for purely formal reasons weak distortion always implies distortion,
at least when distortion in $V$ is a meaningful idea at all.  This contrasts
with the infinite version; as we will show in Example \ref{exs:oddD}(\ref{qqu}),
weak infinite distortion \emph{does not} imply infinite distortion.  An
identical argument shows that if the one-dimensional subspace
$\mathbb{Q}\ph(\alpha)$ is $F$-distorted, then so is the element $\alpha$.
However, here again the distortion functions may be different; such a case will
be demonstrated in Example \ref{exs:oddD}(1).
\begin{lem}
  Let $G$ be an abelian group and $F$ a subadditive functional, and let $V
  \subset U \subset G \otimes \mathbb{Q}$ be finite-dimensional subspaces.  If
  $\ph(G)$ contains a basis of $V$ and $V$ is weakly $F$-distorted in $U$, then
  $V$ is $F$-distorted.
\end{lem}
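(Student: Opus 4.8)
The plan is to prove the (operative) assertion that $\delta_{V,F}(k)\gnsim k$, by converting the efficient near-$V$ elements supplied by weak distortion into honest elements of $V$ of comparable size. Fix once and for all a basis $v_1=\ph(g_1),\dots,v_d=\ph(g_d)$ of $V$ with $g_i\in G$ --- the only use of the hypothesis that $\ph(G)$ contains a basis of $V$ --- together with a vector-space complement $W$ of $V$ in $U$, the projection $\pi_W\colon U\to W$, and any norm $\|\cdot\|$ on $U$; all norms on a finite-dimensional space being equivalent, neither the distortion functions nor the estimates below depend on these choices. Let $C$ witness that $V$ is weakly $F$-distorted in $U$. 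Unwinding the meaning of $\gnsim$, for each $i$ we may choose $\alpha_i\in G$ with $\ph(\alpha_i)\in U$, $\dist(\ph(\alpha_i),V)<C$, and $\|\ph(\alpha_i)\|>i\max\{F(\alpha_i),1\}$. Write $\ph(\alpha_i)=v_i+w_i$ with $v_i=\pi_V\ph(\alpha_i)\in V$ and $w_i=\pi_W\ph(\alpha_i)\in W$; boundedness of $\pi_W$ and $\dist(\ph(\alpha_i),V)<C$ give a constant $C'$ with $\|w_i\|\le C'$ for all $i$, so $\|v_i\|\ge\|\ph(\alpha_i)\|-C'>i\max\{F(\alpha_i),1\}-C'$ is unbounded.

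The crux is to cancel the uniformly bounded components $w_i$ using only finitely many reference elements. Since the $w_i$ span a finite-dimensional subspace $W_0\subseteq W$, choose indices $i_1<\dots<i_r$ so that $w_{i_1},\dots,w_{i_r}$ is a basis of $W_0$, and write $w_i=\sum_{l=1}^r q^{(i)}_l w_{i_l}$; as the coordinate map of $W_0$ is bounded and $\|w_i\|\le C'$, there is a fixed $D_0$ with $|q^{(i)}_l|\le D_0$ for all $i,l$. Let $N_i$ be a positive common denominator of $q^{(i)}_1,\dots,q^{(i)}_r$ and set
$$\beta_i\;:=\;N_i\alpha_i\;+\;\sum_{l\,:\,q^{(i)}_l<0}\bigl(N_i|q^{(i)}_l|\bigr)\,\alpha_{i_l}\;+\;\sum_{l\,:\,q^{(i)}_l>0}\bigl(N_iq^{(i)}_l\bigr)\,(-\alpha_{i_l})\;\in\;G,$$
an integer combination, with nonnegative coefficients, of $\alpha_i$ and of the fixed elements $\pm\alpha_{i_l}$. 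By design $\pi_W\ph(\beta_i)=N_iw_i-\sum_l N_iq^{(i)}_l w_{i_l}=0$, hence $\ph(\beta_i)=N_i\bigl(v_i-\sum_l q^{(i)}_l v_{i_l}\bigr)\in V$. Subadditivity of $F$, together with $F(nx)\le nF(x)$ for $n\in\mathbb{Z}_{\ge0}$, gives $F(\beta_i)\le N_i\bigl(F(\alpha_i)+P\bigr)$ where $P:=rD_0\max_l\max\{F(\alpha_{i_l}),F(-\alpha_{i_l})\}+rF(0)$ is a constant, while $\|\ph(\beta_i)\|\ge N_i\bigl(\|v_i\|-Q\bigr)$ with $Q:=rD_0\max_l\|v_{i_l}\|$ constant.

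Now the factor $N_i$ cancels: for all large $i$ (so that $\|v_i\|>Q$), since $\ph(\beta_i)\in V$,
$$\delta_{V,F}\bigl(\lceil F(\beta_i)\rceil\bigr)\;\ge\;\|\ph(\beta_i)\|\;\ge\;\frac{\|v_i\|-Q}{F(\alpha_i)+P}\,F(\beta_i)\;>\;\frac{i\max\{F(\alpha_i),1\}-C'-Q}{F(\alpha_i)+P}\,F(\beta_i),$$
and, using $\max\{F(\alpha_i),1\}\ge\tfrac12(F(\alpha_i)+1)$ to control the numerator against the denominator, the coefficient on the right tends to $\infty$ with $i$. If $F(\beta_i)$ stays bounded along a subsequence, the middle inequality shows $\delta_{V,F}$ is infinite at some finite argument, an even stronger conclusion; otherwise $\delta_{V,F}(k)/k$ is unbounded. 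Since also $\delta_{V,F}(k)\gtrsim k$ always (take multiples of $g_1$, or note $\delta_{V,F}(0)=\infty$ when $F(g_1)=0$), we conclude $\delta_{V,F}(k)\gnsim k$, i.e.\ $V$ is $F$-distorted.

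The only genuine obstacle is conceptual: weak distortion furnishes elements merely \emph{near} $V$, and landing exactly inside $V$ forces a correction of $\alpha_i$ of bounded $F$-cost. This succeeds precisely because the corrections must only annihilate the components $w_i$, all of which lie in the single finite-dimensional space $W_0$ that they span --- so a fixed finite supply of reference elements $\alpha_{i_l}$ is enough --- and the denominators $N_i$ one is then forced to introduce are harmless, since they divide out of the distortion ratio. The degenerate cases ($F$ vanishing on some $g_i$ or $\alpha_i$, or $\delta_{V,F}$ already infinite) all exhibit $V$ as infinitely, hence a fortiori, distorted.
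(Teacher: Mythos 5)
Your proof is correct, and at its core it runs on the same engine as the paper's: replace the efficient near-$V$ element by an element of $G$ landing exactly in $V$, at the cost of a correction that is a bounded combination of a fixed finite set, and then observe that the integer multiple one is forced to introduce ($N_i$ for you, $M$ in the paper) cancels out of the ratio $\lVert\ph(\cdot)\rVert/F(\cdot)$. The implementations differ enough to be worth comparing. The paper snaps $\ph(\alpha)$ to a nearby point of $\ph(G)\cap V$ using coarse density of $\ph(G)\cap V$ in $V$, and controls the $F$-cost of the difference $\gamma$ via a finite set whose image rationally generates $\ph(G)\cap U$, asserting $F(M\gamma)\lesssim M\lVert\ph(\gamma)\rVert$; that assertion really requires a small supplementary argument about representing lattice points by integer combinations with norm-controlled coefficients. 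You instead cancel only the component in a complement $W$ of $V$, and your correcting set is a finite subset of the efficient elements themselves, chosen so that their $W$-components form a basis of the (finite-dimensional) span of all the $w_i$; this makes the bounded-cost-per-unit-multiple claim completely explicit and sidesteps both the coarse-density step and the linear bound on $F$ over $U$. One shared feature rather than a defect: like the paper's argument, yours yields superlinearity of $\delta_{V,F}$ only along a sequence of scales ($\delta_{V,F}(k)/k$ unbounded rather than literally $k/\delta_{V,F}(k)\to 0$); the remark following the lemma in the paper (``$V$ has \emph{some} superlinear distortion function'') makes clear this is the intended conclusion, so you are exactly on par with the original here.
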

\begin{proof}
  Fix a finite set $\{\alpha_i\} \subset G$ such that $\{\ph(\alpha_i\}$
  generates $\ph(\ph^{-1}(U))$, and let $C^\prime=\max_i F(\alpha_i)/
  \lVert\ph(\alpha_i)\rVert$.  Then for every $\gamma \in G$ with $\ph(\gamma)
  \in U$, there is a sufficiently large $M$ that $M\gamma$ is in the subgroup of
  $G$ generated by the $\{\alpha_i\}$ and therefore $F(\gamma) \leq C^\prime M
  \lVert\Gamma\rVert$.

  The assumptions guarantee that $\ph(G) \cap V$ is ``coarsely dense'' in $V$:
  that is, there is a constant $K$ such that every point in $V$ is at most
  distance $K$ from $\ph(G)$.  Since $V$ is weakly $F$-distorted, there is a $C$
  such that for every $\epsi>0$ and $N>0$ there is a $\alpha \in G$ such that
  $\dist(\alpha,V)<C$ and $N<F(\alpha) \leq \epsi\lVert\ph(\alpha)\rVert$.  We
  can then write $\alpha=\beta+\gamma$ where $\ph(\beta) \in V$ and
  $\lVert\ph(\gamma)\rVert \leq K+C$.  Let $M$ be such that $M\gamma$ is in
  the subgroup generated by $\{\alpha_i\}$.  Then
  $$F(M\beta) \leq \epsi M\lVert\ph(\alpha)\rVert+C^\prime M(K+C) \leq M
  (\epsi\lVert\ph(\alpha)\rVert+\text{const}).$$
  Supposing that for a given $\epsi>0$ we choose $N=C^\prime(K+C)/\epsi$, this
  gives us
  $$F(M\beta) \leq \text{const} \cdot \epsi M\lVert\ph(\beta)\rVert=
  \text{const} \cdot \epsi \lVert\ph(M\beta)\rVert.$$
  Since we can satisfy this inequality for every $\epsi>0$, $V$ is
  $F$-distorted.
\end{proof}
Note that this proof is utterly ineffective: the fact that $V$ is weakly
distorted means only that it has \emph{some} superlinear distortion function,
but its divergence from linearity could be arbitrarily slow.

In these definitions, $G$ may be rather complicated even when $G \otimes
\mathbb{Q}$ is a finitely generated vector space.  For example, $G=
\mathbb{Z}\left[\frac{3}{5}+\frac{4}{5}i\right] \subset \mathbb{Q}[i] \cong
\mathbb{Q}^2$ is infinitely generated as a group; if we see it as generated by
$\left\{\left(\frac{3}{5}+\frac{4}{5}i\right)^n: n \in \mathbb{N}\right\}$, then
each generator has the same 2-norm in $\mathbb{Q}^2$.  Thus the universe of
conceivable behaviors of the functional $F$ is very large indeed.

It is worth giving a little bit of flavor as to the kind of functionals $F$ that
we will deploy.  A basic example is as follows: let $G=\mathbb{Z}$, and let
$F(z)$ denote the least number of powers of 2 one needs to add or subtract to
get to $z$.  Thus for example $F(1023)=F(2^{10}-2^0)=2$.  Then $F$ is unbounded,
but so is $\{z \in \mathbb{Z}: F(z)=1\}$; therefore $\delta_{1,F}(n)=\infty$ for
every $n \geq 1$.  In other words, the element 1 is infinitely distorted, even
though some of its multiples have large minimal representations.

More generally, to any group $G$ and any generating set $\{g_i\}_{i \in I}$ we
may associate the functional $F$ which, for any $g \in G$, gives the minimal sum
of the coefficients of a way of representing $g$ as $\sum_i C_ig_i$.  In
Examples \ref{exs:oddD} and the later parts of section 3 we discuss several
examples of this type; the complexity revealed there is as much a consequence of
the linear algebra in the definitions we have given thus far as it is of the
geometry of the relevant spaces.

We now specify these ideas to the study of homotopy groups and their metric
properties.  Let $X$ be a compact Riemannian manifold with boundary or a finite
CW-complex with a piecewise Riemannian metric.  Rademacher's theorem, applied to
some piecewise smooth local embedding of $X$ into $\mathbb{R}^N$, tells us that
a Lipschitz map $f:S^n \to X$ is almost everywhere differentiable.  In
particular, one can define its \emph{volume}:
$$\vol f:=\int_{S^n} |\Jac(f)|d{\vol}.$$
\begin{defn}
  Given $\alpha \in \pi_n(X)$, write
  $$\lvert\alpha\rvert_{\Lip}:=\sup\{\Lip f \mid f:S^n \to X
  \text{ is a Lipschitz representative of }\alpha\}.$$
  The \emph{Lipschitz distortion} of $\alpha$, written $\LD_\alpha(k)$, is the
  distortion of $\alpha$ with respect to the functional $F(\alpha)=\lvert\alpha
  \rvert_{\Lip}$.  Similarly, write
  $$\lvert\alpha\rvert_{\vol}:=\min\{\vol f \mid f:S^n \to X
  \text{ is a Lipschitz representative of }\alpha\}.$$
  The \emph{volume distortion} of $\alpha$, written $\VD_\alpha(k)$, is the
  distortion of $\alpha$ with respect to the volume functional..

  Given a finite-dimensional vector subspace $V \subseteq \pi_n(X) \otimes
  \mathbb{Q}$ together with a norm $\lVert\cdot\rVert$, we define the distortion
  functions
  \begin{align*}
    \LD_V(k) &= \sup\{\lVert\vec v\rVert \mid \exists \alpha \in \pi_n(X)
    \text{ such that }\alpha\overset{\otimes \mathbb{Q}}{\mapsto}\vec v \in V
    \text{ with }\lvert\alpha\rvert_{\Lip} \leq k\},\\
    \VD_V(k) &= \sup\{\lVert\vec v\rVert \mid \exists \alpha \in \pi_n(X)
    \text{ such that }\alpha\overset{\otimes \mathbb{Q}}{\mapsto}\vec v \in V
    \text{ with }\lvert\alpha\rvert_{\vol} \leq k\}.
  \end{align*}
\end{defn}
If $\alpha$ is torsion, then $|k\alpha|_{\Lip}$ and $|k\alpha|_{\vol}$ are
bounded, and so the corresponding distortion functions are eventually infinite.
In all cases, $|k\alpha|_{\Lip} \lesssim k^{1/n}\alpha$ because we can get a
representative of $k\alpha$ by precomposing a map $f \in \alpha$ with a map $S^n
\to S^n$ of degree $k$ and Lipschitz constant $k^{1/n}$.  Thus for any $\alpha$,
$\LD_\alpha(k) \gtrsim k^n$.  Moreover, a $k$-Lipschitz map has volume at most
$k^n$, so for any $\alpha$, $\VD_\alpha(k) \leq \LD_\alpha(k)^{1/n}$.  In
particular, $\VD_\alpha(k)$ is at least linear.  This motivates another
definition:
\begin{defn}
  A class $\alpha$ or subspace $V$ in $\pi_n(X) \otimes \mathbb{Q}$ is
  \emph{Lipschitz undistorted} if $\LD_\alpha(k) \sim k^n$ (respectively
  $\LD_V(k) \sim k^n$), and \emph{undistorted} if in addition its volume
  distortion is linear.
\end{defn}

Clearly, for any subspace $V$ and any $0 \neq \alpha \in V$, $\LD_V(k) \gtrsim
\LD_\alpha(k)$ and $\VD_V(k) \gtrsim \VD_\alpha(k)$.  On the other hand, we will
later see an example of an $X$ such that $\pi_n(X) \otimes \mathbb{Q}$ is
distorted although none of its elements are.  This provides extra motivation for
the definitions of $\LD_V$ and $\VD_V$.

A Lipschitz homotopy equivalence $X \underset{g}{\overset{f}{\rightleftarrows}}
Y$ induces an asymptotic equivalence on distortion functions, since
$$|\alpha|_{\Lip} \leq \Lip g|f_*\alpha|_{\Lip} \leq \Lip f\Lip g|\alpha|_{\Lip}$$
and
$$|\alpha|_{\vol} \leq (\Lip g)^n|f_*\alpha|_{\vol} \leq (\Lip f)^n(\Lip g)^n
|\alpha|_{\vol}.$$
In particular, we can speak of distortion functions of finite CW complexes and
compact manifolds without specifying a metric.  Another way of simplifying our
object of study is to restrict to a more combinatorial class of functions $S^n
\to X$.
\begin{defn}[\cite{BBFS}]
  Given a CW-complex $X$ and an $n$-manifold $M$, call a map $f:M \to X$
  \emph{admissible} if $f(M) \subseteq X^{(n)}$ and for every interior $U$ of an
  $n$-cell of $X$, $f^{-1}(U)$ is a disjoint union of balls which map
  homeomorphically to $U$.  If $M$ is compact, define the \emph{cellular volume}
  $\vol_C(f)$ to be the total number of these balls.
\end{defn}
If $X$ has a piecewise Riemannian metric and a finite $n$-skeleton, then for any
admissible map $f$,
$$c\vol(f) \leq \vol_C(f) \leq C\vol f,$$
where $c$ and $C$ are the least and greatest volume of a cell, respectively.
Thus asymptotically speaking, it doesn't matter which notion of volume we
consider.  Before we can use this fact, however, we need to make sure that the
asymptotics of distortion functions remain the same if we only consider
admissible maps.  For this, we use a theorem originally from geometric measure
theory.
\begin{thm}[Federer--Fleming deformation theorem \cite{ECHLP}]
  For any finite-dimensional simplicial complex $Y$, for example for a
  triangulated manifold, there is a $c>0$ with the following property.  Any
  Lipschitz $k$-cycle $T$ can be decomposed as $T=Q+\partial R$, where $R$ is a
  Lipschitz $(k+1)$-chain and $Q$ is a smooth $k$-cycle whose simplices are
  cellular, such that $\mass_{k+1}R \leq c\mass_k Q \leq c^2\mass_k T$ and $Q$
  and $R$ are contained in the smallest subcomplex of $Y$ containing $T$.

  Indeed, if $N$ is a triangulated manifold, then a Lipschitz map $f:N
  \to Y$ is Lipschitz homotopic, via a homotopy of mass bounded by $c^2\vol f$,
  to an admissible map $g$ with $\vol_C(g)\leq c\vol f$.
\end{thm}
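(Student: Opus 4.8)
The plan is to run the classical Federer--Fleming ``pushing'' argument one skeleton at a time, from the top dimension of $Y$ downward, and then to upgrade the resulting chain-level assertion to the promised statement about maps. Since a Lipschitz $k$-cycle $T$ has finite $k$-mass, its support is compact and hence lies in a finite subcomplex $Z \subseteq Y$; write $m = \dim Z$. Every estimate below carries a multiplicative constant depending only on the geometry of the simplices of $Y$, which we may take to be uniformly bilipschitz to a standard simplex --- this is automatic when $Y$ is finite, or is a bounded-geometry cover of a finite complex, and these are the only cases we use.

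The basic move is a single pushing step. Suppose $T$ is a Lipschitz $k$-chain supported in the $j$-skeleton $Z^{(j)}$ with $j > k$, and for each open $j$-simplex $\sigma$ let $T_\sigma$ denote the part of $T$ lying in $\sigma$. An averaging (Fubini/coarea) argument over the choice of a center point $p$ in a slightly shrunken concentric copy of $\sigma$ shows that one can pick $p$ off the support of $T$ so that the radial projection $\pi_p \colon \sigma \setminus \{p\} \to \partial\sigma$ expands $k$-mass by at most a fixed factor, and so that the radial homotopy from $\id$ to $\pi_p$ sweeps out a $(k+1)$-chain $R_\sigma$ with $\mass_{k+1}R_\sigma$ at most a constant times $\mass_k T_\sigma$; here one uses $k < j$, so that $T_\sigma$ does not fill $\sigma$. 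Assembling these contributions over all top simplices of $Z^{(j)}$ gives $T = T' + \partial R_j$ with $T'$ supported in $Z^{(j-1)}$ and $\mass_k T' \le c_1\mass_k T$, $\mass_{k+1}R_j \le c_1\mass_k T$. Iterating from $j = m$ down to $j = k$ --- and stopping there, since a $k$-chain supported in $Z^{(k)}$ is a cellular chain by the constancy theorem --- and composing the estimates yields $T = Q + \partial R$ with $Q$ a cellular $k$-chain, $R = \sum_j R_j$, and $\mass_k Q \le c_1^{\,m-k}\mass_k T$, $\mass_{k+1}R \le (m-k)c_1^{\,m-k}\mass_k T$. Folding all the constants into a single $c$ gives the asserted $\mass_{k+1}R \le c\,\mass_k Q \le c^2\mass_k T$; if $T$ is a cycle so is $Q$, and since each projection moves points only inside a simplex they already meet, $Q$ and $R$ remain in the smallest subcomplex containing $T$.

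For a Lipschitz map $f \colon N \to Y$ I would first replace $f$, up to a Lipschitz homotopy of mass controlled by $\vol f$, by a map that is simplicial for a fine enough subdivision of $N$ (a quantitative form of simplicial approximation), and then push that map down the skeleta of $Y$ exactly as above, now on the level of maps: a preliminary generic perturbation makes $f$ miss the center points, so the radial projections are Lipschitz where they need to be, and the radial homotopies assemble, simplex by simplex, into a homotopy of total mass a bounded multiple of $\vol f$. When the map finally takes values in $Y^{(n)}$, $n = \dim N$, a transversality-and-cancellation step removes folds on which the local degree cancels, producing an admissible $g$ with $\vol_C(g) \le c\,\vol f$ and an accumulated homotopy of mass at most $c^2\vol f$. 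I expect this last passage --- from ``image in the $n$-skeleton'' to ``admissible with $\vol_C(g) \le c\,\vol f$'' --- to be the real obstacle, since the cellular-volume count is combinatorial and can dwarf the geometric volume when the map folds, whereas the chain statement has no analogous issue; controlling it means organizing the fold cancellations so that each move strictly lowers $\vol_C$ while enlarging the homotopy mass by at most a constant factor, which is the content of the deformation theorem for maps in \cite{ECHLP}. By contrast, the chain-level pushing argument is entirely routine once the averaging estimate for a single projection is in hand.
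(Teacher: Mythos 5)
The paper does not actually prove this statement: it is quoted from \cite{ECHLP}, and the only original content in its vicinity is the remark that the map version ``falls out of the proof they give'' and extends from simplicial to compact CW complexes via a Lipschitz simplicial approximation. Your sketch is the standard Federer--Fleming pushing argument and matches what the cited source does, so the chain-level part is fine as a reconstruction. (Note only that your estimates give $\mass_{k+1}R \leq c\,\mass_k T$ rather than $\mass_{k+1}R \leq c\,\mass_k Q$; the latter, as literally printed in the paper, would be false when $Q=0$, so the paper's chain of inequalities should be read as two separate bounds against $\mass_k T$, which is what you prove.)

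One correction to your diagnosis of where the difficulty lies in the map statement: no fold cancellation is needed, and the cellular volume cannot ``dwarf'' the geometric volume. Once the map lands in $Y^{(n)}$ with $n=\dim N$, one radially projects each open $n$-cell $\sigma$ from a generic regular value $p$; the preimage of the open cell under the resulting map is a disjoint union of small disks about the points of $f^{-1}(p)$, each mapping homeomorphically onto $\sigma$, so $\vol_C(g)=\sum_\sigma \#f^{-1}(p_\sigma)$. By the area formula, the average of $\#f^{-1}(p)$ over $p\in\sigma$ is $\vol(f|_{f^{-1}(\sigma)})/\vol\sigma$, so a good choice of $p$ gives $\vol_C(g)\leq c\vol f$ directly. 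A fold contributes sheets of both orientations to $\#f^{-1}(p)$, but each such sheet also contributes its full area to $\vol f$, so the two counts are commensurable and the averaging you already invoke for the mass bounds is exactly what controls $\vol_C(g)$; there is no separate combinatorial obstacle to organize away.
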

The second statement, though not stated as such by \cite{ECHLP}, falls out of
the proof they give.  Every CW complex $X$ has a simplicial approximation $Y$
with a homotopy equivalence $Y \to X$ that sends each $k$-simplex either
homeomorphically to a cell or to $X^{(k-1)}$, and when $X$ is compact this can
be made Lipschitz.  Therefore, the same statement holds for CW complexes.  Thus
we have the following consequences.
\begin{cor}
  The minimal volume of a representative of $\alpha \in \pi_n(X)$ is
  approximated to within a multiplicative constant, depending on $n$ and $X$, by
  an admissible representative.  To find the asymptotic behavior of the
  distortion function of an element of $\pi_n(X)$, it is enough to consider
  admissible representatives.  In particular, the asymptotic behavior of the
  volume distortion functions of $\pi_n(X)$ depends only on the topology of the
  $(n+1)$-skeleton of $X$.
\end{cor}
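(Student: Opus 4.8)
The plan is to deduce all three assertions from the Federer--Fleming deformation theorem together with the two-sided comparison of Riemannian and cellular volume for admissible maps. Fix a piecewise Riemannian metric on $X$, and write $\lvert\alpha\rvert_{\vol}^{\mathrm{adm}}$ for the infimum of $\vol f$ over \emph{admissible} representatives $f:S^n\to X^{(n)}$ of $\alpha$. This infimum is over a nonempty set: any class has a cellular representative $S^n\to X^{(n)}$, and applying the deformation theorem to a simplicial approximation of $X^{(n+1)}$ (and transferring back, as in the paragraph preceding the statement) makes it admissible without leaving $X^{(n+1)}$. Let $c_1,c_2>0$ be the least and greatest volume of an $n$-cell, so that $c_1\vol h\le\vol_C(h)\le c_2\vol h$ for every admissible $h$, and let $c_0$ be the constant of the deformation theorem applied to (a simplicial approximation of) $X^{(n+1)}$; all three depend only on $n$ and $X$.

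First I would establish $\lvert\alpha\rvert_{\vol}\le\lvert\alpha\rvert_{\vol}^{\mathrm{adm}}\le A\lvert\alpha\rvert_{\vol}$ with $A=c_0/c_1$. The left inequality is immediate. For the right, pick a Lipschitz representative $f$ of $\alpha$ with $\vol f\le\lvert\alpha\rvert_{\vol}+\epsi$; by the deformation theorem $f$ is Lipschitz homotopic to an admissible map $g$, which therefore also represents $\alpha$, with $\vol_C(g)\le c_0\vol f$, whence $\vol g\le c_1^{-1}\vol_C(g)\le(c_0/c_1)\vol f$. Letting $\epsi\to 0$ gives $\lvert\alpha\rvert_{\vol}^{\mathrm{adm}}\le A\lvert\alpha\rvert_{\vol}$, which is the first assertion.

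Next, the distortion functions. Applying the comparison to $m\alpha$ for each $m$ shows that the set $\{m\mid\lvert m\alpha\rvert_{\vol}^{\mathrm{adm}}\le k\}$ lies between $\{m\mid\lvert m\alpha\rvert_{\vol}\le k/A\}$ and $\{m\mid\lvert m\alpha\rvert_{\vol}\le k\}$; taking suprema yields $\VD_\alpha(k/A)\le\VD_\alpha^{\mathrm{adm}}(k)\le\VD_\alpha(k)$, so $\VD_\alpha^{\mathrm{adm}}\sim\VD_\alpha$. Since $c_1\vol h\le\vol_C(h)\le c_2\vol h$ for admissible $h$, the distortion function obtained by minimizing \emph{cellular} volume over admissible representatives differs from $\VD_\alpha^{\mathrm{adm}}$ only by rescaling the argument by a bounded factor, hence it too is $\sim\VD_\alpha$. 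Thus, up to asymptotic equivalence, $\VD_\alpha$ is computed by minimizing cellular volume over admissible representatives, which is the second assertion.

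Finally, the dependence on the $(n+1)$-skeleton. An admissible map $S^n\to X$ has image in $X^{(n)}$, and its cellular volume is defined purely in terms of $X^{(n)}$. Moreover the inclusion $X^{(n+1)}\hookrightarrow X$ induces an isomorphism on $\pi_n$ (cellular approximation makes any $S^n\to X$, and any homotopy $S^n\times[0,1]\to X$ rel endpoints, land in the $(n+1)$-skeleton, since $X$ is obtained from $X^{(n+1)}$ by attaching cells of dimension $\ge n+2$). Hence $\pi_n(X)$, the multiples $m\alpha$, the set of admissible representatives, and their cellular volumes are all determined by $X^{(n+1)}$, so the cellular-volume-over-admissible-representatives distortion function of $\alpha$ is literally a function of $X^{(n+1)}$ alone; by the previous paragraph it is asymptotically equivalent to $\VD_\alpha$. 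Equivalently, $\VD_\alpha$ computed in $X$ agrees up to $\sim$ with $\VD_\alpha$ computed in $X^{(n+1)}$. This last step is the only place where a little care is needed: one must check that both admissibility and the homotopy relation among maps out of $S^n$ can be confined to the $(n+1)$-skeleton, so that cells of dimension $\ge n+2$ play no role; the rest is formal bookkeeping of the constants produced by the deformation theorem.
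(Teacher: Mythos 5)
Your proposal is correct and is essentially the argument the paper intends: the corollary is stated as a direct consequence of the Federer--Fleming deformation theorem (which converts a Lipschitz representative into a homotopic admissible one with cellular volume bounded by a constant times the original volume) combined with the two-sided comparison of cellular and Riemannian volume for admissible maps, and your skeleton-confinement step via cellular approximation is the intended justification for the final claim. No gaps.
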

These homotopy invariance results are also true for Lipschitz constants, using a
Lipschitz version of the deformation theorem which is beyond the scope of this
paper.  But the following lemma gives a stronger independence result which only
seems to work for volume distortion.  Namely, it shows that all multiples have
maps that pull back through rational equivalences in a volume-preserving way.

To demonstrate some of the intuition behind this, here is a warmup problem.
Consider the rational equivalence $Z \to T^3$, where $Z$ is defined as follows.
The 3-cell of $T^3$ has an attaching map $a:S^2 \to (T^3)^{(2)}$; define $Z=
(T^3)^{(2)} \cup_{a \circ u} D^3$, where $u:S^2 \to S^2$ is a map of degree 2.
Then an extension of $u$ to the interior of $D^3$ defines a map $\ph:Z \to T^3$;
since $\widetilde{T^3}$ is contractible and $\tilde Z$ has torsion homology in
all dimensions $n>0$, this is a rational equivalence.  Let $f:(D^3,S^2) \to
\widetilde{T^3}$ be the composition
$$D^3 \xrightarrow{\text{degree }2} [0,k]^3 \hookrightarrow \mathbb{R}^3.$$
We know this map is homotopic rel boundary to $\ph \circ g$ for some map
$g:(D^3,S^2) \to Z$.  But how do we construct such a map?  The two preimages of
each 3-cell are quite far away from each other, but we need to join their
boundaries into a single map of degree 2.

It turns out that one way to do this is to take a path from one preimage to the
other and nullhomotope its image through $\widetilde{(T^3)^{(2)}}$, which is
simply connected.  Although this forces the map to become very distorted, the
cellular volume is not affected.  An observation of this sort was originally
made by Brian White in \cite{White}.

In this example, our job is made easier by the fact that the 2-skeleta of the
two complexes happen to coincide.  When that is not the case, forcing the map to
behave correctly on lower skeleta takes some rather intuition-free wrangling.
\begin{lem} \label{lem:riv}
  Let $(X,K)$ be a CW pair, and $n \geq 2$, such that the inclusion $K
  \hookrightarrow X$ is rationally $n$-connected.  Then there are constants
  $p_n(X,K)>0$ and $\kappa_n(X,K)$ such that for any map $f:S^n \to X$ of volume
  $k$ there is an admissible map $g:S^n \to K$ for which $p_nf \simeq g$ and $g$
  has volume $\kappa_nk+\kappa_n$.  If $f$ is admissible, then as cellular
  chains, $g_\#([S^n])=p_nj_n(f_\#([S^n]))$, where $j_n:C_n(X) \to C_n(K)$ is a
  lifting homomorphism.
\end{lem}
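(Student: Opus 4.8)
The plan is to reduce to admissible maps, pass to the universal cover, and then build $g$ by \emph{modifying} a suitable multiple of $f$ cell by cell, using the lifting homomorphism $j_\bullet$ and its chain homotopy $u_\bullet$ from Proposition \ref{prop:chain_ho} as a combinatorial blueprint. First, by the Federer--Fleming deformation theorem we may homotope $f$ to an admissible map with $\vol_C(f)\le c\vol f+c$ and absorb $c$ into the final constant $\kappa_n$; so assume $f$ is admissible and reduce the general statement to this case. (If $\vol f=0$, then $f$ is homotopic into $X^{(n-1)}$ and the additive constant does all the work; if $K=X$ there is nothing to prove.) Since $n\ge2$, $S^n$ is simply connected, so $f$ lifts to $\tilde f\colon S^n\to\tilde X$; write $\tilde f_\#[S^n]=\sum_i\epsi_i e_{\sigma(i)}\in C_n(\tilde X;\mathbb Z)$, a cycle, where $i$ runs over the balls of $f$, $\epsi_i=\pm1$, and $e_{\sigma(i)}$ is the $n$-cell of $\tilde X$ covered by ball $i$, so $\vol_C(f)$ is the number of balls.

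To pin down the target, recall from Proposition \ref{prop:chain_ho} the $\pi_1$-equivariant maps $j_\bullet,u_\bullet$ with $j_ki_k=\id$ and $i_kj_k=\id+\partial u_k+u_{k-1}\partial$ through dimension $n$. As in the proof of Corollary \ref{cor:bo_pi}, finiteness of the set of $\pi_1$-orbits of cells yields an integer $q>0$ with $q j_k(e)$ and $q u_k(e)$ integral for every cell $e$, together with a constant $C$ bounding $\lVert q j_k(e)\rVert_1$ and $\lVert q u_k(e)\rVert_1$; Corollary \ref{cor:bo_pi} also supplies a common period $P$ for the bounded-exponent groups $\pi_i(X,K)$, $i\le n$. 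Set $p_n:=qP$ (enlarging it by a bounded factor later if needed). The target is the cycle
$$z:=p_n j_n(\tilde f_\#[S^n])=\sum_i\epsi_i p_n j_n(e_{\sigma(i)})\in C_n(\tilde K;\mathbb Z),$$
whose boundary is $p_n j_{n-1}(\partial\tilde f_\#[S^n])=0$ and whose $\ell^1$-norm is at most $p_nC\vol_C(f)$; the map $g$ we construct will be admissible with $g_\#[S^n]$ equal to the image of $z$ in $C_n(K)$.

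For the construction, precompose $f$ with an admissible degree-$p_n$ self-map of $S^n$ to realize $p_nf$ as an admissible map with $p_n$ copies of each ball of $f$. Now homotope this map, treating the $p_n$ balls over each cell $e$ of $\tilde X$ together: if $e\in\tilde K$ do nothing (then $j_n(e)=e$, $u_n(e)=0$); if $e\notin\tilde K$, perform a homotopy whose trace runs through the $(n+1)$-cells of $p_n u_n(e)$ and which terminates in balls realizing $p_n j_n(e)$ inside $\tilde K$, matching on the bounding $(n-1)$-sphere the behavior prescribed by the $n$-chain $p_n u_{n-1}(\partial e)$. The relation $\partial u_n(e)=i_nj_n(e)-e-u_{n-1}(\partial e)$ is precisely what makes these local homotopies compatible along shared $(n-1)$-cells, and globally there is no obstruction to gluing them because $\sum_i\epsi_i u_{n-1}(\partial e_{\sigma(i)})=u_{n-1}(\partial\tilde f_\#[S^n])=0$; the factor $P$ in $p_n$ is there to annihilate, uniformly, the relative-homotopy obstructions met while carrying out each local homotopy. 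This produces a homotopy $H\colon S^n\times[0,1]\to X$ with $H_0=p_nf$ and $H_1=:g$ an admissible map into $K$ with $g_\#[S^n]=p_n j_n(f_\#[S^n])$, hence $\vol_C(g)=\lVert z\rVert_1\le p_nC\vol_C(f)\le\kappa_n\vol f+\kappa_n$ once the Federer--Fleming constants are folded into $\kappa_n$; and $p_nf\simeq g$ via $H$. When $f$ was admissible to begin with, this is exactly the asserted cellular-chain identity.

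The main obstacle is the ``guided local homotopy'' of the previous paragraph: promoting the purely algebraic chain homotopy $u_\bullet$ to genuine homotopies of admissible maps of $n$-balls, glued consistently over the $(n-1)$-skeleton into an admissible $g$, with no loss of control of cellular volume. This is where the finiteness and bounded-exponent estimates of Corollary \ref{cor:bo_pi} carry the weight: they ensure that a single universal multiple $p_n$ both clears every denominator in $j_\bullet,u_\bullet$ and kills every relative-homotopy obstruction that arises, while the finite number of cell types yields the multiplicative constant $\kappa_n$. The remainder is bookkeeping of how the $(n+1)$-cells of $u_n$ and the correction chains $u_{n-1}(\partial e)$ are distributed among the balls.
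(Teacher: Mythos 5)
Your overall strategy matches the paper's: reduce to admissible maps, pass to the universal cover, and use the lifting homomorphism $j_n$ and chain homotopy $u_n$ together with the bounded-exponent constants of Corollary \ref{cor:bo_pi} to realize $p_nj_n(f_\#[S^n])$ by a genuine homotopy. But the step you yourself flag as ``the main obstacle'' --- promoting the algebraic chain homotopy to an actual homotopy of maps that terminates in $\tilde K$ --- is precisely where the proof lives, and your claim that the period $P$ of the groups $\pi_i(X,K)$ annihilates ``every relative-homotopy obstruction that arises'' is not correct. After you use $u_n$ to adjust degrees, what remains is a portion of the sphere mapping into $\tilde X^{(n-1)}$, not into $\tilde K$: the boundaries of your balls sit in $\tilde X^{(n-1)}$, and $u_{n-1}(\partial e)$ is only a homological recipe for moving them. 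Pushing this leftover into $K$ requires killing further obstructions that do \emph{not} live in $\pi_i(X,K)$: one in $\ker\bigl(\pi_{n-1}(\tilde K^{(n-1)})\to\pi_{n-1}(\tilde X^{(n-1)})\bigr)$, whose bounded exponent needs Proposition \ref{lem:injpi} plus a $\mathbb{Z}\Gamma$-module argument via Proposition \ref{prop:bo_mod} (torsion alone is not enough --- a uniform exponent is needed), and one measuring the discrepancy between a class in $\pi_n(\tilde K)$ and its Hurewicz image, resolved by Proposition \ref{prop:i-1} after a diagram chase mod $\mathcal{BE}$. The paper's proof devotes an entire inner proposition (the homotopy $\Phi$, with three separate constants $r_1,r_2,r_3$) to exactly this, and none of those constants is the period of $\pi_i(X,K)$.

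A second, related gap: your ``local homotopies glued along shared $(n-1)$-cells'' do not assemble as described. The balls of an admissible map are disjoint, and the compatibility between their modifications is mediated by the region $f^{-1}(\tilde X^{(n-1)})\subset S^n$, which maps into $\tilde X^{(n-1)}$ and must itself be deformed into $K$; the identity $\partial u_n(e)=i_nj_n(e)-e-u_{n-1}(\partial e)$ is a statement about rational chains and does not by itself produce compatible homotopies of maps. The paper circumvents the gluing problem by working globally in stages: first adjusting degrees by homotoping through the cells of $q_nu_n(\alpha)$, then cancelling opposite-orientation preimages by dragging balls along paths through the simply connected $(n-1)$-skeleton, which collects all the remaining non-$K$ behavior into a single ball $B$ to which the obstruction analysis above is applied once. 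If you keep your cell-by-cell formulation you still need that analysis; as written, the argument assumes the conclusion of the hardest step.
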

\begin{proof}
  By passing to a homotopy equivalent situation, we assume that $X$ and $K$ have
  the same 1-skeleton and that all boundary maps are admissible.  At the cost of
  increasing $\kappa_n$, we assume $f$ is admissible.

  First, fix some notation.  For every $i$ and $r$, fix degree $r$ maps
  $d_r:(D^i,S^{i-1}) \to (D^i,S^{i-1})$ and $d_r:S^i \to S^i$.  In an abuse of
  notation, we distinguish these only by context.  Given maps $a:I \times
  S^{i-1} \to X$ and $b:D^i \to X$ such that $a|_{\{0\} \times S^{i-1}}=
  b|_{\partial D^i}$, define the map $a \vee b:D^i \to X$ to be ``$a$ on the
  outside and $b$ on the inside.''  Finally, let $q_n$ be the period of the
  group $H_n(\tilde X,\tilde K)$, which has bounded exponent by Corollary
  \ref{cor:bo_pi}.

  Suppose first that $n=2$.  Let $f:S^2 \to X^{(2)}$ be an admissible map.
  Since $\pi_1(X) \cong \pi_1(K)$, given a 2-cell $e$ with attaching map
  $\gamma_e:S^1 \to X^{(1)}$, we can extend $\gamma_e$ to a map $h_e:(D^2,S^1)
  \to (K,K^{(1)})$.  Define a map $f^\prime:S^2 \to K^{(2)} \subset X$ which
  agrees with $f$ on $f^{-1}(X^{(1)})$ and where every homeomorphic preimage of a
  cell $e$ is replaced with $h_e$.  Then $f^\prime$ differs from $f$ by a
  torsion element of $\pi_2(X)$.  Therefore $f \circ d_{q_2}$ is homotopic to
  $f^\prime \circ d_{q_2}$.  This shows the lemma for $n=2$, with $p_2=q_2$.

  The proof for $n \geq 3$ is much more complicated and the homotopy we
  construct involves several steps.  On the interval $[0,1/3]$, we homotope $f
  \circ d_{q_n}$ to an admissible map which has degree zero on all $n$-cells of
  $\tilde X \setminus \tilde K$.  On the interval $[1/3,2/3]$, we cancel out
  preimages with opposite orientations; this leaves us with a map $S^n \to
  X^{(n-1)} \cup K$.  Finally, we show that a particular multiple of this map
  can be homotoped into $K$ without changing the volume.  This last homotopy is
  itself quite involved and involves several intermediate maps, whose
  relationships to each other are sketched in Figure \ref{fig:Phi}.

  We take $f:S^n \to \tilde X$ to be a lift of the original map to the universal
  cover.  To begin the first step, let $\alpha \in C_n(\tilde X)$ be the
  cellular cycle $f_\#([S^n])$.  Fix a lifting homomorphism
  $j_n:C_n(\tilde X;\mathbb{Q}) \to C_n(\tilde K;\mathbb{Q})$ and a
  corresponding chain homotopy $u_n:C_n(\tilde X;\mathbb{Q}) \to
  C_{n+1}(\tilde X;\mathbb{Q})$, and choose a listing $\{e_r\}_{1 \leq r \leq V}$
  of the cells of $q_nu_n(\alpha)$, with multiplicity.  We build a homotopy
  $h:\left[0,\frac{1}{3}\right] \times S^n \to \tilde X$ starting with $h_0=f
  \circ d_{q_n}$.  As $t$ increases, we homotope through each cell $e_r$ so that
  $$h_\#([\{t_r\} \times S^n])=h_\#([\{t_{r-1}\} \times S^n])+\partial e_r.$$
  Thus $h_\#\left(\left[\left\{\frac{1}{3}\right\} \times S^n\right]\right)=
  q_nj_n(\alpha)$.  We can also ensure that $h$ is admissible by leaving it
  constant for a time $\epsi$ between homotoping through cells.  Moreover, we
  can assume that in $h_{1/3}$, the closures of preimages of open cells in
  $\tilde X$ are disjoint closed disks; in other words,
  $h_{1/3}^{-1}(\tilde X^{(n-1)})$ is a path-connected compact manifold.

  On the interval $\left[\frac{1}{3},\frac{2}{3}\right]$, we cancel out all
  preimages of cells with opposite orientations; this way, the cellular volume
  of $h_{2/3}$ will be $\lVert q_nj_n(\alpha) \rVert_1$.  Choose two preimages of
  an $n$-cell under $h_{1/3}$ which have opposite orientation; call these
  $a_1,a_2:B^n \to S^n$ and the interior of the cell $B \subset \tilde X$.  We
  can find a simple smooth path $\gamma$ through the interior of
  $h_{1/3}^{-1}(\tilde X^{(n-1)})$ whose endpoints are a point $b_1$ on the
  boundary of $\overline{a_1(B^n)}$ and a point $b_2$ on the boundary of
  $\overline{a_2(B^n)}$ with $f(b_1)=f(b_2)$.  Moreover, $X^{(n-1)}$ is simply
  connected, so $h_{1/3} \circ \gamma$ is a nullhomotopic loop in $X^{(n-1)}$.
  Let $N$ be a tubular neighborhood of $\gamma$ containing a smaller tubular
  neighborhood $N_{1/2}$.  We homotope $h_{1/3}|_N$ in such a way that $N \cap
  h_{1/3+\epsi}^{-1}(B)=N_{1/2}$.  This gives us a disk $D=\overline{a_1(B^n)}
  \cup \overline{N_{1/2}} \cup \overline{a_2(B^n)}$ which maps to our cell with
  degree 0.  Thus we can homotope $h_{1/3+\epsi}|_D$ so that it maps to
  $\tilde X^{(n-1)}$.  After iterating this process for every pair of preimages
  with opposite orientations, we get an $h_{2/3}$ which is admissible and maps
  to each $n$-cell $c$ exactly $\langle q_nj_n(\alpha), c \rangle$ times.

  However, at this point, it's not necessarily true that $h_{2/3}$ maps to
  $\tilde K$.  We have ensured that the image of $h_{2/3}$ is in $\tilde K \cup
  \tilde X^{(n-1)}$; moreover, by homotoping neighborhoods of paths through
  $\tilde X^{(n-1)}$ into $\tilde K^{(1)}$, we can ensure that $h_{2/3}^{-1}(
  \tilde K)$ is connected, and $h_{2/3}^{-1}(\tilde X \setminus \tilde K)$ is
  contained in a ball $B$ with $h_{2/3}(B) \subset \tilde X^{(n-1)}$.  We can
  assume that $B$ is the upper hemisphere and $d_r$ preserves it.
  \begin{prop}
    There is an $r=r(X,K,n)>0$ such that $h_{2/3}|_B \circ d_r$ deforms rel
    boundary to $K^{(n-1)}$ via a homotopy $\Phi:\left[\frac{2}{3},1\right]
    \times B \to X$.
  \end{prop}
  \begin{proof}
    By Corollary \ref{cor:bo_pi}, $\pi_n(\tilde X,\tilde K)$ also has bounded
    exponent, while $\pi_{n+1}(\tilde X,\tilde K) \cong
    H_{n+1}(\tilde X,\tilde K) \mod \mathcal{BE}$.

    \begin{figure}
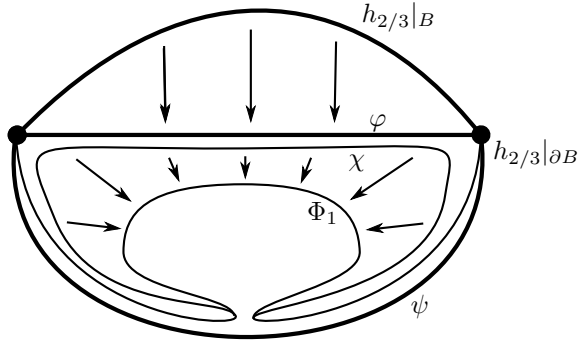

      \begin{center}
        \include{skeleton_lemma}
        \caption{The stages of the homotopy $\Phi$.} \label{fig:Phi}
      \end{center}
    \end{figure}

    Since $h_{2/3}|_B$ defines an element of $\pi_n(\tilde X,\tilde K)$, there's
    some $r_1(X,K,n)$ such that $h_{2/3}|_B \circ d_{r_1}$ deforms rel boundary
    to a map $\ph:B \to K$.

    By Proposition\ \ref{lem:injpi}, the inclusion $\tilde K^{(n-1)}
    \hookrightarrow \tilde X^{(n-1)}$ induces a map on $\pi_{n-1}$ with torsion
    kernel.  Call $\Gamma:=\pi_1(X)$ and consider the diagram
    $$
    \xymatrix{
      \mathbb{Z}\Gamma^{\#\{n\text{-cells of }K\}} \ar[r] \ar@{^{(}->}[d] \ar[rd]^\theta&
      \pi_{n-1}(\tilde K^{(n-1)}) \ar@{->>}[r] \ar[d]^{\iota_*} &
      \pi_{n-1}(\tilde K) \ar[r] \ar[d]^{\wr}_{\text{\rotatebox[origin=c]{-90}{$\mathcal{BE}$}}} & 0 \\
      \mathbb{Z}\Gamma^{\#\{n\text{-cells of }X\}} \ar[r] &
      \pi_{n-1}(\tilde X^{(n-1)}) \ar@{->>}[r] & \pi_{n-1}(\tilde X) \ar[r] & 0
    }
    $$
    induced by the inclusions $K^{(n-1)} \hookrightarrow K$ and $X^{(n-1)}
    \hookrightarrow X$.  From a diagram chase, we get that $\ker \iota_*$ is an
    extension of a bounded exponent group by $A:=
    \mathbb{Z}\Gamma^{\#\{n\text{-cells of }K\}}/\ker\theta$; since $\ker \iota_*$
    is known to be torsion, $A$ must be torsion, and thus by Proposition
    \ref{prop:bo_mod} it has bounded exponent.  Therefore $\ker \iota_*$ itself
    has bounded exponent, and there is an $r_2(X,K,n)$ such that
    $h_{2/3}|_{\partial B} \circ d_{r_2}$ is nullhomotopic in $\tilde K^{(n-1)}$
    via a nullhomotopy $\psi$.

    Together, $\psi \circ d_{r_1}$ and $\ph \circ d_{r_2}$ give us a map
    $$\chi:=\psi \circ d_{r_1} \vee \ph \circ d_{r_2}:S^n \to \tilde K$$
    which is homotopic in $\tilde X$ to
    $$\psi \circ d_{r_1} \vee h_{2/3}|_B \circ d_{r_1} \circ d_{r_2}:S^n \to
    \tilde X^{(n-1)}.$$
    Now, we have the diagram (with decorations mod $\mathcal{BE}$)
    $$
    \xymatrix{
      \pi_{n+1}(\tilde X,\tilde K) \ar[r]
      \ar[d]^{\text{\rotatebox[origin=c]{-90}{$\cong$}}} &
      \pi_n(\tilde K) \ar@{->>}[r]^{\iota_*} \ar[d]^{h_K} &
      \pi_n(\tilde X) \ar[d]^{h_X} \\
      H_{n+1}(\tilde X,\tilde K) \ar[r] & H_n(\tilde K) \ar@{->>}[r] &
      H_n(\tilde X),
    }
    $$
    where the rows are exact.  We know that $h_X(\iota_*[\chi])=0$; on the other
    hand, a diagram chase confirms that for some $r_3(X,K,n)$, there is an
    $\alpha \in \pi_n(\tilde K)$ with $\iota_*\alpha=0$ and $h_K(\alpha)=
    r_3h_K[\chi]$.  By Prop.\ \ref{prop:i-1}, $r_3[\chi]-\alpha$ has a
    representative $\hat\chi$ whose image is in $\tilde K^{(n-1)}$.  In
    particular, $\hat\chi \simeq \chi \circ d_{r_3}$ in $\tilde X$.

    Set $r=r_1r_2r_3$ and let $\Phi$ be a homotopy that takes $h_{2/3}|_B \circ
    d_r$ to $\psi \circ d_{r_1} \circ d_{r_3} \vee \chi \circ d_{r_3}$ and then
    deforms $\chi \circ d_{r_3}$ to $\hat\chi$.
  \end{proof}
  By precomposing with $d_r$ on the interval $[0,\frac{2}{3}]$, we then get a
  homotopy $H:[0,1] \times S^{n-1} \to \tilde X$ with
  $$H(t,x)=\left\{
  \begin{array}{l l}
    h_t(d_r(x))&\mbox{if }t \in \left[0,\frac{2}{3}\right]\\
    h_{2/3}(d_r(x))&\mbox{if }t \in \left[\frac{2}{3},1\right]\mbox{ and }x \in
    S^{n-1} \setminus B \\
    \Phi(x)&\mbox{if }t \in \left[\frac{2}{3},1\right]\mbox{ and }x \in B
  \end{array}\right.$$
  and $H_1$ lands in $\tilde K$ with $H_1([S^n])=rq_nj_n(\alpha)$, is admissible
  and has no cells of opposite orientations.  In particular, if we let $t_n=
  \max\{|j_n(e)|: e\text{ is an $n$-cell of }X\}$, $\vol g \leq rq_nt_n\vol f$.
  Thus we can set $p_n=rq_n$, and in our homotopy equivalent setup $\kappa_n=
  rq_nt_n$ and $g=H_1$.  The homotopy equivalence may impose a penalty on the
  constant.
\end{proof}
More generally, if a map $\ph:Y \to X$ obeys the same conditions on homotopy,
then using the mapping cylinder, we can prove that for any $f:S^n \to X$, $rf$
lifts to a map of volume $Ck$ for constants $C(\ph,n)$ and $r(\ph,n)$.  As a
corollary, we have Theorem A.
\begin{thm}[Rational invariance of volume distortion] \label{thm:QInv}
  Let $X$ and $Y$ be rationally $n$-equivalent finite complexes, with
  $n$-equivalences $X \xrightarrow{f} Z \xleftarrow{g} Y$.  Then for any $\alpha
  \in \pi_n(X)$ and $\beta \in \pi_n(Y)$ such that $f_*\alpha=g_*\beta$,
  $\VD_\alpha \sim \VD_\beta$.  Similarly, for any finite-dimensional $V
  \subseteq \pi_n(X) \otimes \mathbb{Q}$ and $W \subseteq \pi_n(Y) \otimes
  \mathbb{Q}$ which are sent to the same subspace of $\pi_n(Z) \otimes
  \mathbb{Q}$, $\VD_V \sim \VD_W$.
\end{thm}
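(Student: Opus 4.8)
The plan is to reduce to a single map and push representatives through Lemma \ref{lem:riv}. Since $X \to Z$ and $Y \to Z$ are both rational $n$-equivalences of finite complexes and $f_*\alpha = g_*\beta$, it suffices to prove: for a rational $n$-equivalence $f\colon X \to Z$ of finite complexes and any $\alpha \in \pi_n(X)$, one has $\VD_\alpha \sim \VD_{f_*\alpha}$ (and likewise for a finite-dimensional subspace $V$). Granting this, $\VD_\alpha \sim \VD_{f_*\alpha} = \VD_{g_*\beta} \sim \VD_\beta$. Because the volume distortion of $\pi_n$ depends only on the $(n+1)$-skeleton, I would first replace $X$ and $Z$ by finite $(n+1)$-complexes; passing to the mapping cylinder of a cellular model of $f$ (or, equivalently, invoking the ``more general'' form of Lemma \ref{lem:riv} for maps stated just above), I may assume $X$ is a subcomplex of $Z$ with the inclusion rationally $n$-connected and, being a rational $n$-equivalence, inducing an isomorphism $f_*\otimes\mathbb{Q}\colon\pi_n(X)\otimes\mathbb{Q}\to\pi_n(Z)\otimes\mathbb{Q}$.

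One inequality is soft. A Lipschitz cellular representative of $f$ composed with an admissible map multiplies cellular volume by at most a fixed constant, after applying the Federer--Fleming deformation theorem to return to an admissible map; hence a volume-$\le k$ representative of $m\alpha$ yields a volume-$\le Ck$ representative of $m f_*\alpha$, so $\VD_\alpha(k)\le\VD_{f_*\alpha}(Ck)$ and $\VD_\alpha\lesssim\VD_{f_*\alpha}$.

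For the reverse inequality I would use Lemma \ref{lem:riv} with the pair $(Z,X)$: it supplies constants $p=p_n(Z,X)$ and $\kappa=\kappa_n(Z,X)$ so that any admissible $h\colon S^n\to Z$ of cellular volume $k$ representing $m f_*\alpha$ is matched by an admissible $g\colon S^n\to X$ with $ph\simeq g$ in $Z$ and $\vol g\le\kappa k+\kappa$. Then $f_*[g]=p\,m\,f_*\alpha=f_*(pm\alpha)$, so $\tau:=[g]-pm\alpha\in\ker(f_*\colon\pi_nX\to\pi_nZ)$. If this kernel has bounded exponent $N$, then $N\tau=0$, and $Npm\alpha=N[g]$ has an admissible representative of volume $\le N\vol g\le N\kappa(k+1)$; hence $\VD_\alpha(N\kappa(k+1))\ge Np\,\VD_{f_*\alpha}(k)$, giving $\VD_{f_*\alpha}\lesssim\VD_\alpha$.

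The crux is therefore the claim that $\ker(f_*\colon\pi_nX\to\pi_nZ)$ has bounded exponent. Since $f_*\otimes\mathbb{Q}$ is an isomorphism this kernel is a torsion group, and by the exact sequence of the pair it is a quotient of $\pi_{n+1}(Z,X)$. The rational Wall theorem \ref{thm:QWall} (condition $F_{n+1}(\mathbb{Q})$, available because $Z$ is rationally equivalent to a complex with finite $(n+1)$-skeleton) makes $\pi_{n+1}(Z,X)\otimes\mathbb{Q}$ a finitely generated $\mathbb{Q}\pi_1$-module, while the generalized Hurewicz theorem (as in Corollary \ref{cor:bo_pi}) identifies $\pi_{n+1}(Z,X)$ with $H_{n+1}(\tilde Z,\tilde X;\mathbb{Z})$ modulo $\mathcal{BE}$; a torsion quotient of such a group is a finitely generated torsion $\mathbb{Z}\pi_1$-module modulo $\mathcal{BE}$, hence of bounded exponent by Propositions \ref{prop:bo_mod} and \ref{prop:Serre}. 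I expect this passage --- promoting the rational statement ``the rationalization of $[g]$ equals $pm$ times that of $\alpha$'' to the bounded-exponent integral statement needed to control the element functional $\VD_\alpha$ --- to be the main obstacle, and it is exactly where the effective (bounded-exponent) refinements of Section 1 are indispensable rather than mere rational isomorphisms. The subspace version is then easier: $\VD_V$ is computed from a norm on $V\subseteq\pi_n(X)\otimes\mathbb{Q}$, so no torsion reconciliation is needed; the soft direction again composes with $f$, and the other direction applies Lemma \ref{lem:riv} as above, using that the rationalization of $[g]$ is $p\,(f_*\otimes\mathbb{Q})^{-1}$ applied to the rationalization of $[h]$, which lies in $V$, and that the linear isomorphism $f_*\otimes\mathbb{Q}\colon V\to f_*V$ changes norms by a bounded factor.
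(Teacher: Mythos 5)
Your overall route is the same as the paper's: reduce to a single rational $n$-equivalence $f\colon X\to Z$, get the easy inequality by composing with a Lipschitz model of $f$, and get the reverse inequality from Lemma \ref{lem:riv} applied to the pair $(Z,X)$. (The paper's own proof is exactly this, in two lines; it invokes Corollary \ref{lem:QFin} to make $Z$ finite, which you also need --- ``volume distortion depends only on the $(n+1)$-skeleton'' does not by itself make $Z^{(n+1)}$ finite, since $Z$ is not assumed finite in the theorem.) You are right, and more careful than the paper, to flag that Lemma \ref{lem:riv} only pins down $f_*[g]$, so that $\tau:=[g]-pm\alpha$ is a torsion element of $\ker f_*$ that must be disposed of \emph{uniformly}.

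The gap is in how you dispose of $\tau$. From ``$\pi_{n+1}(Z,X)\otimes\mathbb{Q}$ is a finitely generated $\mathbb{Q}\Gamma$-module'' you cannot conclude that a torsion quotient of $\pi_{n+1}(Z,X)$ is a finitely generated torsion $\mathbb{Z}\Gamma$-module: rational finite generation says nothing about integral torsion (compare $\bigoplus_k\mathbb{Z}/k$, whose rationalization is zero but whose exponent is unbounded), so Propositions \ref{prop:Serre} and \ref{prop:bo_mod} do not apply as you invoke them, and Corollary \ref{cor:bo_pi} bounds the exponent of $\pi_i(Z,X)$ only for $i\le n$, not of $\img\bigl(\partial\colon\pi_{n+1}(Z,X)\to\pi_n(X)\bigr)$. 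Fortunately you do not need $\ker f_*$ to have bounded exponent; you only need a uniform $q$ with $\lvert q\tau\rvert_{\vol}=0$, and that is available from the paper's tools. Because $f$ is a rational $n$-\emph{equivalence} (not merely rationally $n$-connected), the boundary map $H_{n+1}(\tilde Z,\tilde X;\mathbb{Q})\to H_n(\tilde X;\mathbb{Q})$ vanishes, so $i_*$ is injective on $H_n(\cdot\,;\mathbb{Q})$ and the lifting homomorphism of Proposition \ref{prop:chain_ho} extends to a chain map $j_{n+1}\colon C_{n+1}(\tilde Z;\mathbb{Q})\to C_{n+1}(\tilde X;\mathbb{Q})$ with denominators dividing a single integer $q$. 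Applying $j_{n+1}$ to a relative cycle shows that $q$ kills the image of $\partial\colon H_{n+1}(\tilde Z,\tilde X;\mathbb{Z})\to H_n(\tilde X;\mathbb{Z})$; by naturality of the Hurewicz map, $h_X(\tau)$ lies in that image, so $q\tau\in\ker\bigl(h_X\colon\pi_n(\tilde X)\to H_n(\tilde X;\mathbb{Z})\bigr)$, and Proposition \ref{prop:i-1} applied to the simply connected $\tilde X$ gives $q\tau$ a representative in $\tilde X^{(n-1)}$, hence of zero volume. With this substitution your argument closes; the subspace case, as you note, sidesteps the issue entirely since only the rationalization of $[g]$ matters there.
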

\begin{proof}
  By Corollary \ref{lem:QFin}, we can assume that $Z$ is also finite; it is not
  necessarily the case then that $f_*\alpha=g_*\beta$, but certainly this is
  true for some multiple of $\alpha$ and $\beta$.

  For any $k$,
  $$\lvert kf_*\alpha \rvert_{\vol} \leq (\Lip f)^k\lvert k\alpha\rvert_{\vol}.$$
  Conversely, by Lemma \ref{lem:riv}, there are a $p_n(X,Z)$ and a
  $\kappa_n(X,Z)$ such that
  $$\lvert p_nk\alpha \rvert_{\vol} \leq \kappa_n\lvert kf_*\alpha \rvert_{\vol}+
  \kappa_n.$$
  Therefore the distortion functions of $\alpha$ and $f_*\alpha$ are
  asymptotically equivalent.  The same holds for $\beta$ and $g_*\beta$.  The
  proof of the subspace case follows.
\end{proof}

This fact allows us to ignore torsion information when studying volume
distortion functions.  Indeed, in our subsequent discussion, we will often speak
of spaces and maps ``up to rational equivalence''.  However, we conjecture that
such an approach is not sufficient for studying Lipschitz distortion.
\begin{ex} \label{ex:Lip}
  Let $X$ be the total space of a fibration $S^3 \to X \to T^4$ with Euler class
  equal to the fundamental class $[T^4]$.  Define a map $f_k:S^3 \to
  \widetilde{T^4}$ which sends $S^3$ to the sides of a $4$-cube with side
  length $k$.  This map is $Ck$-lipschitz, for $C$ independent of $k$, and as
  will be discussed it lifts to a $Ck$-lipschitz map $g_k:S^3 \to X$ which is a
  representative of $k^4\alpha$, where $\alpha$ is a generator of $\pi_3(X)$.
  Thus $\alpha$ is lipschitz distorted in $X$ with $\LD_\alpha(k) \gtrsim k^4$.

  On the other hand, let $Z=(T^3)^{(2)} \cup_{a \circ u} D^3$ be the space
  mentioned above which has a rational equivalence $i:Z \to T^3$.  Then the map
  $\id \times i:S^1 \times Z \to T^4$ is also a rational equivalence.  Thus we
  can define a pullback fibration with total space $Y=(\id \times i)^*X$, and
  $Y \to X$ is also a rational equivalence.  On the other hand, suppose that
  $\LD_\alpha(k) \gtrsim k^4$.  It is possible to show that this is equivalent
  to the existence of admissible maps $f_k:(D^4,S^3) \to \tilde Z$ of volume
  $k^4$ with admissible boundary such that $\Lip(f_k|_{S^3}) \lesssim k$.  By
  composing $f_k|_{S^3}$ with a projection onto $\tilde Z$, we get a sequence of
  $Ck$-Lipschitz admissible maps $D^3 \to \tilde Z$ with area $C^\prime k^3$.
  Although we don't know a proof, we suspect that such maps do not exist.

  This would mean that $Y$ and $X$ have asymptotically different Lipschitz
  distortion functions.  In other words, torsion matters for Lipschitz
  distortion, at least for spaces with a nontrivial fundamental group.
\end{ex}

\section{Sources of volume distortion}
In this section, we discuss two ways volume distortion can be induced in finite
complexes: homological triviality and actions by the fundamental group.  The
ultimate goal of this is to show that conditions (1) and (2) of Theorem
\ref{thmA} are necessary, but we will allow ourselves various detours along the
way to explore these phenomena in greater detail.

Both of these phenomena induce a particularly coarse kind of distortion.
Throughout the section, let $X$ be a finite CW complex with universal cover
$\tilde X$ and fundamental group $\Gamma$.  Restating the previous section's
definition, a class $\alpha$ or finite-dimensional subspace $V$ in $\pi_n(X)
\otimes \mathbb{Q}$ is \emph{infinitely distorted} if there is some constant $C$
such that there are arbitrarily large $k$ (respectively, arbitrarily large
vectors $\vec v \in V$) such that $|k\alpha|_{\vol} \leq C$ (respectively,
$|\vec v|_{\vol} \leq C$.)  As discussed in the previous section, this does not
necessarily mean that the volume of all multiples of $\alpha$ is uniformly
bounded.  Thus, by defining distortion the way we do rather than simply studying
the function $f(k)=\lvert k\alpha \rvert_{\vol}$, we are obscuring a certain
amount of complexity.

Lipschitz distortion in simply connected spaces is fairly subtle.  It has been
previously studied by Gromov in \cite{GrDil}, \cite{GrQHT}, and \cite{GrMS}.  On
the other hand, volume distortion is trivial in the simply connected case: all
elements either have all multiples with uniformly bounded volume, or are
undistorted, depending on whether they are homologically trivial.

Recall that we use the notation $h_n:\pi_n(X) \otimes \mathbb{Q} \to
H_n(X;\mathbb{Q})$ to refer to the rational Hurewicz homomorphism. 
\begin{thm} \label{thm:sc}
  Suppose $X$ is a simply connected complex and $\alpha \in \pi_n(X) \otimes
  \mathbb{Q}$.  Then $k\alpha$ has a representative of zero volume for some $k$
  if and only if $h_n(\alpha)=0$; otherwise $\alpha$ is undistorted.
\end{thm}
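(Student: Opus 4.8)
The plan is to prove the two characterizations separately, treating the case $h_n(\alpha)=0$ first and then establishing a linear lower bound on $\lvert k\alpha\rvert_{\vol}$ when $h_n(\alpha)\neq 0$. We may assume $\alpha\neq 0$ in $\pi_n(X)\otimes\mathbb{Q}$, since otherwise both sides are trivially true, and we choose $k$ divisible enough that $k\alpha$ lifts to $\pi_n(X)$; the asymptotics of $\VD_\alpha$ do not see this choice.

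Suppose first that $h_n(\alpha)=0$. By Proposition~\ref{prop:i-1} there is a $k>0$ such that $k\alpha$ has a representative $f\colon S^n\to X^{(n-1)}$. At almost every point the differential $df_x$ factors through $T_{f(x)}X^{(n-1)}$, a space of dimension at most $n-1$, so $\Jac(f)=0$ a.e.\ and $\vol f=0$. Conversely, suppose $k\alpha$ has a representative $g$ with $\vol g=0$. By the Federer--Fleming deformation theorem, $g$ is homotopic to an admissible map $g'$ with $\vol_C(g')\le c\vol g=0$; an admissible map of cellular volume $0$ takes values in $X^{(n-1)}$. Since $H_n(X^{(n-1)};\mathbb{Q})=0$, the Hurewicz image of $k\alpha$ vanishes, hence $h_n(\alpha)=0$. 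This proves the first sentence of the theorem.

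Now suppose $h_n(\alpha)\neq 0$; we must show $\VD_\alpha$ is linear. The upper bound $\lvert k\alpha\rvert_{\vol}\le k\lvert\alpha\rvert_{\vol}$ holds by precomposing a fixed representative of $\alpha$ with a degree-$k$ self-map of $S^n$, and $\VD_\alpha$ is always at least linear, so it suffices to prove $\lvert k\alpha\rvert_{\vol}\gtrsim k$. Since $H^n(X;\mathbb{Q})$ is dual to $H_n(X;\mathbb{Q})$, after clearing denominators we may fix a cellular cocycle $\phi\in C^n(X;\mathbb{Z})$ with $\langle\phi,h_n(\alpha)\rangle=a\neq 0$. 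Given a Lipschitz representative $f$ of $k\alpha$, the deformation theorem replaces it by an admissible $g\simeq f$ with $\vol_C(g)\le c\vol f$; the cellular cycle $g_\#([S^n])$ represents $k\,h_n(\alpha)$, so
$$ka=\langle\phi,g_\#([S^n])\rangle\qquad\text{and}\qquad |ka|\le\lVert\phi\rVert_\infty\,\lVert g_\#([S^n])\rVert_1\le\lVert\phi\rVert_\infty\,\vol_C(g)\le c\lVert\phi\rVert_\infty\vol f.$$
Hence $\vol f\ge |a|k/(c\lVert\phi\rVert_\infty)$, so $\lvert k\alpha\rvert_{\vol}\gtrsim k$, and $\alpha$ is undistorted.

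There is no serious obstacle here: the proof is a repackaging of Proposition~\ref{prop:i-1} and the deformation theorem, and the only care needed is bookkeeping --- interpreting ``representative of $k\alpha$'' rationally as above, invoking the deformation theorem for a finite CW complex via the Lipschitz simplicial approximation noted after its statement, and keeping the comparison $c\vol f\le\vol_C f\le C\vol f$ pointed in the right direction in each of the two estimates. If one wants ``undistorted'' to include the Lipschitz statement $\LD_\alpha(k)\sim k^n$, that part is not addressed by this argument and would need the separate (known) analysis of Lipschitz distortion in simply connected spaces; in the context of this section ``undistorted'' should be read as ``volume undistorted,'' i.e.\ $\VD_\alpha$ linear.
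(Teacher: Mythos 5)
Your proof is correct and follows essentially the same route as the paper's: Proposition~\ref{prop:i-1} handles the homologically trivial case, and pairing an admissible representative with a cocycle dual to $h_n(\alpha)$ gives the linear volume lower bound in the other case. You spell out the deformation-theorem bookkeeping and the ``only if'' direction more explicitly than the paper does, but the underlying argument is identical (and your closing worry is unneeded, since a linear lower bound on $\lvert k\alpha\rvert_{\vol}$ already forces $\LD_\alpha(k)\sim k^n$ via the inequality $\vol f\lesssim(\Lip f)^n$).
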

\begin{proof}
  If $h_n(\alpha) \neq 0$, then there is a cohomology class $\omega \in H^n(X;
  \mathbb{Q})$ which pairs nontrivially with any representative of $k\alpha$,
  giving some $kC$.  Thus $k\alpha$ cannot have zero volume and in fact the
  cellular volume
  $$\vol_C(k\alpha) \geq kC/\min_{n\text{-cells $c$ of }X} \omega(c).$$
  In particular, $\alpha$ is undistorted.

  On the other hand, if $h_n(\alpha)=0$ then some $k\alpha$ has a representative
  of zero volume by Proposition \ref{prop:i-1}.
\end{proof}
We can make the same observation for piecewise Riemannian complexes using
differential forms and the metric definition of volume.   Indeed, we can apply
it even to non-compact spaces by requiring our differential forms to be bounded.
This prefigures the use of $L_\infty$ cohomology later in the paper.  For the
moment, however, we restrict ourselves to exploring how this relates distortion
to the homology of covering spaces.
\begin{defn}
  Given a $k$-form $\omega$ on a piecewise Riemannian space $(X,g)$, let
  $\lVert\omega\rVert_\infty=\sup \langle \omega, (v_1,\ldots,v_k) \rangle$,
  where the supremum ranges over all orthonormal frames $(v_1,\ldots,v_k)$.  We
  say $\omega$ is \emph{bounded} if $\lVert \omega \rVert_\infty<\infty$.  It is
  clear that a form on a compact space is bounded, and that the boundedness of
  a form on $\tilde X$ with respect to a Riemannian metric lifted from a compact
  $X$ is independent of the metric.  We may therefore speak of bounded forms on
  $X$ and $\tilde X$ without specifying a metric.
\end{defn}
\begin{lem} \label{lem:duh}
  If $\alpha \in \pi_n(X)$ and there is a bounded closed form $\omega \in
  \Omega^n(\tilde X)$ such that $\langle \omega,\alpha \rangle=c \neq 0$, then
  $\alpha$ is volume-undistorted.
\end{lem}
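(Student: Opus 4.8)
The plan is to establish the lower bound $\lvert k\alpha\rvert_{\vol} \gtrsim k$; since $\lvert k\alpha\rvert_{\vol} \lesssim k$ always (precompose a fixed Lipschitz representative of $\alpha$ with an orientation-preserving degree $k$ self-map of $S^n$), this will give $\lvert k\alpha\rvert_{\vol} \sim k$, i.e.\ that $\alpha$ is volume-undistorted. Write $B := \lVert\omega\rVert_\infty < \infty$.

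First I would fix the lift. Since $n \geq 2$, the covering $p\colon\tilde X \to X$ induces an isomorphism $p_*\colon\pi_n(\tilde X,\tilde x_0) \xrightarrow{\ \sim\ } \pi_n(X,x_0)$ for any lift $\tilde x_0$ of the basepoint; let $\tilde\alpha = p_*^{-1}(\alpha)$ and fix a Lipschitz representative $\tilde f\colon(S^n,\ast)\to(\tilde X,\tilde x_0)$ of $\tilde\alpha$, so that $c = \langle\omega,\alpha\rangle = \int_{S^n}\tilde f^*\omega$. This integral is a homotopy invariant: for a Lipschitz homotopy $G\colon S^n\times[0,1]\to\tilde X$ between two such maps, Stokes gives $\int_{S^n}\tilde f_1^*\omega - \int_{S^n}\tilde f_0^*\omega = \int_{S^n\times[0,1]}G^*(d\omega) = 0$, using only that $\omega$ is closed and that $S^n\times[0,1]$ is compact, so the noncompactness of $\tilde X$ causes no trouble. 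Now let $g\colon(S^n,\ast)\to(X,x_0)$ be an arbitrary Lipschitz representative of $k\alpha$. As $S^n$ is simply connected, $g$ has a unique lift $\tilde g\colon(S^n,\ast)\to(\tilde X,\tilde x_0)$, and $p_*[\tilde g] = k\alpha = p_*(k\tilde\alpha)$ forces $[\tilde g] = k\tilde\alpha$ in $\pi_n(\tilde X,\tilde x_0)$. Representing $k\tilde\alpha$ by collapsing $S^n$ onto a wedge of $k$ copies of $\tilde f$ and applying Stokes once more, $\int_{S^n}\tilde g^*\omega = k\int_{S^n}\tilde f^*\omega = kc$.

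Finally I would pass to a volume bound via comass. At a point of differentiability $x$ of $\tilde g$, choose a positively oriented orthonormal frame $e_1,\dots,e_n$ of $T_xS^n$; then $(d\tilde g_x\,e_1)\wedge\dots\wedge(d\tilde g_x\,e_n)$ has mass $\lvert\Jac\tilde g\rvert(x)$, hence equals $\lvert\Jac\tilde g\rvert(x)\,\xi$ for a unit decomposable $n$-vector $\xi$, so by the definition of $\lVert\omega\rVert_\infty$ we have $\lvert\langle\tilde g^*\omega,(e_1,\dots,e_n)\rangle\rvert \leq B\,\lvert\Jac\tilde g\rvert(x)$. Since $p$ is a local isometry, $\vol(\tilde g) = \vol(g)$, and integrating over $S^n$ gives
$$k\lvert c\rvert = \left|\int_{S^n}\tilde g^*\omega\right| \leq B\int_{S^n}\lvert\Jac\tilde g\rvert\,d\vol = B\,\vol(g).$$
As $g$ was an arbitrary Lipschitz representative of $k\alpha$, this yields $\lvert k\alpha\rvert_{\vol} \geq (\lvert c\rvert/B)\,k$, completing the proof. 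There is no genuine obstacle here; the only steps that require any care are the consistent use of the single basepoint $\tilde x_0$ when lifting both $\tilde f$ and $g$---so that the integral over $\tilde g$ comes out to exactly $k\langle\omega,\alpha\rangle$---and the observation that it is closedness of $\omega$ together with compactness of the source, rather than boundedness, that makes the integral a homotopy invariant, while boundedness is precisely what converts the resulting identity into a linear lower bound on volume.
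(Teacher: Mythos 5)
Your proof is correct and is exactly the paper's argument (the paper compresses it to one line: $\langle\omega,f\rangle = ck$ forces $\vol f \geq ck/\lVert\omega\rVert_\infty$), with the lifting, Stokes/homotopy-invariance, and comass steps spelled out in full. Nothing further is needed.
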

\begin{proof}
  If $f:S^n \to X$ is a map representing $k\alpha$, then $\langle \omega, f
  \rangle=ck$, and so $\vol f \geq ck/\lVert \omega \rVert_\infty \sim k$.
\end{proof}
In particular, if a class is has a nonzero image under the Hurewicz homomorphism
in some finite cover of $X$, then this condition is satisfied.  One might even
hope---as it turns out in Example \ref{ex:BS}, incorrectly---that this is the
only way such bounded cocycles can be generated.
\begin{cor} \label{cor:cover}
  Let $\alpha \in \pi_n(X)$.  If for some finite cover $\ph:Y \to X$ and lift
  $\tilde\alpha \in \pi_n(Y)$ of $\alpha$, $0 \neq h_n(\tilde\alpha) \in
  H_n(Y)$, then $\alpha$ is volume-undistorted.
\end{cor}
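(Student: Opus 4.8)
The plan is to deduce this directly from Lemma \ref{lem:duh} by manufacturing a bounded closed $n$-form on $\tilde X$ that pairs nontrivially with $\alpha$. The one structural point to exploit is that a finite cover of a finite complex is again a finite complex, so that forms on it are automatically bounded.

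First I would note that since $X$ is finite and $\varphi:Y\to X$ is a finite cover, $Y$ is a finite, hence compact, CW complex, and that the universal covering $p:\tilde X\to X$ factors as $\tilde X\xrightarrow{q}Y\xrightarrow{\varphi}X$, where $q$ is the universal covering of $Y$ (the composite of two coverings with simply connected total space is the universal cover). Fix a piecewise Riemannian metric on $Y$ and the lifted metric on $\tilde X$, so $q$ is a local isometry. Next, because $h_n(\tilde\alpha)\neq 0$ in $H_n(Y;\mathbb{Q})$ and $Y$ is compact, the universal coefficient theorem supplies a class $[\omega]\in H^n(Y;\mathbb{Q})$ with $\langle[\omega],h_n(\tilde\alpha)\rangle\neq 0$; represent it by a closed differential form $\omega\in\Omega^n(Y)$, which is bounded since $Y$ is compact. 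Then $q^*\omega\in\Omega^n(\tilde X)$ is closed and bounded, as $q$ is a local isometry.

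It remains to check that $q^*\omega$ pairs nontrivially with $\alpha$, so that Lemma \ref{lem:duh} applies. For $n\geq 2$ the covering maps induce isomorphisms $\pi_n(X)\cong\pi_n(Y)\cong\pi_n(\tilde X)$, and under these $\alpha$ corresponds to $\tilde\alpha$ by the very meaning of ``$\tilde\alpha$ is a lift of $\alpha$''. Hence if $f:S^n\to X$ represents $k\alpha$ and $\tilde f:S^n\to\tilde X$ is a lift (possible since $S^n$ is simply connected), then $q\circ\tilde f:S^n\to Y$ represents $k\tilde\alpha$, so $\int_{S^n}\tilde f^*(q^*\omega)=k\langle[\omega],h_n(\tilde\alpha)\rangle\neq 0$. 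By Lemma \ref{lem:duh}, $\alpha$ is volume-undistorted. (Alternatively, one can run the same estimate on $Y$ itself, using that $\varphi$ is a local isometry so $\vol f=\vol(q\circ\tilde f)$ — but phrasing it through $\tilde X$ exhibits the result as a genuine corollary of Lemma \ref{lem:duh}.)

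There is no real obstacle here; the only thing needing attention is the role of finiteness of the cover, which is precisely what guarantees $\omega$, and hence $q^*\omega$, is bounded — and this is exactly the hypothesis that cannot be dropped, as Example \ref{ex:BS} will show. A secondary bookkeeping point is the claim that the $q$-lift of a representative of $k\alpha$ represents $k\tilde\alpha$; this is immediate because $\varphi_*:\pi_n(Y)\to\pi_n(X)$ is an isomorphism for $n\geq 2$, so the lifted class is uniquely determined and must be $k\tilde\alpha$.
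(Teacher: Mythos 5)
Your proof is correct and follows essentially the same route as the paper: pull back to $\tilde X$ a (necessarily bounded, by compactness of $Y$) closed form on $Y$ detecting $h_n(\tilde\alpha)$, and apply Lemma \ref{lem:duh}. The only cosmetic difference is that you invoke the universal coefficient theorem to produce the dual cohomology class where the paper writes down the cell-wise dual cochain directly.
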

\begin{proof}
  Let $\pi:\tilde X \to X$ be the universal cover.  It is enough to show that
  there is a bounded form $\omega$ on $\tilde X$ and lift $\tilde{\tilde\alpha}$
  of $\tilde\alpha$ to $\tilde X$ such that $\langle \omega,
  h_n(\tilde{\tilde\alpha}) \rangle \neq 0$.  We can define a form on $Y$ which
  integrates on cells to the cellular cochain which is cell-wise dual to
  $h_n(\tilde\alpha)$.  The pullback of this form to $\tilde X$ is the form we
  are looking for.
\end{proof}
We have an example in which this applies nontrivially:
\begin{ex}
  Let $A=S^2 \vee S^1 \vee S^1$, with $x$, $a$, and $b$ denoting the identity
  maps on the three spheres and $\cdot$ denoting the action of $\pi_1A$ on
  $\pi_2A$.  For some function $f \simeq x+a \cdot x+b \cdot x$, set $X=A \cup_f
  D^3$.  Then in $H_2(X)$, $x$ represents an element of order 3, and so
  $h_2([x])=0$.

  On the other hand, let $Y$ be the threefold cover of $X$ corresponding to the
  normal subgroup $\lpair a^3, ab \rpair$; this has three 2-cells which we may
  call $x_1$, $x_2$, and $x_3$.  In the cellular homology of $Y$, the relation
  induced by each lift of the 3-cell is $x_1+x_2+x_3=0$; therefore the $x_i$ are
  homologically nontrivial, and $x$ is undistorted by Corollary \ref{cor:cover}.
\end{ex}
But also an example demonstrating that it is not a necessary condition:
\begin{ex} \label{ex:BS}
  According to \cite{Mesk}, the Baumslag-Solitar group $BS(2,3)=\langle a,b \mid
  ab^2a^{-1}b^{-3} \rangle$ is not residually finite; in particular every
  surjection onto a finite group sends $g:=[a^b,a] \mapsto 1$.  Moreover, $g$
  and $b$ generate a free subgroup of $BS(2,3)$.  Let $X$ be a 2-complex with
  $\pi_1X=BS(2,3)$, and let $Y=(X \vee S^2) \cup_f D^3$, for some $f$ such that
  $[f]=y+g \cdot y+b \cdot y$, where $y$ is the generator of $\pi_2(S^2)$.  To
  show that $y$ is undistorted in $Y$, we need to find a cellular cocycle in
  $H^2(\tilde Y;\mathbb{Q})$ which is nonzero on some lift $\tilde y$ of $y$.
  By the argument in the previous example, such a cocycle can be defined on the
  cells $\langle g,b \rangle \cdot \tilde y$; we can extend it to the other
  cosets of $\langle g,b \rangle$ by defining it to be zero there.

  On the other hand, in any finite cover, $y$ does not have any nontrivial lifts
  from the point of view of rational homology.  Indeed, suppose $\ph:Z \to Y$ is
  a finite cover and $\tilde y$ any lift of $y$ to $Z$.  Then the disk attached
  via $f$ induces the relation $(2+b)h_2(\tilde y)=0$.  Moreover, there's an $r$
  for which $b^r$ acts trivially on $H_2(Z)$, so that
  $$0=(2^r-(-b)^r) h_2(\tilde y)=(2^r+(-1)^{r-1})h_2(\tilde y).$$
  Thus $\tilde y$ is sent via the Hurewicz map to a torsion element of $H_2(Z)$.
\end{ex}

\subsection*{Spaces with injective Hurewicz homomorphisms}
For the time being, we will expand our discussion to possibly infinite spaces
with finite skeleta.  Although these are not compact, distortion is still
well-defined if we stick to admissible maps, independent of the definition of
volume used, or indeed maps $S^n \to X^{(N)}$ for any fixed $N(n)$.  In all
these situations, it is still the case that if $\alpha \in \pi_n(X)$ goes to 0
under the Hurewicz homomorphism, then its volume is zero.  Therefore, in order
for a space to have no distortion, its rational Hurewicz homomorphisms must all
be injective.  Here we classify spaces with this property.
\begin{prop} \label{prop:PiQ}
  Let $X$ be any simply-connected CW complex.  Then the rational Hurewicz
  homomorphism $h_n:\pi_n(X) \otimes \mathbb{Q} \to H_n(X;\mathbb{Q})$ is
  injective for all $n \geq 2$ if and only if $X$ is rationally equivalent to a
  product of Eilenberg--MacLane spaces.
\end{prop}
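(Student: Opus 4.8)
The plan is to reduce to rational spaces and then analyze the Postnikov tower of $X$, turning the vanishing of the kernels of the Hurewicz maps into the rational vanishing of all $k$-invariants. First I would observe that both the hypothesis and the conclusion are invariant under rational equivalence: a rational equivalence of simply connected spaces induces an isomorphism on $\pi_*\otimes\mathbb{Q}$ by definition and on $H_*(\cdot;\mathbb{Q})$ by the relative rational Hurewicz theorem applied to its mapping cylinder, and the Hurewicz homomorphism is natural. So I may assume $X$ is a simply connected rational space, and I must show that $h_n\colon\pi_n(X)\otimes\mathbb{Q}\to H_n(X;\mathbb{Q})$ is injective for all $n\ge2$ if and only if $X$ is homotopy equivalent to the product $P:=\prod_{n\ge2}K(\pi_n(X)\otimes\mathbb{Q},n)$ of Eilenberg--MacLane spaces. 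The ``if'' direction is then a direct computation: in $P$ only the $n$-th factor contributes to $\pi_n$, so $\pi_n(P)\otimes\mathbb{Q}=\pi_n(X)\otimes\mathbb{Q}$, and by the Künneth theorem the Hurewicz image of this group is exactly the summand $H_n\bigl(K(\pi_n(X)\otimes\mathbb{Q},n);\mathbb{Q}\bigr)\subseteq H_n(P;\mathbb{Q})$, on which $h_n$ restricts to the Hurewicz isomorphism of that factor.

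For the ``only if'' direction I would build the Postnikov tower $X\simeq\operatorname{holim}_n X_{(n)}$, with principal fibrations $K(\pi_n(X),n)\to X_{(n)}\to X_{(n-1)}$ classified by $k$-invariants $\kappa_n\in H^{n+1}(X_{(n-1)};\pi_n(X))$, all coefficient groups now being $\mathbb{Q}$-vector spaces. I claim by induction that $X_{(n)}\simeq P_n:=\prod_{2\le j\le n}K(\pi_j(X),j)$; the case $n=2$ is clear. Assuming $X_{(n-1)}\simeq P_{n-1}$, consider the rational homology Serre spectral sequence of $F:=K(\pi_n(X),n)\to X_{(n)}\to X_{(n-1)}$. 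Since $H_q(F;\mathbb{Q})=0$ for $0<q<n$, in total degree $n$ the only nonzero entries are $E^{n,0}_2=H_n(X_{(n-1)};\mathbb{Q})$ and $E^{0,n}_2=H_n(F;\mathbb{Q})=\pi_n(X)$, and the only differential meeting $E^{0,n}$ is the transgression $d_{n+1}\colon H_{n+1}(X_{(n-1)};\mathbb{Q})\to\pi_n(X)$. Using naturality of Hurewicz, the fact that the fiber inclusion induces an isomorphism $\pi_n(F)\xrightarrow{\ \sim\ }\pi_n(X_{(n)})=\pi_n(X)$, and the fact that $X\to X_{(n)}$ is $(n+1)$-connected (hence an isomorphism on $H_n(\cdot;\mathbb{Q})$), one identifies $\ker h_n$ with the image of $d_{n+1}$. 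By hypothesis $\ker h_n=0$, so $d_{n+1}=0$; but $d_{n+1}$ is the $\pi_n(X)$-valued linear functional on $H_{n+1}(X_{(n-1)};\mathbb{Q})$ corresponding, via universal coefficients over $\mathbb{Q}$, to $\kappa_n$, so $\kappa_n=0$. A nullhomotopic classifying map has trivial pullback, whence $X_{(n)}\simeq X_{(n-1)}\times K(\pi_n(X),n)\simeq P_n$, completing the induction; passing to the now-split homotopy limit gives $X\simeq\prod_{n\ge2}K(\pi_n(X)\otimes\mathbb{Q},n)$.

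The main obstacle is the spectral-sequence bookkeeping: correctly identifying $\ker h_n$ with the transgression $d_{n+1}$ and the transgression with the $k$-invariant $\kappa_n$, while keeping the induction hypothesis (that $X_{(n-1)}$ is already a product) in force so that the relevant homology is under control. As an alternative, one can run the whole argument through minimal Sullivan models: the dual of $h_n$ is the map $H^n(X;\mathbb{Q})\to V^n$ induced by the projection of the minimal model $(\Lambda V,d)$ onto its space of indecomposables $V\cong\Hom(\pi_*(X),\mathbb{Q})$, so $h_n$ is injective for all $n$ exactly when every homogeneous element of $V$ is the linear part of a $d$-cocycle; a minimal-degree argument then forces $d=0$, and $(\Lambda V,0)$ is the minimal model of a product of Eilenberg--MacLane spaces.
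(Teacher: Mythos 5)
Your proof is correct and follows essentially the same strategy as the paper: rationalize, then show that injectivity of all the rational Hurewicz maps forces every $k$-invariant of the Postnikov tower to vanish, so each stage is a product. The only difference is in how the key step is executed: the paper argues the contrapositive geometrically, taking a cellular cycle $c$ that pairs nontrivially with a nonzero $k$-invariant and lifting the (homologically trivial) sum of attaching maps of its cells through a partial section to produce a homotopically nontrivial class in $\ker h_n$; you instead identify $\ker h_n$ with the image of the transgression $d_{n+1}$ in the Serre spectral sequence and use the standard correspondence between the transgression and the $k$-invariant. These are two formulations of the same fact, and your spectral-sequence bookkeeping checks out (in particular, the induction hypothesis that $X_{(n-1)}$ is already a product is not even needed for the identification $\ker h_n=\operatorname{im}d_{n+1}$, only for assembling the final product decomposition). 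Your alternative sketch via minimal Sullivan models — surjectivity of cohomology onto indecomposables forcing $d=0$ by a minimal-degree argument — is a genuinely different, dual route and is also valid.
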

\begin{proof}
  If $X$ is a $K(G,n)$, then $\pi_n(X) \to H_n(X)$ is an isomorphism and all
  other homotopy groups vanish.  This injectivity is preserved under arbitrary
  products, so the backwards implication is true.

  For the converse, we note that every simply-connected complex can be
  rationalized; that is, it has a rational equivalence to a complex whose
  reduced homotopy and homology groups are rational vector spaces.  One may for
  example see Chapter 7 of \cite{GrMo}.  Thus it is enough to prove this for
  rational spaces.

  So let $X$ be a rational space, and consider a step of its Postnikov tower,
  $K(\pi,n) \to X_{(n)} \to X_{(n-1)}$; $X_{(n)}$ and $X_{(n-1)}$ are also rational
  spaces.  If this fibration is not a product, then there is a nonzero
  $k$-invariant $k \in H^{n+1}(X_{(n-1)};\pi)$ giving the obstruction to
  extending a section $(X_{(n-1)})^{(n)} \to X_{(n)}$.  Choose a cellular cycle
  $c \in C_{n+1}(X_{(n-1)})$ which pairs nontrivially with $k$, and let $f:S^n
  \to X_{(n-1)}$ be the sum of the boundary maps of the cells in $c$.  Then in
  our partial section, $f$ lifts to a representative $\tilde f:S^n \to 
  (X_{(n)})^{(n)}$ of $\langle k,c \rangle \in \pi_n(X)$.  Since $c$ is a cycle,
  $f$ has degree 0 on each $n$-cell; thus $[\tilde f]$ is also homologically
  trivial.

  Therefore, if the Hurewicz homomorphism is injective, every stage of the
  Postnikov tower of $X$ must be a product.
\end{proof}
Now, for even $n$, $K(\mathbb{Q},n)$ has an infinite number of nonzero
cohomology groups; for odd $n$, $K(\mathbb{Q},n) \simeq_{\mathbb{Q}} S^n$.
Moreover, an infinite product or a $K(V,n)$ where $V$ is an infinite-dimensional
rational vector space will always have infinitely many nonzero cohomology
groups.  This gives us
\begin{cor} \label{cor:fin}
  If $X$ is a finite complex with no distortion in its homotopy groups, then
  $\tilde X$ is rationally equivalent to a finite product of odd-dimensional
  spheres.
\end{cor}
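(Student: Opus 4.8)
The plan is to combine the classification in Proposition \ref{prop:PiQ} with the fact that, $X$ being a finite complex, its universal cover $\tilde X$ is a CW complex of finite dimension $\dim X$.

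First I would translate the hypothesis into injectivity of the rational Hurewicz map of $\tilde X$. Recall that for $n\ge 2$ one has $\pi_n(\tilde X)=\pi_n(X)$. Suppose some non-torsion $\alpha\in\pi_n(X)$ lies in the kernel of $h_n\colon\pi_n(\tilde X)\otimes\mathbb{Q}\to H_n(\tilde X;\mathbb{Q})$. Applying Proposition \ref{prop:i-1} to the simply-connected complex $\tilde X$, some multiple $k\alpha$ has a representative $S^n\to\tilde X^{(n-1)}$; composing with the covering projection gives a zero-volume representative of $k\alpha$ in $X^{(n-1)}$, and the same holds for every multiple of $k\alpha$. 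Hence $\alpha$ is infinitely distorted, contrary to assumption. So $h_n$ is injective for every $n\ge 2$, and Proposition \ref{prop:PiQ} shows that $\tilde X$ is rationally equivalent to a product of Eilenberg--MacLane spaces; rationalizing, we may take this product to be of the form $\prod_{n\ge 2}K(V_n,n)$ with each $V_n$ a $\mathbb{Q}$-vector space.

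Next I would exploit that $H^j(\tilde X;\mathbb{Q})=0$ for $j>\dim X$. Since a rational equivalence of simply-connected spaces induces an isomorphism on rational cohomology, $H^*(\tilde X;\mathbb{Q})\cong H^*\bigl(\prod_{n}K(V_n,n);\mathbb{Q}\bigr)$, and projection onto any finite subproduct is split injective on cohomology (using basepoints in the remaining factors), so by Künneth every finite tensor product $\bigotimes_{n\in F}H^*(K(V_n,n);\mathbb{Q})$ embeds in $H^*(\tilde X;\mathbb{Q})$ and is therefore bounded. Because $H^*(K(\mathbb{Q},2m);\mathbb{Q})$ is polynomial on a class of degree $2m$, and $H^*(K(V,n);\mathbb{Q})$ for $V$ infinite-dimensional contains products of arbitrarily many generators, this forces $V_n=0$ for $n$ even, each $V_n$ finite-dimensional, and only finitely many $V_n$ nonzero (otherwise the generators in the distinct degrees $n$ would produce cohomology in unbounded degree). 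Thus $\tilde X\simeq_{\mathbb{Q}}\prod_{i=1}^{s}K(\mathbb{Q}^{r_i},2n_i+1)$, and since $K(\mathbb{Q},n)\simeq_{\mathbb{Q}}S^n$ for odd $n$ and rational equivalence is compatible with finite products, $K(\mathbb{Q}^{r_i},2n_i+1)\simeq_{\mathbb{Q}}(S^{2n_i+1})^{r_i}$, giving $\tilde X\simeq_{\mathbb{Q}}\prod_{i=1}^{s}(S^{2n_i+1})^{r_i}$.

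The main thing to get right is the third step: deducing from finiteness of the rational cohomological dimension of $\tilde X$ that the Eilenberg--MacLane decomposition has only finitely many factors, all odd-dimensional and of finite rank. This is a routine bookkeeping argument about the cohomology rings of (a priori infinite) products of Eilenberg--MacLane spaces; the first two steps are direct applications of Propositions \ref{prop:i-1} and \ref{prop:PiQ} together with the remark, already recorded above, that $K(\mathbb{Q},n)$ has bounded rational cohomology exactly when $n$ is odd.
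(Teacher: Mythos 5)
Your proposal is correct and follows the paper's own route: Proposition \ref{prop:i-1} plus the no-distortion hypothesis gives injectivity of the rational Hurewicz map on $\tilde X$, Proposition \ref{prop:PiQ} converts this into a product of Eilenberg--MacLane spaces, and the bounded rational cohomology of $\tilde X$ (as the cover of a finite complex) rules out even factors, infinite-dimensional factors, and infinitely many factors, leaving a finite product of odd spheres. The only difference is that you spell out the retraction/K\"unneth bookkeeping that the paper leaves as a one-sentence remark.
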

We will show later that this condition is equivalent to condition (1) of
Theorems \ref{thmA} and \ref{thmB}.

One could hope that with a result as powerful as Proposition \ref{prop:PiQ}, we
could proceed through a classification like that of Theorem \ref{thmA} for
spaces with finite skeleta.  However, the rest of our analysis relies heavily on
the fact that the rational homotopy groups of our spaces are finite-dimensional;
dealing with larger homotopy groups would require radically new techniques.  How
could one hope to carry this analysis through to infinite complexes with finite
skeleta?  Consider the following restatement of Corollary \ref{cor:fin}: if $X$
is finite, then if one of its rational homotopy groups is infinite-dimensional,
some higher homotopy group has homologically trivial rational elements.  One
approach would be to try to demonstrate that this still holds for complexes with
finite skeleta.  The following example demonstrates that such an approach
cannot work: there is an undistorted space with finite skeleta whose universal
cover is homotopy equivalent to $(S^3)^\infty$.  This means that condition (1)
of Theorem \ref{thmA} cannot hold true in the same form for such complexes.
\begin{ex} \label{rickard}
  Consider Thompson's group $F$.  Brown and Geoghegan \cite{BG_F} give a
  $K(F,1)$ with two cells in each dimension; call this space $X$.  The group $F$
  acts transitively on the set $A$ of dyadic rationals strictly between 0 and 1,
  as a subgroup of $\Homeo_+([0,1])$; the stabilizer $S$ of a point $a \in A$
  under this action is isomorphic to $F \times F$, corresponding to
  homeomorphisms that fix $[a,1]$ and those that fix $[0,a]$.  These two copies
  of $F$ are generated by four elements; call these $g_1,g_2,g_3,g_4$.  Then as
  a module,
  $$\mathbb{Q}[A]=\mathbb{Q}F/\langle g_i-1: i=1,2,3,4\rangle.$$
  Thus we can attach four 4-cells to $X \vee S^3$ along $g_i \cdot \id_{S^3}
  -\id_{S^3}$, to get a space $Y_4$ with finite skeleta and with $\pi_3(Y) \cong
  \mathbb{Q}[A]$.  One may think of $Y_4$ as
  $$X \cup_{K(S,1)^{(1)}} (K(S,1)^{(1)} \times S^3).$$
  Continuing to add four cells in every dimensions gives us a space $Y=X
  \cup_{K(S,1)} (K(S,1) \times S^3)$, whose universal cover is homotopy
  equivalent to $\bigvee_{a \in A} S^3$.

  We now require a second induction to embed $Y$ in a space whose universal
  cover is homotopy equivalent to $\prod_{a \in A} S^3$.  In general, $F$ acts
  transitively on unordered $n$-element subsets of $A$ with stabilizer
  $F^{n+1}$.  Suppose, then, that we have constructed a space $Z_{n-1}$ such that
  $\tilde Z_{n-1}$ is homotopy equivalent to $K^{(3(n-1))}$, where $K \simeq
  \prod_{a \in A} S^3$ is a complex with one $3k$-cell corresponding to each
  $k$-tuple of elements in $A$.  To construct $Z_n$, we add a single $3n$-cell
  corresponding to an $n$-tuple of elements of $A$; then proceed as in the
  construction of $Y$, using $2(n+1)$ $(3n+1)$-cells to identify those of its
  translates which correspond to the same $n$-tuple, and so on with $2(n+1)$
  cells in each subsequent dimension.

  At the end of this induction, we have a complex $Z$ with $O(n^2)$ cells in
  dimension $n$ whose universal cover is homotopy equivalent to $\prod_{a \in A}
  S^3$.  Moreover, the $S^3$ we attached originally is homologically nontrivial
  in $Z$; therefore $\pi_3(Z)$ is undistorted.
\end{ex}

\subsection*{Distortion via monodromy}
Another potential source of infinite distortion is the action by the fundamental
group on $\pi_n(X)$: volume is preserved under this action, but a norm on a
finite-dimensional subspace of $\pi_n(X) \otimes \mathbb{Q}$ need not be.  The
most basic example is when $X$ is the mapping torus of a degree 2 map on $S^2$;
here $\pi_2(X) \cong \mathbb{Z}[\frac{1}{2}]$, and so $\pi_2(X) \otimes
\mathbb{Q} \cong \mathbb{Q}$.  Let $\alpha=[\id_{S^2}] \in \pi_2(X)$; then
$2^k\alpha$ has a cellular representative of volume 1 for every integer $k$, and
indeed any integer multiple $k\alpha$ has a representative of volume
$\leq \log_2 k$.

More generally, let $\alpha \in \pi_n(X)$, $\gamma \in \Gamma$, and suppose that
the $\mathbb{Q}[\mathbb{Z}]$-module generated by $\gamma^i \cdot \alpha$ is an
$m$-dimensional $\mathbb{Q}$-vector space $V$ for some finite $m$.  Then
$\gamma$ acts on $V$ via a linear transformation $T \in GL(V)$.  Then either $T$
is conjugate to an element of $O(m,\mathbb{R})$, or there is some vector
$\vec v$ such that $T^k(\vec v)$ increases without bound and thus $V$ is
infinitely distorted.  A similar dichotomy holds if $\mathbb{Q}\Gamma \cdot
\alpha$ is a finite-dimensional submodule of $\pi_n(X) \otimes \mathbb{Q}$:
either $\gamma \cdot \alpha$ gets arbitrarily large, or the entire submodule
conjugates into $O(m,\mathbb{R})$.
\begin{defn}
  We say a transformation $T \in GL(m,\mathbb{Q})$, or more generally a
  representation $\rho:\Gamma \to GL(m,\mathbb{Q})$, is \emph{elliptic} if it
  preserves a norm on $\mathbb{Q}^m$, or equivalently if as a representation
  over $\mathbb{R}$ it conjugates into $O(m,\mathbb{R})$.
\end{defn}
\begin{prop}
  Any representation $\rho:\Gamma \to GL(m,\mathbb{Q})$ which is bounded in
  operator norm is elliptic.
\end{prop}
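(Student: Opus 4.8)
The plan is to run the classical averaging argument: a group homomorphism with bounded image lands in a compact subgroup of $GL(m,\mathbb{R})$, and every compact subgroup of $GL(m,\mathbb{R})$ preserves an inner product. First I would note that $\rho(\Gamma)$ is an honest \emph{subgroup} of $GL(m,\mathbb{R})$, and by hypothesis the operator norms $\lVert\rho(\gamma)\rVert$ are uniformly bounded, say by $R$. Crucially, the inverses $\rho(\gamma)^{-1}=\rho(\gamma^{-1})$ are also among the $\rho(\gamma)$'s, hence also bounded by $R$; this forces the smallest singular value of each $\rho(\gamma)$ to be at least $1/R$, so $\lvert\det\rho(\gamma)\rvert\ge R^{-m}>0$. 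Therefore the closure $G$ of $\rho(\Gamma)$ in $M_m(\mathbb{R})$ is bounded, closed, and stays inside $GL(m,\mathbb{R})$; being the closure of a subgroup it is a subgroup, so $G$ is a compact subgroup of $GL(m,\mathbb{R})$.

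Next I would average. Let $\mu$ be Haar probability measure on $G$, fix any inner product $\langle\cdot,\cdot\rangle_0$ on $\mathbb{R}^m$, and set $\langle x,y\rangle:=\int_G\langle gx,gy\rangle_0\,d\mu(g)$. Compactness of $G$ and continuity make this finite; symmetry and positive-definiteness are immediate from those of $\langle\cdot,\cdot\rangle_0$; and left-invariance of $\mu$ makes $\langle\cdot,\cdot\rangle$ invariant under $G$, in particular under $\rho(\Gamma)$. Thus $\rho(\Gamma)$ is contained in the orthogonal group of $\langle\cdot,\cdot\rangle$, which is conjugate in $GL(m,\mathbb{R})$ to $O(m,\mathbb{R})$; by the second clause of the definition, $\rho$ is elliptic. (If one prefers to avoid Haar measure, $G$ acts by isometries with a bounded orbit on the symmetric space of positive-definite forms, a $\mathrm{CAT}(0)$ space, so it fixes the circumcenter of that orbit, whose stabilizer is a conjugate of $O(m,\mathbb{R})$.)

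To match the first clause of the definition I would then upgrade the invariant inner product to a rational one. The $\rho(\Gamma)$-invariant symmetric bilinear forms form a subspace $W$ of the space of symmetric $m\times m$ matrices cut out by the conditions $\rho(\gamma)^{T}B\rho(\gamma)=B$; since $\rho(\Gamma)\subset GL(m,\mathbb{Q})$, these are $\mathbb{Q}$-linear equations, so $W$ is defined over $\mathbb{Q}$. We have just exhibited a positive-definite element of $W\otimes\mathbb{R}$, positive-definiteness is an open condition, and the $\mathbb{Q}$-points of $W$ are dense in $W\otimes\mathbb{R}$; hence $W$ itself contains a positive-definite form, which is a $\rho$-invariant norm on $\mathbb{Q}^m$. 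The only point requiring any care is the compactness of the closure $G$, and as indicated that follows purely formally from boundedness of the inverses; the remaining steps are routine.
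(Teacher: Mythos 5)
Your proof is correct and follows essentially the same route as the paper's: take the closure $K$ of $\rho(\Gamma)$ in $GL(m,\mathbb{R})$, observe it is compact, and average a norm over $K$ to produce a $\rho$-invariant one. You simply fill in details the paper elides — why the closure stays inside $GL(m,\mathbb{R})$ and is compact, and how to descend the invariant form to $\mathbb{Q}$ — but the underlying argument is identical.
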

\begin{proof}
  Let $K \subset GL(m,\mathbb{R})$ be the closure of $\rho(\Gamma)$; this is a
  closed subgroup and hence a compact Lie group.  Averaging any norm over $K$
  gives us a $\rho$-invariant norm on $\mathbb{Q}^m$.
\end{proof}
\begin{cor} \label{cor:md}
  For any finite complex $X$ such that $\pi_nX \otimes \mathbb{Q}$ is
  finite-dimensional, either $\pi_nX \otimes \mathbb{Q}$ is infinitely
  distorted, or the monodromy representation $\rho:\pi_1X \to GL(\pi_nX \otimes
  \mathbb{Q})$ is elliptic.
\end{cor}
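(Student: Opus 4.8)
The plan is to deduce this straight from the preceding Proposition, once we observe that the monodromy action of $\pi_1(X)$ on $\pi_n(X)$ can be realized geometrically without any increase in volume. If $\rho$ is elliptic there is nothing to prove, so assume it is not; we must produce infinite distortion. By the preceding Proposition, $\rho(\pi_1X)$ is unbounded in operator norm. Since $X$ is a finite complex, $\Gamma:=\pi_1(X)$ is finitely generated, say by $\gamma_1,\dots,\gamma_s$. Moreover $\ph(\pi_n X)$ spans the finite-dimensional space $\pi_n(X)\otimes\mathbb{Q}$, so we may fix integral classes $\alpha_1,\dots,\alpha_m\in\pi_n(X)$ whose images $\ph(\alpha_j)$ form a basis. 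Because all norms on a finite-dimensional space are equivalent, unboundedness of $\rho(\Gamma)$ in operator norm forces $\max_j\lVert\rho(w)\ph(\alpha_j)\rVert$ to be unbounded as $w$ ranges over words in the $\gamma_i$; by the pigeonhole principle there is a single index $j$ and a sequence of words $w_k$ with $\lVert\rho(w_k)\ph(\alpha_j)\rVert\to\infty$. Write $\alpha=\alpha_j$.

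Next I would check that $\lvert w_k\cdot\alpha\rvert_{\vol}$ is bounded independently of $k$. Fix an admissible representative $f:S^n\to X$ of $\alpha$ and represent each $\gamma_i$ by an edge loop in $X^{(1)}$. Writing $S^n=D^n/\partial D^n$ with a collar $\partial D^n\times[0,1]$, the class $\gamma_i\cdot\alpha$ has an admissible representative which runs the loop $\gamma_i$ along the collar coordinate (so the collar maps into $X^{(1)}\subseteq X^{(n)}$) and agrees with $f$ on the inner disk; this is the ``balloon on a string'' picture from the discussion of the mapping torus. Iterating for the word $w_k$ merely lengthens the string, which maps into the $1$-skeleton and thus contributes nothing to the cellular volume. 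Hence $w_k\cdot\alpha$ has an admissible representative of cellular volume $\vol_C(f)$, so $\lvert w_k\cdot\alpha\rvert_{\vol}\le C$ for a constant $C$ independent of $k$. Since $\ph(w_k\cdot\alpha)=\rho(w_k)\ph(\alpha)$ has norm tending to infinity, there are arbitrarily large vectors of $\pi_n(X)\otimes\mathbb{Q}$ represented by maps of volume at most $C$; by definition $\pi_n(X)\otimes\mathbb{Q}$ is infinitely distorted.

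The one step needing care is the geometric realization of the monodromy action: one must confirm that the ``balloon on a string'' map is genuinely admissible in the sense of the earlier definition --- that is, preimages of the open $n$-cells are disjoint balls mapping homeomorphically --- and that its cellular volume is exactly that of $f$. This is straightforward provided the loops representing the generators are chosen to be cellular, and is essentially the computation already carried out in the text for the mapping torus of a degree-$2$ map; the remainder of the argument is the pigeonhole step and the appeal to the preceding Proposition.
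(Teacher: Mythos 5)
Your proof is correct and follows the same route the paper intends: the corollary is deduced from the preceding Proposition (non-elliptic implies unbounded in operator norm) together with the fact, stated in the surrounding discussion, that the $\pi_1$-action preserves volume, so an unbounded orbit of a single class yields infinite distortion. Your pigeonhole step and the explicit ``balloon on a string'' verification that $\lvert w\cdot\alpha\rvert_{\vol}\le\lvert\alpha\rvert_{\vol}$ simply make precise the details the paper leaves implicit.
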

In other words, we have just shown the necessity of condition (2) of Theorems
\ref{thmA} and \ref{thmB}.  The reader who is impatient to see the proofs of
these theorems is invited to skip forward to Section 4.   In the rest of this
section, we take a closer look at how the monodromy action of an individual
element of $\Gamma$ affects the distortion function of individual elements of
finite-dimensional subspaces of $\pi_n(X) \otimes \mathbb{Q}$; in particular, we
will show that such distortion functions need not be infinite.  We also
demonstrate distinctions between distortion functions of elements and subspaces,
and between weakly infinite and infinite distortion.

This case is relatively easy to analyze because we can use the fact that
$\mathbb{Q}[\mathbb{Z}]$ is a PID.  Thus, given a general $X$ with $\pi_1X=
\Gamma$ and a $\gamma \in \Gamma$, consider an finite-dimensional
$\mathbb{Q}[\gamma,\gamma^{-1}]$-submodule $V \subseteq \pi_n(X) \otimes
\mathbb{Q}$.  Then $V$ decomposes as
$$V=\oplus_{i=1}^r \mathbb{Q}[\gamma,\gamma^{-1}]/p_i(\gamma)^{q_i}=:\oplus_{i=1}^r
V_i,$$
where the $p_i$ are irreducible factors of the characteristic polynomial of the
action $T \in GL(V)$ of $\gamma$ on $V$.  The distortion we have described
depends on finding ways of writing vectors in $V$ as $\sum_{i=-\ell}^\ell
T^i\vec v_i$, where the $\vec v_i$ are in the lattice generated by some
generating set whose precise nature is irrelevant.  Thus we can assume that
generating set contains a basis for each $V_i$, and we can classify the possible
distortion effects by looking at irreducible actions.

Of course, in a completely general setting, we can only talk of the contribution
of monodromy to the distortion of an element.  But there is a simple way of
constructing examples of spaces $X$ for which all distortion in $\pi_nX$ is
caused by monodromy by a single element.  Given an integer $m \times m$ matrix
$A$, we can construct the mapping torus $X_{A,n}$ of a map $\bigvee_m S^n \to
\bigvee_m S^n$ which is modeled on it.  If $A$ has nonzero determinant, then it
corresponds to the monodromy action of the generator of $\pi_1X_{A,n} \cong
\mathbb{Z}$ on $\pi_nX_{A,n} \otimes \mathbb{Q} \cong \mathbb{Q}^m$.  What's
more, since all admissible maps $S^n \to X_{A,n}$ are combinations of translates
of the generators, the monodromy is the only source of distortion for elements
of $\pi_nX_{A,n}$.

More generally, given an element $B=\frac{1}{q}A \in GL(m,\mathbb{Q})$, where
$A$ is again an integer matrix, we can construct a similar ``mapping torus''
$X_{B,n}$, with one cell added to $\bigvee_m S^n \vee S^1$ for each column of $A$
which homotopes $q$ times the corresponding sphere to a map corresponding to
this column.  For example, in the simplest case when $m=1$, we can construct a
space $X_{(5/3),n}$ by gluing both sides of $S^n \times I$ to a copy of $S^n$,
the left side with degree 3 and the right side with degree 5.  Here again, the
monodromy determines the distortion.

First, we consider some examples of the form suggested above and see that the
distortion of a subspace need not correspond to that of any of its elements.
\begin{exs} \label{exs:oddD}
  \begin{enumerate}
  \item Consider the space $X_{A,n}$ where $A$ is the companion matrix of the
    polynomial $x^4-2x^3-2x+1$.  This polynomial is irreducible over
    $\mathbb{Q}$ and has four distinct complex roots, two of which are on the
    unit circle and two of which are real.  Call these roots $\xi$, $\bar\xi$,
    $\eta$ and $\eta^{-1}$, and let $\vec u_\xi \in \mathbb{C}^4$, etc., be the
    corresponding eigenbasis.  Fix $0 \neq \vec v \in \pi_n(X) \otimes
    \mathbb{Q}$; then the coordinates of $\vec v$ in each of these basis vectors
    are nonzero.  On the other hand, for any admissible map $f:S^n \to X$ of
    volume $1$, the element $[f]=T^ke_i \in \pi_n(X) \otimes \mathbb{Q}$ has
    $\vec u_\xi$- and $\vec u_{\bar\xi}$-coordinates at most $1$.  Therefore, any
    admissible map representing $k\vec v$ has to have volume proportional to
    $k$.  The deformation theorem then implies that $\vec v$ admits neither
    volume nor Lipschitz distortion in $X$.
  \item Consider the space $X_{(5/3),3}$ constructed above.  One can think of
    this as the ``mapping torus'' of multiplication by $5/3$.  Note that
    $\pi_n(X) \otimes \mathbb{Q} \cong \mathbb{Q}$ is infinitely distorted,
    since $(5/3)^k$ can be represented with volume 1.  However, any given
    element in $\pi_n(X)$ is \emph{not} infinitely distorted, since for any $V$
    there is a finite number of integers that can be expressed as the sum of $V$
    powers of $5/3$.  So a one-dimensional subspace need not have the same
    distortion function as one of its elements!
  \item \label{qqu}This example demonstrates that a subspace which is weakly
    infinitely distorted is not necessarily infinitely distorted.  Consider the
    space $X_{B,n}$ for $B=\begin{pmatrix}A&0\\I&A\end{pmatrix}$, where $A=
    \begin{pmatrix}3/5&-4/5\\4/5&3/5\end{pmatrix}$.  Then for $\vec v, \vec w
    \in \mathbb{R}^2$ the vector corresponding to $(\vec v,\vec w)$ shifted by
    $k$ is
    $$B^k\begin{pmatrix}\vec v\\\vec w\end{pmatrix}=
    \begin{pmatrix}A^k\vec v\\kA^{k-1}\vec v+A^k\vec w\end{pmatrix}.$$
    This indicates in particular that the subspace generated by the first two
    basis vectors is undistorted.  On the other hand, letting $\alpha$ be the
    irrational angle of rotation of $A$, we get
    $$(B^k-B^{-k})\begin{pmatrix}\vec v\\\vec 0\end{pmatrix}=
    \begin{pmatrix}2\cos(k\alpha)\vec v\\2k\sin[(k-1)\alpha]A^{-1}\vec v
    \end{pmatrix}.$$
    In two-thirds of cases, $\sin(k-1)\alpha>1/2$, and so $\lVert(B^k-B^{-k})
    \vec v\rVert \geq k\lVert\vec v\rVert$.  Thus $\pi_n(X)$ is infinitely
    distorted, and indeed any individual rational vector in $\langle\vec e_3,
    \vec e_4\rangle$ is weakly infinitely distorted.  In contrast,
    \begin{lem} \label{lem:oddD}
      For any $k$, elements of $\langle\vec e_3,\vec e_4\rangle$ with cellular
      volume at most $k$ have length bounded by some $L(k)$.
    \end{lem}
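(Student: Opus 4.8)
The plan is to turn the statement into an arithmetic estimate about sums of powers of $\zeta := \tfrac{3+4i}{5}$, the eigenvalue of $A$, and then exploit that $\zeta = \mu/\bar\mu$ where $\mu = 2+i$ and $\bar\mu = 2-i$ are coprime in $\mathbb{Z}[i]$.

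First I would describe $\pi_n(X_{B,n})$ concretely. Since $\pi_1(X_{B,n}) = \mathbb{Z}$ is generated by the circle, the universal cover $\widetilde{X_{B,n}}$ carries a copy of $\bigvee_4 S^n$ at each integer point, $\pi_n(X_{B,n}) \cong H_n(\widetilde{X_{B,n}};\mathbb{Z})$ by Hurewicz, and rationally the sphere at position $i$ represents $B^i e^{(0)} \in \mathbb{Q}^4 = \pi_n(X_{B,n})\otimes\mathbb{Q}$. Hence an element of $\pi_n(X_{B,n})$ with cellular volume $\le k$ is represented by a cellular $n$-cycle $\sum_i c_i$ (with $c_i \in \mathbb{Z}^4$ and $\sum_i\lVert c_i\rVert_1 \le k$) and equals $\vec\beta = \sum_i B^i c_i$; replacing the chosen lift by another only multiplies $\vec\beta$ by a power of $B$, which acts orthogonally on the honest eigenspace $\langle\vec e_3,\vec e_4\rangle$, so this does not affect $\lVert\vec\beta\rVert$. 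Next I would diagonalize: writing $w = (1,-i)$ for the $\zeta$-eigenvector of $A$, the vectors $u_2 = (0,w)$ and $\bar u_2 = (0,\bar w)$ are honest eigenvectors of $B$ spanning $\langle\vec e_3,\vec e_4\rangle\otimes\mathbb{C}$, while $u_1 = (w,0)$, $\bar u_1 = (\bar w,0)$ satisfy $B^i u_1 = \zeta^i u_1 + i\zeta^{i-1}u_2$ for all $i \in \mathbb{Z}$. Decomposing $c_i = \hat a_i u_1 + \hat b_i u_2 + \overline{\hat a_i}\,\bar u_1 + \overline{\hat b_i}\,\bar u_2$ with $\hat a_i,\hat b_i \in \tfrac12\mathbb{Z}[i]$ of modulus $\le \tfrac12\lVert c_i\rVert_1$, one finds that $\vec\beta \in \langle\vec e_3,\vec e_4\rangle$ iff $\sum_i \hat a_i\zeta^i = 0$, and that $\lVert\vec\beta\rVert$ is comparable to $\lvert\sum_i(i\hat a_i\zeta^{i-1} + \hat b_i\zeta^i)\rvert \le \lvert\sum_i i\hat a_i\zeta^i\rvert + \sum_i\lvert\hat b_i\rvert$, the last sum being $\le k$. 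So, setting $g_i := 2\hat a_i \in \mathbb{Z}[i]$, the whole problem reduces to bounding $\lvert\sum_i i g_i\zeta^i\rvert$ subject to $g_i \in \mathbb{Z}[i]$, $\sum_i\lvert g_i\rvert \le k$, and $\sum_i g_i\zeta^i = 0$.

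For the arithmetic estimate I would shift indices so that the support of $(g_i)$ is $\{0 = i_1 < i_2 < \cdots < i_m = N\}$ with $m \le k$, and study the partial sums $R_l = \sum_{j=0}^l \zeta^j g_j$. The key points are: (a) $T_l := \bar\mu^l R_l = \sum_{j=0}^l \mu^j\bar\mu^{l-j} g_j$ lies in $\mathbb{Z}[i]$; (b) splitting the relation $\sum_{j=0}^N \mu^j\bar\mu^{N-j}g_j = \bar\mu^N R_N = 0$ at $j = l$ shows $\mu^{l+1}$ divides $\bar\mu^{N-l}T_l$, hence divides $T_l$ by coprimality, so that $q_l := -T_l/\mu^{l+1} \in \mathbb{Z}[i]$ and $R_l = -\mu\zeta^l q_l$ with $\lvert q_l\rvert = \lvert R_l\rvert/\sqrt5 \le k/\sqrt5$; (c) an Abel summation using $R_{-1} = R_N = 0$ gives $\sum_i i g_i\zeta^i = -\sum_{l=0}^{N-1}R_l = \mu\sum_{l=0}^{N-1}\zeta^l q_l$; and (d) on each block $i_s \le l < i_{s+1}$ all $g_l$ with $l > i_s$ vanish, so $R_l = R_{i_s}$ and hence $q_l = \zeta^{-(l-i_s)}q_{i_s}$, giving $\sum_{l=0}^{N-1}\zeta^l q_l = \sum_{s=1}^{m-1}(i_{s+1}-i_s)\,\zeta^{i_s}q_{i_s}$. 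The decisive observation is that whenever $q_{i_s} \ne 0$ the integrality of $q_{i_{s+1}-1} = \zeta^{-(i_{s+1}-1-i_s)}q_{i_s}$ forces $\mu^{\,i_{s+1}-1-i_s}$ to divide $q_{i_s}$, whence $i_{s+1} - i_s \le 1 + v_\mu(q_{i_s}) \le 1 + 2\log_5\lvert q_{i_s}\rvert \le 1 + 2\log_5 k$. Summing over $s$, $\lvert\sum_l\zeta^l q_l\rvert \le m(1+2\log_5 k)\,k/\sqrt5$, so $\lVert\vec\beta\rVert \le L(k)$ with $L(k) = O(k^2\log k)$.

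The main obstacle is exactly the gap bound (d). The surrounding discussion already shows the support of $(g_i)$ need not be confined to a window of size $O(k)$ — a partial sum can return to zero in the middle and then resume arbitrarily far away while $\vec\beta$ stays bounded — so one genuinely must use the arithmetic of $\mathbb{Z}[i]$: the denominator $\bar\mu^l$ appearing at position $l$ means that a partial sum lingering in a nonzero state $R_{i_s} = -\mu\zeta^{i_s}q_{i_s}$ over a long stretch would force $q_{i_s}$ to be divisible by a high power of $\mu$, contradicting $\lvert q_{i_s}\rvert \le k/\sqrt5$. This integrality of the $q_l$ is precisely what is unavailable along a generic irrational ray in $\langle\vec e_3,\vec e_4\rangle\otimes\mathbb{R}$, which is why such a ray — but not the rational subspace itself — can be infinitely distorted.
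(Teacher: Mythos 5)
Your argument is correct, and its arithmetic heart---writing the eigenvalue as $\zeta=\tfrac{2+i}{2-i}$ with $2\pm i$ coprime in $\mathbb{Z}[i]$, so that a nonzero partial sum persisting over a long gap forces divisibility by a high power of $2+i$ and hence exponential size---is exactly the mechanism behind the paper's Lemma \ref{lem:NT}. Where you differ is in how the lemma is assembled around that mechanism. The paper inducts on the volume $k$: either no proper partial sum of $\sum_i A^{t_i}\vec v_i$ vanishes, in which case Lemma \ref{lem:NT} bounds each gap by $O(\log k)$, confining the support to a window of size $O(k\log k)$ and leaving only finitely many candidates, or some partial sum vanishes and the element splits into two shorter elements of $\langle\vec e_3,\vec e_4\rangle$ handled by the inductive hypothesis, giving the recursive bound $L(k)=\max\{L^\prime(k),L(k_1)+L(k_2)\}$. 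You avoid the induction entirely: after diagonalizing and reducing to $g_i\in\mathbb{Z}[i]$ with $\sum_i g_i\zeta^i=0$, the Abel summation $\sum_i i g_i\zeta^i=\mu\sum_l\zeta^l q_l$, together with the integrality and uniform bound on the quotients $q_l$, makes blocks with vanishing partial sum contribute zero outright (this is precisely the step that replaces the paper's splitting-and-recursion), while blocks with $q_{i_s}\neq0$ have length at most $1+2\log_5 k$ by the $\mu$-divisibility argument; summing gives the explicit bound $O(k^2\log k)$, which in fact slightly sharpens the $O((k\log k)^2)$ the paper records after its proof. What the paper's route buys is generality and reuse: Lemma \ref{lem:NT} is stated for arbitrary irreducible unit-circle matrices, with the finiteness of the ideal class group standing in for unique factorization, and the same induction scheme reappears in the proof of Theorem \ref{thm:infcyc}; what your route buys is a self-contained, explicit, and quantitatively better estimate specific to this matrix. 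The only point to phrase with the paper's level of care is the initial identification of a volume-$\le k$ class with a sum $\sum_i B^i c_i$, $\sum_i\lVert c_i\rVert_1\le k$, which holds up to a multiplicative constant and is used equally informally in the paper itself.
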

    To prove this, we need a purely algebraic lemma whose precise statement and
    proof are relegated to an appendix, but which generalizes the following
    observation.  Images of integer lattice points under $A$ are only themselves
    lattice points 1/5 of the time; $\vec v$ and $A\vec v$ are both integer
    lattice points if and only if $\vec v$ is in the lattice generated by
    $\begin{pmatrix}-2\\1\end{pmatrix}$ and $\begin{pmatrix}1\\2\end{pmatrix}$,
    so the length of such a vector is $\sqrt{5z}$ for some integer $z$.  More
    generally, if $A^i\vec v$ and $A^j\vec v$ are both integer points, then
    $\lVert\vec v\rVert^2 \in 5^{|i-j|}\mathbb{Z}$.  Similarly, if $A^i\vec u=
    A^j\vec v$ and $\vec u$ and $\vec v$ are both integer vectors, then they
    must be zero or exponentially large in $|i-j|$.  For this proof, we require
    a stronger statement involving linear combinations of $A^j$ for various $j$.
    \begin{proof}
      To see this, we induct on $k$.  For $k=1$, the options are $B^i\vec e_j$
      for $i \in \mathbb{Z}$ and $j=3,4$, which all have length 1.

      Now fix a $V>1$, and take a particular linear combination
      $$\vec u=B^{t_0}\vec u_0+B^{t_1}\vec u_1+\cdots+B^{t_r}\vec u_r$$
      with $\sum_i |\vec u_i| \leq V$ and the $u_i=\begin{pmatrix}\vec v_i\\
      \vec w_i\end{pmatrix}$ are integer vectors.  If $\vec u \in
      \langle\vec e_3,\vec e_4\rangle$, then $\sum_i A^{t_i}\vec v_i=\vec 0$.  We
      can assume that the $t_i$ are increasing and $t_0=0$ since multiplying by
      $B^n$ doesn't change the length of a vector in $\langle\vec e_3,
      \vec e_4\rangle$.  Moreover, we can assume that all the $\vec w_i$ are
      zero, since they contribute at most a linear amount of total length.

      Now, either (1) $\sum_{i=0}^\ell A^{t_i}\vec v_i \neq \vec 0$ for any $\ell<
      r$, or (2) we know that $\lVert u\rVert\leq L(k_1)+L(k_2)$ for some $k_1+
      k_2=k$.  In case (1), by Lemma \ref{lem:NT}, $t_k \leq kf(k)$ for some
      $f(k) \sim \log k$.  Therefore there is a finite number of choices of
      $\vec u$ satisfying (1), and their lengths are bounded by some number
      $L^\prime(k)$.  We can thus set
      $$L(k)=\max\{L^\prime(k),L(k-1)+L(1),L(k-2)+L(2),\ldots\},$$
      completing the proof.
    \end{proof}
    The logarithmic bound given in Lemma \ref{lem:NT} implies that a map of
    volume $k$ represents a vector of length $O((k\log k)^2)$.  With some more
    painstaking accounting, it should be possible to bring this bound done to a
    quadratic one.

    Since, e.g., $\vec e_3$ is weakly infinitely distorted, it is in particular
    distorted.  To find an estimate from below, we construct a sequence of maps
    of volume $O(k)$ representing $5k^2\vec e_3$.  This will be given as a sum
    of powers of $B$ applied to a certain sequence of length $k$ vectors; a
    cancellation trick will ensure that at each step we add $O(k)$ times
    $\vec e_3$ but only $O(1)$ volume.  Specifically, let $\vec v_0=
    \begin{pmatrix}5k\\0\end{pmatrix}$, and for $1 \leq i \leq k$ let $\vec v_i$
    be a point such that $\vec v_i$ and $A^{-1}\vec v_i$ are both lattice
    points and such that $\left\lVert\sum_{j=1}^i \vec A^jv_j-
    i\vec v_0\right\rVert<\sqrt{5}$.  Then
    $$\vec u=\sum_{i=0}^{k-1} B^{i+1}\begin{pmatrix}\vec v_i\\\vec 0\end{pmatrix}
    -B^i\begin{pmatrix}A\vec v_i\\\vec 0\end{pmatrix}=\sum_{i=0}^{k-1}
    \begin{pmatrix}\vec 0\\A^i\vec v_i\end{pmatrix}$$
    can be represented by a map $S^n \to X$ of volume $O(k)$, since
    $\lVert\vec v_i-A\vec v_{i+1}\rVert \leq 2\sqrt{5}$; on the other hand,
    since the distance between $\vec u$ and $5k^2\vec e_3$ is at most
    $\sqrt{5}$, we can convert the one into the other by adding correction terms
    of the form $B^i(p\vec e_3+q\vec e_4)$ and length at most $2\sqrt{5}$ for
    each $0 \leq i \leq k-1$.  Thus $5k^2\vec e_3$ can be represented with
    volume $O(k)$.  This gives us our lower bound on distortion, and so $k^2
    \lesssim \VD_{\vec e_3}(k) \lesssim (k\log k)^2$.
  \end{enumerate}
\end{exs}
However, all of the examples of undistorted elements depend on the
characteristic polynomial $p_i$ having roots on the unit circle.  If all the
roots of $p_i$ are off the unit circle, then in fact all elements of $V$ are at
least exponentially distorted.
\begin{lem} \label{lem:nonuni}
  Suppose that $\gamma \in \pi_n(X)$ and that $V \subseteq \pi_n(X) \otimes
  \mathbb{Q}$ is a finite-dimensional irreducible
  $\mathbb{Q}[\gamma,\gamma^{-1}]$-submodule $V \cong \mathbb{Q}[\mathbb{Z}]/
  (p_i)$, and all complex roots $\lambda_j$ of $p_i$ have $|\lambda_j| \neq 1$.
  Then any $\alpha \in V_i$ has distortion function $\delta_\alpha(k) \gtrsim
  \exp k$.
\end{lem}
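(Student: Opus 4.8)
The plan is to isolate a purely number-theoretic construction and then feed it into the ``balloon on a string'' mechanism of the introduction. Write $T \in GL(V)$ for the action of $\gamma$ on $V$. Since $p_i$ is irreducible and $p_i(0)\neq 0$ (the monodromy is invertible), $V \cong \mathbb{Q}[x,x^{-1}]/(p_i)$ is a field, the minimal polynomial of $T$ on $V$ is $p_i$, and our nonzero $\alpha$ is a cyclic vector with $\operatorname{Ann}(\alpha)=(p_i)$. Fix an admissible representative $f_0$ of $\alpha$ of cellular volume $W_0$. For any Laurent polynomial $q(x)=\sum_j c_j x^{t_j}\in\mathbb{Z}[x,x^{-1}]$, the class $q(\gamma)\cdot\alpha:=\sum_j c_j(\gamma^{t_j}\cdot\alpha)\in\pi_n(X)$ is represented by wedging together $\sum_j|c_j|$ conjugates of $f_0$ by powers of a loop in $X^{(1)}$ representing $\gamma$; since those conjugations route through the $1$-skeleton they contribute no $n$-volume (this is precisely the balloon-on-a-string picture), so $|q(\gamma)\cdot\alpha|_{\vol}\lesssim\|q\|_1 W_0$, where $\|q\|_1=\sum_j|c_j|$.

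Next I would dispose of torsion. The submodule $\mathbb{Z}[\gamma,\gamma^{-1}]\cdot\alpha\subseteq\pi_n(X)$ is cyclic, hence a finitely generated $\mathbb{Z}[\mathbb{Z}]$-module, so its torsion submodule is finitely generated and torsion, hence of bounded exponent $e_0$ by Proposition \ref{prop:bo_mod}. If $q(x)\equiv m\pmod{p_i(x)}$ for some $m\in\mathbb{Z}$, then $q(T)=m\cdot\mathrm{id}_V$, so $q(\gamma)\cdot\alpha$ and $m\alpha$ have the same image in $V=\pi_n(X)\otimes\mathbb{Q}$ and their difference is torsion; thus $e_0 q(\gamma)\cdot\alpha=e_0 m\cdot\alpha$ in $\pi_n(X)$, giving $|(e_0m)\alpha|_{\vol}\lesssim e_0\|q\|_1 W_0$. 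Hence, granting the construction below, $\delta_\alpha\!\left(O(\log m)\right)\ge m$ for all large $m$, and since $\delta_\alpha$ is nondecreasing this is exactly $\delta_\alpha(k)\gtrsim\exp k$.

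It remains to construct, for each large integer $m$, a $q\in\mathbb{Z}[x,x^{-1}]$ with $q\equiv m\pmod{p_i}$ and $\|q\|_1=O(\log m)$; this is where the hypothesis on the roots is used and is the main obstacle. Let $P=c_d p_i\in\mathbb{Z}[x]$ be the primitive integral associate of $p_i$. Because $p_i$ (hence $P$) has no zero on the unit circle, $1/P$ is holomorphic on an annulus $r<|x|<R$ with $r<1<R$, so it has a Laurent expansion $1/P(x)=\sum_{j\in\mathbb{Z}}\gamma_j x^j$ with geometric decay $|\gamma_j|\le C\rho^{|j|}$, $\rho:=\max(r,R^{-1})<1$. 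Let $g\in\mathbb{Z}[x,x^{-1}]$ be obtained from $-m\sum_{|j|\le L}\gamma_j x^j$ by rounding each coefficient to a nearest integer, and set $q:=Pg+m$. Then $q\in\mathbb{Z}[x,x^{-1}]$ and $q\equiv m\pmod{p_i}$ since $Pg\in(P)\subseteq(p_i)$. Writing $g=-m\sum_{|j|\le L}\gamma_j x^j+e$ with $\|e\|_\infty\le\tfrac12$ supported in $[-L,L]$, and using $P\sum_{|j|\le L}\gamma_j x^j = 1 - P\sum_{|j|>L}\gamma_j x^j$ in $\ell^1(\mathbb{Z})$ together with submultiplicativity of $\|\cdot\|_1$,
\[
\|q\|_1 = \bigl\| m\,P\!\!\sum_{|j|>L}\gamma_j x^j + Pe \bigr\|_1 \le |m|\,\|P\|_1\!\!\sum_{|j|>L}|\gamma_j| + \|P\|_1\|e\|_1 \lesssim |m|\rho^L + L .
\]
Choosing $L$ minimal with $|m|\rho^L\le 1$, i.e.\ $L\asymp\log|m|$, yields $\|q\|_1=O(\log|m|)$, as needed. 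The only delicate point is that the $\gamma_j$ are generically irrational, so the rounding is genuinely necessary; it is harmless because it costs only $\|P\|_1\|e\|_1=O(L)=O(\log m)$ in the norm. (When $\deg p_i=1$ this recovers the behavior of the model spaces $X_{(r),n}$ discussed above, which is a useful sanity check.)
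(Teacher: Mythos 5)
Your proof is correct, but it takes a genuinely different route from the paper's. Both arguments reduce to the same quantitative core -- every large multiple $m\alpha$ must be expressible as an integer combination of $O(\log m)$ translates $\gamma^{j}\cdot(\text{bounded classes})$, each of which costs bounded volume via the balloon-on-a-string picture -- but they produce that expression differently. The paper works linear-algebraically in $V\otimes\mathbb{R}$: it splits $V\otimes\mathbb{R}=V_-\oplus V_+$ into the contracting and expanding eigenspaces of $T=\rho(\gamma)$, chooses a lattice and norm adapted to this splitting, and runs a greedy ``base-$T$ division algorithm'' (its Lemma \ref{lem:bound}), applying $T$ to shrink the $V_+$-component and $T^{-1}$ to shrink the $V_-$-component, terminating after $\log_L(M\lVert p\rVert)$ steps. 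You instead exploit the cyclic module structure: since $p_i(T)=0$, the class $q(\gamma)\cdot\alpha$ depends only on $q \bmod (p_i)$, so it suffices to find an integer Laurent polynomial $q\equiv m \pmod{p_i}$ with $\lVert q\rVert_1=O(\log m)$, which you manufacture from the Laurent expansion of $1/P$ on an annulus around the unit circle (truncation plus rounding). Your version makes the role of the hypothesis transparent -- no roots on the unit circle is exactly what gives geometric decay of the coefficients of $1/P$ -- and it handles the mixed contracting/expanding case uniformly, where the paper needs a separate splitting argument; your explicit treatment of torsion via Proposition \ref{prop:bo_mod} and the exponent $e_0$ is also more careful than the paper's, which passes silently from an identity in $V\subseteq\pi_n(X)\otimes\mathbb{Q}$ to a volume bound on $M\alpha$. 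What the paper's approach buys in exchange is that it never uses irreducibility or cyclicity in an essential way, so the greedy decomposition adapts more readily to non-cyclic modules and is of a piece with the lattice arguments used elsewhere in Section 3 and in Theorem \ref{thm:infcyc}.
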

\begin{proof}
  Let $T:V \to V$ be the transformation induced by the action of $\gamma$.  Our
  general strategy will be to express vectors as a sum of logarithmically many
  terms of the form $T^iv_i$, where all the $v_i$ are lattice points within a
  fixed ball.  To do this, we first need to find an appropriate lattice.

  Let $\lambda_1,\ldots,\lambda_r$ be the eigenvalues of $T$ with multiplicity,
  with $|\lambda_1|,\ldots,|\lambda_s|<1$ and $|\lambda_{s+1}|,\ldots,|\lambda_r|
  >1$.  Let $v_1,\ldots,v_r$ be a corresponding real Jordan basis for $V \otimes
  \mathbb{R}$ with respect to $T$, and $V \otimes \mathbb{R}=V_- \oplus V_+$,
  where $V_-$ is spanned by $v_1,\ldots,v_s$ and $V_+$ by $v_{s+1},\ldots,v_r$.

  On the other hand, let $u_1,\ldots,u_r$ be a (rational) basis for $V$
  consisting of elements of $\pi_n(X)$, and $\lVert \cdot \rVert$ the Euclidean
  norm with respect to this basis, with the property that $T$ is
  $\lVert \cdot \rVert$-increasing on $V_+$ and $\lVert \cdot \rVert$-decreasing
  on $V_-$.  Note that this is a nontrivial condition: for example, the matrix
  $\begin{pmatrix}1.1&1\\0&1.1\end{pmatrix}$ does not increase the standard
  Euclidean norm of every vector.  This can be remedied, however, by scaling one
  of the coordinates in order to decrease the off-diagonal term.  In general,
  one can, for example, choose sufficiently large multiples of a close rational
  approximation to a basis consisting of scalings of the $v_i$ in which the
  off-diagonal entries of $T$ are very small.

  We let $\Lambda$ be the lattice generated by the $u_i$.  Finally, we fix the
  following constants:
  \begin{itemize}
  \item $Q$ such that the matrices of $T$ and $T^{-1}$ with respect to the basis
    $\{u_i\}$ are both in $\frac{1}{Q}M_r(\mathbb{Z})$;
  \item $L=\min\left\{\min_{u \in V_+, \lVert u\rVert=1}
    \lVert Tu \rVert,\min_{u \in V_-, \lVert u\rVert=1}
    \lVert T^{-1}u \rVert\right\}$;
  \item $U=\max\{\max_{\lVert u\rVert=1} \lVert Tu \rVert,
    \max_{\lVert u\rVert=1} \lVert T^{-1}u \rVert\}$.
  \end{itemize}
  In particular, every vector in $Q\Lambda$ has a preimage in $\Lambda$ under
  both $T$ and $T^{-1}$.

  Take an element $\alpha \in V_i \subseteq \pi_n(X) \otimes \mathbb{Q}$.  It
  has a multiple which is a lattice point $p \in \Lambda$, which decomposes over
  $\mathbb{R}$ as $p=p_-+p_+$, with $p_- \in V_-$ and $p_+ \in V_+$.  Fix $M$
  such that $\lVert Mp \rVert \geq Q\sqrt{r}$.

  Suppose first that $V_-=0$, i.e.\ $T$ only has large eigenvalues.  Then for
  any $M>Q\sqrt{r}$, take $q=Tp^\prime$ to be the nearest point in
  $Q\Lambda$ whose coordinates are smaller than those of $Mp$.  Then $Mp=p_0+
  Tp^\prime$, where $p_0$ and $p^\prime$ are lattice points, $\lVert p_0 \rVert
  \leq Q\sqrt{r}$ and $\lVert p^\prime \rVert \leq M\lVert p \rVert/L$.
  Continuing this construction inductively gives us
  $$Mp=\sum_{i=0}^\ell T^ip_i,$$
  with $\lVert p_i \rVert \leq Q\sqrt{r}$ for every $i$ and $\ell \leq
  \log_L(M\lVert p \rVert)$.  Thus
  $$|M\alpha|_{\vol} \leq \ell Q\sqrt{r} \in O(\log \lVert p \rVert).$$
  If $V_+=0$, the same computation holds substituting $T^{-1}$ for $T$.

  Now suppose $T$ has both small and large eigenvalues.  In other words,
  multiplying by a power of $T$ shrinks a vector in certain irrational
  directions and stretches it in others.  If these directions were rational,
  then by the above we could distort a vector going in just one of them.  Our
  strategy will be to express our chosen vector as a sum of shrinking and
  expanding components as precisely as possible.

  To this end, we write $Mp=a+b+p_0$, where $a$ and $b$ are in $Q\Lambda$,
  $\lVert p_0 \rVert<Q\sqrt{r}$, $\lVert a-p_- \rVert<Q\sqrt r$ and
  $\lVert b-p_+ \rVert<Q\sqrt r$.  Applying $T^{-1}$ to this sort of
  decomposition, we get the following lemma.
  \begin{lem} \label{lem:bound}
    For any lattice point $q \in V_+ \oplus V_-$, we can write $q=a+Tb$,
    with $a$ and $b$ lattice points, $\lVert a_+ \rVert<Q\sqrt r$ and
    $\lVert b_- \rVert<UQ\sqrt r$, $\lVert a_- \rVert \leq \lVert q_- \rVert+
    Q\sqrt r$ and $\lVert b_+ \rVert \leq \frac{\lVert q_+ \rVert}{L}$.  The
    same thing holds switching $V_+$ and $V_-$ components if we substitute
    $T^{-1}$ for $T$.
  \end{lem}

  Applying Lemma \ref{lem:bound} to $Mp$, we get $Mp=a+p_0+Tp_1$ with the
  appropriate bounds.  Applying the lemma inductively to $p_i$ gives
  $$Mp=a+\sum_{i=0}^\ell T^ip_i,$$
  where $\lVert p_i \rVert \leq (U+1)Q\sqrt{r}$ and $\ell \leq 
  \log_L(M\lVert p \rVert)$.  We can now also apply the $T^{-1}$ case of Lemma
  \ref{lem:bound} to $a$ to get
  $$Mp=\sum_{i=-\ell}^\ell T^ip_i,$$
  and the same bounds on $p_i$ and $\ell$ hold.  Hence
  $$|M\alpha|_{\vol} \leq 2\ell (U+1)Q\sqrt{r}) \in O(\log M\lVert p \rVert).$$
  Thus for any $\alpha$, volume distortion is at least exponential.
\end{proof}
Conversely, generalizing Example \ref{exs:oddD}(1), one sees that if $T$ is
diagonalizable and has at least one eigenvalue on the unit circle, then the
action of $\gamma$ does not induce any distortion on $V$.

The situation is more complex when $T$ has eigenvalues on the unit circle but is
non-diagonalizable.  If it is quasiunipotent, i.e.\ the eigenvalues are roots of
unity, then we can take a power of it which is in fact unipotent.  For an
irreducible unipotent $T$, there is an eigenvector $\vec v$ and a $\vec u$ for
which $T\vec u=\vec u+\vec v$.  Then $k\vec v=T^k\vec u-\vec u$ and so $\vec v$
is infinitely distorted.  For all but one of the other generalized eigenvectors,
one can find similar polynomials, showing that they are infinitely distorted as
well.  The remaining generalized eigenvector is undistorted.  If $T$ has
eigenvalues with irrational angles, then certain individual vectors are
distorted because they are weakly infinitely distorted, as one sees in Example
\ref{exs:oddD}(\ref{qqu}).  The precise distortion functions are harder to
ascertain, though it seems reasonable to suppose that, as in Example
\ref{exs:oddD}(\ref{qqu}), they are polynomial.

\subsection*{Summary} In this section, we saw that if $X$ is a finite complex
and no subspace of $\pi_*(X)$ is infinitely distorted, then:
\begin{itemize}
\item $\pi_*(X) \otimes \mathbb{Q}$ is finite dimensional;
\item the universal cover $\tilde X$ is rationally equivalent to a product of
  odd-dimensional spheres;
\item and the action of $\Gamma$ on $\pi_*(X) \otimes \mathbb{Q}$ is elliptic.
\end{itemize}
We will refer to spaces that satisfy these conditions as \emph{delicate spaces}.

\section{Filling functions}

In this section, we study various notions of isoperimetry in higher dimensions,
aiming to build a library of general results and examples which we can deploy
later.  Section \ref{S:5} will relate these to a dual cohomological notion, and
we will apply this duality to the study of distortion in Section \ref{S:6}.

The template for most study of isoperimetry in higher dimensions is the
one-dimensional example of Dehn functions.  The Dehn function of a group
$\Gamma$ describes the difficulty of solving the word problem in that group;
specifically, $\delta_\Gamma(k)$ is the minimal number of conjugates of
relations required to trivialize a trivial word of length $k$.  This has a
geometric interpretation as the cellular volume of fillings of cellular loops in
the Cayley 2-complex of $\Gamma$.

There are several different ways to generalize this notion to higher dimensions.
Higher-dimensional Dehn functions were first defined by \cite{AWP}, and filling
volume functions by Gromov in \cite{GrAsym}; later, other equivalent and
non-equivalent definitions of filling functions in groups and spaces have been
given by \cite{BBFS}, \cite{Young}, and \cite{Groft}.

Robert Young \cite{Young} formalizes the distinction between \emph{homotopical}
Dehn functions, which measure the difficulty of extending maps from spheres to
disks, or more generally, maps from $\partial M$ to $M$ for a manifold $M$, and
\emph{homological} isoperimetric functions, which measure the difficulty of
filling chains by cycles.  We will additionally introduce \emph{directed}
isoperimetric functions, which measure pairings of fillings with cohomology
classes and which have not been discussed before in this guise.  Except in
dimension 2, homotopical filling functions bound homological filling functions
from above.  These, in turn, bound directed isoperimetric functions, creating a
kind of hierarchy of ``coarseness.''

Certain of these functions are harder or easier to compute in certain
situations.  To take advantage of the ability to make comparisons between them,
we give definitions of all three types.

We start with the most obvious Dehn function for fillings of spheres with disks.
\begin{defn}
  Let $X$ be a compact space with fundamental group $\Gamma$ and $n$-connected
  fundamental cover $\tilde X$.  Given a Lipschitz map $f:S^n \to \tilde X$,
  define the \emph{filling volume} of $f$ to be the minimal volume of an
  extension of $f$ to $D^{n+1}$:
  $$\delta_X^n(f)=\inf\{\vol(g) \mid g:D^{n+1} \to \tilde X\text{ s.t.\ }
  g|_{\partial D^{n+1}}=f\}.$$
%  If $f$ is admissible, can also define
%  $$\delta_{X,\cel}^n(f)=\min\{\vol^{n+1}(g) \mid g:D^{n+1} \to \tilde X
%  \text{ admissible s.t.\ }g|_{\partial D^{n+1}}=f\}.$$
  The \emph{$n$-dimensional Dehn function} of $X$ is
  $$\delta_X^n(k)=\sup\{\delta_X^n(f) \mid f:S^n \to \tilde X\text{ s.t.\ }
  \vol f \leq k\}.$$
\end{defn}
One can also ask, for another $(n+1)$-manifold with boundary $(M,\partial M)$,
how hard it is to fill a map $\partial M \to X$ with a map $M \to X$:
\begin{defn}
  Given a Lipschitz map $f:\partial M \to \tilde X$, define the
  \emph{filling volume} of $f$ to be
  $$\delta_X^{(M,\partial M)}(f)=\inf\{\vol(g) \mid g:M \to \tilde X\text{ s.t.\ }
  g|_{\partial M}=f\}$$
  and we can define the corresponding Dehn function
  $$\delta_X^{(M,\partial M)}(k)=\sup\{\delta_X^{(M,\partial M)}(f) \mid
  f:\partial M \to \tilde X\text{ s.t.\ }\vol f \leq k\}.$$
\end{defn}
Next, we define the filling volume or homological isoperimetric functions.
\begin{defn}
  Let $X$ be a compact space with fundamental group $\Gamma$ and $n$-connected
  fundamental cover $\tilde X$.  Given a Lipschitz boundary $\beta \in
  C_n(\tilde X)$, define the \emph{filling volume} of $\beta$ to be
  $$\FVol_X^n(\beta)=\inf\{\vol(\alpha) \mid \alpha \in C_{n+1}(\tilde X)
  \text{ s.t.\ }\partial\alpha=\beta\}$$
  and the filling volume function
  $$\FV_X^n(k)=\sup\{\FVol_X^n(\beta) \mid \beta \in
  C_n(\tilde X)\text{ s.t.\ }\vol \beta \leq k\}.$$
  One can also restrict to boundaries that look like a specific $n$-manifold $N$
  to get
  $$\FV_X^N(k)=\sup\{\FVol_X^n(\beta) \mid \beta \in
  C_n(\tilde X)\text{ s.t.\ }\vol \beta \leq k\text{ and }\beta=f_*[N]
  \text{ for some }f:N \to \tilde X\}.$$
\end{defn}

By restricting chains to be cellular and maps to be admissible, one can get
similar cellular definitions.  Note, however, that cellular and Lipschitz
filling functions of $X$ might not be asymptotically equivalent as defined
earlier; e.g.\ one may be linear and the other sublinear.  Instead, we need a
slightly weaker notion of \emph{coarse equivalence},
\begin{align*}
  f \lesssim_C g &\iff f(k) \leq Ag(Bk+C)+Dk+E \\
  f \sim_C g &\iff f \lesssim_C g\mbox{ and }f \gtrsim_C g,
\end{align*}
for arbitrary constants $A$, $B$, $C$, $D$, and $E$.  It is easy to apply the
deformation theorem to see that these functions are indeed coarsely equivalent.
One also then sees that the cellular versions depend only on the
$(n+1)$-skeleton of $X$, and that a homotopy equivalence up to dimension $n$
between $X$ and $Y$ induces a coarse equivalence of filling functions.  This
gives a well-defined notion of $\FV_\Gamma^n$ and $\FV_\Gamma^N$ for any group
$\Gamma$ of type $\mathcal{F}_{n+1}$, that is, which has a $K(\Gamma,1)$ with
finite $(n+1)$-skeleton.

We will now refine the notion of filling volume to define filling homology
classes and directed isoperimetric functions.
\begin{defn}
  Assume that $X$ is constructed via Lipschitz attaching maps and that
  $\tilde X$ is $(n+1)$-connected.  Let $p:\tilde X \to X$ be the universal
  covering map.  Given an $n$-manifold $M$ and an admissible map $f:M \to
  \tilde X$, define the \emph{chain evaluation} $[[f]] \in C_n(X)$ to be the
  cellular chain whose value on a cell $c$ is the degree of the map $p \circ f$
  in $H_*(X,X \setminus c)$.  Suppose $[[f]]=0$.  Then for any homological
  filling $G \in C_{n+1}(\tilde X)$ of $f$, its image $p_\#G \in C_{n+1}(X)$ is a
  cycle, and we can define the \emph{filling class} $\Fill(f) \in H_{n+1}(X)$ to
  be its homology class.  This is well-defined since two such fillings differ by
  an $(n+1)$-boundary in $\tilde X$.

  Now, given a seminorm $\lVert\cdot\rVert$ on $H_{n+1}(X;\mathbb{Q})=
  H_{n+1}(\Gamma;\mathbb{Q})$, define the \emph{directed isoperimetric function}
  of $\Gamma$ with respect to $\lVert\cdot\lVert$ to be
  $$\FV_{\Gamma,\lVert \cdot \rVert}^M(k)=\sup\{\lVert\Fill(f)\rVert \mid f:M
  \to \tilde X\text{ admissible s.t.\ $[[f]]=0$ and }\vol f \leq k\}.$$
  If $b \in C_n(\tilde X)$ is a cellular boundary with $p_\#b=0$, then we can
  similarly define a filling class $\Fill(b) \in H_{n+1}(X)$, and filling
  functions
  $$\FV_{\Gamma,\lVert \cdot \rVert}^n(k)=\sup\{\lVert\Fill(b)\rVert \mid b \in
  C_n(\tilde X)\text{ cellular s.t.\ $p_\#b=0$ and }\vol b \leq k^{1/n}\}.$$
  More generally, suppose $X$ is any finite complex with Lipschitz attaching
  maps, $p:\tilde X \to X$ is the universal covering map, and
  $\lVert\cdot\rVert$ is a seminorm on $H_{n+1}(X;\mathbb{Q})/
  p_*H_{n+1}(\tilde X;\mathbb{Q})$.  Then a map $f:M \to \tilde X$ with $[[f]]=
  0$, or a boundary $b \in C_n(\tilde X)$ with $p_\#b=0$, has a filling class
  $$\Fill f \in H_{n+1}(X;\mathbb{Q})/p_*H_{n+1}(\tilde X;\mathbb{Q}),$$
  and we can define the filling functions $\FV_{\Gamma,\lVert \cdot \rVert}^M$ and
  $\FV_{\Gamma,\lVert \cdot \rVert}^n$ as above.
\end{defn}
It is clear from the definitions that for any $X$, $n$, and seminorm
$\lVert\cdot\rVert$, $\FV_{X,\lVert\cdot\rVert}^n(k) \lesssim \FV_X^n(k)$.  We now
try to understand directed isoperimetric inequalities in their own right.
\begin{exs}
  Here are two examples of seminorms with respect to which we may take directed
  isoperimetric inequalities:
  \begin{enumerate}
  \item Define the \emph{cellular norm} of $h \in H_{n+1}(X)$ by
    $$\lVert h \rVert_{\cel}=\min\left\{\sum a_i \Big\vert \sum a_ic_i
    \text{ is a cellular representative of }h\right\}.$$
    Then $\FV_{X,\lVert \cdot \rVert_{\cel}}^M$ is defined whenever $H_{n+1}(\tilde X)
    =0$.  The cellular norm is maximal among the seminorms we might consider:
    for any other seminorm $\lVert \cdot \rVert$ on $H_{n+1}(X)$,
    $\lVert \cdot \rVert \leq C\lVert \cdot \rVert_{\cel}$ for some $C$.
  \item Suppose that $e \in H^{n+1}(X;\mathbb{Q})$ is a cohomology class such
    that $p^*e=0$.  Then $\lvert\langle e,\cdot \rangle\rvert$ defines a
    seminorm on $H_{n+1}(X;\mathbb{Q})/p_*H_{n+1}(\tilde X;\mathbb{Q})$.  The
    directed isoperimetric functions we will use most often are with respect to
    this seminorm.
  \end{enumerate}
\end{exs}
\begin{ex}
  For the most obvious examples, that is $\Gamma=\mathbb{Z}^2$ and other
  surface groups, $\FV_{\Gamma,\lVert\cdot\rVert}^1(k) \sim \FV_\Gamma^1(k)$ for any
  nonzero seminorm $\lVert\cdot\rVert$; in any case all such choices of seminorm
  differ by a constant.

  In order to get a sense of the difference between homological and directed
  isoperimetric functions, consider the famous Baumslag-Solitar group $B=BS(1,2)
  =\langle a,b \mid bab^{-1}=a^2 \rangle$.  The Dehn function of this group is
  exponential: the word $b^{k}ab^{-k}ab^{k}a^{-1}b^{-k}a^{-1}$ of length $4k+4$
  represents the trivial element but takes $O(2^k)$ cells to fill.  By the same
  token, $\FV_B^1(k)$ is also exponential in $k$.

  On the other hand, the filling class of this word in $H_2(B)$ is zero, because
  in the usual filling, for every cell of positive orientation, there is a cell
  of negative orientation.  Indeed, it is easy to see that $H_2(B)=0$, since the
  Cayley complex only has one 2-cell and its boundary is nonzero.  Thus this
  kind of cancellation must happen for every word whose chain evaluation is
  zero.  In particular, $\FV_{B,\lVert\cdot\rVert}^1(k) \cong 0$ for any norm
  $\lVert\cdot\rVert$.
\end{ex}
Looking at this example, one may wonder if we have been too restrictive in
defining directed isoperimetric functions.  We know that every map is close
enough to an admissible map, so there is no harm done in restricting to such
maps.  We would like to show in addition that every map or chain is close in the
same sense to a map or chain whose chain evaluation is zero.  Then we have
better reason to believe that directed isoperimetric inequalities tell us
something about all maps or chains.
\begin{lem} \label{lem:tohom}
  Let $X$ be a finite complex with admissible boundary maps an universal
  covering map $p:\tilde X \to X$, and $n \geq 1$.  Then there is a constant $C$
  depending on $n$ and $X$ such that the following holds.
  \begin{enumerate}
  \item Let $f:S^n \to \tilde X$ be an admissible map of cellular $n$-volume $k$
    such that $[[f]]$ is a boundary.  Then $f$ can be deformed via a homotopy
    with $(n+1)$-volume $Ck$ to an admissible map $g$ of volume $Ck$ with
    $[[g]]=0$.
  \item Let $b \in C_n(\tilde X)$ be a boundary of volume $k$.  Then $b$ is
    homologous via an $(n+1)$-chain of volume $Ck$ to a boundary $c$ of volume
    $Ck$ with $p_\#c=0$.
  \end{enumerate}
\end{lem}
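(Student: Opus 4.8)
The plan is to reduce each statement to a local modification on cells of $\tilde X$ together with a global cancellation-type argument, exploiting the fact that $[[f]]$ (resp.\ $p_\#b$) is a boundary in $X$ rather than merely a cycle. For part (2), since $b$ is a boundary in $C_n(\tilde X)$ and $p_\#b$ is a boundary in $C_n(X)$, the class $[p_\#b] \in H_n(X)$ is automatically zero, but we need the sharper statement that $p_\#b$ itself can be killed by a bounded-volume homology in $\tilde X$. First I would fix a basis $\{c_j\}$ of $n$-cells of $X$ and, for each $j$, a bounded-volume $(n+1)$-chain $D_j \in C_{n+1}(\tilde X)$ whose image under $p_\#$ has boundary a cellular representative of the relation expressing that $c_j$ is a boundary modulo the image of $\partial_{n+1}$; such chains exist because $X$ is finite and $\tilde X$ is $(n+1)$-connected (so every relevant cellular $n$-cycle in $X$ pulls back to a boundary in $\tilde X$, and we may choose the fillings equivariantly, cf.\ the lifting-homomorphism construction in Proposition~\ref{prop:chain_ho}). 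Writing $p_\#b = \partial_{n+1}(\text{something in } C_{n+1}(X))$, pull that $(n+1)$-chain back coefficient-by-coefficient using translates of the $D_j$; this produces an $(n+1)$-chain in $\tilde X$ of volume $\lesssim \vol b$ whose boundary $c$ satisfies $p_\#c = 0$ and $\vol c \lesssim \vol b$. The multiplicative constant $C$ depends only on $n$, the number of cells of $X$, and the volumes of the finitely many fillings $D_j$.

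For part (1), I would first apply the Federer--Fleming deformation theorem (or rather its consequence that every admissible map is volume-equivalent to a nice one) to reduce to the situation where $f$ is cellular and well-behaved, then push the chain-level argument of part (2) back up to the level of maps. Concretely, the chain evaluation $[[f]] \in C_n(X)$ being a boundary means, by part (2) applied to a lift, that there is a bounded-volume homology in $\tilde X$ from $f_\#[S^n]$ to a cellular cycle with vanishing $p_\#$; the point is to realize this homology geometrically as a homotopy of admissible maps with controlled $(n+1)$-volume. Here I would use the standard move that appears already in the proof of Lemma~\ref{lem:riv}: to change the degree of an admissible map on a single $n$-cell by $\pm 1$ along a chosen $(n+1)$-cell $e$, one homotopes through $e$ at the cost of $(n+1)$-volume $1$, provided one first connects the relevant sheets of the preimage by tubes through $\tilde X^{(n-1)}$ (which is simply connected), an operation that changes the $n$-volume only by a bounded amount. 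Iterating this over the finitely many cells appearing in the chosen homology, with multiplicity bounded by $\vol b \lesssim \vol f$, yields the desired $g$ with $[[g]]=0$, $\vol g \lesssim \vol f$, and a homotopy of $(n+1)$-volume $\lesssim \vol f$.

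The main obstacle I expect is bookkeeping the geometry of the tube-and-cancel moves so that the $n$-volume and the $(n+1)$-volume of the homotopy both stay linear in $k$ simultaneously: each elementary move is cheap, but one must be sure that connecting sheets through $\tilde X^{(n-1)}$ does not inflate the number of cellular preimages in a way that compounds over the $O(k)$ moves. The resolution, as in Lemma~\ref{lem:riv}, is that tubes can be taken through the $(n-1)$-skeleton, which is simply connected, so each tube contributes only an additive constant to cellular $n$-volume and a constant to $(n+1)$-volume; since there are $O(k)$ moves the totals stay $O(k)$. A secondary point requiring care is equivariance in the construction of the fillings $D_j$ — but this is exactly the content of the lifting-homomorphism machinery already developed, so it can be invoked rather than redone.
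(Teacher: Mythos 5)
Your proposal is essentially correct, and part (2) matches the paper's (implicit) argument: both rest on the fact that $B_n(X)$ is a finitely generated subgroup of the finite lattice $C_n(X)$, so a boundary of cellular volume $k$ in $X$ admits a filling of volume $\lesssim k$, which one then lifts cell-by-cell to $\tilde X$ and subtracts off. (Two small blemishes there: you do not need, and should not assume, that $\tilde X$ is $(n+1)$-connected --- the hypothesis that $b$, resp.\ $[[f]]$, is already a boundary is all that is used --- and arbitrary lifts of the cells of the filling suffice, so the equivariant lifting-homomorphism machinery is overkill.) For part (1) your route genuinely differs from the paper's. You realize the chain-level correction as a homotopy of $f$ itself, pushing $f$ through the $(n+1)$-cells of a lift of the filling, i.e.\ the first stage of the homotopy in Lemma~\ref{lem:riv}; note that for this move alone no tubes through $\tilde X^{(n-1)}$ are needed (tubes enter in Lemma~\ref{lem:riv} only to cancel preimages of opposite orientation, which is not required here since you only need $[[g]]=0$ as a chain, not geometric cancellation). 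The paper instead leaves $f$ untouched on the lower hemisphere and wedges onto the upper hemisphere $O(k)$ nullhomotopic ``balloons'' whose heads are the attaching maps of the cells of $-c$; since each balloon collapses through its own $(n+1)$-cell, $g \simeq f$ via a homotopy of volume $O(k)$, and the chain evaluations cancel by construction. The balloon argument buys simplicity --- the added pieces never interact with the existing preimages of $f$, so no bookkeeping of sheets is needed --- while your argument is closer in spirit to the machinery already built for Lemma~\ref{lem:riv} and gives the same linear constants.
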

\begin{proof}
  We only prove (1); the proof of (2) is similar.

  We work in $X$, since all the maps we are considering lift to $\tilde X$.  We
  know $\lVert[[f]]\rVert_{\cel} \leq \vol f=k$.  Since $X$ is finite, the
  boundaries $B_n(X)$ form a finitely generated group.  Thus there is a constant
  $A$ such that we can always find an $(n+1)$-chain $c$ with $\partial c=[[f]]$
  and $\lVert c \rVert_{\cel} \leq Ak$.

  Let $B$ be the maximal volume of an attaching map $f_i$ of an $(n+1)$-cell
  $c_i$.  Then a map $D^n \to X$ which takes the disk to a balloon starting at a
  basepoint and with head $f_i$ is has volume at most $B$.  By mapping the upper
  hemisphere of $S^n$ to $X$ via $Ak$ balloons corresponding to the cells of
  $-c$ and the lower hemisphere via $f$, we create a map $g$ of volume $(AB+1)k$
  with $[[g]]=0$.  Since each $f_i$ can be nullhomotoped through $c_i$, this map
  is homotopic to $f$ via a homotopy with volume $\lVert c \rVert_{\cel}=Ak$.
  Thus we have proven the lemma with $C=AB+1$.
\end{proof}
To further promote the admission of directed functions to the filling function
pantheon, we would like to prove that they are well-defined for groups with
appropriate finiteness properties.
\begin{prop}
  Suppose $\Gamma$ is a group of type $\mathcal{F}_{n+1}$.  Then given a norm
  $\lVert\cdot\rVert$ on $H_{n+1}(\Gamma;\mathbb{Q})$,
  $\FV_{\Gamma,\lVert\cdot\rVert}^n$ depends up to coarse equivalence only on
  $\Gamma$, justifying the notation.
\end{prop}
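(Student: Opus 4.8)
The plan is to show that for any two $K(\Gamma,1)$-complexes $X$ and $Y$ with finite $(n+1)$-skeleton, the directed filling functions $\FV^n_{\Gamma,\lVert\cdot\rVert}$ computed in $X$ and in $Y$ are coarsely (in fact multiplicatively) equivalent, once the norm $\lVert\cdot\rVert$ on $H_{n+1}(\Gamma;\mathbb{Q})$ is transported via the canonical identifications $H_{n+1}(X;\mathbb{Q}) \cong H_{n+1}(\Gamma;\mathbb{Q}) \cong H_{n+1}(Y;\mathbb{Q})$. Note that for a $K(\Gamma,1)$ the universal cover is contractible, so $H_{n+1}(\tilde X;\mathbb{Q})=0$ and the general definition of $\FV^n_{\Gamma,\lVert\cdot\rVert}$ (with the norm on $H_{n+1}(X;\mathbb{Q})/p_*H_{n+1}(\tilde X;\mathbb{Q})$) reduces to the norm on $H_{n+1}(X;\mathbb{Q})$; in particular $\Fill$ is everywhere defined.

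First I would fix such models $X$, $Y$ and an identification $\Gamma=\pi_1X=\pi_1Y$. Since both are aspherical CW complexes, obstruction theory yields a cellular map $\phi:X \to Y$ inducing the identity on $\pi_1$, and likewise a cellular $\psi:Y \to X$, each unique up to homotopy. Fix a $\Gamma$-equivariant lift $\tilde\phi:\tilde X \to \tilde Y$. Because $X^{(n+1)}$ is finite there are finitely many $\Gamma$-orbits of cells of $\tilde X$ in dimensions $\le n+1$, so, $\phi$ being cellular, the induced chain map $\tilde\phi_\#$ sends a cellular $k$-chain of cellular volume $v$ to one of volume at most $Bv$, where $B=B(\phi,n)$ is the largest cellular norm of the image of a cell; moreover $\tilde\phi_\#$ covers $\phi_\#$, so $p_\#(\tilde\phi_\# b)=\phi_\#(p_\# b)$ for every cellular chain $b$ on $\tilde X$. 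Now take a cellular $n$-boundary $b \in C_n(\tilde X)$ with $p_\# b=0$ and cellular volume bounded by the quantity prescribed in the definition of $\FV^n$. Its pushforward $\tilde\phi_\# b$ is again a cycle, hence a boundary in the contractible space $\tilde Y$, and $p_\#(\tilde\phi_\# b)=\phi_\#(0)=0$, so $\Fill(\tilde\phi_\# b) \in H_{n+1}(Y;\mathbb{Q})$ is defined. If $G \in C_{n+1}(\tilde X)$ fills $b$, then $\tilde\phi_\# G$ fills $\tilde\phi_\# b$ and $p_\#(\tilde\phi_\# G)=\phi_\#(p_\# G)$, so passing to homology gives $\Fill(\tilde\phi_\# b)=\phi_*\Fill(b)$. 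By naturality of the comparison map $H_*(-;\mathbb{Q}) \to H_*(\Gamma;\mathbb{Q})$ for aspherical spaces, $\phi_*$ is precisely the canonical identification of $H_{n+1}(X;\mathbb{Q})$ with $H_{n+1}(Y;\mathbb{Q})$, so $\lVert\Fill(\tilde\phi_\# b)\rVert=\lVert\Fill(b)\rVert$. Since $\vol(\tilde\phi_\# b) \le B\,\vol b$, taking suprema gives that $\FV^n_{\Gamma,\lVert\cdot\rVert}$ computed in $X$ is $\lesssim$ the version computed in $Y$ (the constant $B$ only rescales the argument, regardless of whether the defining volume bound is $k$ or $k^{1/n}$, since either way rescaling by a constant changes the parameter by a constant factor). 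Applying the identical argument to $\psi$ gives the opposite inequality, hence coarse equivalence; and a homotopy equivalence realizing the identity on $\Gamma$ induces the canonical identification on $H_{n+1}$, so the construction is independent of all choices, justifying the notation $\FV^n_{\Gamma,\lVert\cdot\rVert}$.

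The main obstacle is soft rather than computational: one must verify carefully that $\phi_*$ on $H_{n+1}$ is the \emph{canonical} identification — i.e.\ the identity once both groups are identified with $H_{n+1}(\Gamma;\mathbb{Q})$ — so that the fixed norm is genuinely preserved and no outer automorphism of $\Gamma$ is silently introduced. This is exactly where asphericity does the work, via the uniqueness of $\phi$ up to homotopy and the functoriality of $H_*(\Gamma)$. Everything else is the same bounded-cellular-distortion observation already used for the homotopical and homological filling functions: a cellular map between complexes with finitely many cell-orbits in the relevant dimensions changes cellular volume by at most a multiplicative constant, and commutes with the deck group, hence with $p_\#$ and $\Fill$.
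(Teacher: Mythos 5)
Your proof is correct and follows essentially the same route as the paper's: a cellular comparison map between finite models, the observation that it distorts cellular volume by at most a multiplicative constant and commutes with $p_\#$ and hence with $\Fill$, and the identification of the induced map on $H_{n+1}$ with the canonical one. The only real difference is that the paper allows arbitrary finite models whose universal covers are merely $n$-connected (normalized so that $H_{n+1}(X)=H_{n+1}(\Gamma)$), and so can only build the comparison map on the $(n+2)$-skeleton, whereas your restriction to aspherical models lets obstruction theory produce a global map; your version suffices to justify the notation, and for the coarse-equivalence conclusion the canonicity of $\phi_*$ that you flag as the main obstacle is not strictly needed, since any fixed linear isomorphism of $H_{n+1}(\Gamma;\mathbb{Q})$ distorts a norm by at most a multiplicative constant.
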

\begin{proof}
  Suppose $X$ and $Y$ are two complexes with fundamental group $\Gamma$ and
  $n$-connected universal cover such that $H_{n+1}(X)=H_{n+1}(Y)=
  H_{n+1}(\Gamma)$.  The last condition can always be satisfied given a complex
  which satisfies the first two by adding a finite number of $(n+2)$-cells.

  In particular, we can find a cellular map $h:X^{(n+2)} \to Y$ which induces an
  isomorphism on $H_{n+1}$.  Now suppose $f:S^n \to X$ is an admissible map of
  volume $k$ with $[[\tilde f]]=0$.  Then $h \circ f$ is a cellular map of
  volume $Ck$.  Although it may not be admissible, it has well-defined degrees
  on $n$-cells and is homotopic in $Y^{(n+1)}$ to a map $g$ with the same bound
  on volume and the same degrees on $n$-cells; in particular, $[[\tilde g]]=0$.
  Moreover, an admissible filling of $f$ gives a corresponding cellular filling
  of $g$.  Since $h_*:H_{n+1}(X) \to H_{n+1}(Y)$ is induced by the corresponding
  map of chain complexes, $h_*\Fill(g)=\Fill(f)$.
\end{proof}
These are the only general results for directed isoperimetric functions that we
will show at this time.  Before we move on to some more interesting examples, we
prove some comparison results about filling volumes and Dehn functions.

First, we show that in dimensions other than 2, every cellular boundary is
induced by a map $f:S^n \to X$; we say that every boundary is \emph{spherical}.
This means that isoperimetric functions are equivalent whether or not we require
boundaries to be spherical.
\begin{lem} \label{lem:BBFS}
  Let $X$ be a finite complex with universal cover $\tilde X$ and $n \geq 3$.
  Then for every integral boundary $c \in C_n(\tilde X)$, there is an admissible
  $f:S^n \to \tilde X$ with $f_\#([S^n])=c$ and no cells of opposite
  orientations.  In particular, $\FV_X^{S^n} \sim \FV_X^n$ and for every norm
  $\lVert\cdot\rVert$ on $H^{n+1}(X)$, $\FV_{X,\lVert\cdot\rVert}^{S^n}
  \sim \FV_{X,\lVert\cdot\rVert}^n$.
\end{lem}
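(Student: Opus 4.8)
The plan is to adapt the argument of \cite{BBFS}: first realize $c$ by \emph{some} admissible map $S^n\to\tilde X$, and then remove the cancellation using the surgery employed on the interval $\left[\frac13,\frac23\right]$ in the proof of Lemma \ref{lem:riv}.

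First I would pass to a homotopy‑equivalent CW structure in which all attaching maps are admissible; this is harmless, since cellular boundary maps are determined by local degrees, which are homotopy invariant, and since the claimed identities among filling functions are coarse‑equivalence statements, hence homotopy invariant. Writing $c=\partial d$ with $d=\sum_j b_j f_j$ a cellular $(n+1)$‑chain, the attaching map $\psi_j\colon S^n\to\tilde X^{(n)}$ of the cell $f_j$ satisfies $(\psi_j)_\#([S^n])=\partial f_j$. Because $\tilde X$ is simply connected, I can assemble, for each $j$, $b_j$ signed copies of $\psi_j$ into a single admissible map $g\colon S^n\to\tilde X^{(n)}$ --- a cluster of ``balloons'' joined to a basepoint by thin strings routed through $\tilde X^{(1)}$ --- with $g_\#([S^n])=\sum_j b_j\,\partial f_j=c$, the strings and basepoint region contributing nothing in dimension $n$.

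Next I would cancel opposite orientations. Suppose $g$ sends two disjoint balls $B_1,B_2\subset S^n$ homeomorphically onto the same open $n$‑cell $e$ with opposite local orientations. Choose a simple path $\gamma\subset S^n$ from $\partial B_1$ to $\partial B_2$ missing the (finitely many) other preimages of open $n$‑cells, so that $g|_\gamma$ has image in $\tilde X^{(n-1)}$. Since $n\ge 3$, the skeleton $\tilde X^{(n-1)}$ is simply connected, so --- exactly as in the proof of Lemma \ref{lem:riv} --- one thickens $\gamma$ to a tube, merges $B_1$, this tube, and $B_2$ into one disk mapping to $e$ with total degree $0$, and homotopes that disk off into $\tilde X^{(n-1)}$. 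Such a move leaves $g_\#([S^n])=c$ unchanged and strictly lowers the cellular $n$‑volume, so after finitely many of them --- followed by a general‑position perturbation to restore admissibility --- one obtains an admissible $f\colon S^n\to\tilde X$ with $f_\#([S^n])=c$ and no cell covered with both orientations; in particular $\vol_C f=\lVert c\rVert_1=\vol c$. This cancellation step, and its reliance on $\tilde X^{(n-1)}$ being simply connected, is the only real content --- and is why the statement fails for $n=2$.

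The two equivalences are then formal. The bounds $\FV_X^{S^n}\lesssim_C\FV_X^n$ and $\FV_{X,\lVert\cdot\rVert}^{S^n}\lesssim_C\FV_{X,\lVert\cdot\rVert}^n$ hold because spherical boundaries form a subclass of all boundaries. Conversely, given a cellular boundary $c$ with $\vol c$ bounded by the relevant function of $k$ --- and, in the directed case, with $p_\#c=0$ --- the map $f$ above has $\vol f=\vol c$, $f_\#([S^n])=c$, hence $[[f]]=p_\#c$, and any homological filling of $f$ is a homological filling of $c$, so $\Fill(f)=\Fill(c)$ whenever the latter is defined. Taking suprema over all such $c$ gives $\FV_X^n\lesssim_C\FV_X^{S^n}$ and $\FV_{X,\lVert\cdot\rVert}^n\lesssim_C\FV_{X,\lVert\cdot\rVert}^{S^n}$; combined with the deformation theorem, which identifies Lipschitz and cellular filling functions up to coarse equivalence, this yields the asserted coarse equivalences.
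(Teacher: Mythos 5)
Your proof is correct, but it takes a genuinely different route from the paper's. The paper builds, directly from the chain $c$ itself, an admissible map of a \emph{disk} $g:(D^n,S^{n-1})\to(\tilde X,\tilde X^{(n-1)})$ covering each $n$-cell $e$ exactly $|c_e|$ times with the sign of $c_e$ --- so no cancellation ever arises --- and then uses the hypothesis that $c$ is a boundary to conclude, via the relative Hurewicz theorem for the $(n-1)$-connected pair $(\tilde X,\tilde X^{(n-1)})$, that $[g]=0\in\pi_n(\tilde X,\tilde X^{(n-1)})$, hence that $g|_{S^{n-1}}$ is nullhomotopic inside $\tilde X^{(n-1)}$; capping the disk off with such a nullhomotopy produces the desired sphere. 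You instead start from a filling $d$ of $c$, assemble the attaching maps of its cells into a sphere (which realizes $c$ but may cover cells with both orientations), and then remove the cancellation by the tube surgery of Lemma \ref{lem:riv}. Both arguments invoke $n\ge 3$ at exactly one point --- simple connectivity of $\tilde X^{(n-1)}$ --- the paper to run relative Hurewicz and detect the nullhomotopy of $g|_{S^{n-1}}$ in the skeleton, you to nullhomotope the connecting loops when merging oppositely oriented preimages. The paper's version is shorter and makes the role of ``$c$ is a boundary'' algebraically transparent, whereas yours is more hands-on, reuses machinery already developed for Lemma \ref{lem:riv}, and has the minor virtue of exhibiting the final map with cellular volume exactly $\lVert c\rVert_1$. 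The formal deduction of the filling-function equivalences at the end is the same in both.
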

\begin{proof}
  This proof generalizes Remark 2.6(4) in \cite{BBFS}.  Let $c$ be a boundary in
  $C_n(\tilde X)$, and take an admissible map $g:(D^n,S^{n-1}) \to (\tilde X,
  \tilde X^{(n-1)})$ such that $g([D^n])=c$ and with no cells of opposite
  orientations, for example by mapping the boundary to a sum of attaching maps.
  Then
  $$g_\#([D^n,S^{n-1}])=0 \in H_n\left(\tilde X,\tilde X^{(n-1)}\right),$$
  and thus, by the relative Hurewicz theorem, $[g]=0 \in
  \pi_n(\tilde X,\tilde X^{(n-1)})$.  This means $[g|_{\partial D^n}]=0 \in
  \pi_{n-1}(\tilde X^{(n-1)})$, that is, it is nullhomotopic within the
  $(n-1)$-skeleton.  Using such a nullhomotopy, and including $D^n \subset S^n$
  as the upper hemisphere, we can extend $g$ to a map $f:S^n \to X$ with the
  desired properties.
\end{proof}
Finally, it's worth remarking that for groups, homological filling functions are
always finite.  This is not the case for all spaces.  For example, if we set $X=
S^2 \times S^1$, then a 2-boundary in $\tilde X$ of volume 2 can have
arbitrarily large filling volume: just take two copies of $S^2$ arbitrarily far
apart with opposite signs.
\begin{lem} \label{lem:finite}
  Let $\Gamma$ be a group of type $\mathcal{F}_n$.  Then for every $1 \leq m
  \leq n$, $\FV_\Gamma^m(k)<\infty$.
\end{lem}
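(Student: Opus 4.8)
The plan is to realize $\FV_\Gamma^m$ using a convenient model and reduce to a finiteness count. First I would take $X$ to be the $(m+1)$-skeleton of a $K(\Gamma,1)$, which we may take to have finite $(m+1)$-skeleton by the finiteness hypothesis on $\Gamma$; then $X$ is a finite complex whose universal cover $\tilde X$ is $m$-connected, and by the coarse equivalence between Lipschitz and cellular filling functions it suffices to bound, for each fixed $k$, the cellular quantity $\sup\{\FVol_X^m(\beta)\}$ over cellular $m$-cycles $\beta$ in $\tilde X$ with $\vol\beta\le k$. Two structural facts do the work: (i) since $H_m(\tilde X)=0$, every cellular $m$-cycle is the boundary of a cellular $(m+1)$-chain, so each $\beta$ has a finite filling volume; (ii) the $\Gamma$-action on $\tilde X$ is cocompact and preserves both $\vol$ and $\FVol_X^m$.

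Next I would decompose a cycle into pieces of bounded diameter. Given $\beta$ with $\vol\beta\le k$, form the graph on the $m$-cells appearing in $\beta$ in which two cells are joined when they share an $(m-1)$-face, and write $\beta=\sum_l\beta_l$ according to its connected components. An $(m-1)$-cell is incident only to $m$-cells lying in a single component, so the chains $\partial\beta_l$ have pairwise disjoint supports; hence $\partial\beta=0$ forces each $\beta_l$ to itself be a cycle. Each $\beta_l$ is connected through adjacent $m$-cells and involves at most $k$ cells, so it is supported on a subcomplex of diameter at most $kD$ for a constant $D$ depending only on $X$, while the number of components is at most $k$.

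Then I would invoke local finiteness and cocompactness. Translating $\beta_l$ so that one of its $m$-cells is a fixed orbit representative, all remaining cells lie within distance $kD$ of it, and a locally finite complex has only finitely many cells in such a ball; together with the finitely many choices of orbit representative, there are only finitely many $\Gamma$-translation classes of connected cycles of volume at most $k$. By (i) each of them bounds, hence has finite cellular filling volume; let $C=C(k,X)$ be the maximum of these finitely many numbers. Since $\FVol_X^m$ is subadditive and $\beta$ has at most $k$ components, $\FVol_X^m(\beta)\le\sum_l\FVol_X^m(\beta_l)\le kC$, and therefore $\FV_\Gamma^m(k)\le kC<\infty$.

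The main obstacle is the decomposition step: it is essential that the connected pieces of a cycle are again cycles, and this is exactly where the hypothesis $H_m(\tilde X)=0$ is used in an essential way — without it one gets the $S^2\times S^1$ phenomenon, where a two-component cycle of bounded volume has unbounded filling volume because neither component individually bounds. The one genuinely careful point is the incidence bookkeeping showing that distinct components cannot share an $(m-1)$-cell, so that their boundaries cannot cancel; everything else is a counting argument resting on cocompactness of the $\Gamma$-action.
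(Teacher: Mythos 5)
Your proof is correct, but it takes a genuinely different route from the one in the paper. The paper reduces to \emph{spherical} boundaries: for $m\geq 3$ (and $m=1$) it invokes Lemma \ref{lem:BBFS} to realize every cellular boundary as the image of a sphere with no cancelling cells, and then quotes the Alonso--Wang--Pride theorem that the homotopical Dehn functions $\delta_\Gamma^m(k)$ are finite; the case $m=2$ then has to be handled separately, by assembling a $2$-boundary into a map of a disjoint union of surfaces, bounding the total genus by the Euler-characteristic count, and doing surgery along short nonseparating loops supplied by the systolic inequality until only spheres remain. Your argument instead bounds the homological filling function directly by a compactness count: split a cycle of volume $k$ into its face-adjacency components, observe (via the disjoint-support bookkeeping, which is indeed the one point that needs care) that each component is again a cycle and hence a boundary because $H_m(\tilde X)=0$, and note that cocompactness leaves only finitely many $\Gamma$-translation classes of connected integral cycles of volume at most $k$. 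This treats all dimensions uniformly, dispenses with both the reduction to spherical boundaries and the genus-reduction surgery, and is essentially the AWP-style counting argument that the paper itself redeploys later in the proof of Theorem \ref{thm:inf} to bound seminorms of bounded-volume cycles; what it gives up is any quantitative comparison between $\FV_\Gamma^m$ and $\delta_\Gamma^m$, which the paper's route provides for free when $m\neq 2$. Your initial reduction from Lipschitz to cellular fillings is legitimate and is exactly the coarse equivalence the paper establishes via the deformation theorem. (The only loose end, that realizing $\FV_\Gamma^m$ for $m=n$ really wants a finite $(n+1)$-skeleton rather than just type $\mathcal{F}_n$, is an indexing issue already present in the statement of the lemma itself, not something your argument introduces.)
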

\begin{proof}
  Fix a CW complex $X$ with $\pi_1X=\Gamma$ and $n$-connected universal cover
  $\tilde X$.  Theorem 1 of \cite{AWP} states among other things that
  $\delta_\Gamma^m(k)=\delta_X^m(k)<\infty$ for $1 \leq m \leq n$.  By Lemma
  \ref{lem:BBFS}, for $m \geq 3$, $\FV_\Gamma^m(k)<\delta_\Gamma^m(k)$, which
  completes the proof in that case.  The same inequality is true for $m=1$,
  since every 1-cycle is a union of circles.  The only case in which we have
  something to prove is $m=2$; here it may be harder to fill a surface of
  positive genus than a sphere.

  We will now show that $\FV_X^2(k)$ is finite for every $k$.  We may assume
  that $X^{(2)}$ is a simplicial complex and use simplicial volume as our
  measure of 2-volume.  Then given a cocycle, and hence coboundary, $\alpha \in
  C_2(\tilde X)$ of volume $k$, we can glue the cells of $\alpha$ into a
  simplicial map $f:\Sigma:=\bigsqcup_{i=1}^r \Sigma_{g_i} \to \tilde X$, with
  each surface $\Sigma_{g_i}$ glued out of triangles.  Note that $r \leq k$ and
  the Euler characteristic of $\Sigma$ is bounded in terms of $k$:
  $$\chi\left(\bigsqcup_{i=1}^r \Sigma_{g_i}\right)=\sum_{i=1}^r (2-2g_i) \geq
  -\frac{k}{2}$$
  since the difference between these two quantities is the number of vertices of
  $\Sigma$.  Thus $G(\Sigma)=\sum_{i=1}^r g_i$ is also bounded in terms of $k$.

  From here, we will show by induction on $G$ that $\FV_X^\Sigma(k)$ is finite.
  Since $\delta_\Gamma^2(k)$ is finite, this is true when $G=r$, that is, when
  $g_i=0$ for all $i$.  Now, by Gromov's systolic inequality for surfaces
  (Theorem 11.3.1 in \cite{Katz}), for any Riemannian surface $V=\Sigma_g$ there
  is a nonseparating loop $\gamma:S^1 \to \Sigma_{g_i}$ of length at most
  $C\log g\sqrt{g^{-1}\vol V}$.  A simplicial version of this inequality is
  shown, for example, in \cite{VHM}.  Thus we can find a simple, nonseparating
  simplicial loop $\gamma:S^1 \to \Sigma_{g_i}$ of length $k^\prime \leq
  C\log g\sqrt{g^{-1}k}$.  Define a surface $\Sigma^\prime$ and a map
  $f^\prime:\Sigma^\prime \to \tilde X$ by cutting $\Sigma_{g_i}$ at $\gamma$ and
  gluing in two copies of a minimal disk filling $\gamma$.  Then
  $G(\Sigma^\prime)=G(\Sigma)-1$ and
  $$\vol \Sigma^\prime \leq k^\prime:=k+2\delta_X^1(C\log g\sqrt{g^{-1}k}).$$
  Moreover, any filling of $f^\prime$ is also a filling of $f$.  Thus
  $\FV_X^\Sigma(k) \leq \FV_X^{\Sigma^\prime}(k^\prime)$ is finite.

  Since for each $k$, there is a maximal possible $G$, this gives us an overall
  bound on $\FV_X^2(k)$.
\end{proof}
\begin{exs}
  To conclude the section, we give some examples of directed isoperimetric
  functions.
  \begin{enumerate}
  \item Suppose $\Gamma$ is a group of type $\mathcal{F}_{n+1}$ with homological
    Dehn function $\FV^1_\Gamma=f(k)$.  By a theorem of \cite{Young}, for $n
    \geq 2$, $\FV^n_{\Gamma^n}(k) \geq f(k)$ because if $\gamma$ is hard to fill
    in $\Gamma$, then $\gamma \times \cdots \times \gamma$ is hard to fill in
    $\Gamma^n$.  However this doesn't tell us anything about directed
    isoperimetric functions, because if $D$ is a chain filling some 1-chain
    $\gamma$ in $\Gamma$ whose chain evaluation is zero, then
    $$D \times \gamma \times \cdots \times \gamma$$
    is a filling of $\gamma \times \cdots \times \gamma$ whose chain evaluation
    is also zero.
  \item Suppose $M^{n+1}$ is a closed oriented smooth manifold with fundamental
    group $\Gamma$.  By dualizing a handle decomposition, we see that filling an
    $n$-boundary in $\tilde M$ is equivalent to finding a 0-cochain cobounding a
    compactly supported 1-cochain in the Cayley graph of $\Gamma$.  Thus any
    $n$-boundary $b$ has a unique filling; moreover, $b=b_++b_-$, where $\vol b=
    \vol b_++\vol b_-$ and $b_+$ has a filling by positively oriented copies of
    the top cell while $b_-$ has a filling by negatively oriented copies.  Thus
    $\FV_M^n(k)$, $\FV_{M,\lvert\langle[M],\cdot\rangle\rvert}^n(k)$, the
    isoperimetric problem for domains in $\tilde M$, and the problem of bounding
    from below the sizes of boundaries of subsets of $\Gamma$ are all
    equivalent.  In particular, $\FV_M^n(k) \sim
    \FV_{M,\lvert\langle[M],\cdot\rangle\rvert}^n(k)$ is linear if and only if
    $\Gamma$ is non-amenable.
  \item Define the $n$th diamond group
    $$\dmd_n=\left\langle b_1,c_1,\ldots,b_n,c_n,a \bigg\vert
    \begin{aligned}
      &b_i^{-1}ab_i=c_i^{-1}ac_i=a^2 \\ \relax
      &[b_i,b_j]=[b_i,c_j]=[c_i,c_j]=0\text{ for }i \neq j
    \end{aligned}\right\rangle.$$
    We can think of $\dmd_n$ as $F_2^n$ with an extra generator $a$ together
    with some relations involving it.  Alternatively, we can define $\dmd_n$
    inductively by setting $\dmd_0=\mathbb{Z}$ and $\dmd_n$ to be a multiple
    ascending HNN extension of $\dmd_{n-1}$, specifically, the fundamental group
    of the graph of groups with a single vertex $\dmd_{n-1}$ and two edges each
    labeled by the injective self-homomorphism $a \mapsto a^2$, $b_i \mapsto
    b_i$, $c_i \mapsto c_i$.  (Indeed, when $n \geq 2$, this is an
    automorphism.)  This last definition gives a construction for a
    $(n+1)$-dimensional classifying complex $X_n$ for $\dmd_n$, starting with an
    $S^1$ with one 1-cell and setting $X_n$ to be the appropriate quotient space
    of $X_{n-1} \times ([0,1] \sqcup [0,1])$ with the product cell structure.

    \tikzset{->-/.style={decoration={
          markings,
          mark=at position .5 with {\arrow{>}}},postaction={decorate}}}
    \tikzset{-<-/.style={decoration={
          markings,
          mark=at position .5 with {\arrow{<}}},postaction={decorate}}}
    \tikzset{->>-/.style={decoration={
          markings,
          mark=at position .25 with {\arrow{>}},
          mark=at position .75 with {\arrow{>}}},postaction={decorate}}}
    \tikzset{-<<-/.style={decoration={
          markings,
          mark=at position .75 with {\arrow{<}},
          mark=at position .25 with {\arrow{<}}},postaction={decorate}}}
    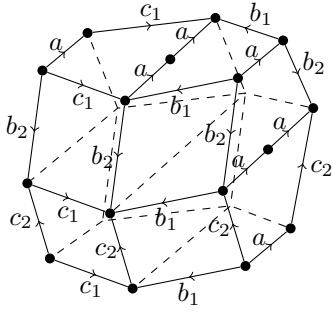
\begin{figure}
      \centering
      \begin{tikzpicture}
        % these are coordinates: e=1, A=a^{-1}; y comes before x
        \tikzstyle{frontN}=[circle,fill=black,scale=0.4];
        \coordinate (eee) at (0.7,0.8) {};
        \node[frontN] (eea) at (-0.2,0) {};
        \coordinate (eeA) at (1.6,1.6) {};
        \node[frontN] (bea) at (0,1.5) {};
        \node[frontN] (beA) at (1.2,2.6) {};
        \node[frontN] (bba) at (1.5,1.8) {};
        \node[frontN] (bbA) at (2.1,2.3) {};
        \node[frontN] (eba) at (1.3,0.3) {};
        \node[frontN] (ebA) at (2.5,1.4) {};
        \node[frontN] (cba) at (1.6,-0.7) {};
        \node[frontN] (cbA) at (2.2,-0.2) {};
        \node[frontN] (cea) at (0.1,-1) {};
        \coordinate (ceA) at (1.4,0.1) {};
        \node[frontN] (cca) at (-1,-0.6) {};
        \coordinate (ccA) at (-0.3,-0.1) {};
        \node[frontN] (eca) at (-1.3,0.4) {};
        \coordinate (ecA) at (-0.1,1.4) {};
        \node[frontN] (bca) at (-1.1,1.9) {};
        \node[frontN] (bcA) at (-0.5,2.4) {};
        % outlines
        \tikzstyle{behind}=[dashed];
        \draw[-<-] (bea)--(bba) node[pos=0.5,anchor=north,inner sep=2pt]{$b_1$};
        \draw[->-] (bba)--(eba) node[pos=0.5,anchor=east,inner sep=1pt]{$b_2$};
        \draw[-<-] (eba)--(cba) node[pos=0.5,anchor=east,inner sep=1pt]{$c_2$};
        \draw[->-] (cba)--(cea) node[pos=0.5,anchor=north,inner sep=2pt]{$b_1$};
        \draw[-<-] (cea)--(cca) node[pos=0.5,anchor=north,inner sep=3pt]{$c_1$};
        \draw[->-] (cca)--(eca) node[pos=0.5,anchor=east,inner sep=2pt]{$c_2$};
        \draw[-<-] (eca)--(bca) node[pos=0.5,anchor=east,inner sep=2pt]{$b_2$};
        \draw[->-] (bca)--(bea) node[pos=0.5,anchor=north,inner sep=3pt]{$c_1$};
        \draw[->-] (bca)--(bcA)
        node[pos=0.5,anchor=south east,inner sep=0.5pt]{$a$};
        \draw[->-] (bcA)--(beA) node[pos=0.5,anchor=south,inner sep=2pt]{$c_1$};
        \draw[-<-] (beA)--(bbA)
        node[pos=0.45,anchor=south west,inner sep=1pt]{$b_1$};
        \draw[->-] (bbA)--(ebA)
        node[pos=0.55,anchor=south west,inner sep=1pt]{$b_2$};
        \draw[-<-] (ebA)--(cbA) node[pos=0.5,anchor=west,inner sep=3pt]{$c_2$};
        \draw[-<-] (cbA)--(cba)
        node[pos=0.5,anchor=south east,inner sep=0.5pt]{$a$};
        \draw[behind] (cbA) -- (ceA) -- (ccA) -- (ecA) -- (bcA);
        \draw[-<<-] (beA) -- (bea) node[pos=0.5,frontN]{}
        node[pos=0.25,anchor=south east,inner sep=0.5pt]{$a$}
        node[pos=0.75,anchor=south east,inner sep=0.5pt]{$a$};
        \draw[->-] (bea)--(eea) node[pos=0.5,anchor=east,inner sep=1pt]{$b_2$};
        \draw[-<-] (eea)--(cea) node[pos=0.5,anchor=east,inner sep=1pt]{$c_2$};
        \draw[->-] (eca)--(eea) node[pos=0.5,anchor=north,inner sep=3pt]{$c_1$};
        \draw[-<-] (eea)--(eba) node[pos=0.5,anchor=north,inner sep=2pt]{$b_1$};
        \draw[->>-] (eba)--(ebA) node[pos=0.5,frontN]{}
        node[pos=0.25,anchor=south east,inner sep=0.5pt]{$a$}
        node[pos=0.75,anchor=south east,inner sep=0.5pt]{$a$};
        \draw[->-]
        (bba)--(bbA) node[pos=0.5,anchor=south east,inner sep=0.5pt]{$a$};
        \draw[behind] (beA) -- (eeA) -- (ceA) -- (cea);
        \draw[behind] (ccA) -- (cca) (eea) -- (eee) -- (eeA);
        \draw[behind] (ebA) -- (eeA) -- (ecA) -- (eca);
      \end{tikzpicture}

      \caption{
        A lift of the 3-chain $\sigma_2$ to the universal cover $\tilde X_2$.
        This induces a cycle in $X_2$ since opposite faces cancel out.
      } \label{fig:sigma}
    \end{figure}
    The space $X_n$ has $2^n$ $(n+1)$-cells $e_I$ corresponding to elements
    $I \in \{b,c\}^n$.  It's easy to see that the only $(n+1)$-cycles are
    multiples of
    $$\sigma_n=\sum_{I \in \{b,c\}^n} (-1)^{\text{number of $b$'s in }I}e_I \in
    C_{n+1}(X_n),$$
    and so $H_{n+1}(X_n) \cong \mathbb{Z}$.

    For now, we show that all the top-dimensional isoperimetric functions of
    $\dmd_n$ have the same superpolynomial growth.
    \begin{thm}
      Let $n \geq 1$, and let $h \neq 0 \in H^{n+1}(\dmd_n)$.  Then
      $$\FV_{\dmd_n,\lvert\langle h,\cdot \rangle\rvert}^n(k) \sim \FV_{\dmd_n}^n(k)
      \sim \delta_{\dmd_n}^n(k) \sim 2^{\sqrt[n]{k}}.$$
    \end{thm}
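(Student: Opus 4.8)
The plan is to pin down all four functions by squeezing. From the comparison results of this section one has $\FV^n_{\dmd_n,\lvert\langle h,\cdot\rangle\rvert}(k)\lesssim\FV^n_{\dmd_n}(k)\lesssim\delta^n_{\dmd_n}(k)$: the first inequality is immediate from the definitions, and the second is Lemma~\ref{lem:BBFS} for $n\geq 3$, elementary for $n=1$, and for $n=2$ follows from the (coarse) bound implicit in the proof of Lemma~\ref{lem:finite}; all four functions are finite because $\dmd_n$ has type $\mathcal F_{n+1}$. So it is enough to prove the two extreme bounds $\delta^n_{\dmd_n}(k)\lesssim 2^{\sqrt[n]{k}}$ and $\FV^n_{\dmd_n,\lvert\langle h,\cdot\rangle\rvert}(k)\gtrsim 2^{\sqrt[n]{k}}$; in particular the lower bound on the ordinary Dehn function then comes for free, so no ``no clever fillings'' argument for $\delta^n$ is needed.

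For the upper bound I would induct on $n$ using the description of $X_n$ as a double mapping torus of the self-map $\psi_n\colon X_{n-1}\to X_{n-1}$ with $a\mapsto a^2$, $b_i,c_i\mapsto b_i,c_i$, and the resulting tree-of-spaces decomposition of $\tilde X_n$ over the Bass--Serre tree $T_n$ (a copy of $\tilde X_{n-1}$ over each vertex, a copy of $\tilde X_{n-1}\times[0,1]$ over each edge). The base case $n=1$ is that $\dmd_1$, a two-stable-letter HNN extension of $\mathbb Z$ in which each stable letter conjugates $\langle a\rangle$ onto $\langle a^2\rangle$, has exponential Dehn function; this is the classical van Kampen/``corridor'' estimate for $BS(1,2)$, unaffected by the extra stable letter. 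For the inductive step, given an admissible $f\colon S^n\to\tilde X_n$ of volume $k$ and hence of linear size comparable to $k^{1/n}$, project to $T_n$, cut $f$ along the $O(k^{1/n})$ relevant fibres into slices that are $(n-1)$-cycles of volume $O(k^{(n-1)/n})$, fill each slice using the inductive bound $\delta^{n-1}_{\dmd_{n-1}}(k^{(n-1)/n})\lesssim 2^{(k^{(n-1)/n})^{1/(n-1)}}=2^{k^{1/n}}$, and glue the slice-fillings into a filling of $f$. The extra cost is the branching of the corridors: since $\psi_n$ has degree $2$ on the $a$-circle, an object can split into at most two as one moves one edge up the tree, so over $O(k^{1/n})$ levels the branching contributes a factor $2^{O(k^{1/n})}$. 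The total $k^{1/n}\cdot 2^{k^{1/n}}\cdot 2^{O(k^{1/n})}=2^{O(k^{1/n})}$ gives the bound, and since $\sim$ absorbs multiplicative constants and rescaling of the argument, $2^{O(k^{1/n})}\sim 2^{\sqrt[n]{k}}$.

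For the lower bound I would produce explicit ``hard'' cycles. In dimension one, set $z_\ell=b_1^{-\ell}ab_1^\ell\,c_1^{-\ell}a^{-1}c_1^\ell$; this word of length $4\ell+2$ is trivial in $\dmd_1$ because $b_1^{-\ell}ab_1^\ell=a^{2^\ell}=c_1^{-\ell}ac_1^\ell$, and it is balanced in each generator, so the loop it traces lifts to a $1$-cycle in $\tilde X_1$ with zero chain evaluation in $X_1$. The natural van Kampen diagram realising this relation uses $2^\ell-1$ copies of the $b_1$-relator cell and $2^\ell-1$ copies of the $c_1$-relator cell with opposite orientations; pushed into $X_1$ it represents $\pm(2^\ell-1)\sigma_1\in H_2(X_1)\cong\mathbb Z\sigma_1$, and since the filling class is independent of the chosen filling, $\Fill(z_\ell)=\pm(2^\ell-1)\sigma_1$. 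For general $n$ one builds $z^{(n)}_\ell$ inductively through the mapping-torus structure: placing $z^{(n-1)}_\ell$ in a fibre over a vertex of $T_n$, sweeping it $\ell$ edges along the $b_n$-direction and $\ell$ edges along the $c_n$-direction, and capping off yields an $n$-cycle of volume $O(\ell^n)$ with vanishing chain evaluation whose filling class, tracked through the degree-$2$ gluings, is $\pm c_n 2^{\ell}\sigma_n+O(\text{lower order})$ for some nonzero integer constant $c_n$. Because $h\neq 0$ and $H_{n+1}(X_n)\cong\mathbb Z\sigma_n$, the integer $\langle h,\sigma_n\rangle$ is nonzero, so $\lvert\langle h,\Fill(z^{(n)}_\ell)\rangle\rvert\gtrsim 2^\ell$; taking $\ell$ comparable to $k^{1/n}$ gives $\FV^n_{\dmd_n,\lvert\langle h,\cdot\rangle\rvert}(k)\gtrsim 2^{\sqrt[n]{k}}$.

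The main obstacle is the upper bound: making the ``tree of corridors'' picture rigorous when $n>1$ requires controlling the interaction of an $n$-sphere with the whole tree of fibres at once --- bounding the total branching, cutting cleanly along fibres, and re-assembling the fibrewise fillings with only linear overhead --- rather than processing a single $\psi_n$-corridor as in dimension one. A secondary subtlety, in the lower bound, is the sign and degree bookkeeping through the iterated mapping torus: one must verify that the exponentially many relator cells appearing in the inductive filling do not cancel in $H_{n+1}(X_n)$, which is exactly what the computation $H_{n+1}(X_n)\cong\mathbb Z\sigma_n$ together with the explicit form of $\sigma_n$ is there to guarantee.
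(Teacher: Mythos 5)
Your lower bound is essentially the paper's argument: the same word $b_1^{-\ell}ab_1^{\ell}c_1^{-\ell}a^{-1}c_1^{\ell}$ in the base case, and the same inductive sweep of the previous cycle through the two mapping-torus directions, arranging cancellations so that the boundary has volume $O(\ell^n)$ while the filling class is $\sim 2^{\ell}\sigma_n$. That half is fine (the paper tracks the constant as $2^{\ell}\leq K\leq 2^{n+\ell}$ and verifies the cancellation bookkeeping you defer, but the idea is identical).

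The upper bound, however, has a genuine gap, and it is not just a matter of "making the picture rigorous." Your scheme rests on the claims that an admissible $f:S^n\to\tilde X_n$ of volume $k$ has "linear size comparable to $k^{1/n}$," meets only $O(k^{1/n})$ fibres, and cuts into slices of volume $O(k^{(n-1)/n})$. None of these hold in general: volume $k$ bounds only the total number of top cells, so the sphere can be long and thin, spread over arbitrarily many layers of the Bass--Serre tree with no control on how the volume distributes among the slices. Moreover the slices are closed $(n-1)$-manifolds, not spheres, so your induction would in any case have to be on $\delta^{(N,\partial N)}$ for all manifolds with boundary, which you never set up. The paper's argument runs in the opposite direction and avoids constructing a filling altogether: because the filling has the same dimension as the complex, the optimal filling of $f:(M,\partial M)\to\tilde X_n$ is \emph{unique} and its volume is exactly $\sum_e\vol(f|_{N_e})$ over layers, while $\vol(f|_{\partial M})=\sum_e\vol(f|_{\partial N_e})+h$. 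One then assumes $\vol f=2^k$ and $\vol(f|_{\partial M})\leq k^n$, picks a layer of maximal volume $\geq 2^k/k^n$, and uses the fact that adjacent layer volumes can drop by at most a factor of $6$ plus the boundary volume to extract a path of $\sim k$ edges in the Bass--Serre tree along which every layer still has volume $\geq 2^{k/3}$; the inductive hypothesis $\delta^{(N,\partial N)}_{\dmd_{n-1}}(k)\lesssim 2^{\sqrt[n-1]{k}}$, read contrapositively, forces each of those $\sim k$ layers to contribute $\gtrsim k^{n-1}$ to the boundary, giving $\vol(f|_{\partial M})\gtrsim k^n$. Your "branching factor $2^{O(k^{1/n})}$" accounting has no analogue of this volume-decay/path-extraction step and cannot be repaired without it. (A smaller point: for $n=2$ the inequality $\FV^2\lesssim\delta^2$ is not what the paper uses; it bounds $\FV^2_{\dmd_2}$ by $\max_M\delta^{(M,\partial M)}_{\dmd_2}$ over all $3$-manifolds with boundary, which the induction supplies anyway, rather than routing through the genus-reduction surgery of Lemma \ref{lem:finite}.)
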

    \begin{proof}
      We start by setting some notation.  Let $p:\tilde X \to X$ be the
      universal covering and let $\rho_i:\dmd_i \to \dmd_i$ be the monodromy
      homomorphism $a \mapsto a^2$, $b_i \mapsto b_i$, $c_i \mapsto c_i$ used in
      the construction of $\dmd_{k+1}$.  We write $I_1$ and $I_2$ for the two
      intervals used to construct $X_n$ from $X_{n-1}$.  Note that the universal
      cover $\tilde X_{n+1}$ consists of glued-together copies of $\tilde X_n
      \times [0,1]$ indexed by edges of the Bass-Serre tree corresponding to the
      graph of groups, which we call \emph{layers}.

      As already discussed, for $n \neq 2$, it is automatic that
      $$\FV_{\dmd_n,\lvert\langle h,\cdot \rangle\rvert}^n(k) \lesssim
      \FV_{\dmd_n}^n(k) \lesssim \delta_{\dmd_n}^n(k).$$
      In the case $n=2$, the second inequality does not obviously hold, but
      $$\FV_{\dmd_2}^2(k) \lesssim \max \left\{\delta_{\dmd_2}^{(M,\partial M)}(k):
      (M,\partial M)\text{ 3-manifold with boundary}\right\}.$$
      Thus it is enough to show that
      $2^{\sqrt[n]{k}} \lesssim \FV_{\dmd_n,\lvert\langle h,\cdot \rangle\rvert}^n(k)$
      and that $\delta_{\dmd_n}^{(M,\partial M)}(k) \lesssim 2^{\sqrt[n]{k}}$ for
      every $(n+1)$-manifold with boundary $(M,\partial M)$.

      To show the first of these two inequalities, we construct chains with
      large directed fillings in $\tilde X_n$.  Specifically, by induction on
      $n$, we construct a chain $\tau_n(k) \in C_{n+1}(\tilde X_n)$ for which
      $p_\#\tau_n(k)=K\sigma_n$ for some $2^k<K<2^{n+k}$, and whose boundary has
      volume $O(k^n)$, if $n$ is viewed as a constant.  Notice that because from
      a homological point of view all fillings are equivalent, we do not need to
      show that $\tau_n(k)$ is the ``best'' filling of its boundary.

      In $\dmd_1$, the construction of $\tau_n(k)$ is similar to the usual
      demonstration that $BS(1,2)$ has exponential Dehn function.  Namely, we
      take $\tau_1(k)$ to be the disk bounded by $b_1^{-k}ab_1^kc_1^{-k}a^{-1}
      c_1^k$.  Notice for the purpose of the induction that $\rho_1(\tau_1(
      k-1))$ gives a disk that differs from $\tau_1(k)$ by only two cells.

      \begin{figure}
        \centering
        \begin{tikzpicture}
          \tikzstyle{chainfill}=[fill=gray!25];
          \fill[chainfill] (0.4,3) -- (4.9,3.9) -- (5.5,4.4) -- (2.8,5.3)
          -- (-2.3,4.7) -- (-2.9,4.2) -- cycle;
          % draw a grid of dots
          \tikzstyle{frontN}=[circle,fill=black,scale=0.4];
          \foreach \x in {0,1,...,4} {
            \foreach \y in {0,1,...,4} {
              \node[frontN] (b\x b\y a) at (1.5*\x+0.2*\y,0.3*\x+1.5*\y) {};
              \node[frontN] (b\x c\y a) at (1.5*\x+0.3*\y,0.3*\x-\y) {};
              \node[frontN] (c\x b\y a) at (-1.1*\x+0.2*\y,0.4*\x+1.5*\y) {};
              \node[frontN] (c\x c\y a) at (-1.1*\x+0.3*\y,0.4*\x-\y) {};
              \edef\r{\number\numexpr\x+\y\relax};
              \ifnum \r=5
                \breakforeach
              \fi
            }
            \node[frontN] (b\x b\x A) at (2+1.3*\x,7.1-1.2*\x) {};
            \coordinate (b\x c\x A) at (7.2-1.2*\x,2.3-1.3*\x) {};
            \coordinate (c\x c\x A) at (2.4-1.4*\x,-2.9+1.4*\x) {};
            \coordinate (c\x b\x A) at (-3.2+1.3*\x,2.7+1.1*\x) {};
          }
          % dots on belt, clockwise on each segment
          \foreach \x in {0,1,2,3} {
            \edef\xx{\number\numexpr\x+1\relax};
            \node[frontN] (b\x b\xx A) at (2.9+1.3*\x,6.8-1.2*\x) {};
            \node[frontN] (b\x c\xx A) at (6.9-1.2*\x,0.7-1.3*\x) {};
            \coordinate (c\x c\xx A) at (0.7-1.4*\x,-3.1+1.4*\x) {};
            \node[frontN] (c\x b\xx A) at (-3.6+1.3*\x,3.6+1.1*\x) {};
          }
          \coordinate (eeA) at (3.6,3.2) {};
          \tikzstyle{behind}=[dashed];
          \draw[behind] (b0b0A) -- (eeA) -- (c0c0A) (b0c0A) -- (eeA) -- (c0b0A);
          \draw[behind] (b0b0a) -- (eeA)
          node[pos=0.7,anchor=south east,inner sep=-0.5pt]{$a^{32}$};
          \draw[->>-] (b4b0a) -- (b4b4A) node[pos=0.5,frontN]{}
          node[pos=0.25,anchor=south east,inner sep=0.5pt]{$a$}
          node[pos=0.75,anchor=south east,inner sep=0.5pt]{$a$};
          \draw[->-] (b0c0A)--(b0c1A)
          node[pos=0.5,anchor=west,inner sep=2pt]{$c_2$};
          \draw[->-] (c3b4A)--(c4b4A)
          node[pos=0.5,anchor=south,inner sep=2pt]{$c_1$};
          \foreach \y in {0,1,2,3} {
            \edef\yy{\number\numexpr\y+1\relax};
            \foreach \x in {1,2,3,4} {
              \draw[->-] (b\x b\yy a) -- (b\x b\y a);
              \draw[->-] (b\x c\yy a) -- (b\x c\y a);
              \draw[->-] (c\x b\yy a) -- (c\x b\y a);
              \draw[->-] (c\x c\yy a) -- (c\x c\y a);
              \draw[->-] (b\yy b\x a) -- (b\y b\x a);
              \draw[->-] (b\yy c\x a) -- (b\y c\x a);
              \draw[->-] (c\yy b\x a) -- (c\y b\x a);
              \draw[->-] (c\yy c\x a) -- (c\y c\x a);
              \edef\r{\number\numexpr\x+\y\relax};
              \ifnum \r=4
                \breakforeach
              \fi
            }
            \edef\my{\number\numexpr4-\y\relax};
            % axes
            \draw[->-] (b\yy b0a) -- (b\y b0a)
            node[pos=0.5,anchor=north,inner sep=3pt]{$b_1$};
            \draw[->-] (b0b\yy a) -- (b0b\y a)
            node[pos=0.5,anchor=east,inner sep=2pt]{$b_2$};
            \draw[->-] (c\yy b0a) -- (c\y b0a)
            node[pos=0.5,anchor=north,inner sep=3pt]{$c_1$};
            \draw[->-] (b0c\yy a) -- (b0c\y a)
            node[pos=0.5,anchor=east,inner sep=2pt]{$c_2$};
            % top right belt
            \draw[->>-] (b\y b\my a) -- (b\y b\y A) node[pos=0.5,frontN]{}
            node[pos=0.25,anchor=south east,inner sep=0.5pt]{$a$}
            node[pos=0.75,anchor=south east,inner sep=0.5pt]{$a$};
            \draw[->-] (b\yy b\my a) -- (b\y b\yy A)
            node[pos=0.5,anchor=south east,inner sep=0.5pt]{$a$};
            \draw[-<-] (b\y b\y A)--(b\y b\yy A)
            node[pos=0.45,anchor=south west,inner sep=1pt]{$b_1$};
            \draw[->-] (b\y b\yy A)--(b\yy b\yy A)
            node[pos=0.55,anchor=south west,inner sep=1pt]{$b_2$};
            % bottom right belt
            \draw[->-] (b\my c\yy a) -- (b\y c\yy A)
            node[pos=0.5,anchor=north west,inner sep=0.5pt]{$a$};
            \ifnum \y>0
              \draw[dash pattern=on 3pt off 3pt on 3pt off 3pt on 3pt off 3pt on
                3pt off 3pt on 100pt] (b\y c\y A)--(b\y c\yy A)
              node[pos=0.75,anchor=west,inner sep=2pt]{$c_2$};
            \fi
            \draw[behind] (b\y c\yy A) -- (b\yy c\yy A);
            \draw[behind] (b\y c\my a) -- (b\my c\my A);
            % bottom left belt
            \draw[behind] (c\y c\y A) -- (c\y c\yy A) -- (c\yy c\yy A);
            \draw[behind] (c\y c\my a) -- (c\y c\y A);
            \draw[behind] (c\yy c\my a) -- (c\y c\yy A);
            % top left belt
            \draw[->-] (c\my b\yy a) -- (c\y b\yy A)
            node[pos=0.5,anchor=south east,inner sep=0.5pt]{$a$};
            \ifnum \y<3
              \draw[dash pattern=on 3pt off 3pt on 3pt off 3pt on 3pt off 3pt on
                3pt off 3pt on 3pt off 3pt on 100pt] (c\yy b\yy A)--(c\y b\yy A)
              node[pos=0.75,anchor=south,inner sep=2pt]{$c_1$};
            \fi
            \draw[behind] (c\y b\y A) -- (c\y b\yy A);
            \draw[behind] (c\my b\y a) -- (c\y b\y A);
          }
        \end{tikzpicture}
        \caption{
          An illustration of the 3-chain $\tau_2(4)$ in the universal cover
          $\tilde X_2$.  It is an optimal filling of its boundary, which is a
          hard-to-fill 2-sphere in the group $\dmd_2$.  The highlighted plane is
          a layer corresponding to $\tau_1(3)$.
        }% \label{fig:tau}
      \end{figure}
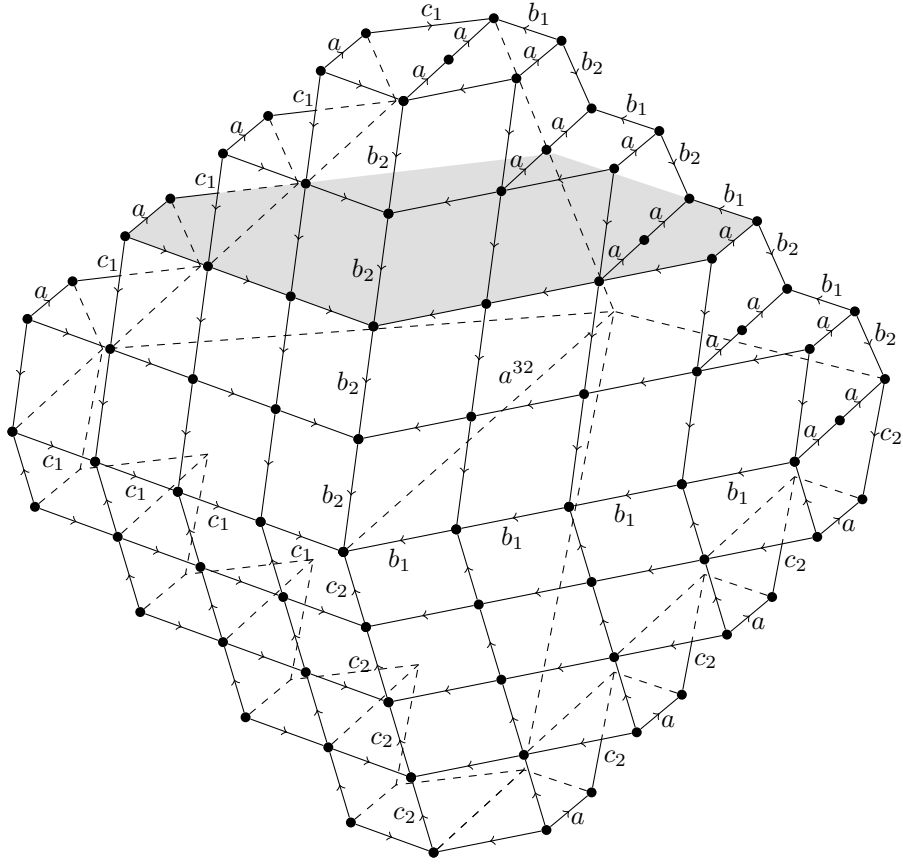
      Now suppose by induction that we have constructed $\tau_{n-1}(k) \in C_n(
      \tilde X_{n-1})$, for each $k \geq 0$, with the following properties:
      \begin{itemize}
      \item $p_\#\tau_{n-1}(k)=K\sigma_{n-1}$ for some $2^k \leq K \leq 2^{n+k}$;
      \item $\tau_{n-1}(0)=0$;
      \item $\vol(\partial\tau_{n-1}(k))=O(k^{n-1})$;
      \item for every $k \geq 1$, the $n$-chains $\rho_{n-1}\tau_{n-1}(k-1)$ and
        $\tau_{n-1}(k)$ differ by $O(k^{n-2})$ cells.
      \end{itemize}
      We construct $\tau_n(k)$ from $2k$ layers, that is, certain copies in
      $\tilde X_n$ of $(-1)^\ell\tau_{n-1}(j) \times I_\ell$ for $j=1,\ldots,k$
      and $\ell=1,2$.  This way, $p_\#\tau_{n-1}(k)=K\sigma_n$ with $K$ in the
      appropriate range.  Specifically, we pick these so that
      \begin{itemize}
      \item the two copies of $\tau_{n-1}(k-1) \times \{1\}$ cancel out;
      \item for each $2 \leq j \leq k-1$, $\tau_{n-1}(j) \times \{0\}$ cancels
        out with $\tau_{n-1}(j-1) \times \{1\}$, except for the aforementioned
        $O(k^{n-2})$ cells.
      \end{itemize}
      Then $\partial \tau_n(k)$ is the sum over $j$ and $\ell$ of copies of
      $(-1)^\ell\partial \tau_{n-1}(j) \times I_\ell$ and $(-1)^\ell[\tau_{n-1}(j)
      -\rho_{n-1}\tau_{n-1}(j-1)]$.  This means that in total,
      $$\vol(\partial \tau_n(k)) \leq 2kO(k^{n-1})+2kO(k^{n-2}).$$
      Moreover, since $\rho_n|_{\dmd_{n-1}}=\rho_{n-1}$, $\rho_n(\tau_{n-1}(j)
      \times I_\ell)$ differs from $\tau_{n-1}(j+1) \times I_\ell$ by $O(k^{n-2})$
      cells.  Thus $\rho_n\tau_n(k)$ differs from $\tau_n(k+1)$ by
      $$2kO(k^{n-2})+2^{n+1}=O(k^{n-1})$$
      cells.  This completes the inductive step.

      To show that $\delta_{\dmd_n}^n(k) \lesssim 2^{\sqrt[n]{k}}$, we do another
      induction.  It's clear that $\delta_{\dmd_1}^1(k) \leq 2^k$, and moreover,
      by Lemma 7.4 of \cite{BBFS}, $\delta_{\dmd_1}^{(M,\partial M)} \leq
      \delta_{\dmd_1}^1(k)$ for any surface with boundary $M$, giving us the base
      case.  For the inductive step when $n \geq 2$, we adapt and strengthen the
      argument of Theorem 7.2 of \cite{BBFS}, which concerns the top-dimensional
      Dehn functions of multiple ascending HNN extensions.
      \begin{thm*}[7.2 in \cite{BBFS}]
        Let $A_{n-1}$ be the $n$-dimensional classifying complex of a group $H$,
        and suppose that $F$ is a nondecreasing function such that
        $\delta_H^{(N,\partial N)}(k) \leq F(k)$ for all $n$-manifolds $N$ with
        boundary.  Let $A_{n+1}$ be the complex corresponding to a multiple
        ascending HNN extension $G$ of $H$.  Then for all $(n+1)$-manifolds with
        boundary $(M,\partial M)$, $\delta_G^{(M,\partial M)}(k) \leq F(k)$.
      \end{thm*}
      The idea of their proof is as follows.  First of all, since the dimension
      of the filling is equal to the dimension of the complex, the optimal
      filling will always be unique.  Moreover, for a map $f:(M,\partial M) \to
      A_{n+1}$, the volume of this filling is the sum of volumes of layers.

      Once again the universal cover $\tilde A_{n+1}$ consists of glued-together
      layers $\tilde A_n \times [0,1]$, indexed by edges $e$ of the Bass-Serre
      tree.  For each layer, let $Z_e$ be the corresponding copy of $\tilde A_n
      \times \{1/2\}$, and let $Z=\bigcup_e Z_e$.  Then given an admissible
      $f:(M,\partial M) \to \tilde A_{n+1}$ which is transverse to $Z$ (in the
      sense defined in \S VII.2 of \cite{BRS}) and setting, for each edge $e$ of
      the Bass-Serre tree, $N_e:=f^{-1}(Z_e)$ and $f_e=f|_{N_e}$, we have
      $$\vol f=\sum_e \vol(f|_{N_e}).$$
      On the other hand,
      $$\vol(f|_{\partial M})=\sum_e \vol(f|_{\partial N_e})+h,$$
      where $h$ stands for any ``horizontal'' volume which is the difference
      between fillings of the various $N_e$ meeting at a given vertex of the
      Bass-Serre tree.  Thus $\partial M$ is as easy to fill in $X_n$ as
      $\bigsqcup N_e$ is in $X_{n-1}$, and its volume is at least as large.

      In our case, however, this decomposition gives additional information.
      When certain boundaries have fillings which are much larger than
      $\vol(f|_{\partial M})$, this means that there must be adjacent boundaries
      that have very similar fillings.  This allows us to show that there is a
      large number of layers that have large intersections with $f$.  We will
      use this to prove the stronger statement that if
      $\delta_{\dmd_{n-1}}^{(N,\partial N)}(k) \lesssim 2^{\sqrt[n-1]{k}}$ for all
      pairs $(N,\partial N)$, then $\delta_{\dmd_{n-1}}^{(M,\partial M)}(k) \lesssim
      2^{\sqrt[n]{k}}$.

      So suppose $n \geq 2$, and let $M$ be an $(n+1)$-manifold with boundary
      and $f:(M,\partial M) \to \tilde X_n$ be a map of volume $2^k$ which is an
      optimal filling of its boundary.  We would like to show that
      $\vol f|_{\partial M} \gtrsim k^n$.  Our strategy will be to actually
      assume that $\vol f|_{\partial M} \leq k^n$, and then show that under that
      assumption $\vol f|_{\partial M} \gtrsim k^n$, because there are at least
      $\sim k$ layers each of which contributes volume at least $\sim k^{n-1}$
      to the boundary.

      So assume that $\vol(f|_{\partial M}) \leq k^n$, and that
      $$k>k_{\min}(n)=\max\{k_{\min}(n-1),3n\log_2 k+2\}.$$
      (We can choose $k_{\min}(1)=1$.)  Choose an edge $e_0$ for which
      $\vol(f|_{N_{e_0}})$ is maximal, so that
      $$\vol\left(f|_{N_{e_0}}\right) \geq \frac{2^k}{k^n} \geq 2^{2k/3+2}.$$
      By the inductive assumption, this means that $\vol(f|_{\partial N_{e_0}})
      \geq C_{n-1}k^{n-1}$.  Now let $v$ be a vertex incident to $e_0$, which has
      degree 4 in the Bass-Serre tree since $\rho_n$ is an automorphism.  Since
      $\rho$ multiplies areas by at most $2$, we know that for one of the edges
      incident to $v$, which we call $e_1$,
      $$\vol\left(f|_{N_{e_0}}\right) \leq 6\vol\left(f|_{N_{e_1}}\right)+
      \vol(f|_{\partial M}).$$
      Continuing in this vein, for $r=\lfloor k/9 \rfloor$, we can pick a path
      $e_0,e_1,\ldots,e_r$ such that
      $$\vol\left(f|_{N_{e_j}}\right) \geq \frac{1}{6}
      \left(\vol\left(f|_{N_{e_{j-1}}}\right)-k^n\right) \geq 2^{2k/3+2-3j} \geq
      2^{k/3+2}.$$
      By the inductive assumption, for each $0 \leq j \leq r$,
      $\vol(f|_{\partial N_{e_j}}) \geq (k/3)^{n-1}$, so
      $$\vol(f|_{\partial M}) \geq \frac{k}{9}C_{n-1}(k/3)^{n-1}=3^{-(n+1)}
      C_{n-1}k^n.$$
      By induction, we get that if $k>k_{\min}(n)$ and $\vol f=2^k$, then
      $\vol(f|_{\partial M}) \geq C_nk^n$, where $C_n=3^{-O(n^2)}$.  This completes
      the proof.
    \end{proof}
  \end{enumerate}
\end{exs}

\section{$L_\infty$ cohomology and fillings} \label{S:5}
Correspondences between isoperimetry and the cohomology theories that turn up in
coarse geometric settings have been noted a number of times in the literature,
notably by Block and Weinberger \cite{BWJAMS}, Attie, Block and Weinberger
\cite{ABW}, Gersten \cite{GerLost}, and Nowak and \v Spakula \cite{NoSpa}.  The
main technical theorem of this section generalizes most of these results, as
well as the classical max flow--min cut theorem from graph theory, using a
technique from the theory of algorithms, the duality theorem for linear
programming problems.  In effect, a linear programming problem seeks to optimize
a linear function subject to a number of linear constraints.  In the dual linear
program, the role of the constraints and variables is switched.  The theorem of
linear programming duality states that the optimum solution to the original and
dual programs is the same.  Our proof proceeds by translating our two conditions
into this formal setting and demonstrating that they generate dual linear
programs and are therefore equivalent.  For a more detailed discussion of these
ideas, see a textbook on algorithms, such as \cite{CLRS}.

Linear programming duality is widely applicable, including in geometry.  For an
example of a very differently flavored application to isoperimetric problems,
see \cite{KK}.

We will use the following form of linear programming duality:
\begin{thm*}[Linear programming duality, standard form; 29.10 in \cite{CLRS}]
  For vectors in $\mathbb{R}^q$ for any $q$, use $\leq$ to denote
  coordinate-wise comparison.  Let $A$ be an $m \times n$ matrix, $\vec b \in
  \mathbb{R}^m$, $\vec c \in \mathbb{R}^n$.  Then the maximal value of $\vec c
  \cdot \vec x$, $\vec x \in \mathbb{R}^n$, subject to the constraints $A\vec x
  \leq \vec b$ and $\vec x \geq \vec0$, is the same as the minimal value of
  $\vec b \cdot \vec y$, $\vec y \in \mathbb{R}^m$, subject to the constraints
  $A^T\vec y \geq \vec c$ and $\vec y \geq \vec0$.
\end{thm*}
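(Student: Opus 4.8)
The plan is to deduce strong duality from Farkas' lemma, which I would in turn obtain from the separating hyperplane theorem together with the fact that finitely generated convex cones are closed. First I record \emph{weak duality}: if $\vec x$ is primal-feasible ($A\vec x \le \vec b$, $\vec x \ge \vec 0$) and $\vec y$ is dual-feasible ($A^T\vec y \ge \vec c$, $\vec y \ge \vec 0$), then $\vec c \cdot \vec x \le (A^T\vec y)\cdot\vec x = \vec y\cdot(A\vec x) \le \vec y\cdot\vec b$, using $\vec x \ge \vec 0$ in the first step and $\vec y \ge \vec 0$ in the last. Hence every primal value is at most every dual value; in particular, since the primal has a finite maximum $z^*$, the dual objective is bounded below by $z^*$ wherever it is feasible.

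\emph{Farkas' lemma.} For an $m\times k$ matrix $M$ and $\vec d\in\mathbb{R}^m$, exactly one of the following holds: (i) there is $\vec w\ge\vec 0$ with $M\vec w=\vec d$; (ii) there is $\vec z$ with $M^T\vec z\ge\vec 0$ and $\vec d\cdot\vec z<0$. I would prove this by noting that the cone $C=\{M\vec w:\vec w\ge\vec 0\}$ is convex and closed --- the one genuinely nontrivial point, provable by induction on the number of generators or via Carath\'eodory's theorem for cones --- so that if $\vec d\notin C$ the separating hyperplane theorem yields $\vec z$ with $\vec z\cdot\vec d<\inf_{\vec v\in C}\vec z\cdot\vec v$; as $C$ is a cone this infimum equals $0$, which forces $M^T\vec z\ge\vec 0$ while $\vec z\cdot\vec d<0$.

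\emph{Strong duality.} Fix $\delta>0$. Since $z^*$ is the primal maximum, the system $A\vec x\le\vec b$, $\vec x\ge\vec 0$, $\vec c\cdot\vec x\ge z^*+\delta$ has no solution; adding slacks $\vec s\ge\vec 0$ (for $A\vec x+\vec s=\vec b$) and $t\ge0$ (for $\vec c\cdot\vec x-t=z^*+\delta$) turns this into an infeasible system $M\vec w=\vec d$ with $\vec w=(\vec x,\vec s,t)\ge\vec 0$. Farkas, after unwinding signs, produces $(\vec y,\mu)$ with $\vec y\ge\vec 0$, $\mu\ge0$, $A^T\vec y\ge\mu\vec c$, and $\vec b\cdot\vec y<\mu(z^*+\delta)$. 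Here $\mu\ne0$, since $\mu=0$ would give $\vec y\ge\vec 0$, $A^T\vec y\ge\vec 0$, $\vec b\cdot\vec y<0$, contradicting $\vec b\cdot\vec y\ge(A\vec x_0)\cdot\vec y\ge0$ for any primal-feasible $\vec x_0$. Dividing by $\mu$ exhibits a dual-feasible point of value $<z^*+\delta$. Letting $\delta\downarrow0$ shows the dual infimum is at most $z^*$, so by weak duality it equals $z^*$; it is attained because a feasible linear program with objective bounded below attains its optimum, a standard consequence of the Minkowski--Weyl structure of polyhedra.

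\emph{Main obstacle and an alternative.} The real content is the closedness of finitely generated cones (equivalently, that a finitely generated cone is an intersection of finitely many half-spaces); everything else is algebraic bookkeeping and the limiting argument. To avoid point-set topology altogether one can instead run Fourier--Motzkin elimination on the infeasible improvement system directly: eliminating the $\vec x$-variables one at a time, always by nonnegative combinations of the current inequalities, terminates in a contradiction $0<(\text{negative constant})$ whose coefficients on the rows $A\vec x\le\vec b$ and $\vec c\cdot\vec x\ge z^*+\delta$ form the dual certificate --- at the price of a more computational induction.
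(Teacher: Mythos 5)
This statement is not proved in the paper at all: it is quoted verbatim as Theorem 29.10 of \cite{CLRS} and used as a black box in the proof of the isoperimetric duality theorem, so there is no in-paper argument to compare against. Your proposal is a correct rendition of the standard textbook proof: weak duality by the chain $\vec c\cdot\vec x \le (A^T\vec y)\cdot\vec x = \vec y\cdot(A\vec x) \le \vec y\cdot\vec b$, Farkas' lemma from the closedness of finitely generated cones plus the separating hyperplane theorem, and then strong duality by applying Farkas to the infeasible improvement system with slacks; your sign bookkeeping ($\vec y\ge\vec 0$, $\mu\ge 0$, $A^T\vec y\ge\mu\vec c$, $\vec b\cdot\vec y<\mu(z^*+\delta)$) and the argument that $\mu\ne 0$ (which correctly uses primal feasibility, implicit in the hypothesis that the primal maximum exists) are right. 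Two small points worth flagging: first, the statement as quoted tacitly assumes the primal optimum exists and is attained, and you work under exactly that reading, which is appropriate; second, your $\delta\downarrow 0$ limit only shows the dual \emph{infimum} equals $z^*$, so the final appeal to attainment (``a feasible LP bounded below attains its optimum,'' via Minkowski--Weyl or the fundamental theorem of LP) is genuinely needed and is of the same nature as the theorem itself --- the Fourier--Motzkin route you sketch, or a non-homogeneous variant of Farkas, would deliver an attaining dual certificate in one step and close that loop without an extra citation. As you say, the only real analytic content is the closedness of finitely generated cones; everything else is bookkeeping, and your proposal handles it correctly.
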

\begin{thm}[Isoperimetric duality] \label{thm:mfmc}
  Suppose $Y$ is a metric CW complex in which balls intersect a finite number
  of cells, and write $E_n(Y)$ for the set of $n$-cells of $Y$.  For $\mathbb{F}
  =\mathbb{Q}$ or $\mathbb{R}$, let $\omega \in C^{n+1}(Y;\mathbb{F}^r)$ be any
  cochain.  For each $e \in E_n(Y)$, fix a (perhaps asymmetric) polyhedral norm
  $N_e$ on $\mathbb{F}^r$, and let $N^\prime_e$ be the dual norm on
  $(\mathbb{F}^r)^*$.  Then the following are equivalent:
  \begin{enumerate}
  \item for all chains $\sigma \in C_{n+1}(Y;(\mathbb{F}^r)^*)$,
    $\langle\omega,\sigma\rangle \leq \sum_{e \in E_n(Y)}
    N^\prime_e(\partial\sigma(e))$;
  \item $\omega=d\alpha$ for a cochain $\alpha \in C^n(Y;\mathbb{F}^r)$
    with $N_e(\langle\alpha,e\rangle) \leq 1$ for every $e \in E_n(Y)$.
  \end{enumerate}
\end{thm}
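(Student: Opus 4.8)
The plan is to recognize Theorem \ref{thm:mfmc} as an infinite-dimensional instance of linear programming duality and reduce it, by an exhaustion/compactness argument, to the finite-dimensional duality theorem quoted just above. First I would set up the two programs carefully on a \emph{finite} subcomplex. Fix a finite connected subcomplex $Y_0 \subseteq Y$ (eventually $Y_0$ will exhaust $Y$). The ``primal'' program has variables the coefficients of a cochain $\alpha \in C^n(Y_0;\mathbb{F}^r)$, the constraints are $N_e(\langle\alpha,e\rangle) \le 1$ for each $n$-cell $e$ of $Y_0$ — each of which, since $N_e$ is a polyhedral (finite-vertex, i.e.\ finitely many defining linear inequalities) norm, unpacks into finitely many linear inequalities on the coordinates of $\langle\alpha,e\rangle$ — together with the equation $d\alpha = \omega$ on the $(n+1)$-cells of $Y_0$, which I rewrite in standard form by splitting each variable into positive and negative parts and each equality into two inequalities. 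The ``dual'' program then has variables indexed by $(n+1)$-cells (coming from the equality $d\alpha=\omega$, these are the coefficients of a chain $\sigma \in C_{n+1}(Y_0;(\mathbb{F}^r)^*)$) and by the facets of each $N_e$ (these assemble into the statement that the ``cost'' charged to cell $e$ is exactly $N'_e(\partial\sigma(e))$, using that the support function of the unit ball of $N_e$ is precisely the dual norm $N'_e$). The objective in the dual is $\langle\omega,\sigma\rangle$, to be minimized against $\sum_e N'_e(\partial\sigma(e)) \le$ (objective of primal), and LP duality (29.10 in \cite{CLRS}) says the optimal values agree. Thus on $Y_0$: condition (2) restricted to $Y_0$ holds (the primal is feasible) iff for all $\sigma \in C_{n+1}(Y_0;(\mathbb{F}^r)^*)$ one has $\langle\omega,\sigma\rangle \le \sum_e N'_e(\partial\sigma(e))$, which is condition (1) restricted to $Y_0$.

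Next I would pass from finite subcomplexes to all of $Y$. The implication (2) $\Rightarrow$ (1) is immediate and needs no finiteness: given $\alpha$ with $N_e(\langle\alpha,e\rangle)\le 1$ and $d\alpha=\omega$, and any compactly-supported chain $\sigma$, we compute $\langle\omega,\sigma\rangle = \langle d\alpha,\sigma\rangle = \langle\alpha,\partial\sigma\rangle = \sum_{e} \langle\langle\alpha,e\rangle,\partial\sigma(e)\rangle \le \sum_e N'_e(\partial\sigma(e)) N_e(\langle\alpha,e\rangle) \le \sum_e N'_e(\partial\sigma(e))$, using the definition of the dual norm. For (1) $\Rightarrow$ (2), I assume (1) holds on $Y$ and want a global $\alpha$. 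The hypothesis that balls meet finitely many cells lets me write $Y = \bigcup_j Y_j$ as an increasing union of finite subcomplexes with $Y_j \subseteq Y_{j+1}$. On each $Y_j$, condition (1) for chains supported in $Y_j$ is a special case of (1) on $Y$, so by the finite-dimensional duality just established there is a cochain $\alpha_j \in C^n(Y_j;\mathbb{F}^r)$ with $d\alpha_j = \omega|_{Y_j}$ and $N_e(\langle\alpha_j,e\rangle)\le 1$ for all $n$-cells $e$ of $Y_j$. I then want to extract a coherent limit $\alpha$. The value $\langle\alpha_j,e\rangle$ for a fixed $n$-cell $e$ ranges over the compact set $\{v : N_e(v)\le 1\}$, so by a diagonal argument (over the countably many cells, using sequential compactness of each such ball in $\mathbb{R}^r$) I can pass to a subsequence along which $\langle\alpha_j,e\rangle$ converges for every $e$ to some $\langle\alpha,e\rangle$; the limit satisfies $N_e(\langle\alpha,e\rangle)\le 1$ by continuity of $N_e$, and $d\alpha = \omega$ because each defining linear equation $\langle d\alpha,\tau\rangle = \langle\omega,\tau\rangle$ for an $(n+1)$-cell $\tau$ involves only finitely many cells and holds for all sufficiently large $j$. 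One wrinkle: over $\mathbb{F}=\mathbb{Q}$, the limit $\alpha$ a priori has real coefficients. To fix this I would observe that the existence of a rational solution to the finite LP on each $Y_j$ is guaranteed directly by LP duality over $\mathbb{Q}$ (the simplex method stays within the field of the data, which is $\mathbb{Q}$ since $\omega$ and the polyhedral norms $N_e$ are rational), and then handle the limit over $\mathbb{Q}$ by noting that feasibility of (2) globally over $\mathbb{Q}$ follows because an $\mathbb{R}$-solution to the (affine, rationally-defined) system $\{d\alpha = \omega,\ N_e(\langle\alpha,e\rangle)\le 1\}$ exists by the above iff a $\mathbb{Q}$-solution does — the solution set is a rational polyhedron (possibly infinite-dimensional, but each finite-cell restriction is a nonempty rational polytope), so one can again take a coherent rational choice via the diagonal argument applied to the rational points of each compact ball, which are dense enough to run the same compactness/limiting scheme after observing the constraint ``$d\alpha=\omega$'' pins down $\alpha$ on enough cells.

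The step I expect to be the main obstacle is the honest bookkeeping in the finite-dimensional duality: getting the primal and dual LPs into the exact standard form of the quoted theorem so that the dual objective genuinely reads $\sum_e N'_e(\partial\sigma(e))$ rather than some artifact of the encoding. The key identity making this work is that the value $\max_{N_e(v)\le 1}\langle w,v\rangle$ of a linear program over the unit ball of a polyhedral norm equals $N'_e(w)$; running this as a sub-LP inside the dual and invoking strong duality there too is what converts the bundle of facet-variables into a single dual-norm term. A secondary subtlety is asymmetry of the norms $N_e$: since $N_e$ need not satisfy $N_e(-v)=N_e(v)$, I must be careful that ``$\sigma\mapsto\partial\sigma$'' and ``$-\sigma\mapsto -\partial\sigma$'' are treated as giving possibly different costs $N'_e(\partial\sigma(e))$ vs.\ $N'_e(-\partial\sigma(e))$, which is consistent with (1) as stated (it is an inequality, not requiring symmetry) and with the dual norm being defined as $N'_e(w) = \sup\{\langle w,v\rangle : N_e(v)\le 1\}$ without symmetrizing. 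Once the finite case is pinned down precisely, the exhaustion argument is routine, so I would allocate most of the write-up to the LP encoding and cite \cite{CLRS} for the duality theorem itself.
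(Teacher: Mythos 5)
Your proposal follows essentially the same route as the paper: encode the two conditions as a primal--dual pair of finite linear programs on an exhaustion of $Y$ by finite pieces (using that each polyhedral $N_e$ is a max of finitely many linear functionals, and that the support function of its unit ball is $N'_e$), invoke the quoted duality theorem, and extract a global $\alpha$ by a compactness/accumulation argument over the countably many cells. The only cosmetic differences are that you phrase the primal as a feasibility problem (so the duality you need is really the Farkas-type alternative hidden in 29.10) where the paper introduces an auxiliary objective $\sum_{i,k} x_{i,k}$ whose attainment of its maximum encodes feasibility, and that you use a diagonal subsequence where the paper takes a weak-* accumulation point; neither changes the substance.
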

Note that (1) is an isoperimetric condition: what may be termed the
``$\omega$-content'' of any chain $\sigma$ is bounded by the $N$-volume of its
boundary.  On the other hand, (2) says that $\omega$ is the coboundary of an
$N$-bounded cochain.
\begin{proof}
  Fix a radius $R$, let $*$ be a basepoint in $Y$, and let $\{e_i: i \in I\}$ be
  an enumeration of the $(n+1)$-cells that intersect the open ball $B_R(*)$, and
  $\{f_j:j \in J\}$ be an enumeration of the $n$-cells which either intersect
  $B_R(*)$ or are incident to $e_i$ for some $i$.  We denote the coefficient
  of $f_j$ in $\partial e_i$ by $\partial_j e_i$.  For each $j$, we can write
  the norm $N_{f_j}$ as
  \begin{equation} \label{eqn:norm}
    N_{f_j}(v_1,\ldots,v_r)=\max \left\{\sum_{k=1}^r \vec c(j,\ell)_kv_k: \ell=1,
    \ldots,L_j\right\}
  \end{equation}
  for constants $L_j$ and constant vectors $\vec c(j,\ell)$.  With these
  notations in place, condition (2) holds restricted to $B_R(*)$ if and only if
  the linear programming problem
  \begin{equation} \label{eqn:max}
    \text{maximize }\sum_{i \in I} \sum_{k=1}^r x_{i,k}\text{ subject to }\left\{
    \begin{aligned}
      &\text{for }i \in I, 1 \leq k \leq r, && 0 \leq x_{i,k} \leq
      \lvert\langle\omega,e_i\rangle_k\rvert & (A_{i,k})\\
      &\text{for }i \in I, 1 \leq k \leq r, && x_{i,k} =
      \sign(\langle\omega,e_i\rangle_k)\sum_{j \in J} \partial_je_i\alpha_{j,k} &
      (C_{i,k})\\
      &\text{for }j \in J, 1 \leq \ell \leq L_j, &&
      \sum_{k=1}^r c(j,\ell)_k\alpha_{j,k} \leq 1, & (B_{j,\ell})
    \end{aligned}
    \right.
  \end{equation}
  has the maximal possible solution, $x=\sum_{i \in I} \lVert\langle\omega,
  e_i\rangle\rVert_1$.  Here, the vectors
  $$(\alpha_{j,1},\ldots,\alpha_{j,r})=\langle \alpha, f_j \rangle$$
  are the values on $n$-cells of a cochain $\alpha$ which the inequalities
  $(B_{j,\ell})$ constrain to have $N_{f_j}(\langle\alpha,f_j\rangle) \leq 1$.
  The equations $(C_{i,k})$ guarantee that
  $$(\sign(\langle\omega,e_i\rangle_1)x_{i,1},\ldots,
  \sign(\langle\omega,e_i\rangle_r)x_{i,r})=\langle d\alpha, e_i \rangle$$
  are the values on $(n+1)$-cells of $d\alpha$, and the equations $(A_{i,k})$
  guarantee that these are no greater than and have the same signs as the values
  of $\omega$.  Thus $x=\sum_{i \in I} \lVert\langle\omega,e_i\rangle\rVert_1$ is
  a solution if and only if one can find an appropriate $\alpha$ with $d\alpha=
  \omega$.

  Note that this linear programming problem is not quite in the standard form
  quoted above.  To convert it, we can replace the equations $(C_{i,k})$ with
  two inequalities $(C_{i,k})$ and $(\overline{C_{i,k}})$ with opposite signs,
  and replace the variables $\alpha_{j,k}$ with differences $(\alpha_{j,k}-
  \overline{\alpha_{j,k}})$ where both $\alpha_{j,k}$ and
  $\overline{\alpha_{j,k}}$ are nonnegative.  When we take the dual of the
  resulting problem, the mirrored variables become mirrored inequalities and
  vice versa, and we can turn them back into equalities and perhaps-negative
  variables, respectively.  Thus by linear programming duality, $x$ is the
  solution to
  \eqref{eqn:max} if and only if it is
  \begin{equation}\begin{aligned} \label{eqn:min}
    \text{the minimal value of }&\sum_{i \in I}\sum_{k=1}^r \lvert\langle\omega,
    e_i\rangle_k\rvert A_{i,k}+\sum_{j \in J} \sum_{\ell=1}^{L_j} B_{j,\ell} \\
    \text{ subject to }&\left\{
    \begin{aligned}
      &\text{for }i \in I, 1\leq k\leq r, & A_{i,k}+C_{i,k} &\geq 1 & (x_{i,k})\\
      &\text{for }j \in J, 1 \leq k \leq r, & \sum_{\ell=1}^{L_j} c(j,\ell)_k
      B_{j,\ell} &= \sum_{i \in I} \sign(\langle\omega,e_i\rangle_k)
      \partial_je_iC_{i,k} & (\alpha_{j,k}) \\
      &\text{for }i \in I, 1 \leq k \leq r, & A_{i,k} &\geq 0 \\
      &\text{for }j \in J, 1 \leq \ell \leq L_j, & B_{j,\ell} &\geq 0.
    \end{aligned}
    \right.
  \end{aligned}\end{equation}
  Set $\sigma \in C_{n+1}(Y;(\mathbb{F}^r)^*)$ to be
  \begin{equation} \label{eqn:sigma}
    \sigma=\sum_{i \in I} \left(\sign(\langle\omega,e_i\rangle_1)C_{i,1},\ldots,
    \sign(\langle\omega,e_i\rangle_r\right)C_{i,r})e_i
  \end{equation}
  in the dual basis to the standard basis.  Then the right side of
  $(\alpha_{j,k})$ adds up to $\partial\sigma(f_j)_k$.  So picking the
  $B_{j,\ell}$ amounts to adding together nonnegative multiples of the vectors
  $\vec c(j,\ell)$ for each $\ell$, and $\sum_\ell B_{j,\ell}$ is the sum of
  these coefficients, i.e.\ the dual norm $N^\prime_{f_j}(\partial\sigma(f_j))$.
  Thus, keeping in mind that $A_{i,k} \geq 1-C_{i,k}$, we can bound the quantity
  $M$ being minimized in \eqref{eqn:min} as
  $$M \geq \sum_{i \in I}\sum_{k=1}^r \lvert\langle\omega,e_i\rangle_k\rvert
  (1-C_{i,k})+\sum_{j \in J} \sum_{j=1}^{L_j} B_{j,\ell}=x-
  \langle\omega,\sigma\rangle+\sum_f N^\prime_f(\partial\sigma(f)).$$
  If (1) is true, then $M \geq x$ for every $R$ and choice of $\sigma$.  On the
  other hand, if (1) is not true, then for some $R$, take a counterexample
  $\sigma \in B_R(*)$ for which $\langle\omega,\sigma\rangle>\sum_{e \in E_n(Y)}
  N^\prime_e(\partial\sigma(e))$.  We can scale such a $\sigma$ so that
  \eqref{eqn:sigma} is satisfied with $|C_{i,k}| \leq 1$ for every $i$ and $k$,
  and then set $A_{i,k}=1-C_{i,k}$ and the $B_{j,\ell}$ accordingly.  This gives
  us a vector which satisfies the constraints and results in $M<x$; thus
  condition (1) is equivalent to the dual problem \eqref{eqn:min} satisfying
  $M=x$ for every $R$.

  In other words, this demonstrates that (2) implies (1), and that (1) implies
  that for every $R$ there is an $\alpha_R$ which satisfies (2) when restricted
  to $B_R(*)$.  To get an $\alpha$ as desired on all of $Y$, we take a weak-*
  accumulation point of the $\alpha_R$.
\end{proof}
Mutatis mutandis, the same proof shows a version of the theorem with the role of
chains and cochains reversed.  Both versions have a number of corollaries
obtained through specific choices of norms.  The one which will be most useful
to us refers to \emph{$L_\infty$ cohomology}, that is, the cohomology of the
complex of cellular cochains whose values on cells are uniformly bounded.  (Note
that the somewhat similarly defined singular theory known as
\emph{bounded cohomology} results in a very different invariant of spaces; see
for example \cite{GerLost2}.)
\begin{cor} \label{cor:IsoDu}
  The following are equivalent for a cochain $\omega \in
  C^{n+1}(Y;\mathbb{F}^r)$:
  \begin{enumerate}
  \item there is a constant $K$ such that for all chains $\sigma \in
    C_{n+1}(Y;\mathbb{F})$, $\lVert\langle \omega,\sigma \rangle\rVert_\infty
    \leq K\vol(\partial\sigma)$;
  \item $\omega$ is an $L_\infty$ coboundary.
  \end{enumerate}
\end{cor}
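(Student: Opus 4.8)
The plan is to specialize Theorem~\ref{thm:mfmc} to the pair of dual norms $\bigl(\tfrac1K\lVert\cdot\rVert_\infty,\;K\lVert\cdot\rVert_1\bigr)$ on $\mathbb{F}^r$ and $(\mathbb{F}^r)^*$, with $K>0$ a scaling parameter, and then to pass from vector-valued chains to scalar ones by working one coordinate at a time. Concretely, for a fixed $K>0$ take $N_e=\tfrac1K\lVert\cdot\rVert_\infty$ for every $e\in E_n(Y)$, so that the dual norm is $N'_e=K\lVert\cdot\rVert_1$. Then Theorem~\ref{thm:mfmc} asserts the equivalence of: (1$_K$) the inequality $\langle\omega,\tau\rangle\le K\sum_{e\in E_n(Y)}\lVert\partial\tau(e)\rVert_1$ for all $\tau\in C_{n+1}(Y;(\mathbb{F}^r)^*)$; and (2$_K$) $\omega=d\alpha$ for some $\alpha\in C^n(Y;\mathbb{F}^r)$ with $\lVert\langle\alpha,e\rangle\rVert_\infty\le K$ for every $e$. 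Quantifying over $K>0$, ``(2$_K$) for some $K$'' says precisely that $\omega$ is the coboundary of a cochain whose values on cells are uniformly bounded, i.e.\ an $L_\infty$ coboundary, which is condition~(2) of the corollary. So it remains only to identify ``(1$_K$) for some $K$'' with condition~(1).

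For that I would compare the two isoperimetric inequalities coordinatewise. Write $\omega=(\omega_1,\dots,\omega_r)$ with $\omega_k\in C^{n+1}(Y;\mathbb{F})$, let $e_1^*,\dots,e_r^*$ be the standard basis of $(\mathbb{F}^r)^*$, and for $\tau\in C_{n+1}(Y;(\mathbb{F}^r)^*)$ decompose $\tau=\sum_{k=1}^r\tau_k\otimes e_k^*$ with $\tau_k\in C_{n+1}(Y;\mathbb{F})$ the chain $c\mapsto(\text{$k$-th coordinate of }\tau(c))$. Then $\langle\omega,\tau\rangle=\sum_k\langle\omega_k,\tau_k\rangle$, and $\sum_e\lVert\partial\tau(e)\rVert_1=\sum_e\sum_k\lvert\partial\tau_k(e)\rvert=\sum_k\vol(\partial\tau_k)$. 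Hence, if condition~(1) holds with constant $K$, then applying it to each scalar chain $\tau_k$ gives $\lvert\langle\omega_k,\tau_k\rangle\rvert\le\lVert\langle\omega,\tau_k\rangle\rVert_\infty\le K\vol(\partial\tau_k)$, and summing over $k$ yields $\langle\omega,\tau\rangle\le K\sum_e\lVert\partial\tau(e)\rVert_1$, which is (1$_K$). Conversely, suppose (1$_K$) holds, let $\sigma\in C_{n+1}(Y;\mathbb{F})$ and fix $k$; apply (1$_K$) to $\tau=\pm\sigma\otimes e_k^*$, choosing the sign so that $\langle\omega,\tau\rangle=\lvert\langle\omega_k,\sigma\rangle\rvert$ (equivalently, apply it to both $\tau$ and $-\tau$). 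Since $\sum_e\lVert\partial\tau(e)\rVert_1=\vol(\partial\sigma)$, this gives $\lvert\langle\omega_k,\sigma\rangle\rvert\le K\vol(\partial\sigma)$; taking the maximum over $k$ recovers condition~(1) with the same constant. Stringing these together gives (1)$\iff$($\exists K$: (1$_K$))$\iff$($\exists K$: (2$_K$))$\iff$(2).

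The argument is essentially bookkeeping, so I do not expect a genuine obstacle; the two points that need a little care are that Theorem~\ref{thm:mfmc} is phrased for one fixed family of norms, so the constant $K$ must be quantified by hand afterwards, and that its condition~(1) is a one-sided estimate $\langle\omega,\tau\rangle\le(\cdots)$ rather than an absolute-value estimate, which is why a sign is inserted (or, equivalently, both $\tau$ and $-\tau$ are used) when recovering condition~(1). The standing hypothesis that metric balls in $Y$ meet only finitely many cells enters solely through the invocation of Theorem~\ref{thm:mfmc}.
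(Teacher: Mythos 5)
Your proof is correct and is essentially the paper's own argument: the paper likewise obtains the corollary as a direct specialization of Theorem~\ref{thm:mfmc}, applying it to each coordinate $\omega_k$ with $N_e(\cdot)=\lvert\cdot\rvert$, whereas you apply the vector-valued statement once with the dual pair $\bigl(\tfrac1K\lVert\cdot\rVert_\infty,\,K\lVert\cdot\rVert_1\bigr)$ and then decompose coordinatewise --- the same specialization in slightly different packaging. Your explicit handling of the scaling by $K$ and of the one-sided versus absolute-value inequality fills in bookkeeping the paper leaves implicit.
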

\begin{proof}
  The corollary is obtained by applying the theorem separately to each
  coordinate of $\omega=(\omega_1,\ldots,\omega_r)$, with $N_e(\cdot)=
  \lvert\cdot\rvert$ for every $e$ and $\ell=1,\ldots,r$.
\end{proof}
The similar theorem for 0-chains proved in \cite{BWJAMS} is a corollary in the
following formulation:
\begin{cor}
  The following are equivalent:
  \begin{enumerate}
  \item the isoperimetric inequality for subsets of $Y^{(0)}$ is linear;
  \item the chain in $C_0^{(\infty)}(Y;\mathbb{F})$ which takes the value 1 on
    every vertex is an $L^\infty$ boundary.
  \end{enumerate}
\end{cor}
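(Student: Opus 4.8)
The plan is to deduce this from the version of Theorem \ref{thm:mfmc} with the roles of chains and cochains interchanged (the ``mutatis mutandis'' variant remarked on above), applied in the bottom two dimensions. Write $c \in C_0^{(\infty)}(Y;\mathbb{F})$ for the $0$-chain that is $1$ on every vertex, and for a finite set $S \subseteq Y^{(0)}$ let $\partial S \subseteq E_1(Y)$ be the set of edges with exactly one endpoint in $S$. First I would note that for a finitely supported $0$-cochain $\sigma$ one has $\langle\sigma,c\rangle = \sum_{v}\sigma(v)$ and $\vol(\delta\sigma) = \sum_{e \in E_1(Y)}|\langle\delta\sigma,e\rangle|$, so the chain-analogue of Corollary \ref{cor:IsoDu} (the case $r=1$ of the reversed theorem with the norm $N_e = \tfrac1K|\cdot|$ on each edge $e$, whose dual is $N'_e = K|\cdot|$) says precisely that $c$ is an $L_\infty$ boundary if and only if there is a constant $K$ with
$$\Bigl|\sum_{v \in Y^{(0)}}\sigma(v)\Bigr| \;\le\; K\sum_{e \in E_1(Y)}|\langle\delta\sigma,e\rangle| \qquad\text{for every finitely supported } \sigma \in C^0(Y;\mathbb{F}),$$
where the $L_\infty$ filling $\beta \in C_1^{(\infty)}(Y;\mathbb{F})$ has $\partial\beta = c$ and coefficients of absolute value at most $K$.

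It then remains to identify the displayed condition with linearity of the isoperimetric inequality for subsets of $Y^{(0)}$. One direction is immediate: plugging in $\sigma = \mathbf{1}_S$ for a finite set $S$ gives $\sum_v\sigma(v) = |S|$ and $\sum_e|\langle\delta\sigma,e\rangle| = |\partial S|$, so the inequality forces $|S| \le K|\partial S|$. For the converse I would run the co-area argument: given finitely supported $\sigma$, since $x \mapsto x^+$ is $1$-Lipschitz we have $|\langle\delta\sigma^+,e\rangle| \le |\langle\delta\sigma,e\rangle|$ on every edge and $\sum_v\sigma(v) \le \sum_v\sigma^+(v)$, so it suffices to treat $\sigma \ge 0$; writing $\sigma = \int_0^\infty \mathbf{1}_{\{\sigma > t\}}\,dt$ (a finite sum, as $\sigma$ takes finitely many values) with superlevel sets $S_t = \{\sigma > t\}$, the layer-cake/co-area identities $\sum_v\sigma(v) = \int_0^\infty|S_t|\,dt$ and $\sum_e|\langle\delta\sigma,e\rangle| = \int_0^\infty|\partial S_t|\,dt$ hold, and integrating $|S_t| \le K|\partial S_t|$ over $t \in (0,\infty)$ yields the displayed inequality for $\sigma^+$, hence (applying it also to $(-\sigma)^+$) for $\sigma$.

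Once the chain version of Theorem \ref{thm:mfmc} is in hand the argument is essentially bookkeeping; the only points demanding care are (i) getting the direction of the chain/cochain swap and the degrees right, so that the all-ones $0$-chain really plays the role of ``$\omega$'' in the reversed theorem and the polyhedral-norm bound on the filling translates into a uniform bound on the coefficients of $\beta \in C_1^{(\infty)}$, and (ii) the co-area step, where one must handle cochains of mixed sign and verify that the ``edge total variation'' $\sum_e|\langle\delta\sigma,e\rangle|$ genuinely equals $\int_0^\infty|\partial\{\sigma>t\}|\,dt$ (using that only finitely many edges meet $\operatorname{supp}\sigma$, by the hypothesis that balls meet finitely many cells). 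Neither is a serious obstacle, and the substantive content is all contained in Theorem \ref{thm:mfmc}.
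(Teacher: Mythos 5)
Your proposal is correct and is exactly the route the paper intends: the paper offers no written proof beyond asserting that this follows from the chain--cochain--reversed form of Theorem \ref{thm:mfmc} with a suitable choice of norms, which is precisely what you carry out (with $r=1$ and $N_e=\frac{1}{K}\lvert\cdot\rvert$, so the dual bound becomes $K\lvert d\sigma(e)\rvert$ and the filling bound becomes an $L_\infty$ bound on $\beta$). The co-area/layer-cake step identifying the dual inequality with the linear isoperimetric inequality for finite subsets is the standard Block--Weinberger argument and you handle the sign issue correctly via positive parts.
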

Notably, \cite{BWJAMS} also prove that this is equivalent to the triviality of
the entire zeroth homology group in this $L^\infty$ theory.  The Poincar\'e dual
of this result used in \cite{ABW} is another corollary of our theorem.

Similarly, if we take $r=1$, choose a basepoint $* \in Y$ and define $N_e(x)=
|x/f(d(*,e))|$ for some non-decreasing function $f:[0,\infty) \to [0,\infty)$,
we recover an analogous theorem for the groups $H_n^f$ introduced by
\cite{NoSpa}, generalizing their Theorem 4.2:
\begin{cor}
  The following are equivalent for a chain $\sigma \in C_{n+1}(Y,*;\mathbb{F})$:
  \begin{enumerate}
  \item there is a constant $K$ such that for all cochains $\omega \in
    C_{n+1}(Y;\mathbb{F})$, $\lVert\langle \omega,\sigma \rangle\rVert_\infty
    \leq K\sum_e f(d(*,e))|d\omega(e)|$;
  \item $[\sigma]=0 \in H_n^f(Y;\mathbb{F})$.
  \end{enumerate}
\end{cor}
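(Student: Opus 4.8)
The plan is to read this corollary off the chains--cochains-reversed form of Theorem~\ref{thm:mfmc}, taken with $r=1$, after one substitution of norms and a harmless rescaling. First I would set up the reversed theorem in the needed shape: for a fixed cycle $\sigma$ in the chain complex of $Y$, put on each cell $e$ entering the sums below the (polyhedral) norm $N_e(x)=|x|/f(d(*,e))$ on $\mathbb{F}$, whose dual norm on $\mathbb{F}^{\ast}\cong\mathbb{F}$ is $N'_e(y)=f(d(*,e))\,|y|$. The reversed theorem then asserts that $\sigma$ is the boundary of a chain $\tau$ with $N_e(\langle\tau,e\rangle)\le 1$ for every $e$ --- equivalently $|\langle\tau,e\rangle|\le f(d(*,e))$ for every $e$, i.e.\ $\tau$ lies in the $f$-weighted chain complex of \cite{NoSpa} with $f$-norm at most $1$ --- if and only if $\langle\omega,\sigma\rangle\le\sum_e N'_e(\langle d\omega,e\rangle)=\sum_e f(d(*,e))\,|d\omega(e)|$ for every cochain $\omega$. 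One uses $\langle\omega,\sigma\rangle=\langle\omega,\partial\tau\rangle=\langle d\omega,\tau\rangle$ to transfer the estimate onto $\tau$; all of the local-to-global machinery --- the passage through balls $B_R(*)$ and the weak-$\ast$ accumulation of the local solutions --- is already internal to the proof of Theorem~\ref{thm:mfmc}.

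Next I would remove the asymmetry of the pairing and absorb the constant, exactly as in the proof of Corollary~\ref{cor:IsoDu}. Replacing $\omega$ by $-\omega$ shows that ``$\langle\omega,\sigma\rangle\le\sum_e f(d(*,e))\,|d\omega(e)|$ for all $\omega$'' is the same as ``$|\langle\omega,\sigma\rangle|\le\sum_e f(d(*,e))\,|d\omega(e)|$ for all $\omega$,'' and for $r=1$ the left-hand side is precisely $\|\langle\omega,\sigma\rangle\|_\infty$. Condition~(1) of the corollary is this inequality with an extra factor $K$ on the right, which by the previous paragraph holds exactly when $K^{-1}\sigma=\partial\tau'$ for some $\tau'$ of $f$-norm at most $1$; since $\mathbb{F}$ is a field, this is the same as $\sigma=\partial(K\tau')$ with $K\tau'$ of finite $f$-norm, i.e.\ $[\sigma]=0$ in $H_n^f$. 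Conversely, if $[\sigma]=0$ then $\sigma=\partial\tau$ with $\|\tau\|_f=:K<\infty$, and $K^{-1}\tau$ witnesses~(1) with that $K$. This yields the claimed equivalence.

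I do not expect any real obstacle here: the work is bookkeeping --- lining up the dimension in which the fillings $\tau$ of $\sigma$ live with the cells indexing the sums in condition~(1), and checking that the definition of $H_n^f$ in \cite{NoSpa} unwinds precisely to ``$\sigma$ bounds a chain of finite $f$-norm.'' The only mildly delicate point is cosmetic: if $f$ vanishes on some cell $e$ near the basepoint, then $N_e$ forces $\langle\tau,e\rangle=0$ while $N'_e\equiv0$, so $e$ simply drops out of both sides and the argument is unaffected; alternatively one may assume $f>0$ throughout.
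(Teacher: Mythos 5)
Your proposal is correct and follows the same route the paper intends: the corollary is read off the chain/cochain-reversed form of Theorem \ref{thm:mfmc} with $r=1$ and $N_e(x)=|x/f(d(*,e))|$ (dual norm $f(d(*,e))|\cdot|$), with the constant $K$ absorbed by rescaling exactly as in Corollary \ref{cor:IsoDu}. The paper leaves all of this implicit in a one-line remark, so your bookkeeping (the transfer $\langle\omega,\partial\tau\rangle=\langle d\omega,\tau\rangle$ and the unwinding of the definition of $H_n^f$) is a faithful expansion of its argument rather than a different proof.
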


\section{Finite approximations of Postnikov towers} \label{S:6}

Suppose now that $X$ is a delicate space, and once again let $\Gamma=\pi_1X$.
We would like to show that $X$ is built out of simpler spaces in an
easy-to-analyze way; we will then use this decomposition to complete the proofs
of Theorems \ref{thmA} and \ref{thmB}.  First, note that the homotopy fiber of
the inclusion $X \to B\Gamma$ is $\tilde X$ and hence rationally equivalent to
$\prod_{i=1}^r S^{2n_i+1}$.  Next, we see that up to rational homotopy $B\Gamma$
also has finite skeleta.
\begin{lem}
  Suppose $X$ is a finite complex and $\tilde X$ is rationally
  equivalent to a complex $F$ with finite skeleta.  Then $\Gamma=\pi_1X$ is of
  type $\mathcal{F}_\infty(\mathbb{Q})$, i.e.\ $B\Gamma$ is rationally
  equivalent to a complex with finite skeleta.
\end{lem}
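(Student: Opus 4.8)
The plan is to realize $\Gamma=\pi_1 X$ through its classifying map and apply Theorem \ref{cor:XandY}. Fix a CW model for $B\Gamma$ and let $f\colon X\to B\Gamma$ be a cellular classifying map. By construction $f$ induces an isomorphism on $\pi_1$, and its homotopy fiber is the universal cover $\tilde X$; hence $\pi_k(f)\otimes\mathbb{Q}\cong\pi_{k-1}(\tilde X)\otimes\mathbb{Q}$ for every $k\ge 2$. In particular $\pi_2(f)\otimes\mathbb{Q}\cong\pi_1(\tilde X)\otimes\mathbb{Q}=0$, so one of the two hypotheses of Theorem \ref{cor:XandY} holds for free. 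For the other, I would observe that $\tilde X\simeq_{\mathbb Q}F$ gives $\pi_{k-1}(\tilde X)\otimes\mathbb{Q}\cong\pi_{k-1}(F)\otimes\mathbb{Q}$, and $F$ is a simply connected complex with finite skeleta. A simply connected complex with finite skeleta has finitely generated integral homology in each degree, hence (by the generalized Hurewicz theorem applied to the Serre class of finitely generated abelian groups, as in \S9.6 of \cite{Spa}) finitely generated homotopy groups in each degree, so $\pi_{k-1}(F)\otimes\mathbb{Q}$ is finite-dimensional for every $k$. Therefore $\pi_k(f)\otimes\mathbb{Q}$ is finite-dimensional for all $k$.

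Now, for each $n\ge 2$, the map $f\colon X\to B\Gamma$ satisfies all the hypotheses of Theorem \ref{cor:XandY}, and $X$, being finite, is trivially rationally equivalent to a complex with finite $n$-skeleton. The theorem then yields that $B\Gamma$ is rationally equivalent to a complex with finite $n$-skeleton. Since this holds for every $n$, and since $\Gamma$ is finitely presented ($X$ being a finite complex), the rational Wall theorem (Theorem \ref{thm:QWall}) shows that $\Gamma$ satisfies condition $F_k(\mathbb{Q})$ for all $k$; the inductive construction in the proof of that theorem, which adds only finitely many cells in each dimension, then produces a single complex with finite skeleta in all dimensions together with a rational equivalence to $B\Gamma$. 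This is precisely the statement that $\Gamma$ is of type $\mathcal{F}_\infty(\mathbb{Q})$.

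The essentially routine nature of this argument means there is no real obstacle; the only point requiring a little care is the passage from the "for each $n$" conclusions of Theorem \ref{cor:XandY} to a single complex with all skeleta finite, which is why the last step is routed through the construction underlying the rational Wall theorem rather than merely quoting its statement. The finite-dimensionality of $\pi_*(\tilde X)\otimes\mathbb{Q}$ is the one input that is not completely formal, but it is a standard consequence of $\tilde X$ having the rational homotopy type of a finite-type complex.
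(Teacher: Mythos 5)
Your proof is correct and takes essentially the same route as the paper, which simply applies Theorem \ref{cor:XandY} to the classifying map $f\colon X\to B\Gamma$ after noting $\pi_k(f)\otimes\mathbb{Q}\cong\pi_{k-1}(F)\otimes\mathbb{Q}$ is finite-dimensional. You additionally spell out two details the paper leaves implicit (Serre's finiteness for the simply connected finite-type complex $F$, and the passage from finite $n$-skeleton for each $n$ to a single complex with all skeleta finite via the rational Wall construction), both of which are handled correctly.
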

\begin{proof}
  Let $f:X \to B\Gamma$ be the canonical map.  Since for every $n$, $\pi_n(f)
  \otimes \mathbb{Q} \cong \pi_{n-1}(F) \otimes \mathbb{Q}$ is
  finite-dimensional, by Theorem \ref{cor:XandY} $B\Gamma$ is rationally
  equivalent to a complex with finite skeleta.
\end{proof}
Note that $B\Gamma$ need not itself have finite skeleta.  Thus for example, if
$L$ is a flag triangulation of $\Sigma\mathbb{R}\mathbf{P}^2$, then the
Bestvina-Brady group $H_L$ is finitely presented and the fundamental group of a
$\mathbb{Q}$-aspherical 3-complex, but not of an aspherical complex with finite
skeleta \cite{BeBr}.

When $X$ is a delicate space, for any $N$, we get a sequence of compact spaces
$$\prod_{i=1}^r S^{2n_i+1} \to X \to B,$$
which maps to a fibration $\prod_{i=1}^r K(\mathbb{Q},2n_i+1) \to \hat X \to
B\Gamma$ via rational $N$-equivalences.  We can pick $N$ sufficiently large that
in a desired range, these maps rationally obey the homotopy exact sequence of a
fibration and the Serre spectral sequence.  Moreover, the fibration splits into
a tower of rational homotopy fibrations by products of same-dimensional spheres
which is rationally equivalent to a Postnikov tower.  In other words,
\begin{prop}
  If $X$ is a delicate space, then it satisfies conditions (1) and (2) of
  Theorem \ref{thmA}.
\end{prop}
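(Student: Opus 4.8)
The plan is to obtain both conditions directly from the rational fibration structure assembled just above. By Corollary~\ref{cor:fin} the universal cover $\tilde X$, which is the homotopy fiber of the classifying map $X\to B\Gamma$, is rationally equivalent to a finite product of odd-dimensional spheres; grouping these by dimension, write the distinct dimensions that occur as $2n_1+1<\dots<2n_s+1$ and set $r_i=\dim_{\mathbb{Q}}\pi_{2n_i+1}(X)\otimes\mathbb{Q}$, so that $\tilde X\simeq_{\mathbb{Q}}\prod_{i=1}^s(S^{2n_i+1})^{r_i}$. Since an odd sphere is rationally an Eilenberg--MacLane space, in fact $\tilde X\simeq_{\mathbb{Q}}\prod_{i=1}^s K(\mathbb{Q}^{r_i},2n_i+1)$, so its rational Postnikov tower is split; and by the lemma just proved $B\Gamma$ is of type $\mathcal{F}_\infty(\mathbb{Q})$.

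First I would prove condition~(1) by taking the (rational) relative Postnikov tower of the map $X\to B\Gamma$. Because $\pi_k$ of the homotopy fiber $\tilde X$ is rationally nonzero only in the degrees $2n_1+1,\dots,2n_s+1$, the only nontrivial stages occur there: let $X_i$ denote the $(2n_i+1)$-st stage (so $X_0=B\Gamma$ and $X_m=X$ up to rational equivalence, with $m=s$), giving a tower $X=X_m\to X_{m-1}\to\dots\to X_0=B\Gamma$ in which the fiber of $X_i\to X_{i-1}$ is $K(\pi_{2n_i+1}(\tilde X),2n_i+1)$, which rationally is $K(\mathbb{Q}^{r_i},2n_i+1)\simeq_{\mathbb{Q}}(S^{2n_i+1})^{r_i}$. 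To keep this inside the category of complexes with finite skeleta --- so that, following the discussion preceding this proposition, one may pass to compact models in which each stage genuinely obeys the homotopy exact sequence and the Serre spectral sequence in the relevant range --- I would apply Theorem~\ref{cor:XandY} to the $\pi_1$-isomorphic map $X\to X_i$: its rational homotopy fiber is $(2n_i+1)$-connected, hence has vanishing $\pi_2\otimes\mathbb{Q}$, and has finite-dimensional rational homotopy groups, so since $X$ is finite, each $X_i$ is rationally equivalent to a complex with finite $n$-skeleton for every $n$, hence (via Theorem~\ref{thm:QWall}) to one with finite skeleta. Composing the fibrations $X_i\to X_{i-1}$ then exhibits $X$, up to rational equivalence, as the total space of a fibration over $B\Gamma$ with fiber $\tilde X\simeq_{\mathbb{Q}}\prod_{i=1}^s(S^{2n_i+1})^{r_i}$ which decomposes as the required tower; this is condition~(1).

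Next I would deduce condition~(2) from ellipticity. The monodromy representation of the $i$-th stage is the action of $\pi_1(X_{i-1})=\Gamma$ on $\pi_{2n_i+1}$ of its fiber; since $X\to X_i$ is rationally $(2n_i+1)$-connected one has $\Gamma$-equivariant isomorphisms $\pi_{2n_i+1}(X_i)\otimes\mathbb{Q}\cong\pi_{2n_i+1}(X)\otimes\mathbb{Q}=\pi_{2n_i+1}(\tilde X)\otimes\mathbb{Q}$, and the last group is $\pi_{2n_i+1}$ of the fiber of $X_i\to X_{i-1}$. Thus $\rho_i\colon\Gamma\to GL(r_i,\mathbb{Q})$ is simply the restriction of the monodromy action of $\Gamma$ on $\pi_*(X)\otimes\mathbb{Q}$ to the summand $\pi_{2n_i+1}(X)\otimes\mathbb{Q}$. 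By the definition of a delicate space this action is elliptic, so it preserves a norm, and therefore so does $\rho_i$; hence each $\rho_i$ is elliptic, which is condition~(2).

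The one point requiring real care is the construction in the second paragraph: one must check that the relative Postnikov tower of $X\to B\Gamma$ can be realized rationally by complexes with finite skeleta and then by honest compact spaces whose successive fibrations support the exact-sequence and spectral-sequence formalism in a sufficiently large range --- precisely the machinery set up immediately before this proposition, resting on Theorem~\ref{cor:XandY} and the rational Wall theorem. Granting that, the proof is essentially bookkeeping; condition~(3) of Theorem~\ref{thmA} is not claimed here and will be treated separately.
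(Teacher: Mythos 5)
Your proposal is correct and follows essentially the same route as the paper: identify the homotopy fiber of $X\to B\pi_1X$ with $\tilde X\simeq_{\mathbb{Q}}\prod_i(S^{2n_i+1})^{r_i}$ via Corollary \ref{cor:fin}, split its rational Postnikov tower into stages indexed by the odd degrees, use Theorem \ref{cor:XandY} and the rational Wall theorem to realize each stage by a complex with finite skeleta, and read off condition (2) from the ellipticity built into the definition of a delicate space. You are somewhat more explicit than the paper about the finiteness bookkeeping at the intermediate stages, but the argument is the same.
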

More generally, let $X$ be a finite complex such that $\pi_n(X) \otimes
\mathbb{Q}$ is finite-dimensional.  By adding a few cells, we can get a finite
complex $B$ such that the pair $(B,X)$ is rationally $n$-connected and $\pi_n(B)
\otimes \mathbb{Q}=0$.  By studying such a map, we can get a handle on the
distortion of $\pi_n(X)$.

\subsection*{Almost Postnikov pairs and the Euler class}
These last very weak conditions are strong enough to analyze a step of our
approximate Postnikov tower.  So in the rest of this section, an
\emph{almost Postnikov pair} will denote a system $X \xrightarrow{p} B$ of
finite complexes where the map $p$ is rationally $n$-connected and the vector
space $V_p:=\ker(\pi_n(X) \to \pi_n(B)) \otimes \mathbb{Q}$ is
finite-dimensional.  Note that by adding cells to $B$, perhaps infinitely many,
one obtains a map, well-defined up to rational homotopy, whose rational homotopy
fiber is $K(V_p,n)$.  In fact, although this definition is much more general,
one loses very little for the purposes of this paper by thinking of all almost
Postnikov pairs as $K(\mathbb{Q}^r,n)$-fibrations for some $r$.

We will say that an almost Postnikov pair is \emph{normal} if in addition the
Hurewicz map on $V_p$ is injective in the universal cover $\tilde X$ and the
action of $\pi_1(X)$ on $V_p$ is elliptic.  This terminology is motivated by the
fact that in this case, the corresponding fibration of universal covers is
trivial up to rational homotopy.  That is, it behaves as a product.

We would like to define invariants for almost Postnikov pairs.  As a warmup, we
define the same invariant in a more traditional context.
\begin{defn}
  Let $K(\mathbb{Q},n)^r=F \to X \to B$ be a fiber bundle with monodromy
  representation $\rho:\pi_1B \to GL(H_n(F;\mathbb{Q}))$, with a corresponding
  $\mathbb{Q}\pi_1B$-module $M_\rho$.  This defines a bundle of groups
  $\mathcal{H}(\rho)$ over $B$ with fiber $H_n(F;\mathbb{Q})$.  Define a class
  $\eu \in H^{n+1}(B;\mathcal{H}(\rho))=H^{n+1}(B;M_\rho)$ as the obstruction to
  lifting a singular $(n+1)$-simplex in $B$ to $X$, given a lifting of the
  singular chain complex through dimension $n$.  By analogy, we refer to this as
  the \emph{Euler class} of the bundle, although one could just as well call it
  its \emph{$k$-invariant}.
\end{defn}
We need to show that the Euler class is well-defined, that is,
\begin{enumerate}
\item the cochain is a cocycle;
\item the choice of lifts of $n$-simplices determines it up to a coboundary.
\end{enumerate}
We can assume fixed lifts for simplices in $C_*(B)$ for $* \leq n-1$, since such
a lifting is unique up to homotopy.  Notice then that two ways to lift
$n$-simplices giving representatives $c_1$, $c_2$ of the Euler class give us a
cochain $b \in C^n(B;M_\rho)$ with $\delta b=c_1-c_2$.  This proves (2).

Now we prove (1).  Let $f:\Delta^{n+2} \to B$ be a simplex, and choose a
lifting of the singular chain complex through dimension $n$ which takes
$g:\Delta^n \to B$ to $\hat g$, which induces an obstruction cochain $c$.
Finally, let $\Delta_{ij}$ be the $n$-simplex which does not include vertices
$v_i$ and $v_j$.  To show that $\eu(\partial f)=0$, we lift the $n$-simplices
containing $v_0$, and then extend by homotopy lifting to a lift $\tilde f$ of
$f$.  Then the restrictions $\tilde f|\Delta_{i0}$ differ from
$\hat f|\Delta_{i0}$, but the sum of the obstructions on the $(n+1)$-simplices
is the same for both.

\S VI.5 of \cite{GJ} gives a more abstract homotopy-theoretic treatment of these
invariants.  In particular, they show that it is always possible to construct a
fiber bundle with the given monodromy representation and $k$-invariant, and that
conversely, Postnikov towers are determined up to homotopy by their monodromy
representations and $k$-invariants.  Up to rational homotopy, these results
restrict to finite complexes: if $B$ has finite skeleta, then by Theorem
\ref{cor:XandY}, one can find an $X$ that does as well.  In other words, given a
base space and a monodromy representation and Euler class, one can always
construct a corresponding almost Postnikov pair.

Conversely, suppose that $X \xrightarrow{p} B$ is an almost Postnikov pair.  The
map $p$ may not be a fibration, but there is nevertheless an Euler class
associated with the rational homotopy type of the fibration which it is
approximating.  We would like to come up with a cellular representative of the
Euler class in $C^{n+1}(X;M_\rho)$ which gives an obstruction to extending lifts
through $\pi$.  So let $Z_p$ be the mapping cylinder of $p$, and let
$j_n:C_n(\tilde Z_p;\mathbb{Q}) \to C_n(\tilde X;\mathbb{Q})$ be a lifting
homomorphism with a corresponding chain homotopy $u_n:C_n(\tilde Z_p;\mathbb{Q})
\to C_{n+1}(\tilde Z_p;\mathbb{Q})$.  Then taking an $(n+1)$-cell $c$ of
$\tilde B$ to
$$c+u_n(\partial c) \in H_{n+1}(\tilde Z_p,\tilde X;\mathbb{Q}) \cong
\pi_{n+1}(\tilde Z_p,\tilde X) \otimes \mathbb{Q} \twoheadrightarrow V_p$$
gives us the cellular representative we want.  Moreover, if the almost Postnikov
pair is normal, so that $V_p$ injects into $H_n(\tilde X;\mathbb{Q})$, we can
further identify the image of $c$ with
$j_n(\partial c) \in H_n(\tilde X;\mathbb{Q})$.

\subsection*{Distortion from the Euler class}
Already in the introduction we gave an example in which the Euler class relates
distortion in the total space of an almost Postnikov pair to isoperimetry in the
base space.  Now we can translate this example into the language that we will be
using.  Suppose that instead of a bundle, our almost Postnikov pair is actually
an injective cellular map---we can always guarantee this by taking a mapping
cylinder.

For the concrete case we are interested in, suppose $(B,X)$ is a CW pair with $B
\simeq T^4$ such that the homotopy fiber of the inclusion is $S^3$ and the Euler
class is the fundamental class $[T^4]$.  Then any admissible map $f:S^3 \to X$
represents an element of $\pi_3(X)$ which is, by definition, equal to the
pairing of the representative of the Euler class constructed above with a
filling of $f$.  Conversely, any boundary in $C_n(B)$ deforms via a lifting
homomorphism to a chain in $C_n(X)$ whose filling differs by a linear amount.
In this formulation, then, the equivalence between the functions
$\FV_{\mathbb{Z}^4,\langle \eu,\cdot \rangle}^{S^3}(k)$ and $\VD_\alpha(k)$ for $0
\neq \alpha \in \pi_3(X)$ is almost tautological.  This equivalence extends to
other almost Postnikov pairs with one-dimensional fiber.
\begin{thm} \label{thm:sph}
  Let $X \xrightarrow{p} B$ be a normal almost Postnikov pair in dimension $n$
  with $V_p \cong \mathbb{Q}$ and Euler class $\eu \neq 0$.  Then any element
  $\alpha \in V_p$ has distortion function $\VD_\alpha(k) \sim_C
  \FV_{B,\lvert\langle\eu,\cdot\rangle\rvert}^{S^n}(k)$.  In particular, this is true
  if $B \cong_{\mathbb{Q}} B\Gamma$ for some group $\Gamma$.
\end{thm}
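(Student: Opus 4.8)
The plan is to make the heuristic preceding the theorem precise at the level of cellular chains, with the lifting homomorphism of Proposition \ref{prop:chain_ho} playing the role of the ``partial section over the $n$-skeleton.'' First I would replace $p$ by the inclusion $X\hookrightarrow B:=Z_p$ into its mapping cylinder, pass to a homotopy-equivalent situation in which all attaching maps are admissible, and observe (via the deformation theorem and the discussion of Section 4) that all four quantities in sight may be computed cellularly and are coarse invariants of the $(n+1)$-skeleton. I will assume $n\geq 3$, so that Lemma \ref{lem:BBFS} lets me pass freely between integral $n$-cycles and admissible maps of $S^n$ of the same cellular volume. Fix a $\pi_1$-equivariant lifting homomorphism $j_n\colon C_n(\tilde B;\mathbb{Q})\to C_n(\tilde X;\mathbb{Q})$ with $j_n\circ\iota=\id$ as in Proposition \ref{prop:chain_ho}, and recall from the construction preceding the theorem that (using normality, i.e.\ injectivity of $h_n$ on $V_p$) the cellular representative of $\eu$ sends an $(n+1)$-cell $\bar c$ of $\tilde B$ to $[j_n(\partial\bar c)]\in h_n(V_p)\cong\mathbb{Q}$. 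Normalize so that $h_n(\alpha)\leftrightarrow 1$, so that values of $\langle\eu,\cdot\rangle$ are rational numbers.

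For the inequality $\FV^{S^n}_{B,\lvert\langle\eu,\cdot\rangle\rvert}\lesssim_C\VD_\alpha$, let $h\colon S^n\to\tilde B$ be admissible with $[[h]]=0$ and $\vol h\leq k$, let $c=h_\#([S^n])=\partial G$ be its cellular cycle (a boundary, working rationally and in the quotient $H_{n+1}(B)/p_*H_{n+1}(\tilde B)$ if $\tilde B$ is not $(n+1)$-connected), and set $z=j_n(c)$, a rational $n$-cycle in $\tilde X$ of cellular volume $O(k)$. Expanding $G$ cell-by-cell gives $[z]=\sum_{\bar c}a_{\bar c}[j_n(\partial\bar c)]=\langle\eu,p_\#G\rangle=\langle\eu,\Fill(h)\rangle$ in $h_n(V_p)\cong\mathbb{Q}$. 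Clearing a bounded denominator and applying Lemma \ref{lem:BBFS}, I realize an integer multiple of $z$ by an admissible $g\colon S^n\to\tilde X$ of volume $O(k)$ with $h_n[g]$ proportional to $\langle\eu,\Fill(h)\rangle\,h_n(\alpha)$; since $h_n$ kills the difference of $[g]$ and the corresponding multiple of $\alpha$, Proposition \ref{prop:i-1} provides a further bounded integer $N$ for which $N$ times that difference has a zero-volume representative in $\tilde X^{(n-1)}$. Hence a bounded multiple of $\langle\eu,\Fill(h)\rangle\,\alpha$ has a representative of volume $O(k)$, which yields $\VD_\alpha(Ck)\gtrsim\lvert\langle\eu,\Fill(h)\rangle\rvert$ and, taking suprema, the claimed coarse inequality.

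For the reverse inequality $\VD_\alpha\lesssim_C\FV^{S^n}_{B,\lvert\langle\eu,\cdot\rangle\rvert}$, take an admissible $f\colon S^n\to X$ of volume $k$ with $[f]=m\alpha$, lift it to $\tilde f$, and set $\beta=\tilde f_\#([S^n])$, a cycle of volume $\leq k$ with $[\beta]=m\,h_n(\alpha)$. Its image $\iota\beta\in C_n(\tilde B)$ is a cycle whose rational homology class vanishes, because $h_n(V_p)=\ker\!\big(H_n(\tilde X;\mathbb{Q})\to H_n(\tilde B;\mathbb{Q})\big)$; a bounded multiple $N_0\iota\beta$ is therefore an integral boundary, and Lemma \ref{lem:tohom}(2) deforms it, through an $(n+1)$-chain of volume $O(k)$, to a boundary $c$ of volume $O(k)$ with $p_\#c=0$, i.e.\ (Lemma \ref{lem:BBFS}) to the cycle of an admissible $h\colon S^n\to\tilde B$ with $[[h]]=0$ and $\vol h=O(k)$. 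Writing $c=N_0\iota\beta+\partial D$ with $\vol D=O(k)$ and choosing the filling $G''=N_0G_0+D$ with $\partial G_0=\iota\beta$, I compute $\langle\eu,\Fill(h)\rangle=\langle p^*\eu,G''\rangle=N_0\langle p^*\eu,G_0\rangle+\langle p^*\eu,D\rangle$; here $\lvert\langle p^*\eu,D\rangle\rvert\leq\lVert p^*\eu\rVert_\infty\cdot O(k)$ since $p^*\eu$ is bounded ($B$ finite), while the cell-by-cell identity above together with $j_n\iota=\id$ gives $\langle p^*\eu,G_0\rangle=[j_n(\iota\beta)]=[\beta]=m$. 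Thus $N_0\lvert m\rvert\leq\FV^{S^n}_{B,\lvert\langle\eu,\cdot\rangle\rvert}(Ck)+O(k)$, which is the desired bound.

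Combining the two inequalities proves $\VD_\alpha\sim_C\FV^{S^n}_{B,\lvert\langle\eu,\cdot\rangle\rvert}$. The ``in particular'' follows since the directed isoperimetric function depends on $B$ only up to coarse equivalence of the $(n+1)$-skeleton and, via Theorem \ref{thm:QInv} and the invariance results of Section 4, only up to rational equivalence through dimension $n+1$; so when $B\simeq_{\mathbb{Q}}B\Gamma$ it coincides with $\FV^{S^n}_{\Gamma,\lvert\langle\eu,\cdot\rangle\rvert}$. The hard part is not the algebra of the Euler cocycle, which is essentially a restatement once the lifting homomorphism is in place, but the uniform control of the auxiliary integers (the denominators of $j_n$, the torsion order $N_0$ in Lemma \ref{lem:tohom}, the multiple $N$ in Proposition \ref{prop:i-1}) and of the linear error terms: each is a bounded-exponent phenomenon governed by Corollary \ref{cor:bo_pi}, and it is exactly this that forces a \emph{coarse} rather than an asymptotic equivalence and that encodes the ``twisting around the fiber is linear in volume'' intuition.
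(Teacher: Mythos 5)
Your proof follows essentially the same route as the paper's: both directions are run at the level of cellular chains, with Lemma \ref{lem:tohom} converting a small representative of $m\alpha$ into a small spherical boundary in $\tilde B$ with vanishing chain evaluation, and with the lifting homomorphism $j_n$ and the cellular Euler cocycle $\bar c\mapsto[j_n(\partial\bar c)]$ mediating the identification of $\langle\eu,\Fill(\cdot)\rangle$ with the homotopy class upstairs. The one structural difference is in the direction $\FV^{S^n}_{B,\lvert\langle\eu,\cdot\rangle\rvert}\lesssim_C\VD_\alpha$: the paper invokes Lemma \ref{lem:riv} wholesale, which produces from $h\colon S^n\to\tilde B$ an honest map $g\colon S^n\to\tilde X$ with $g_\#([S^n])=p_nj_n(h_\#([S^n]))$ and $p_n[h]\simeq g$, with $p_n$ and the volume constant depending only on the pair; you instead reassemble this from Lemma \ref{lem:BBFS} (realize the cycle $j_n(c)$ spherically) plus Proposition \ref{prop:i-1} (correct the homotopy class). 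That reassembly has a soft spot you partly acknowledge: Proposition \ref{prop:i-1} only asserts that \emph{some} multiple of a rationally homologically trivial class retracts to the $(n-1)$-skeleton, and the class you need to correct varies with $k$, so the "further bounded integer $N$" is not supplied by that proposition alone. The uniformity is exactly what Lemma \ref{lem:riv} packages (via the bounded-exponent results of Corollary \ref{cor:bo_pi}), and citing it directly closes the gap. You also skip the paper's opening reduction — ellipticity on $V_p\cong\mathbb{Q}$ forces the monodromy to act by $\pm1$, so one passes to a double cover to trivialize it — which is needed for the pairing of the twisted class $\eu$ with the untwisted filling class to be well defined; your normalization $h_n(\alpha)\leftrightarrow 1$ presupposes this. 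With those two repairs (both available in the paper's toolkit), the argument is correct.
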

\begin{proof}
  In this case, elliptic monodromy means that every element acts by
  multiplication by $1$ or $-1$, so by taking a double-cover we can assume that
  the monodromy is trivial.  Suppose that $\VD_\alpha(k)=M$, so that there is an
  admissible $f:S^n \to X$ with volume $k$ which represents the class
  $M\alpha \in \pi_n(X)$.  By Lemma \ref{lem:tohom}, we can find a nearby
  $g:S^n \to B$ with $[[g]]=0$, and so there are constants $C$ and $C^\prime$
  such that
  $$\FV_{B,\lvert\langle\eu,\cdot\rangle\rvert}^{S^n}(Ck) \geq M-C^\prime k.$$
  Conversely, suppose that $\FV_{B,\lvert\langle\eu,\cdot\rangle\rvert}^{S^n}(k)=M$,
  so that there is a map $f:S^n \to B$ with $[[f]]=0$ such that $\langle\eu,
  \Fill(f)\rangle=M$.  Then by Lemma \ref{lem:riv}, there is a constant $p$ such
  that one can find a $g:S^n \to X$ with $g_\#([S^n])=pj_nf_\#([S^n])$, where
  $j_n$ is a lifting homomorphism.  Therefore, there are constants $C$ and
  $C^\prime$ such that
  $$\VD_\alpha(pCk) \geq M-C^\prime k.$$
  This shows the desired equivalence.
\end{proof}
In other words, the fact that even admissible maps to $X$ and to $B$ do not
match up perfectly can in any case only impose linear differences in the
asymptotics of these two functions.

In general, however, in particular for almost Postnikov pairs with nontrivial
monodromy, the situation is rather more nebulous.  Here it is still the case
that small maps $S^n \to \tilde B$ with big directed fillings correspond in a
more or less one-to-one fashion with small maps $S^n \to \tilde X$ which
represent big homotopy classes in $\pi_n(\tilde X)$.  However, because of the
twisting, different lifts to the universal cover of the same cell in $B$
correspond to different elements of $\pi_n(X)$; the homotopy class of a map in
$X$ does not depend solely on its filling homology class in $B$.  This means,
for example, that the same sequence of fillings in $B$ may demonstrate the
distortion of many different elements of $\pi_n(X)$, depending on how we lift it
to the universal cover.  Moreover, if the homotopy fiber is $(S^n)^r$ for some
$r \geq 2$, then not every map represents a multiple of some given $\alpha$.
These two complications prevent us from giving a neat characterization like
above, and we are left with a statement which is even more nakedly tautological:
\begin{thm} \label{thm:genDI}
  Let $X \xrightarrow{p} B$ be a normal almost Postnikov pair with $p$ an
  injective cellular map.  Let $j_*:C_{\leq n}(\tilde B;\mathbb{Q}) \to
  C_{\leq n}(\tilde X;\mathbb{Q})$ be a lifting homomorphism, and $\omega_j$ be
  the representative of the Euler class induced by $j_n$.  Then there are
  constants $0<c\leq1,p_n(j_*)\geq1$ such that for every $\alpha \in V_p$,
  \begin{equation} \label{eqn:DI}
    \frac{c}{p_n}|p_n\alpha|_{\vol} \leq \min\{\vol(f) \mid f:S^n \to \tilde B
    \text{ admissible and }\langle\omega_j,\Fill(f)\rangle=\alpha\} \leq
    C|\alpha|_{\vol}+C.
  \end{equation}
\end{thm}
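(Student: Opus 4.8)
The plan is to prove the two inequalities by transferring an $n$-sphere between $\tilde X$ and $\tilde B$ along the rationally $n$-connected inclusion $p\colon X\hookrightarrow B$ (which, as in Lemma~\ref{lem:riv}, we may take to be an inclusion of a subcomplex), keeping track simultaneously of volume and of the induced rational homotopy class. The one ingredient that needs setting up is the chain-level meaning of $\langle\omega_j,\Fill(f)\rangle$. Writing $Z_p$ for the mapping cylinder of $p$, the cochain $\omega_j$ takes an $(n+1)$-cell $\tilde c$ of $\tilde B$ to $\tilde c+u_n(\partial\tilde c)\in H_{n+1}(\tilde Z_p,\tilde X;\mathbb Q)\twoheadrightarrow V_p$, which — normality being assumed — is identified with $j_n(\partial\tilde c)\in V_p\subseteq H_n(\tilde X;\mathbb Q)$. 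Consequently, if $f\colon S^n\to\tilde B$ is admissible with $[[f]]=0$ and $G\in C_{n+1}(\tilde B;\mathbb Q)$ is a homological filling of $f$, the $\pi_1$-equivariant pairing of $\omega_j$ with $G$ is $j_n(\partial G)=j_n\!\left(f_\#([S^n])\right)$; so the hypothesis $\langle\omega_j,\Fill(f)\rangle=\alpha$ says precisely that the cycle $j_n(f_\#([S^n]))$ represents the Hurewicz image $h_n(\alpha)$ in $H_n(\tilde X;\mathbb Q)$. I will also use the following volume principle: for $n\ge 3$ and $\beta\in V_p$, the minimal cellular volume $|\beta|_{\vol}$ is comparable, up to a multiplicative constant and up to a bounded-exponent subgroup, to the minimal $\ell^1$-norm of a cellular cycle in $\tilde X$ representing $h_n(\beta)$; this packages Lemma~\ref{lem:BBFS}, Proposition~\ref{prop:i-1}, and the deformation theorem, together with the injectivity of $h_n$ on $V_p$.

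For the right-hand (upper) bound I would take a near-optimal admissible representative $g_0\colon S^n\to\tilde X$ of $\alpha$, with $\vol_C(g_0)\lesssim|\alpha|_{\vol}$, and view it as an admissible map into $\tilde B$ (legitimate since the $n$-cells of $X$ are $n$-cells of $B$). Since $\alpha\in V_p=\ker(\pi_nX\to\pi_nB)\otimes\mathbb Q$, the cellular cycle $[[g_0]]$ is rationally null-homologous in $B$; multiplying by the exponent of the relevant torsion (bounded by Corollary~\ref{cor:bo_pi}, depending only on $B$, and absorbed into the constants) makes it an integral boundary, and Lemma~\ref{lem:tohom}(1) then deforms $g_0$, at linear cost, to an admissible $f\colon S^n\to\tilde B$ with $[[f]]=0$ and $\vol_C(f)\lesssim|\alpha|_{\vol}+1$. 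Because $j_n$ restricts to the identity on $C_n(\tilde X;\mathbb Q)$ and $f$ is homotopic to $g_0$ inside $\tilde B$, the cycle $j_n(f_\#([S^n]))$ represents $h_n(\alpha)$, i.e.\ $\langle\omega_j,\Fill(f)\rangle=\alpha$; this $f$ realizes the upper bound.

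For the left-hand (lower) bound I take any admissible $f\colon S^n\to\tilde B$ with $[[f]]=0$ and $\langle\omega_j,\Fill(f)\rangle=\alpha$, so that $j_n(f_\#([S^n]))$ represents $h_n(\alpha)$. Applying Lemma~\ref{lem:riv} to the pair $(B,X)$ yields constants $p_n=p_n(j_*)\ge 1$ and $\kappa_n$ and an admissible $g\colon S^n\to X$ with $p_nf\simeq g$, $\vol_C(g)\le\kappa_n\vol_C(f)+\kappa_n$, and $g_\#([S^n])=p_n\,j_n(f_\#([S^n]))$ as cellular chains; thus $g_\#([S^n])$ is a cellular cycle of $\ell^1$-norm $\le\vol_C(g)$ representing $h_n(p_n\alpha)$, with $p_n\alpha\in V_p$. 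By the volume principle, $|p_n\alpha|_{\vol}\lesssim\vol_C(g)\lesssim\vol_C(f)$, i.e.\ $\tfrac{c}{p_n}|p_n\alpha|_{\vol}\le\vol_C(f)$ for a suitable $c>0$. Minimizing over $f$ completes the proof, and the parenthetical case $B\cong_{\mathbb Q}B\Gamma$ is immediate because then $\tilde B$ is rationally contractible, so $\Fill$ is unambiguous. The statement is, as advertised, close to tautological; the real work lies in the first paragraph — making the chain-level identity $\langle\omega_j,\Fill(f)\rangle=[\,j_n(f_\#([S^n]))\,]$ precise through the mapping cylinder $Z_p$ and the equivariant coefficient module $M_\rho=V_p$, and then controlling the bounded-exponent denominators that enter both when one forces $[[f]]=0$ and when one pushes through the lifting homomorphism of Lemma~\ref{lem:riv}, so that they affect only the constants $c$ and $p_n$.
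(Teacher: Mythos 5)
Your argument is correct and follows essentially the same route as the paper's (much terser) proof: the upper bound by pushing a near-minimal admissible representative of $\alpha$ from $\tilde X$ into $\tilde B$ and adjusting it to have vanishing chain evaluation, and the lower bound by applying Lemma~\ref{lem:riv} to convert an admissible $f:S^n\to\tilde B$ into a representative of $p_n\alpha$ with $g_\#([S^n])=p_nj_n(f_\#([S^n]))$ and comparable volume. The chain-level identification of $\langle\omega_j,\Fill(f)\rangle$ with the class of $j_n(f_\#([S^n]))$ that you spell out in your first paragraph is precisely the construction of the cellular Euler-class representative given in the paper immediately before the theorem, so your proof is a faithful (and more explicit) version of the intended one.
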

\begin{proof}
  The right inequality is true since any map $f:S^n \to X$ deforms to an
  admissible one.  Conversely, for any $f:S^n \to B$ we can apply Lemma
  \ref{lem:riv} to concoct a map $g:S^n \to X$ representing $p\alpha$ with
  $g_\#[S^n]=p_nj_n(f_\#[S^n])$ and $\vol g \leq C_n\vol f$, where $p_n$ and
  $C_n$ depend only on $j_n$.
\end{proof}
This fact seems rather unwieldy to use, but it does allow us to analyze certain
examples.  It also has the useful consequence that homological isoperimetric
functions give us a bound on how distorted classes may be, regardless of
monodromy.
\begin{cor} \label{cor:FVbd}
  Given a normal almost Postnikov pair $X \xrightarrow{p} B$, for any $\alpha
  \in V_p$, $\VD_\alpha(k) \lesssim \FV_B^n(k)$.
\end{cor}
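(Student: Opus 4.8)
The plan is to read off Corollary \ref{cor:FVbd} from Theorem \ref{thm:genDI}, the only additional input being the elementary remark that evaluating the cellular representative of the Euler class against a chain grows at most linearly in the cellular volume of that chain. To set up: since the pair $X \xrightarrow{p} B$ is normal, the monodromy on $V_p$ is elliptic, so I would fix a $\pi_1(X)$-invariant norm $\lVert\cdot\rVert$ on $V_p$ (the asymptotics of $\VD_\alpha$ do not depend on which norm is chosen). Let $j_*$ be a lifting homomorphism and $\omega_j \in C^{n+1}(B;M_\rho)$ the cellular representative of the Euler class it induces, as constructed just before Theorem \ref{thm:genDI}, and set $\omega_0 := \max\{\lVert\langle\omega_j,c\rangle\rVert : c \text{ an }(n+1)\text{-cell of }B\} < \infty$. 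Because $B$ is finite and the chosen norm is $\pi_1$-invariant, the pulled-back cochain $\pi^*\omega_j$ on $\tilde B$ (where $\pi:\tilde B\to B$ is the universal covering) takes values of norm at most $\omega_0$ on every $(n+1)$-cell, so for any cellular chain $G \in C_{n+1}(\tilde B;\mathbb{Q})$ one has $\lVert\langle\pi^*\omega_j,G\rangle\rVert \le \omega_0\,\vol(G)$. If $\FV_B^n(k)=\infty$ for some $k$ — which is possible, as $B$ need not be aspherical — the asserted inequality is vacuous there, so I may assume $\FV_B^n$ is everywhere finite.

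Next I would prove the key estimate: for every $\beta \in V_p$,
$$\lVert\beta\rVert \le \omega_0\,\FV_B^n\bigl(C\lvert\beta\rvert_{\vol}+C\bigr),$$
where $C$ is the constant of Theorem \ref{thm:genDI}. Indeed, that theorem supplies an admissible map $h:S^n \to \tilde B$ with $\langle\omega_j,\Fill(h)\rangle = \beta$ and $\vol(h) \le C\lvert\beta\rvert_{\vol}+C$; in particular $\Fill(h)$ is defined, so $h_\#([S^n])$ is a boundary in $\tilde B$. Choose a minimal cellular homological filling $G \in C_{n+1}(\tilde B)$ of $h$, so that $\vol(G)=\FVol_B^n(h_\#[S^n]) \le \FV_B^n(\vol h)$, and recall that $\Fill(h)$ is represented by $\pi_\#G$ while the specific cocycle $\pi^*\omega_j$ vanishes on boundaries; hence $\langle\omega_j,\Fill(h)\rangle = \langle\pi^*\omega_j,G\rangle$. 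Combining with the bound from the first paragraph and $\vol(\pi_\#G)\le\vol(G)$,
$$\lVert\beta\rVert = \lVert\langle\pi^*\omega_j,G\rangle\rVert \le \omega_0\,\vol(G) \le \omega_0\,\FV_B^n(\vol h) \le \omega_0\,\FV_B^n\bigl(C\lvert\beta\rvert_{\vol}+C\bigr).$$

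Finally I would specialize $\beta = m\alpha$. Given $k$ and any integer $m$ with $\lvert m\alpha\rvert_{\vol}\le k$, the estimate yields $\lvert m\rvert\,\lVert\alpha\rVert = \lVert m\alpha\rVert \le \omega_0\,\FV_B^n(Ck+C)$, so (assuming $\alpha\ne0$, the case $\alpha=0$ being trivial) $\lvert m\rvert \le (\omega_0/\lVert\alpha\rVert)\,\FV_B^n(Ck+C)$. Taking the supremum over all such $m$ gives $\VD_\alpha(k) \le (\omega_0/\lVert\alpha\rVert)\,\FV_B^n(Ck+C) \lesssim \FV_B^n(k)$, which is the claim. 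I do not expect any genuine obstacle here: the content is entirely in Theorem \ref{thm:genDI} and the definitions of Section \ref{S:5}, and the only point that needs a moment's care is the bookkeeping identifying $\langle\omega_j,\Fill(h)\rangle$ with the pairing of a specific coboundary-in-$\tilde B$ against a homological filling, together with the observation that the $\pi_1$-invariant norm makes that pairing Lipschitz in cellular volume.
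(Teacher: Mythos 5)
Your derivation is correct and is exactly the argument the paper intends: the corollary is stated as an immediate consequence of Theorem \ref{thm:genDI}, and your write-up just fills in the intended bookkeeping (the right-hand inequality of that theorem, plus the observation that the equivariant cellular Euler cocycle paired against a minimal homological filling is bounded by $\omega_0$ times the filling volume, using ellipticity to get the invariant norm). No gaps.
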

This stands in contrast with almost Postnikov pairs which are not normal, in
which volume distortion is always infinite.

In concrete examples, the main principle is as follows: to show a class $\alpha
\in \pi_n(X)$ to be distorted, one needs to find a sequence of boundaries in
$\tilde B$ which lift to multiples of $\alpha$.  The exact lift depends on the
exact representative of the Euler class chosen, but any pair of representatives
gives lifts which diverge from each other linearly.  Thus in principle, if
$\alpha$ is distorted, then we can use any representative of the Euler class to
demonstrate this.
\begin{exs}
  In these examples, we assume $n$ odd, so that $K(\mathbb{Q},n)
  \cong_{\mathbb{Q}} S^n$.
  \begin{enumerate}
  \item Mineyev \cite{MinIP} shows that hyperbolic groups satisfy linear
    isoperimetric inequalities for fillings of boundaries with cycles.  In
    particular, if $\Gamma$ is hyperbolic, $\FV_\Gamma^n(k)$ is linear for all
    $n \geq 1$, and hence so is
    $\FV_{\Gamma,\lvert\langle e,\cdot \rangle\rvert}^n(k)$ for any
    $e \in H^{n+1}(\Gamma;\mathbb{Q})$.  Therefore, if $X$ is the total space
    of a fibration $(S^n)^r \to X \to B\Gamma$, then $\pi_n(X)$ is undistorted.
  \item Conversely, if $\Gamma=\mathbb{Z}^d$ for $d>n$, then the isoperimetric
    inequality $k^{(n+1)/n}$ is attained by fillings of round spheres in
    $(n+1)$-dimensional coordinate hyperplanes, which correspond to the
    generators of $H^{n+1}(\Gamma)$.  Suppose that $X$ is a bundle over $T^r$
    with finite monodromy.  Given $0 \neq \omega \in H^{n+1}(\Gamma)$,
    $\langle \omega,x \rangle \neq 0$ for one of these generators $x$, so
    $\FV_{\Gamma,\lvert\langle e,\cdot \rangle\rvert}^n(k) \sim k^{(n+1)/n}$, and if
    $X$ has Euler class $\omega$, then $\pi_n(X)$ is distorted with
    $VD_\alpha(k) \sim k^{(n+1)/n}$.
  \item A somewhat more subtle situation occurs when $\Gamma=\mathbb{Z}^d$ and
    the almost Postnikov pair $X \to B\Gamma$ has monodromy of infinite order.
    As a concrete example, we take $\Gamma=\mathbb{Z}^5=\langle a,b_1,\ldots,b_4
    \rangle$ and homotopy fiber $(S^3)^3$.  Let $\hat a$ and $\hat b_i$ be the
    4-cells in $\mathbb{R}^5$ which are orthogonal to the corresponding
    generators of the fundamental group.  Suppose that the monodromy
    representation $\rho$ takes
    $$a \mapsto A=\begin{pmatrix}1&0&0\\0&3/5&-4/5\\0&4/5&3/5\end{pmatrix}
    \text{ and }b_i \mapsto I_3.$$
    Then
    $$H^*(\Gamma;M_\rho) \cong H^*(\langle a \rangle;\mathbb{Z}^3/\rho(a))
    \otimes H^*(\langle b_i \rangle;\mathbb{Z}^3),$$
    and in particular $H^4(\Gamma;M_\rho) \cong \mathbb{Q}^7$, generated by
    cochains sending each of the five 4-dimensional flats to any vector, mod the
    last two coordinates of the image of each $\hat b_i$.

    If the Euler class of the fibration takes $\hat a$ to a vector $\vec v$,
    then the situation restricted to a flat perpendicular to $a$ is the same as
    in the previous example, so $\vec v$ is distorted polynomially, with
    $\VD_{\vec v}(k) \sim k^{4/3}$.  Moreover, so is $\rho(a)^k(\vec v)$, for any
    $k$, as we can see by taking the relevant parallel flat.  Indeed, so is any
    rational linear combination of such vectors, that is, any $\rho(m)\vec v$
    for $m \in \mathbb{Q}\Gamma$.

    On the other hand, suppose the Euler class takes $\hat b_1$ to a vector
    $\vec v$, and consider the boundaries of cochains in $C^3(B;M_\rho)$.  Of
    the 3-cells bounding $\hat b_1$, the ones perpendicular to $\hat a$ can be
    lifted by such cochains to any pair of vectors differing by multiplication
    by $A$.  Since $A-I$ is of rank 2, when choosing a cellular representative
    of the Euler class we can choose to lift $\hat b_1$ to any vector as long as
    it has the correct first coordinate.  If we choose the lift to be
    $(v_1,0,0)$, then this vector is distorted by the argument in the previous
    example.  On the other hand, this does not induce distortion in any vectors
    in other directions.

    Thus vectors with nonzero second and third coordinate can only be distorted
    as a result of the pairing of the Euler class with $\hat a$, but any flat
    may cause distortion of vectors of the form $(v_1,0,0)$.
  \item Now take $\Gamma=\dmd_n$, the $n$th diamond group defined earlier, and
    its classifying space $X_n$, and suppose that $Y$ is the total space of a
    rational homotopy fibration $(S^n)^r \to Y \to X_n$.  From the earlier
    discussion we see that if the monodromy of this fibration is trivial and the
    Euler class is nontrivial, then the volume distortion function of $Y$ will
    be $\exp(k^{1/n})$.  Indeed, we can also demonstrate this for certain other
    monodromy representations.  As before, we let $a,b_1,\ldots,b_n,c_1,\ldots,
    c_n$ be the generators of $\dmd_n$.  Suppose that the monodromy
    representation $\rho:\dmd_n \to GL_r(\mathbb{Q})$ of our fibration takes $a
    \mapsto 1$ and maps the $b_i$ and $c_i$ via any elliptic representation of
    $F_2^n$, and that the Euler class $\eu \in H^{n+1}(X_n;M_\rho)$ is nonzero.

    Note that $H^{n+1}(X_n;M_\rho) \cong \mathbb{Q}^r$.  To see this, fix a lift
    $\widetilde{e_I}$ of each $(n+1)$-cell $e_I$ of $X_n$ to $\tilde X_n$ so
    that all of these lifts coincide on the edge $a^{2^n}$, as illustrated in
    Figure \ref{fig:sigma}.  In this basis, coboundaries in $C^{n+1}(X_n;M_\rho)$
    are those $\omega$ which satisfy the vector equation
%    \begin{equation} \label{eqn:A}
    $$A(\omega):=\sum_{\substack{I \in \{b,c\}^n\\\square_i=b_i\text{ or }c_i}}
    (-1)^{\#\{i:\square_i=b_i\}} \prod_{i=1}^n (\rho(\square_i^{-1})-2I)^{-1}
    \langle\omega, \widetilde{e_I}\rangle=\vec 0.$$
%    \end{equation}
    The notation indicates that the sum runs over all choices of either $b_i$ or
    $c_i$ for each $i$.  In particular, to simplify the notation in the rest of
    this example, we can choose a representative $\omega$ of $\eu \in
    H^{n+1}(X_n;M_\rho)$ which is zero on the lifts of every cell except for
    $e_C=e_{\{c,\ldots,c\}}$.

    As an upper bound on the distortion function of $\pi_n(Y)$, we already know
    that $\FV_{\dmd_n}^n(k) \sim \exp(k^{1/n})$.  To show that this bound is
    sharp, we construct a sequence of small representatives of large elements of
    $\pi_n(Y)$.  Specifically, we pick the embedding $\partial\tau_n(k+1)$ of
    $S^n \subset \tilde X_n$ described previously, which has surface area
    $O(k^n)$. Since this has filling $\tau_n(k+1)$, it lifts to a representative
    in $\pi_n(Y) \otimes \mathbb{Q} \cong M_\rho$ of
    $$\vec v(k+1) :=
    \sum_{\substack{j_1,\ldots,j_n\geq0\\j_1+\cdots+j_n \leq k}} 2^{k-\sum_{\ell=1}^nj_\ell}
    \rho(c_1)^{-j_1}\cdots\rho(c_n)^{-j_n}\langle\omega, \widetilde{e_C}\rangle.$$
    Now we multiply both sides of this equation by $\prod_{i=1}^n(\rho(c_i^{-1})
    -2I)$ and notice that the transformation on the right hand side can be
    thought of as $\rho(m_{n,k})$ for a certain element $m_{n,k} \in \mathbb{Z}^r
    \dmd_n$:
    $$\prod_{i=1}^n(\rho(c_i^{-1})-2I)\vec v(k+1)=
    \rho(m_{n,k})\langle\omega, \widetilde{e_C}\rangle.$$
    Then there is an inductive formula
    $$m_{n,k}=(c_n^{-1}-2)\sum_{j=0}^k c_n^j m_{n-1,k-j}.$$
    By cancelling certain terms at each step of this induction, we find that
    $$\prod_{i=1}^n(\rho(c_i^{-1})-2I)\vec v_n(k+1)=
    \left[-2^{k+n}+R(\rho(c_1),\cdots,\rho(c_n))\right]\langle\omega,
    \widetilde{e_C}\rangle \in \pi_n(Y) \otimes \mathbb{Q},$$
    where the remainder term $R$ is a polynomial in the $\rho(c_i)$ with
    $O(k^n)$ terms, all of which have integer coefficients in $O(2^n)$.  Indeed,
    since the argument was on the level of elements of $\mathbb{Z}^r\dmd_n$,
    one can construct a finitely-presented module $M$, and hence a finite
    $Y$, such that the equality holds even before rationalizing:
    $$\prod_{i=1}^n (c_i^{-1}-2) \cdot \alpha(k+1)=\left[-2^{k+n}+R(c_1,\cdots,
      c_n)\right] \cdot [\partial\widetilde{e_C}] \in \pi_n(Y) \cong M,$$
    where $\alpha(k+1) \in \pi_n(Y)$ is the element represented by the
    equivariant lift of $\tau_n(k+1)$ induced by $\omega$.  By Corollary
    \ref{cor:bo_pi}, for any $Y$ with the same rational homotopy type, the
    same equality holds once both sides are multiplied by an integer $p(Y)$.

    Thus we can represent the element $-2^{k+n}[\partial\widetilde{e_C}] \in
    \pi_n(Y)$ via $O(3^n)$ copies of the lift of $\tau_n(k+1)$ summed with
    $O(2^nk^n)$ copies of various inclusions of the fiber, hence with volume
    $O(k^n)$.  This means that $[\partial\widetilde{e_C}]$ is distorted with
    distortion function $\exp(\sqrt[n]{k})$.  Moreover, so is any other lift of
    this cell or linear combination of such lifts; that is, for any $m \in
    \mathbb{Z}\dmd_n$, $\rho(m)[\partial\widetilde{e_C}]$ has the same
    distortion function.  Moreover, we can use other embeddings of $S^n$,
    depending on choices of branches in the Bass-Serre tree, to obtain other,
    perhaps distinct distorted vectors.  Thus the one-to-one relationship
    between homology classes downstairs and homotopy classes upstairs that we
    see in the case of trivial monodromy, as exemplified by Theorem
    \ref{thm:sph}, does not have an equivalent in the general case.
  \end{enumerate}
\end{exs}
\subsection*{Recognizing non-distortion}
We have just demonstrated that for an almost Postnikov pair $X \xrightarrow{p}
B$, while the volume distortion of elements of $V_p$ is in some sense determined
by the monodromy representation and the Euler class, this relationship can be
complicated and perhaps even impossible to encapsulate.  However, there is a
fairly simple criterion that can be used to identify pairs for which all of
$V_p$ is volume-undistorted.
\begin{defn}[\cite{GrHodge}]
  Let $X$ be a compact piecewise Riemannian space.  A form $\omega \in
  \Omega^n(X)$ is called $\tilde d(\text{bounded})$ if its lift $\tilde\omega$
  to the universal cover $\tilde X$ is the differential of a bounded form.
\end{defn}
Being $\tilde d(\text{bounded})$ is a Lipschitz homotopy invariant and (in the
given setting) exact forms are $\tilde d(\text{bounded})$.  Indeed, this
definition is just another way of saying that the pullback of a form to the
universal cover is zero in $L_\infty$ cohomology.

In the following theorem, we put together this as well as a few other equivalent
conditions for non-distortion.  In particular, the presence of condition (5)
gives a weak converse to Lemma \ref{lem:duh}.
\begin{thm} \label{thm:LInfGen}
  Let $X \xrightarrow{p} B$, $n \geq 3$, be a normal almost Postnikov pair with
  elliptic monodromy representation $\rho:V_p \to GL(H_n(F;\mathbb{Q}))$ and
  Euler class $\eu \in H^{n+1}(B;M_\rho)$.  Write $\tilde X$ and $\tilde B$ for
  the universal covers of $X$ and $B$.  Then the following are equivalent:
  \begin{enumerate}
  \item The subspace $V_p \subseteq \pi_n(X) \otimes \mathbb{Q}$ is
    volume-undistorted.
  \item For some (any) representative $\omega$ of $\eu$, there is a constant $C$
    such that for all spherical boundaries $\sigma \in B_n(\tilde B)$,
    $\lVert\langle\omega,\Fill(\sigma)\rangle\rVert \leq C\vol\sigma$.
  \item For some (any) representative $\omega$ of $\eu$, there is a constant $C$
    such that for all boundaries $\sigma \in B_n(\tilde B)$,
    $\lVert\langle\omega,\Fill(\sigma)\rangle\rVert \leq C\vol\sigma$.
  \item The class $\eu$ is $\tilde d(\text{bounded})$, i.e.\ $\pi^*\eu=0 \in
    H^{n+1}_{(\infty)}(\tilde B;V_p)$.
  \item There is a bounded cellular cocycle $w \in C^n_{(\infty)}(\tilde X;V_p)$
    so that $\langle w, f_*[S^n]\rangle=[f]$ for any $f:S^n \to \tilde X$ with
    $[f] \in V_p$.  Equivalently, if $X$ is given a piecewise Riemannian
    structure, there is a bounded closed piecewise smooth $V_p \otimes
    \mathbb{R}$-valued form $\omega \in \Omega^n(\tilde X)$ so that
    $\int_{S^n} f^*\omega=[f]$ for any such $f:S^n \to \tilde X$.
  \end{enumerate}
\end{thm}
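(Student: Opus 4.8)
The plan is to prove the chain of implications $(1)\Leftrightarrow(2)\Leftrightarrow(3)\Leftrightarrow(4)\Rightarrow(5)\Rightarrow(1)$, assembling the equivalences from the machinery already in place and isolating the one genuinely new ingredient, $(4)\Rightarrow(5)$. Two bookkeeping points dispose of the ``some (any)'' clauses and of $(2)\Leftrightarrow(3)$: any two cellular representatives of $\eu$ on $\tilde B$ differ by $\delta\beta$ for an equivariant — hence bounded — cochain $\beta$, and $\langle\delta\beta,\Fill(\sigma)\rangle=\langle\beta,\partial\Fill(\sigma)\rangle=\langle\beta,\sigma\rangle$ is bounded by $\lVert\beta\rVert_\infty\vol\sigma$, so the isoperimetric bound in (2) or (3) does not see the choice of representative; and since $n\geq3$, Lemma~\ref{lem:BBFS} produces, for every integral boundary $\sigma\in B_n(\tilde B)$, an admissible $f\colon S^n\to\tilde B$ with $f_\#[S^n]=\sigma$ and no cells of opposite orientation, so the spherical bound in (2) is the same as the bound over all boundaries in (3).

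For $(3)\Leftrightarrow(4)$ I would apply isoperimetric duality. Fix an equivariant cellular representative $\omega$ of $\pi^*\eu$ on $\tilde B$; being a cocycle, $\omega$ annihilates boundaries, so for any $(n+1)$-chain $\tau$ the pairing $\langle\omega,\tau\rangle$ depends only on $\partial\tau$, i.e.\ $\langle\omega,\tau\rangle=\langle\omega,\Fill(\partial\tau)\rangle$. Thus condition (3) is exactly the statement that $\lVert\langle\omega,\tau\rangle\rVert_\infty\leq C\vol(\partial\tau)$ for all $(n+1)$-chains $\tau$ of $\tilde B$, which is condition (1) of Corollary~\ref{cor:IsoDu} for $Y=\tilde B$ (applied coordinatewise in $V_p$); that corollary makes it equivalent to $\omega$ being an $L_\infty$ coboundary, which is condition (4).

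The equivalence $(1)\Leftrightarrow(2)$ is where Theorem~\ref{thm:genDI} does the work. Take for $\omega$ the representative $\omega_j$ of $\eu$ induced by a lifting homomorphism. Theorem~\ref{thm:genDI}, together with Lemma~\ref{lem:tohom} (to reduce to maps with vanishing chain evaluation) and Lemma~\ref{lem:BBFS} (to pass between spherical cycles and boundaries), identifies $\lvert\alpha\rvert_{\vol}$ — up to the fixed factor $p_n$, which is harmless since $\lVert p_n\alpha\rVert=p_n\lVert\alpha\rVert$, and up to multiplicative and additive constants — with $\min\{\vol f : f\colon S^n\to\tilde B\text{ admissible},\ \langle\omega_j,\Fill(f)\rangle=\alpha\}$; this minimum is over a nonempty set because $\langle\omega_j,\Fill(\cdot)\rangle$ is (rationally) the boundary map $\pi_{n+1}(\tilde Z_p,\tilde X)\otimes\mathbb{Q}\twoheadrightarrow V_p$, so already the attaching spheres of the $(n+1)$-cells of $\tilde B$ surject onto $V_p$ up to bounded denominator. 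Hence ``$V_p$ undistorted,'' i.e.\ $\lvert\alpha\rvert_{\vol}\gtrsim\lVert\alpha\rVert$ uniformly on $V_p$, is equivalent to $\vol f\gtrsim\lVert\langle\omega_j,\Fill(f)\rangle\rVert$ for all such $f$, which is precisely condition (2) for the representative $\omega_j$.

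It remains to bring in condition (5). The implication $(5)\Rightarrow(1)$ is the uniform form of Lemma~\ref{lem:duh}: given $0\neq\alpha\in V_p$ and a representative $g$ of $m\alpha$, lift $g$ to $\tilde X$ and pair $g_*[S^n]=h_n(m\alpha)$ with the single bounded cocycle $w$ (or integrate $g^*\omega$ in the de Rham version) to get $\langle w,g_*[S^n]\rangle=m\alpha$, so $\vol g\geq\lVert m\alpha\rVert/\lVert w\rVert_\infty$ with a constant independent of $\alpha$ and $m$; thus $\VD_{V_p}$ is linear. The main obstacle is $(4)\Rightarrow(5)$: manufacturing from a bounded $L_\infty$-primitive $\eta\in C^n_{(\infty)}(\tilde B;V_p)$ of $\pi^*\eu$ a bounded cocycle $w$ on $\tilde X$ with $\langle w,h_n(\alpha)\rangle=\alpha$ for $\alpha\in V_p$. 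I would construct $w$ by transferring $\eta$ across the (rational, ``coarsely trivial'') fibration $F\to\tilde X\to\tilde B$ using the lifting homomorphism $j_*$ and its chain homotopy from Proposition~\ref{prop:chain_ho}: first build a cochain $w_0$ on $\tilde X$ restricting to the identity of $V_p\cong h_n(V_p)$, verify that $\delta w_0$ equals $p^*$ of the Euler cocycle up to a bounded error, then correct $w_0$ by $p^*\eta$ (together with that bounded error term) to obtain an honest cocycle while leaving its periods on cycles unchanged. Normality enters twice: injectivity of $h_n$ on $V_p$ makes the condition $\langle w,h_n(\alpha)\rangle=\alpha$ well-posed, and ellipticity of the monodromy allows one to average the relevant cochains over a finite quotient (as in the proof of Theorem~\ref{thm:sph}) while preserving boundedness. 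Finally, the equivalence of the cellular and de Rham formulations of (5) is a routine application, on the universal cover of a compact complex, of a Lipschitz-bounded cochain–form comparison (Whitney forms, or the $\tilde d(\text{bounded})$ discussion of \cite{GrHodge}).
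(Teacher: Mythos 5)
Your overall architecture and four of the five implications coincide with the paper's proof: $(2)\Leftrightarrow(3)$ via Lemma \ref{lem:BBFS}, $(3)\Leftrightarrow(4)$ via Corollary \ref{cor:IsoDu}, $(1)\Rightarrow(2)$ via Theorem \ref{thm:genDI}, and $(5)\Rightarrow(1)$ via Lemma \ref{lem:duh}. (The paper only runs the cycle $(4)\Rightarrow(5)\Rightarrow(1)\Rightarrow(2)\Rightarrow(3)\Rightarrow(4)$, so your two-way versions of the first three equivalences are harmless redundancy. One small slip: in $(3)\Leftrightarrow(4)$ the reason $\langle\omega,\tau\rangle$ depends only on $\partial\tau$ is not that ``a cocycle annihilates boundaries'' but that the standard Euler cocycle vanishes on $(n+1)$-\emph{cycles} of $\tilde B$ --- it factors through $H_{n+1}(\tilde Z_p,\tilde X)\to V_p$, which kills the image of $H_{n+1}(\tilde Z_p)$ --- and adding $\delta\beta$ for equivariant $\beta$ preserves this.)

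The genuine gap is in $(4)\Rightarrow(5)$. Your construction hinges on first producing a \emph{bounded} cochain $w_0\in C^n_{(\infty)}(\tilde X;V_p)$ with $\langle w_0,h_n(\alpha)\rangle=\alpha$ for all $\alpha\in V_p$, but you give no reason such a bounded $w_0$ exists; producing a bounded functional on cycles that restricts to the identity on $h_n(V_p)$ is essentially the content of (5) itself, so as written the argument is circular. Moreover, subtracting $p^*\eta$ from $w_0$ does \emph{not} in general leave periods on cycles unchanged, since $p^*\eta$ need not vanish on cycles of $\tilde X$. The fix is that no $w_0$ is needed at all: take the cofibration model $X\subset B$, let $\alpha$ be the Euler cocycle $c\mapsto h_n^{-1}[j_n(\partial\tilde c)]$, and let $\beta$ be a bounded primitive of $\pi^*\alpha$ on $\tilde B$. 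Since $\alpha$ vanishes on $(n+1)$-cells of $\tilde X$, the restriction $w=\beta|_{\tilde X}$ is already a cocycle; and for $z=h_n(\alpha_0)=\partial\tau$ with $\tau\in C_{n+1}(\tilde B;\mathbb{Q})$ one has $\langle\beta,z\rangle=\langle\pi^*\alpha,\tau\rangle=h_n^{-1}[j_n(z)]=\alpha_0$, so $w$ has exactly the periods demanded by (5). This is the paper's one-line argument, and it replaces both your $w_0$ and your correction step.
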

\begin{proof}%[Proof of theorem \ref{thm:LInf}]
  The only part which we have not already essentially proved is
  (4)$\Rightarrow$(5).  In proving it, we abuse notation by using $\pi$ and $p$
  for the maps $\tilde X \xrightarrow{\pi} X$ and $\tilde X \xrightarrow{p}
  \tilde B$ which complete the commutative square.  We write $\Gamma$ for
  $\pi_1B=\pi_1X$.

  By perhaps taking a mapping cylinder, we assume that $X \subset B$.  Let
  $j_*:C_*(\tilde B;\mathbb{Q}) \to C_*(\tilde X;\mathbb{Q})$ be a lifting
  homomorphism, and let $\alpha$ be the cellular cocycle representing $\eu$
  which we previously discussed, which takes a cell $c$ to the preimage in $V_p$
  of the homology class of $j_n(\partial \tilde c) \in H_n(\tilde X;\mathbb{Q})$
  for any lift $\tilde c$ of $c$ to $\tilde B$.  By assumption, there is a
  bounded cellular $\beta \in C_n(\tilde B;V_p)$ such that $d\beta=\pi^*\alpha$.
  But from the definition of $\alpha$ we see that the restriction $w=p^*\beta$
  is a cellular cocycle on $\tilde X$ which satisfies the condition of (5).

  The technical report \cite{AB} contains a proof of a de Rham theorem which
  equates simplicial $L_p$ cohomology with the cohomology of $L_p$ forms, for
  $p$ including $\infty$.  They state this theorem for manifolds, but in fact it
  works for piecewise smooth forms on any simplicial complex of bounded
  geometry.  If $X$ has a piecewise Riemannian structure, then there is a
  compatible cellular, Lipschitz homotopy equivalence with a simplicial complex,
  so this theorem carries over to a cellular version.  Thus we can also produce
  a form with the desired properties in $\tilde X$ as a stratified space, or in
  any homotopy equivalent manifold with boundary.

  (5)$\Rightarrow$(1) is Lemma \ref{lem:duh}.

  (1)$\Rightarrow$(2) is a special case of Theorem \ref{thm:genDI}.

  (2)$\Rightarrow$(3) is Lemma \ref{lem:BBFS}; this is the only place we use the
  stipulation that $n \neq 2$.

  To get (3)$\Rightarrow$(4), apply Corollary \ref{cor:IsoDu} to a cellular
  cochain representing $\eu$.
\end{proof}
\begin{proof}[Proof of Theorem \ref{thmA}]
  We have already shown that if a finite CW complex $X$ has no distortion in its
  homotopy groups, then conditions (1) and (2) of Theorem \ref{thmA} are
  satisfied.  In particular, we can find a tower of maps
  $$X=X_m \to X_{m-1} \to \cdots \to X_0$$
  where each $X_i$ is a finite complex which is rationally equivalent up to a
  large dimension $N$ to $X_{(n_i)}$, and where each step $X_k \to X_{k-1}$ is a
  normal almost Postnikov pair.  By Theorem \ref{thm:LInfGen}, $\pi_{n_k}(X_k)$
  is undistorted if and only if the Euler class of $X_k \to X_{k-1}$ is
  $\tilde d($bounded$)$.  Since $X_k$ is rationally equivalent to $X$ up to
  dimension $k+1$, this is also the case for $\pi_{n_k}(X)$.  Thus if $X$ is a
  delicate space, then the lack of distortion in its homotopy groups is
  equivalent to condition (3).
\end{proof}
\begin{proof}[Proof of Theorem \ref{thmD}]
  Let $X \xrightarrow{p} B$ be an almost Postnikov pair.  Theorem \ref{thm:sc}
  and Corollary \ref{cor:md} show that if it is not normal, then $V_p$ has
  infinite distortion.  When it is normal, nondistortion is equivalent to the
  condition that $\pi^*\eu=0 \in H^{n+1}_{(\infty)}(\tilde B;V_p)$ by Theorem
  \ref{thm:LInfGen}.  Thus in general, nondistortion is equivalent to the two
  given conditions.
\end{proof}

Having proven our main theorem, we break for a meditation on what Theorem
\ref{thm:LInfGen} really means.  If $F \to X \to B$ is an honest fiber
bundle with $F \to \tilde X \to \tilde B$ homotopically trivial, one can think
of condition (4) as specifying that there is a section $\sigma:\tilde B \to
\tilde X$ with a bounded amount of ``twisting'' around the fiber for every
$n$-cell.  What is bounded \emph{a priori} is the number of twists, but this is
equivalent to being able to find a section with a bounded Lipschitz constant,
or, say, $n$-dilation.  This in turn induces a trivializing map $\tilde X \to F
\times \tilde B$ which is also bounded in all the same senses, that is, the
bundle $\tilde X \to \tilde B$ is ``coarsely trivial'' in a natural sense.  It
is tempting to assert that when $X$ has undistorted homotopy groups, its
universal cover must necessarily be coarsely trivial as a rational homotopy
fibration, whatever that may mean.  However, outside the important special case
of actual fiber bundles, and especially when $\rho$ is outside
$GL(r,\mathbb{Z})$, it's much harder to make such an assertion precise.

We now explore how this result applies to specific classes of groups and spaces.
\begin{ex}[Amenable groups]
  If $\Gamma=\pi_1B$ is an amenable group, then only the zero class in $H^{n+1}(
  B;M_\rho)$ is $\tilde d($bounded), i.e.\ the pullback map $H^{n+1}(B;M_\rho)
  \to H^{n+1}_{(\infty)}(\tilde B;\mathbb{Q}^r)$ is injective.  A special case of
  this fact was noted in \cite{ABW}.  In general, we can see this as follows.
  An invariant mean $\mu$ on $\Gamma$ provides a map
  $\int:C^*(\tilde B;\mathbb{Q}^r) \to C^*(B;M_\rho)$, where for any cell $e$,
  $$\left\langle{\textstyle\int}\omega,e\right\rangle=\int_{\gamma \in \Gamma}
  \rho(\gamma)^{-1}(\langle \omega, \gamma \cdot e\rangle)d\mu.$$ %inverse???
  This map commutes with the coboundary operator and thus is a well-defined
  projection on the level of homology which inverts the pullback map.  Thus in
  spaces with amenable fundamental group, any nonzero Euler class induces
  distortion.
\end{ex}
\begin{ex}[Unit tangent bundles of aspherical manifolds]
  Suppose that $\Gamma$ is a group such that $M=B\Gamma$ is an $n$-dimensional
  smooth manifold, and let $X$ be the unit tangent bundle of $M$.  If $\Gamma$
  is amenable, the Euler characteristic of $M$, and thus the Euler class of the
  bundle $X \to M$, is zero by a result of Cheeger and Gromov \cite{CheeGr}.  On
  the other hand, if $\Gamma$ is non-amenable, the fundamental class of $M$
  pulls back to zero in $H^n_{(\infty)}(X;\mathbb{Q})$, as shown in \cite{ABW}.
  Thus in any case, $\pi_n(X)$ is undistorted.
\end{ex}
\begin{ex}[Hyperbolic groups] \label{exs:AmHy}
  On the other end of the spectrum, suppose $\Gamma$ is a hyperbolic group.  As
  discussed before, $B\Gamma$ satisfies a linear isoperimetric inequality for
  fillings of $n$-boundaries with chains for $n \geq 1$, and this means, dually,
  that $H^{n+1}_{(\infty)}(\widetilde{B\Gamma};\mathbb{Q}^r)=0$ when $n+1 \geq 2$.
  Thus the $L_\infty$ cohomology of $\widetilde{B\Gamma}$ is concentrated in
  dimensions 0 and 1.  In particular, this means that if $X$ is the total space
  of a fibration $(S^n)^r \to X \to B\Gamma$, then $\pi_n(X)$ is always
  undistorted.  Can we say the same for rational Postnikov towers with more
  steps?  To answer this question, we need to analyze $L_\infty$ cohomology more
  closely.

  It turns out that in dimension 1, the $L_\infty$ cohomology is as large as
  possible, as far as we're concerned.
  \begin{lem} \label{lem:H1}
    For any finite complex $B$ with universal covering map $\pi:\tilde B \to B$
    and any elliptic monodromy representation $\rho:\pi_1B \to GL(H_n((S^n)^r;
    \mathbb{Q}))$, the pullback map $\pi^*:H^1(\tilde X;M_\rho) \to
    H^1_{(\infty)}(\tilde X;\mathbb{Q})$ is injective.
  \end{lem}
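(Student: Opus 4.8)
The plan is to translate $L_\infty$-exactness of $\pi^*\omega$ into a boundedness statement about the crossed homomorphism $\pi_1B\to M_\rho$ attached to $\omega$, and then to exploit ellipticity of $\rho$ through a center-of-mass argument. Throughout, I would use that $\rho$ preserves a norm $\lVert\cdot\rVert$ on $M_\rho=\mathbb{Q}^r$, which over $\mathbb{R}$ upgrades to a $\rho$-invariant inner product on $M_\rho\otimes\mathbb{R}\cong\mathbb{R}^r$; and that, $B$ being a finite complex, $C^*(B;M_\rho)$ consists of finite-dimensional $\mathbb{Q}$-vector spaces, so $H^1(B;M_\rho)\otimes_{\mathbb{Q}}\mathbb{R}\cong H^1(B;M_\rho\otimes\mathbb{R})$ and it is enough to show that every class in $\ker\pi^*$ vanishes after tensoring with $\mathbb{R}$.

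So let $\omega\in C^1(B;M_\rho)$ be a cocycle with $\pi^*[\omega]=0\in H^1_{(\infty)}(\tilde B;\mathbb{Q}^r)$: there is a uniformly bounded $\mathbb{Q}^r$-valued $0$-cochain $\beta$ on $\tilde B$ with $d\beta=\pi^*\omega$. I would fix a base vertex $\tilde\ast$ over the basepoint of $B$ and set $\beta_0(\tilde v)$ equal to the sum of $\pi^*\omega$ over the edges of any edge path from $\tilde\ast$ to $\tilde v$; this is well defined because $\tilde B$ is simply connected and $\pi^*\omega$ is closed, and $d\beta_0=\pi^*\omega$. Then $\beta_0-\beta$ is a closed $0$-cochain on the connected complex $\tilde B$, hence constant, so $\beta_0$ is itself uniformly bounded. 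Writing $\Gamma=\pi_1B$ and $f(g)=\beta_0(g\tilde\ast)$, the $\Gamma$-equivariance $\langle\pi^*\omega,\gamma\cdot e\rangle=\rho(\gamma)\langle\pi^*\omega,e\rangle$ gives $f(gh)=f(g)+\rho(g)f(h)$, so $f$ is a crossed homomorphism; under the standard identification $H^1(B;M_\rho\otimes\mathbb{R})\cong H^1(\Gamma;\mathbb{R}^r)$ its class is $[\omega]\otimes\mathbb{R}$, and $\lVert f(g)\rVert\le\lVert\beta_0\rVert_\infty$ for every $g$.

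Next I would average over the orbit. The formula $g\cdot x=\rho(g)x+f(g)$ defines an action of $\Gamma$ on $\mathbb{R}^r$ by affine isometries for the invariant inner product, the cocycle identity for the action being exactly the crossed homomorphism identity for $f$. The orbit $\Gamma\cdot 0=\{f(g):g\in\Gamma\}$ is bounded, so its closed convex hull $C$ is a nonempty compact convex $\Gamma$-invariant subset of $\mathbb{R}^r$; its Chebyshev center $v\in\mathbb{R}^r$ is unique, hence fixed by every affine isometry preserving $C$, in particular by $\Gamma$. Then $\rho(g)v+f(g)=v$ for all $g$, i.e.\ $f(g)=\rho(g)(-v)-(-v)$ is principal, so $[\omega]\otimes\mathbb{R}=0$ and therefore $[\omega]=0$ in $H^1(B;M_\rho)$, which is the assertion.

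I expect the only points needing care to be the bookkeeping that identifies ``$\pi^*\omega$ is an $L_\infty$ coboundary'' with ``$\beta_0$, hence $f$, is bounded,'' and the observation that one must pass to $\mathbb{R}$ before taking a center of mass, since the circumcenter of a rational polytope need not be rational; given these, the affine fixed-point step and the descent back to $\mathbb{Q}$ are routine and there is no genuine obstacle. The same scheme, with considerably more delicate isoperimetric input in place of the elementary boundedness of $\beta_0$, is what underlies the higher-degree statements about $H^{n+1}_{(\infty)}$.
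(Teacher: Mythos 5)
Your proof is correct, but it takes a genuinely different route from the paper's. The paper argues the contrapositive and stays inside its isoperimetric-duality framework: given a nonzero class, it asserts there is a single loop $\gamma$ with $\langle\omega,\gamma\rangle\notin\img(\rho([\gamma])-I)$, notes that ellipticity forces this vector to have nonzero projection onto the $\rho(\gamma)$-invariant subspace, so the Birkhoff sums $\sum_{i=0}^{k}\rho(\gamma)^i\vec v$ grow linearly; the lifts $\sigma_k$ of $\gamma^k$ then have boundary volume $2$ but linearly growing pairing with $\pi^*\omega$, and Corollary \ref{cor:IsoDu} converts this into nonvanishing in $H^1_{(\infty)}$. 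You instead argue directly from the definition of $L_\infty$ cohomology: a bounded primitive of $\pi^*\omega$ yields a bounded crossed homomorphism $f:\Gamma\to M_\rho\otimes\mathbb{R}$, hence a bounded orbit for the affine isometric action $g\cdot x=\rho(g)x+f(g)$, and the circumcenter of that orbit is a fixed point, so $f$ is principal. Each approach buys something: the paper's is quantitative and exhibits explicit chains witnessing nonvanishing, consistent with how $L_\infty$ classes are detected elsewhere in the paper; but it leans on the reduction ``nonzero class $\Rightarrow$ some cyclic subgroup detects it,'' i.e.\ injectivity of $H^1(\Gamma;M_\rho)\to\prod_\gamma H^1(\langle\gamma\rangle;M_\rho)$ for elliptic coefficients, which the paper passes over with ``in other words'' and which is not entirely immediate. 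Your fixed-point argument sidesteps that reduction entirely and needs no duality theorem; the only points requiring care are the ones you already flag (boundedness of $\pi^*\omega$ itself and of the path-integrated primitive $\beta_0$, and passing to $\mathbb{R}$ before taking the center, then descending to $\mathbb{Q}$ by flatness of $\mathbb{R}$ over $\mathbb{Q}$ and finite-dimensionality of $H^1(B;M_\rho)$), and these are handled correctly.
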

  \begin{proof}
    Suppose $\omega \in C^1(B\Gamma;M_\rho)$ is a representative of a nonzero
    cohomology class; in other words, there is a loop $\gamma:[0,1] \to B\Gamma$
    such that $\vec v=\langle\omega,\gamma\rangle \notin \img(\rho([\gamma])-
    I)$.  Then $\rho(\gamma)$ is conjugate to a rotation and $\vec v$ has a
    nonzero projection onto its invariant subspace in $\mathbb{R}^r$.  Thus
    $\sum_{i=0}^k \rho^k(\vec v)$ has unbounded length, so if $\sigma_k$ is a
    lift of $k\gamma$ to $\tilde B$, then $\langle\pi^*\omega,\sigma_k\rangle$
    grows linearly in $k$ but $\vol(\partial\sigma_k)=2$.  By Corollary
    \ref{cor:IsoDu}, this means that $0 \neq \pi^*\omega \in
    H^1_{(\infty)}(\tilde B;\mathbb{Q}^r)$.
  \end{proof}
  To extend this dichotomy to other spaces with the same fundamental group, we
  can use a Serre spectral sequence.
  \begin{lem}
    Let $F \to X \to B$ be a rational homotopy fibration of finite complexes
    with $F=\prod_i S^{2n_i+1}$.  Then the $L_\infty$ cohomology of the universal
    cover of this fibration obeys a Serre spectral sequence; that is,
    $H^*_{(\infty)}(\tilde X;\mathbb{Q}^r)$ can be assembled from the $E_\infty$
    page of a spectral sequence with $E_2^{p,q}=
    H^p_{(\infty)}(\tilde B;H^q(F;\mathbb{Q}^r))$.
  \end{lem}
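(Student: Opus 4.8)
The plan is to run the classical derivation of the Serre spectral sequence from the skeletal filtration of the total space (as in \cite{HatSS}), but in the category of \emph{uniformly bounded} cellular cochains; the one genuinely new point is to check that imposing boundedness does not interfere with the identification of the $E_1$-page, and this is where the hypothesis that $F=\prod_i S^{2n_i+1}$ is a \emph{finite} complex enters.

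First I would fix a convenient model. Since $L_\infty$ cohomology is invariant under Lipschitz homotopy equivalence and under the rational equivalences of Section~1, we may assume $p\colon X\to B$ is a fiber bundle with fiber $F$, with a CW structure on $B$ for which $p$ is cellular and $p^{-1}(\bar e)\cong\bar e\times F$ (product cell structure) over each closed cell $\bar e$ of $B$. Because $\pi_1X\cong\pi_1B$ and $F$ is simply connected (as $2n_i+1\ge3$), the universal cover $\tilde X$ is the pullback of $\tilde B\to B$, so $p\colon\tilde X\to\tilde B$ is again such a bundle; $\tilde B$ has bounded geometry since $B$ is finite, and hence so does $\tilde X$. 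Write $\tilde X_p:=p^{-1}(\tilde B^{(p)})$ and let $F^pC^\bullet$ denote the uniformly bounded cellular cochains on $\tilde X$ that vanish on $\tilde X_{p-1}$. As the cellular coboundary of a bounded cochain is bounded, this is a decreasing filtration of $C^\bullet_{(\infty)}(\tilde X;\mathbb{Q}^r)$ by subcomplexes; it is exhaustive, and in each total degree $n$ one has $F^{n+1}C^n=0$ (an $n$-cochain vanishing on $\tilde X_n$ is zero), so the filtration is finite in each degree and the associated spectral sequence abuts to $H^\bullet_{(\infty)}(\tilde X;\mathbb{Q}^r)$.

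Next I would compute $E_0$ and $E_1$. Over a $p$-cell $e$ of $\tilde B$ the cells of $\tilde X_p$ outside $\tilde X_{p-1}$ are the products $e\times\tau$ with $\tau$ a cell of $F$, and the K\"unneth formula for $(D^p,S^{p-1})$ identifies the relative cochain group $C^{p+q}\bigl(p^{-1}(\bar e),p^{-1}(\partial e);\mathbb{Q}^r\bigr)$ with $C^q(F;\mathbb{Q}^r)$. Assembling over all $p$-cells and keeping track of the uniform bound gives
$$E_0^{p,q}\cong C^p_{(\infty)}\bigl(\tilde B;\,C^q(F;\mathbb{Q}^r)\bigr),$$
with $d_0$ equal to the fiberwise coboundary (restriction to $\partial e\times F$ raises $p$ and so contributes to $d_1$). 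Now $C^\bullet(F;\mathbb{Q}^r)$ is a finite complex of finite-dimensional $\mathbb{Q}$-vector spaces, hence splits as $H^\bullet(F;\mathbb{Q}^r)$ plus a chain-contractible complex, and a contracting homotopy for the acyclic summand is a \emph{single} finite-dimensional linear map, which therefore acts cellwise on $\tilde B$ by a uniformly bounded operator. So the acyclic part contributes nothing to the cohomology of $(E_0^{p,\bullet},d_0)$ and
$$E_1^{p,q}\cong C^p_{(\infty)}\bigl(\tilde B;\,H^q(F;\mathbb{Q}^r)\bigr),$$
with $d_1$ the cellular coboundary on $\tilde B$ with values in the coefficient system $\mathcal H^q(F;\mathbb{Q}^r)$ --- exactly as in the untwisted-base case --- and this system is constant because $\tilde B$ is simply connected. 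Therefore $E_2^{p,q}=H^p_{(\infty)}\bigl(\tilde B;H^q(F;\mathbb{Q}^r)\bigr)$, which is the assertion.

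The main obstacle is the passage $E_0\rightsquigarrow E_1$: a priori, taking cohomology of the fiber cochains might not commute with the condition of uniform boundedness on the base cochains. The resolution --- and the essential use of $F$ being a finite complex --- is that the splitting of $C^\bullet(F;\mathbb{Q}^r)$ into its cohomology and an acyclic part, together with a contracting homotopy of the latter, is given by fixed finite-dimensional linear maps, which preserve uniform cellwise bounds when spread over $\tilde B$. A secondary technical point is the reduction to an honest bundle model in the first paragraph; if one wishes to avoid it, one can instead produce the local product decomposition of chains over each cell of $\tilde B$ up to bounded chain homotopy using a lifting homomorphism as in Proposition~\ref{prop:chain_ho}, but replacing $p$ by a bundle is cleaner.
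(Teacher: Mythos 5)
Your proposal is correct and follows essentially the same route as the paper: filter the bounded cochain complex by preimages of skeleta of the base and run the standard derivation of the Serre spectral sequence (the paper likewise cites Theorem 5.15 of \cite{HatSS}), using the finiteness of $F$ to see that passing to fiber cohomology is compatible with uniform bounds. Your treatment of the $E_0\rightsquigarrow E_1$ step via a fixed finite-dimensional contracting homotopy makes explicit the one point the paper's sketch leaves implicit, and your fallback via lifting homomorphisms matches the paper's actual setup (which uses the skeletal approximations from Theorem \ref{cor:XandY} rather than an honest bundle model).
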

  \begin{proof}
    There is nothing tricky to this argument, and we will merely sketch it.

    We can assume that $X$ is built inductively over the skeleta of $B$ as in
    the proof of Theorem \ref{cor:XandY}, with $X_k$ approximating the total
    space of a fibration over $B^{(k)}$.  This gives a filtration of $X$ and
    hence of $C^n_{(\infty)}(\tilde X;\mathbb{Q})$.  Following a standard proof
    such as that of Theorem 5.15 in \cite{HatSS}, one shows that the spectral
    sequence associated to this filtration satisfies the requirements we have
    given.
  \end{proof}
  Now suppose that we have a complex $B$ which is built up to rational homotopy
  as a fibration $F=\prod_i S^{2n_i+1} \to B \to B\Gamma$.  In the case that the
  base is $B\Gamma$ for a hyperbolic group $\Gamma$, the columns of the $E_2$
  page are zero for $p \geq 2$.  Thus if $X$ is the total space of a further
  fibration $(S^n)^r \to X \to B$, distortion occurs in $\pi_n(X)$ if and only
  if the relevant Euler class is represented in the $E_2$ page of the spectral
  sequence by a nonzero element of $H^1(\Gamma;H^n(F;\mathbb{Q}^r))$.  As will
  be proved in Theorem \ref{thm:inf}, such distortion is always weakly infinite,
  and so if $X$ has a hyperbolic fundamental group, then all subspaces of
  $\pi_n(X)$ are either undistorted or weakly infinitely distorted.

  The work of Gersten \cite{GerLost} and Mineyev \cite{MinIP} shows that
  hyperbolic groups are completely characterized by $L_\infty$ cohomology: that
  is, they are exactly those $\Gamma$ for which $H^n_{(\infty)}(\Gamma;V)=0$ for
  every $n \geq 2$ and every normed real vector space $V$; or, equivalently, for
  which $H^2_{(\infty)}(\Gamma;\ell^\infty)=0$.  It is therefore tempting to
  conjecture that the dichotomy outlined above holds \emph{if and only if}
  $\Gamma$ is hyperbolic.  In other words, if $\Gamma$ is not hyperbolic, one
  ought to always be able to hit nonzero elements of $H^n_{(\infty)}(\Gamma;
  \mathbb{Q}^r)$ by pulling back elements of $H^n(\Gamma;M_\rho)$ for some
  module $M_\rho$, and thus find distortion in higher homotopy groups which is
  not weakly infinite.  In fact, the Baumslag-Solitar group $BS(1,2)$ is a
  counterexample: it is not hyperbolic, but has a classifying space of dimension
  2, and $H^2(BS(1,2);M_\rho)=0$ for every finite-dimensional module $M_\rho$
  corresponding to an elliptic representation.  Thus the above dichotomy also
  holds for spaces $X$ with fundamental group $BS(1,2)$.
\end{ex}
\begin{ex}[Bounded cohomology classes]
  It is well known that bounded cohomology classes of groups are zero in
  $L_\infty$ cohomology: for example, this is Lemma 10.3 in \cite{GerLost2}.
  Thus bounded Euler classes always give rise to undistorted homotopy groups.
  However, the converse is not true: for example, let $e=[M^n \times S^1]$,
  where $M^n$ is any hyperbolic $n$-manifold.  This fundamental class is not
  bounded, since geodesic simplices in $\mathbb{H}^n \times \mathbb{R}$ have
  volume proportional to their height in the $\mathbb{R}$-direction, rather than
  bounded volume.  On the other hand, given a map $f:S^n \to \mathbb{H}^n \times
  \mathbb{R}$, we can deform $f$ slightly so that it is an immersion whose
  projection to the $\mathbb{R}$ factor is Morse.  Then both the area of $f$ and
  the volume of a filling are determined by integrating hyperbolic areas and
  volumes over $\mathbb{R}$, and hence one is linear in the other.  Thus for any
  bundle $S^n \to X \to M^n \times S^1$, $\pi_n(X)$ is undistorted.
\end{ex}

\section{Infinite and weakly infinite distortion}
We now turn to the characterization of infinite distortion promised in Theorem
\ref{thmB}.  As pointed out in the introduction, given a finite CW complex $X$,
we can define a minimal cellular volume functional on the homology groups
$H_n(\tilde X)$ of its universal cover.  By Lemma \ref{lem:BBFS}, when $n \geq
3$, this extends the minimal volume functional on $\pi_n(X)$, and so we can
consider the possibility that $\pi_n(X)$ is weakly infinitely distorted in
$H_n(\tilde X)$.  Later in the section we will also discuss the extent to which
this implies infinite distortion.

The theorem that follows characterizes the extent to which the resulting
subspace is weakly infinitely distorted if $X$ is a delicate space; it plays the
role in the proof of Theorem \ref{thmB} that Theorem \ref{thm:LInfGen} played
for Theorem \ref{thmA}.
\begin{thm} \label{thm:inf}
  Suppose $X \xrightarrow{p} B$ is a normal almost Postnikov pair with monodromy
  representation $\rho:\Gamma:=\pi_1(B) \to GL(V_p)$ and Euler class $\eu \in
  H^{n+1}(B;M_\rho)$.  Assume also that $\pi_k(B) \otimes \mathbb{Q}$ is finitely
  generated for $k \leq n$, so that we can find a sequence of finite complexes
  $F \to B \xrightarrow{p^\prime} A$ which is a rational homotopy fibration up
  to dimension $n$ and such that the universal cover $\tilde A$ is
  $n$-connected.  Let $\hat F$ be a rational homotopy fiber with finite
  $n$-skeleton of $p^\prime \circ p:X \to A$.  Then the following are
  equivalent:
  \begin{enumerate}
  \item There is no weakly infinite volume distortion in $V_p \subseteq \pi_n(X)
    \otimes \mathbb{Q}$ as a subset of $H_n(\tilde X;\mathbb{Q})$.
  \item For every $\gamma \in \Gamma$, $0=\gamma^*\eu \in H^{n+1}(U;
    M_\rho|_{\langle\gamma\rangle}))$, where $U$ is the total space of
    $\gamma^*p^\prime$.
  \item[($2^\prime$)] $0=\iota_1^*\eu \in H^{n+1}\left((p^{\prime})^{-1}\left(
    A^{(1)}\right);M_{\rho \circ \iota_{1*}}\right)$, where $\iota_1$ is the
    inclusion of the 1-skeleton $A^{(1)} \hookrightarrow A$.
  \item[($2^{\prime\prime}$)] In the Serre spectral sequence for $L_\infty$
    cohomology, the Euler class $\eu \in H^{n+1}_{(\infty)}(\tilde B;V_p)$
    is represented by $0\in E_2^{1,n}=H^1_{(\infty)}(\tilde A;H^n(F;V_p))$.
  \item $H_n(\tilde X;\mathbb{Q})$ splits as a direct sum of
    $\mathbb{Q}\Gamma$-modules $h_n(V_p) \oplus P$.
  \item[($3^\prime$)] There is a subspace $P \subset H_n(\hat F;\mathbb{Q})$
    which is invariant under the homological monodromy representation $\hat\rho:
    \Gamma \to GL(H_n(\hat F;\mathbb{Q}))$ of $\hat p$, such that $P \oplus
    h_n(V_p)=H_n(\hat F;\mathbb{Q})$.
  \end{enumerate}
\end{thm}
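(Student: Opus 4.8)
The plan is to establish the six-way equivalence through the clusters $(2)\Leftrightarrow(2')\Leftrightarrow(2'')$ and $(3)\Leftrightarrow(3')$, the bridge $(2')\Leftrightarrow(3')$, and finally $(1)\Leftrightarrow(3)$, so that the geometric content is isolated in the last step and everything else is spectral-sequence and homological bookkeeping. First I would handle $(2)\Leftrightarrow(2')$ purely algebraically: for a torsion $\gamma$ the space $U$ is rationally equivalent to $F$ and $\gamma^{*}\eu$ restricts trivially to the fiber, so it vanishes automatically; for $\gamma$ of infinite order, $U$ is the total space of a fibration $F\to U\to S^{1}$ and the Wang exact sequence identifies the image of $\gamma^{*}\eu$ with the class of the degree-$1$ part of $\eu$ in $\operatorname{coker}(\rho(\gamma)-I)$ acting on the relevant cohomology of $F$. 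Since $\pi_{1}A^{(1)}$ is free, vanishing over every generator is the same as vanishing over the whole $1$-skeleton, and the $E^{0,n+1}$ summand is automatically zero because $\eu$ dies on fibers; so $(2')$ says exactly that the degree-$1$ component of $\eu$ in the Serre spectral sequence of $F\to B\to A$ vanishes.

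Next, for $(2')\Leftrightarrow(2'')$ I would use the natural map from the ordinary twisted skeletally-filtered Serre spectral sequence computing $H^{*}(B;V_{p})$ to the $L_\infty$ one computing $H^{*}_{(\infty)}(\tilde B;V_{p})$, under which $\eu\mapsto\pi^{*}\eu$ and the $E_{2}^{1,n}$-component of $\eu$, lying in $H^{1}(\Gamma;H^{n}(F;M_\rho))$, maps to its image in $H^{1}_{(\infty)}(\tilde A;H^{n}(F;\mathbb{Q}^{r}))$. Because $\rho$ is elliptic so is the induced monodromy on $H^{n}(F;M_\rho)$, and Lemma~\ref{lem:H1} makes this comparison map injective in degree $1$; hence the ordinary component is zero iff its $L_\infty$ image is, which is $(2'')$. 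The higher-filtration parts of $\eu$ are irrelevant here since they restrict to zero on $A^{(1)}$.

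For the homological side I would run the Serre spectral sequences of $\hat F\to\tilde X\to\tilde A$ and of $K(V_{p},n)\to\hat F\to F$. Since $\tilde A$ is $n$-connected the first gives $H_{n}(\tilde X;\mathbb{Q})\cong H_{n}(\hat F;\mathbb{Q})/\operatorname{im}d_{n+1}$ as $\mathbb{Q}\Gamma$-modules, with $h_{n}(V_{p})\cap\operatorname{im}d_{n+1}=0$ by normality; a short diagram chase (choosing the complement to contain $\operatorname{im}d_{n+1}$, for which ellipticity of $V_{p}$ kills the relevant $\operatorname{Ext}^{1}$) yields $(3)\Leftrightarrow(3')$. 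The second spectral sequence exhibits $H_{n}(\hat F;\mathbb{Q})$ as a $\mathbb{Q}\Gamma$-extension of $H_{n}(F;\mathbb{Q})$ by $V_{p}$ modulo a transgression, whose class is again the degree-$1$ part of $\eu$; so this extension splits iff $(2')$ holds, giving $(2')\Leftrightarrow(3')$. It remains to prove $(1)\Leftrightarrow(3)$. For $(3)\Rightarrow(1)$: a $\mathbb{Q}\Gamma$-equivariant splitting $H_{n}(\tilde X;\mathbb{Q})=h_{n}(V_{p})\oplus P$ has a $\Gamma$-equivariant projection $\pi_{V}$ onto $h_{n}(V_{p})\cong V_{p}$; by the balloon-on-a-string mechanics behind Lemma~\ref{lem:riv}, a class of bounded volume is a bounded sum of $\Gamma$-translates of bounded-complexity classes, and ellipticity of $\rho$ then bounds $\pi_{V}$ of such a class, so no class of bounded volume near $h_{n}(V_{p})$ has large $h_{n}(V_{p})$-component. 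For the converse I would argue contrapositively: if the splitting fails then by $(2')$ some $\gamma^{*}\eu\neq0$, and restricting the almost Postnikov pair to the mapping torus of $\gamma$ reduces to the model situation of Example~\ref{exs:oddD}(\ref{qqu}), where a nonzero class in $\operatorname{coker}(\rho(\gamma)-I)$, hit by the operators $\gamma^{k}-\gamma^{-k}$ applied to a fixed lift, produces vectors with linearly growing $V_{p}$-component, bounded distance to $h_{n}(V_{p})$, and bounded volume (two far-apart spheres joined by a string) — that is, weakly infinite distortion.

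The hard part will be the converse half of the last step, $(1)\Rightarrow(3)$: one must turn the failure of the module splitting into an honest sequence of bounded-volume admissible maps into $\tilde X$ realizing the rotational-example behavior, which forces the reduction to the cyclic pullback $\gamma^{*}p'$ to be made precise enough that the combinatorial constructions of Section~3 transplant verbatim. A secondary nuisance is checking that the $\operatorname{Ext}^{1}_{\mathbb{Q}\Gamma}$-obstructions arising in the spectral-sequence bookkeeping (extending equivariant maps along $0\to H_{n}\to\,\cdot\,\to B_{n-1}\to0$, and arranging $\operatorname{im}d_{n+1}$ to sit inside the chosen complement) genuinely vanish; I expect these to be absorbed using ellipticity of $V_{p}$ together with the finiteness of $\tilde A$ through dimension $n$.
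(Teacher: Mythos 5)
Your architecture matches the paper's: the paper also treats $(2)\Leftrightarrow(2')\Leftrightarrow(2'')$ and $(3)\Leftrightarrow(3')$ as near-restatements, proves $(2')\Rightarrow(3)$ by a chain-level splitting, proves $(3')\Rightarrow(1)$ by equipping $H_n(\tilde X;\mathbb{Q})$ with a $\Gamma$-invariant seminorm killing the complement $P$ and bounding the seminorm of volume-$k$ integral cycles by induction (a cycle of volume $k$ either lives near one fundamental domain or splits into two disjoint smaller cycles), and isolates all the hard work in the implication $(1)\Rightarrow(2)$, i.e.\ in manufacturing weakly infinite distortion over the mapping torus of a single $\gamma$ with $\gamma^*\eu\neq 0$. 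Your reorganization — closing the loop with a two-way $(2')\Leftrightarrow(3')$ via the transgression of $K(V_p,n)\to\hat F\to F$ instead of the paper's one-way $(2')\Rightarrow(3)$ — is a genuine variant; it buys a cleaner logical picture but requires you to actually identify the $\mathbb{Q}\Gamma$-extension class of $0\to V_p\to H_n(\hat F;\mathbb{Q})\to H_n(F;\mathbb{Q})\to 0$ with the $E_2^{1,n}$-component of $\eu$, which the paper never needs. Your worry about the $\Ext^1$ bookkeeping in $(3)\Leftrightarrow(3')$ is legitimate but no worse than the paper's own one-line treatment.

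The one place where your sketch as written would fail is the hard direction. The device ``apply $\gamma^k-\gamma^{-k}$ to a fixed lift'' reproduces Example \ref{exs:oddD}(3) only when the resonant summand of $H_n(F;\mathbb{Q})$ — the part whose $\mathbb{Q}[\gamma]$-annihilator meets that of $M_\rho$ — carries a \emph{semisimple} $\gamma$-action, so that the only Jordan step is the one created by $\eu$. If that summand is already of the form $\mathbb{Q}[\mathbb{Z}]/I^t$ with $t\geq 2$, then $(\hat\rho(\gamma)^k-\hat\rho(\gamma)^{-k})[z]$ has components growing like $k^{t-1}$ \emph{transverse} to $h_n(V_p)$, so the resulting classes are not at bounded distance from $V_p$ and witness nothing. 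The paper's construction avoids this: it first uses the universal coefficient theorem over the PID $\mathbb{Q}[\mathbb{Z}]$ to locate an indecomposable summand $\mathbb{Q}[\mathbb{Z}]z_i/\mathbb{Q}[\mathbb{Z}]b_i\cong\mathbb{Q}[\mathbb{Z}]/I^t$ on which $\eu$ is nonzero, replaces $z_i$ by $z=q([1])^{t-1}z_i$ so that the quotient is the simple module $M_\rho$, and then builds telescoping chains $c_s=\sum_{j}T(A_q^j(\vec e_1)[s-j])c$ out of translates of a single filling $c$ of $b_i$; ellipticity of $\rho$ (via the companion matrix $A_q$) is what keeps the error cycle $F(s)$ in the boundary decomposition $\partial c_s=E(s)+F(s)+G(s)$ bounded, while $G(s)$ has constant volume and linearly growing Euler pairing. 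So your ``reduction to the model situation'' is exactly the missing module-theoretic step, and it is not cosmetic: without the passage to $q([1])^{t-1}z_i$ the construction does not produce classes at bounded distance from $h_n(V_p)$.
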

\begin{ex} \label{ex:S1}
  Let $B=S^1 \times (S^3)^3$ and $X$ be an $S^9$-bundle over $B$ with Euler
  class $\eu=[B] \in H^{10}(X;\mathbb{Q})$.  Then $\pi_9(X)$ is infinitely
  distorted, informally because arbitrarily long tubes have the same size
  boundary.  To see this, consider the product CW structure on $B$ with one cell
  for each of the spheres, and fix a lift of the $n$-skeleton to $X$.  Then the
  generator of $\pi_9(X)$ is homotopic to a lift of the attaching map of the
  10-cell; in $\tilde B$, this attaching map induces the chain $(i+1) \cdot a-
  i \cdot a$, where $a$ is the top-dimensional cell of $(S^3)^3$ and $i \in
  \mathbb{Z} \cong \pi_1(X)$.  By Lemma \ref{lem:BBFS}, we can find an
  admissible map $f:S^9 \to \tilde B$ with $f_\#([S^9])=k \cdot [(S^3)^3]-
  [(S^3)^3]$ and thus volume 2. This map has filling class $\Fill(f)=k[B]$ and
  hence lifts to $k$ times the generator of $\pi_9(\tilde X)$.

  This space $X$ may also be thought of as a bundle over $S^1$ with fiber
  $(S^3)^3 \times S^9$.  In this formulation it has nontrivial homological
  monodromy: $H_9((S^3)^3 \times S^9;\mathbb{Q})=\mathbb{Q}^2$ and the generator
  $1 \in \pi_1(X)$ acts on it via the matrix
  $\begin{pmatrix}1&0\\1&1\end{pmatrix}$.  Thus as a
  $\mathbb{Q}[\mathbb{Z}]$-module, $H_9((S^3)^3 \times S^9;\mathbb{Q}) \cong
  \mathbb{Q}[\mathbb{Z}]/([1]-1)^2$, which does not decompose as a direct sum.
  In other words, condition (3) of the theorem is also not satisfied.  This also
  offers another way of understanding why the lift of the chain
  $k \cdot [(S^3)^3]-[(S^3)^3]$ gives a nontrivial element of $\pi_9(\tilde X)$.
\end{ex}
\begin{proof}
  We will repeatedly use the fact that since the almost Postnikov pair $X
  \xrightarrow{p} B$ is normal,
  $$H_n(\hat F;\mathbb{Q})=H_n(F;\mathbb{Q}) \oplus H_n(K(V_p,n);\mathbb{Q}).$$

  We first show that (1) implies (2), by demonstrating that volume distortion
  over $S^1$ is always weakly infinite when it is present.  We do this by
  constructing a sequence of chains with small boundary whose pairing with the
  Euler class grows without bound.  One example of such a chain is $\Fill(f)$ in
  Example \ref{ex:S1}: there, adjacent lifts of the same cell are simply strung
  together with their boundaries canceling out.  In the presence of nontrivial
  monodromy, we also try to string together cells in different lifts to form a
  tube, but the construction turns out to be much more complicated.

  Let $U$ be a finite complex with $\pi_1(U)=\mathbb{Z}$ such that $\tilde U$ is
  rationally $n$-equivalent to a finite complex.  Suppose that $V
  \xrightarrow{p} U$ is a normal almost Postnikov pair with monodromy
  $\rho:\mathbb{Z} \to GL(V_p)$ and Euler class
  $0 \neq \eu \in H^{n+1}(U;M_\rho)$.

  Without loss of generality, by taking a factor, we may assume $M_\rho \cong
  \mathbb{Q}[\mathbb{Z}]/I$ with $I$ a primary ideal.  Indeed, since $\rho$ is
  elliptic, $I$ then must be a prime ideal, and thus $M_\rho$ is a simple
  module.  Let $q(x)=\sum_j q_jx^j$ be the polynomial generating $I$.

  Since $\mathbb{Q}[\mathbb{Z}]$ is a PID, the universal coefficient theorem
  gives a short exact sequence
  $$0 \to \Ext_{\mathbb{Q}[\mathbb{Z}]}(H_n(\tilde U;\mathbb{Q}),M_\rho) \to
  H^{n+1}(U;M_\rho) \to
  \Hom_{\mathbb{Q}[\mathbb{Z}]}(H_{n+1}(\tilde U;\mathbb{Q}),M_\rho) \to 0.$$
  By assumption, the Euler class pulls back to $0 \in H^{n+1}(\tilde U;
  \mathbb{Q})$, and thus it has a preimage in $\Ext_{\mathbb{Q}[\mathbb{Z}]}(
  H_n(\tilde U;\mathbb{Q}),M_\rho)$.  Note that $H_n(\tilde U;\mathbb{Q})=
  Z_n(\tilde U;\mathbb{Q})/B_n(\tilde U;\mathbb{Q})$, both of which are free
  modules; thus one can find bases $z_1,\ldots,z_m$ for $Z_n$ and $b_1,\ldots,
  b_m$ for $B_n$ such that
  $$H_n(\tilde U;\mathbb{Q})=\bigoplus_{i=1}^m \mathbb{Q}[\mathbb{Z}]z_i/
  \mathbb{Q}[\mathbb{Z}]b_i,$$
  where each of the summands is indecomposable.  Then
  $\Ext_{\mathbb{Q}[\mathbb{Z}]}(H_n(\tilde U;\mathbb{Q}),M_\rho)$ takes the form
  $$\Ext_{\mathbb{Q}[\mathbb{Z}]}(H_n(\tilde U;\mathbb{Q}),M_\rho)=
  \bigoplus_{i=1}^m \Hom(\mathbb{Q}[\mathbb{Z}]b_i,M_\rho)/
  \Hom(\mathbb{Q}[\mathbb{Z}]z_i,M_\rho) \cong \bigoplus_{i=1}^m 0\text{ or }
  M_\rho/M_i,$$
  where $M_i \subseteq M_\rho$ is the set of possible images of $b_i$ under a
  homomorphism $\mathbb{Q}[\mathbb{Z}]z_i \to M_\rho$.  Since $M_\rho$ is simple,
  each $M_i$ is either $0$ or $M_\rho$.  If $\eu \neq 0$, then it takes a
  nonzero value in one of these summands.  That is, there is some $i$ such that
  $b_i \in Iz_i \subset \mathbb{Q}[\mathbb{Z}]z_i$, in particular there is a $t$
  such that $\mathbb{Q}[\mathbb{Z}]z_i/\mathbb{Q}[\mathbb{Z}]b_i \cong
  \mathbb{Q}[\mathbb{Z}]/I^t$; and there is a nonzero $\mu \in M_\rho$ such that
  any representative of $\eu$ takes any chain $c \in C_{n+1}(\tilde U;
  \mathbb{Q})$ with $\partial c=b_i$ to $\mu$.

  Let $t$ be the least positive number such that $q([1])^tz_i$ is a boundary; we
  may set $b_i=q([1])^tz_i$.  Let $z=q([1])^{t-1}z_i$.  Recall that $M_\rho$ is
  $r$-dimensional.  Then $z, z \cdot [1],\ldots,z \cdot [r-1]$ descends to a
  basis for $\mathbb{Q}[\mathbb{Z}]z/\mathbb{Q}[\mathbb{Z}]b_i \subseteq
  \mathbb{Q}[\mathbb{Z}]z_i/\mathbb{Q}[\mathbb{Z}]b_i$.  Let $S$ be the vector
  subspace of $Z_n(\tilde U;\mathbb{Q})$ generated by
  $$\vec e_1=z,\vec e_2=z \cdot [1],\ldots,\vec e_r= z \cdot [r-1],$$
  and let $A_q:S \to S$ act on this basis via the companion matrix of $q$.  Thus
  mod $\mathbb{Q}[\mathbb{Z}]b_i$, the action of $A_q$ is the same as that of
  multiplication by $[1]$, and $A_q$ is conjugate to $\rho([-1])$.  Define a
  projection $T:S[\mathbb{Z}] \to \mathbb{Q}[\mathbb{Z}]$ sending
  $\vec e_j [j^\prime] \mapsto [j+j^\prime]$.

  Now, let $c \in C_{n+1}(\tilde U;\mathbb{Q})$ be a particular chain with
  $\partial c=b_i$, and construct a sequence of chains $(c_s)_{s \in \mathbb{N}}
  \in C_{n+1}(\tilde U;\mathbb{Q})$ by setting
  $$c_{s+1}=\sum_{j=0}^s T(A_q^j(\vec e_1)[s-j])c.$$
  For every $j$, $T(A_q^j(\vec e_1)[s-j]) \in I+[s-1]$, and thus
  $\langle\eu, c_s\rangle=s\rho([1-s])(\mu)$.  On the other hand,
  $\partial c_{s+1}$ has bounded volume.  To see this, note that for any $s \geq
  r$, and for some $\vec u_j, \vec v_j \in \mathbb{Q}^r$ not depending on $s$,
  \begin{align*}
    \partial c_{s+1} &= \sum_{j=0}^s T(A_q^j(\vec e_1)[s-j])b_i=\sum_{j=0}^s
    \sum_{\ell=0}^{\deg q} q_\ell T(A_q^j(\vec e_1)[s-j+\ell])z \\
    &= \sum_{j=0}^{s-r} T(q(A_q)A_q^j(\vec e_1)[s-j])z+\sum_{j=0}^{r-1}
    T(A_q^s(\vec u_j)[j])z+\sum_{j=0}^{r-1} T(\vec v_j[s+j+1])z \\
    & =: E(s+1)+F(s+1)+G(s+1),
  \end{align*}
  where $E(s)$, $F(s)$, and $G(s)$ are all cycles, but not necessarily
  boundaries, in $\tilde U$.

  Now we analyze each of these cycles separately.  For each $s>r$, $E(s)=0$
  since by definition $q(A_q)=0$, and $G(s)$ has constant volume.  Moreover,
  since $A_q$ is elliptic,
  $$F(s) \in \left\langle T(z \cdot [j])[j^\prime]: 0 \leq j, j^\prime<\deg q
  \right\rangle$$
  is a vector of bounded norm in a finite-dimensional vector subspace of
  $Z_n(\tilde U;\mathbb{Q})$.  Thus for any $\omega \in C^n(\tilde U;
  \mathbb{Q}^r)$ such that $d\omega$ is a representative of $\eu$,
  $\langle\omega, F(s)\rangle$ is bounded as a function of $s$, but
  $\langle\omega, F(s)+G(s)\rangle=\langle\eu,c_s\rangle=s\rho([1-s])(\mu)$.
  Also, there is a constant $K$ such that every $KG(s)$ is integral.

  So $(KG(s))_{s \in \mathbb{N}}$ is a sequence of integral $n$-cycles of bounded
  volume in $\tilde U$ which lift to elements of $H_n(\tilde V;\mathbb{Q})$
  which have norm increasing linearly in $s$.  This sequence is a bounded
  distance away from the sequence $(KF(s)+KG(s))_{s \in \mathbb{N}}$ whose terms
  are boundaries and therefore lift to $h_n\iota_*\pi_n((S^n)^r)$.  Thus
  $\iota_*\pi_n((S^n)^r)$ is weakly infinitely distorted.

  ($2^\prime$) is equivalent to (2) more or less by definition.  Similarly, it
  follows from the definition of the spectral sequence in Example
  \ref{exs:AmHy} that ($2^{\prime\prime}$) is a restatement of ($2^\prime$).

  ($2^\prime$)$\Rightarrow$(3).  Write $\widetilde{\iota_1^*X}$ and
  $\widetilde{\iota_1^*B}$ for $\hat p^{-1}(A^{(1)})$ and
  $(p^\prime)^{-1}(A^{(1)})$ respectively, and $F_m$ for the finitely generated
  free group $\pi_1(A^{(1)})$.  Finally, write $\iota_1^*V_p$ for $V_p$ seen as a
  $\mathbb{Q}F_m$ module with the action $\gamma \cdot v=\iota_{1*}\gamma \cdot
  v$ for $\gamma \in F_m$.  Then it suffices to show that when ($2^\prime$)
  holds, the short exact sequence of $\mathbb{Q}F_m$-modules
  $$0 \to \iota_1^*V_p \xrightarrow{h_n} H_n(\widetilde{\iota_1^*X};\mathbb{Q})
  \to H_n(\widetilde{\iota_1^*B};\mathbb{Q}) \to 0$$
  splits, i.e.\ that there is a module homomorphism
  $i:H_n(\widetilde{\iota_1^*B};\mathbb{Q}) \to H_n(\widetilde{\iota_1^*X};
  \mathbb{Q})$ such that $p_* \circ i=\id$.  To prove (3) from this, we tensor
  this sequence with $\mathbb{Q}\Gamma$ as a $\mathbb{Q}F_m$ module to obtain
  the sequence
  $$0 \to V_p \to H_n(\tilde X;\mathbb{Q}) \to H_n(\tilde B;\mathbb{Q}) \to 0,$$
  together with a splitting.

  Suppose $\iota_1^*\eu=0$.  Perhaps after taking a mapping cylinder so that
  $\iota_1^*X \subset \iota_1^*B$, we can choose a lifting homomorphism
  $j_*:C_{\leq n}(\widetilde{\iota_1^*B};\mathbb{Q}) \to
  C_{\leq n}(\widetilde{\iota_1^*X};\mathbb{Q})$, with a corresponding chain
  homotopy $u_*$.  In particular, $j_n$ takes cycles to cycles.  As before,
  $$\id+u_n\partial:C_{n+1}(\widetilde{\iota_1^*B};\mathbb{Q}) \to
  H_{n+1}(\widetilde{\iota_1^*B},\widetilde{\iota_1^*X};\mathbb{Q})
  \twoheadrightarrow \iota_1^*V_p$$
  defines a representative $v \in C^{n+1}(\widetilde{\iota_1^*B},\iota_1^*V_p)$
  of the Euler class.  We know that $v=de$ for some $n$-cochain $e$.  Then the
  chain map $j_n-h_n \circ e:C_n(\widetilde{\iota_1^*B};\mathbb{Q}) \to
  C_n(\widetilde{\iota_1^*X};\mathbb{Q})$ induces a splitting homomorphism
  $H_n(\widetilde{\iota_1^*B};\mathbb{Q}) \to
  H_n(\widetilde{\iota_1^*X};\mathbb{Q})$ as desired.

  A vector subspace of $H_n(\tilde X;\mathbb{Q})$ is a
  $\mathbb{Q}\Gamma$-submodule if and only if it is $\hat\rho$-invariant, and so
  ($3^\prime$) is simply a restatement of (3).

  ($3^\prime$)$\Rightarrow$(1).  Suppose ($3^\prime$) holds, and equip
  $H_n(\tilde X;\mathbb{Q}) \cong H_n(\hat F;\mathbb{Q})$ with the seminorm
  which sends $P$ to 0 and restricts to a $\rho$-invariant norm on
  $V_p$.  Note that $\hat\rho$ preserves this seminorm.  Similarly to the proof
  in \cite{AWP} that higher-order Dehn functions of groups are finite, we will
  show that when ($3^\prime$) holds, the homology classes of integral cycles of
  volume $k$ in $\tilde X$ have seminorm bounded by some $C(k)$.  This is enough
  to show (1).

  The proof is by induction on $k$.  Clearly there is a $C(1)$, since there's a
  finite number of $\Gamma$-equivalence classes of cycles of volume 1.  Now
  suppose we have determined $C(i)$ for $1 \leq i<k$.  Given an $n$-cycle $c$ in
  $\tilde X$ of volume $k$, assume that it contains a cell from a fundamental
  domain $D$.  Since the action of $\Gamma$ preserves the seminorm, we can do
  this without loss of generality.  Then either $c$ consists entirely of cells
  within distance $k$ of $D$ (in the graph defined by adjacency of cells, in the
  sense of having common $(n-1)$-cells in their boundaries) or it consists of
  two disjoint cycles.  There are a finite number of cycles of the first kind,
  since there are a finite number of such cells, and so the seminorm of their
  homology classes is bounded by some $B(k)$.  Thus we can set
  $C(k)=\max\{B(k)\} \cup \{C(i)+C(k-i):0<i<k\}$.
\end{proof}
\begin{proof}[Proof of Theorem \ref{thmB}]
  We have already shown that if a finite CW complex $X$ has no infinite
  distortion in its homotopy groups, then conditions (1) and (2) of Theorem
  \ref{thmB} are satisfied.  In particular, we can find a tower of maps
  $$X=X_m \to X_{m-1} \to \cdots \to X_0$$
  where each $X_i$ is a finite complex which is rationally equivalent up to a
  large dimension $N$ to $X_{(n_i)}$, and where each step $X_k \to X_{k-1}$ is a
  normal almost Postnikov pair.  By Theorem \ref{thm:inf}, $\pi_{n_k}(X_k)$ does
  not have weakly infinite distortion if and only if
  $H_{n_k}(\tilde X_k;\mathbb{Q})$ splits as a $\mathbb{Q}\pi_1X$-module into the
  image of the Hurewicz map and its complement.  Since $X_k$ is rationally
  equivalent to $X$ up to dimension $k+1$, this is also the case for
  $\pi_{n_k}(X)$.  Thus if $X$ is a delicate space, then the lack of weakly
  infinite distortion in its homotopy groups is equivalent to condition (3) of
  Theorem \ref{thmB}.
\end{proof}

Note, however, that weakly infinite distortion inside the homology group does
not imply that $\pi_n$ is actually infinitely distorted.  As an example, it is
sufficient to construct a space $X$ whose homological monodromy behaves as in
Example \ref{exs:oddD}(\ref{qqu}).  That is, we want $\pi_1X=\mathbb{Z}$ and
$M:=H_n(\tilde X;\mathbb{Q})$ to be a module which is isomorphic to
$\mathbb{Q}^4$ as a vector space, with the fundamental group acting by the
matrix $\hat A=\begin{pmatrix}A&0\\I&A\end{pmatrix}$, where $A=
\begin{pmatrix}3/5&-4/5\\4/5&3/5\end{pmatrix}$.  Moreover, we want the homotopy
submodule $h_n\pi_n(\tilde X) \otimes \mathbb{Q}$ to be generated by the last
two basis vectors.  Then the argument of Example \ref{exs:oddD}(\ref{qqu}) will
show that $\pi_n(X)$ is weakly infinitely but not infinitely distorted.
Actually, it is enough to find a space $X$ such that $H_n(\tilde X;\mathbb{Q})$
has a copy of $M$ containing the homotopy submodule as a direct summand.

We build such a space $X$ as follows.  Let $B$ be the ``mapping torus'' (in the
sense of Section 3) of $(S^3)^4$ corresponding to the matrix $\tilde A=
\begin{pmatrix}A^{-1}&0\\0&I\end{pmatrix}$ acting on the four spheres.
Then the monodromy of $H_9(\tilde B;\mathbb{Q}) \cong \bigwedge^3 H_3(\tilde B;
\mathbb{Q})$ with respect to the action of $\pi_1$ is given by
$\tilde A^{\wedge 3}$; a computation shows that in the basis given by the
Poincar\'e duals of the four spheres, $\tilde A^{\wedge 3}=
\begin{pmatrix}A&0\\0&I\end{pmatrix}$.  So let $X$ be the total space of a
rational $(S^9)^2$-bundle over $B$ which has the monodromy representation
$\rho:\mathbb{Z} \to GL(2,\mathbb{Q})$ which takes the generator to $A$, and
Euler class
$$0 \neq \eu \in H^{10}(B;M_\rho) \cong \Ext_{\mathbb{Q}[\mathbb{Z}]}(M_\rho \oplus
\mathbb{Q} \oplus \mathbb{Q},M_\rho) \cong M_\rho.$$
By the argument in Example \ref{ex:S1}, $H_n(\tilde X;\mathbb{Q}) \cong
M \oplus \mathbb{Q} \oplus \mathbb{Q}$.

On the other hand, suppose that we take an almost Postnikov pair $X \to B$ with
$\pi_1X=\pi_1B=\Gamma$, with monodromy reprsentation $\rho$ and Euler class
$\eu$, such that for some $\gamma \in \Gamma$, $\gamma^*\eu$ is nontrivial when
restricted to a direct summand $\mathbb{Q} \cong L \subseteq M_{\gamma^*\rho}$ (a
module over $\mathbb{Q}[\mathbb{Z}]$) on which the action of $\gamma$ is
trivial.  Then it is in fact possible to find true infinite distortion via the
method of Example \ref{ex:S1}.

In the case when $\rho$ is trivial, this is equivalent to condition (2) of
Theorem \ref{thm:inf} and hence is a necessary condition for finding infinite
distortion.  We conjecture that this is the case in general.
\begin{conj}
  The condition stated above is equivalent to the presence of infinite
  distortion in $\pi_n(X)$.
\end{conj}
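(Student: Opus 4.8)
The plan is to establish the two implications separately. The forward direction --- that the stated condition forces genuine (not merely weakly) infinite distortion --- is carried out by running the construction of Example \ref{ex:S1} along the cyclic subgroup $\langle\gamma\rangle$, with the new ingredient that $\gamma$ acts trivially on the summand $L$. After replacing $X\hookrightarrow B$ by a mapping cylinder one finds, from the fact that $\gamma^*\eu$ is nonzero on $L\cong\mathbb{Q}\subseteq V_p$, an $n$-cycle $b\in Z_n(\tilde B;\mathbb{Q})$ and a chain $c$ with $\partial c=b$ such that every cellular representative of $\eu$ pairs with $c$ to a fixed $\mu\in L\setminus\{0\}$, while $\gamma\cdot b-b$ is a boundary of bounded volume. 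Telescoping the translates $c,\gamma c,\dots,\gamma^{s-1}c$ yields chains $c_s$ with $\vol(\partial c_s)=O(1)$ and $\langle\eu,c_s\rangle=s\mu$; triviality of the action on $L$ is exactly what makes the per-step contributions add up rather than stay bounded, in contrast to the irrational-rotation behaviour of Example \ref{exs:oddD}(\ref{qqu}). Lifting $c_s$ to $\tilde X$ via Lemma \ref{lem:riv} (or Theorem \ref{thm:genDI}) produces bounded-volume representatives of $\sim s\alpha$ for a fixed $0\neq\alpha\in V_p$, so $\alpha$ is infinitely distorted; ellipticity of $\rho$ guarantees that the invariant norm of the lifted class really grows linearly in $s$.

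The content of the conjecture is the converse, and the proposed route is a structure theorem for bounded-volume boundaries in $\tilde B$ with large directed fillings. Assume $V_p$ (or a fixed $0\neq\alpha\in V_p$) is infinitely distorted, witnessed by admissible $f_k\colon S^n\to\tilde X$ of volume at most $C$ representing unbounded classes. Projecting to $\tilde B$ and applying Lemma \ref{lem:tohom} and Theorem \ref{thm:genDI} gives bounded-volume spherical boundaries $\sigma_k\in B_n(\tilde B)$ whose pairing with a fixed $\Gamma$-equivariant Euler cocycle $\omega$ is unbounded in an invariant norm on $V_p$. Since $\omega$ takes only finitely many $\Gamma$-orbit values $\mu_1,\dots,\mu_m$ and $\rho$ is elliptic, $\langle\omega,\Fill(\sigma_k)\rangle$ can grow only if $\Fill(\sigma_k)$ contains, for many group elements $g$ following a common pattern, translates $g\cdot c_\ell$ whose contributions $\rho(g)\mu_\ell$ cluster near a single nonzero vector. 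I would turn this into a combinatorial extraction: the bounded volume of $\partial\sigma_k$ confines its support to boundedly many cells, so $\sigma_k$ must, after deleting a bounded-volume piece, be carried by a long ``tube'' of cells that are pairwise close in the cell-adjacency graph of $\tilde B$ and are indexed along an infinite geodesic ray, with the translation carrying one slab of the tube to the next equal, up to bounded error, to a fixed $\gamma\in\Gamma$. The Euler pairing along such a tube then has the form $\sum_j\rho(\gamma)^j\nu+O(1)$ for some $\nu\in V_p$, and ellipticity forces this to be unbounded precisely when $\nu$ has nonzero image in $\ker(\rho(\gamma)-I)$ --- which is the statement that $\gamma^*\eu$ is nonzero on a trivial-action $\mathbb{Q}$-summand of $V_p|_{\langle\gamma\rangle}$.

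The main obstacle I anticipate is precisely this ``tube'' extraction: given a uniformly bounded-volume family of boundaries with unbounded directed fillings, one must locate an honest infinite cyclic direction along which the filling accumulates and rule out ``spread-out'' fillings whose Euler pairing grows without any single cyclic subgroup being responsible. For $\Gamma\cong\mathbb{Z}$ this is the content of the construction in Theorem \ref{thm:inf} (that cyclic distortion, when present, is weakly infinite) sharpened by the elliptic normal-form bookkeeping of Lemma \ref{lem:nonuni}; for general $\Gamma$ the difficulty is genuinely combinatorial. A natural intermediate step would be to first reduce to a cyclic cover --- to show that infinite distortion in $X$ is always detected by infinite distortion in $\gamma^*(p^\prime\circ p)$ for some $\gamma\in\Gamma$, paralleling the passage to cyclic covers in Theorem \ref{thm:inf} --- after which the already-understood $\mathbb{Z}$-case would finish the argument.
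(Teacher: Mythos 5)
The statement you are addressing is stated in the paper as a conjecture, and the paper does not prove it: it establishes only the case $\pi_1(X)=\mathbb{Z}$ (Theorem \ref{thm:infcyc}) and explicitly remarks that the general case remains far from a proof. Your forward implication is sound and agrees with the paper's own remark that the condition yields genuine infinite distortion ``via the method of Example \ref{ex:S1}'': triviality of the $\gamma$-action on the summand $L$ is exactly what makes the telescoped pairings $\langle\eu,c_s\rangle=s\mu$ grow rather than stay bounded, and Lemma \ref{lem:riv} lifts the resulting bounded-volume chains to representatives of unbounded multiples of a fixed class. That half is fine.

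The converse is where the genuine gap lies, and it sits exactly at the step you flag as the ``main obstacle'': the tube extraction. Nothing in your argument rules out fillings whose Euler pairing grows without any single cyclic direction being responsible --- the paper calls this ``non-local'' infinite distortion, arising from the interaction of weakly infinite distortions induced by several elements of $\pi_1(X)$, and identifies it as precisely the reason the conjecture is open. Your proposed intermediate reduction (that infinite distortion in $X$ is always detected on some cyclic cover) is essentially a restatement of the conjecture, not a step toward it. Note also that even in the cyclic case the paper does not argue geometrically as you propose: the proof of Theorem \ref{thm:infcyc} is algebraic, exploiting that $\mathbb{Q}[\mathbb{Z}]$ is a PID to put the homological monodromy into generalized Jordan blocks and then invoking the number-theoretic Lemma \ref{lem:NT} (eigenvalues of an infinite-order elliptic rational matrix are not algebraic integers, so an identity $\sum_j A^{t_j}\vec u_j=\vec 0$ with small integral $\vec u_j$ forces the exponents $t_j$ to cluster within a window of width $O(\log V)$, unless the sum splits). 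Your sketch supplies no substitute for that arithmetic input, so while it correctly reproduces the known implication, it does not close the open one.
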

The following theorem shows that this is true in the case $\pi_1(X)=\mathbb{Z}$.
In other words, a counterexample to this conjecture must have infinite
distortion which is in some sense ``non-local'', that is, it would have to rely
on the interaction of weakly infinite distortions induced by the homological
monodromy of multiple elements of $\pi_1(X)$.  However, because the algebra and
geometry of a general fundamental group can make themselves relevant, this
realization still leaves us quite far from a proof of the conjecture.
\begin{thm} \label{thm:infcyc}
  Suppose $V \xrightarrow{p} U$ is an almost Postnikov pair such
  that $U$ is the total space of a rational homotopy fibration $F \to U \to
  S^1$.  Let $\rho:\mathbb{Z} \to GL(V_p)$ be its elliptic monodromy
  and $\eu \in H^{n+1}(U;M_\rho)$ be the Euler class.  Then the following are
  equivalent:
  \begin{enumerate}
  \item There is infinite volume distortion in $V_p$ as a subset of
    $H_n(\tilde V;\mathbb{Q})$.
  \item There is a finite-sheeted cover $\ph:\hat U \to U$ such that
    $\ph^*M_\rho$ has a trivial submodule $L \cong \mathbb{Q}$ for which the
    projection of $\ph^*\eu \in H^{n+1}(\hat U;\ph^*M_\rho)$ onto $L$ is nonzero.
  \end{enumerate}
\end{thm}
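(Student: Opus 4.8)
The plan is to reduce everything to module theory over the group ring $\mathbb{Q}[\pi_1U]=\mathbb{Q}[t^{\pm1}]$, exactly as in the proof of Theorem \ref{thm:inf}. Since $\rho$ is elliptic and $\mathbb{Q}[t^{\pm1}]$ is a PID, $M_\rho$ is semisimple: $M_\rho=\bigoplus_i\mathbb{Q}[t^{\pm1}]/(p_i)$ with each $p_i$ irreducible and all roots on the unit circle. Call $p_i$ \emph{cyclotomic} if its roots are roots of unity and \emph{irrational} otherwise; only a cyclotomic summand can acquire a trivial sub- or quotient module after passing to a finite cyclic cover, because the $m$-fold cover replaces the generator's action by its $m$-th power and an irrational rotation stays irrational. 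As in Theorem \ref{thm:inf}, normality forces $\pi^*\eu=0\in H^{n+1}(\tilde U;\mathbb{Q}^r)$ (over the field $\mathbb{Q}$ the $\Hom$-part vanishes iff the class does), so the universal coefficient sequence places $\eu$ in $\Ext_{\mathbb{Q}[t^{\pm1}]}(H_n(\tilde U;\mathbb{Q}),M_\rho)$. The first step is a purely algebraic lemma: condition (2) holds if and only if the image of $\eu$ under $\Ext(H_n(\tilde U),M_\rho)\to\Ext(H_n(\tilde U),M_\rho^{\mathrm{cyc}})$ is nonzero, where $M_\rho^{\mathrm{cyc}}$ is the sum of the cyclotomic summands; this is a direct computation with the decomposition $\Ext(H_n(\tilde U),M_\rho)=\bigoplus_j M_\rho/q_j(t)M_\rho$ together with the observation above about which summands can be trivialized by covers.

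For (2)$\Rightarrow$(1): pass to the finite cyclic cover $\ph\colon\hat U\to U$; the universal cover is unchanged and a finite cover alters volumes of maps $S^n\to(\cdot)$ only by lifting and projecting, so volume distortion is literally the same. The module projection $\ph^*M_\rho\twoheadrightarrow L\cong\mathbb{Q}$ onto a trivial summand with $\ph^*\eu\mapsto\eu_L\ne0$ corresponds, via the product splitting of $K(\ph^*M_\rho,n)$ induced by the module splitting $\ph^*M_\rho=L\oplus L'$, to a normal almost Postnikov pair $\hat V_L\to\hat U$ with $V_p\cong\mathbb{Q}$, trivial monodromy, and $\eu_L\ne0$, together with a projection $\hat V\to\hat V_L$ and a section $\hat V_L\hookrightarrow\hat V$ over $\hat U$. (Normality of $\hat V_L\to\hat U$ holds because $p^*\eu_L$ is a projection of $p^*\ph^*\eu=0$.) By Theorem \ref{thm:sph}, $\VD_{\alpha_L}\sim_C\FV^{S^n}_{\hat U,\lvert\langle\eu_L,\cdot\rangle\rvert}$. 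Now $\hat U$ is a mapping torus of a rational self-map $\phi$ of $F=\prod S^{2n_i+1}$, and $p^*\eu_L=0$ forces, via the Wang sequence, that $\eu_L$ lies in the ``$dt$-part'' $\partial\bigl(\operatorname{coker}(\phi^*-1\mid H^n(F))\bigr)$, say $\eu_L=dt\wedge\eta$ with $\eta\notin\operatorname{im}(\phi^*-1)$; since $\phi^*$ is semisimple one finds a $\phi_*$-invariant class $c\in H_n(F)$ with $\langle\eta,c\rangle\ne0$. Telescoping lifts $c_0,\dots,c_k$ of $c$ through the layers of $\widetilde{\hat U}$ gives an $(n+1)$-chain $W_k$ with $\partial W_k=c_k-c_0$ of bounded volume and $\langle\eu_L,W_k\rangle=k\langle\eta,c\rangle\to\infty$, so $\FV^{S^n}_{\hat U,\lvert\langle\eu_L,\cdot\rangle\rvert}$ is non-finite. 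Hence $\alpha_L\in L\subseteq V_p$ is infinitely distorted in $\hat V_L$, and therefore (pushing forward by the section and using that $\hat V$ and $V$ have the same universal cover) in $V$.

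For (1)$\Rightarrow$(2) we argue the contrapositive: assume the cyclotomic $\Ext$-component of $\eu$ vanishes and show that for each fixed $k$ the set of $v\in h_n(V_p)$ with $\lvert v\rvert_{\vol}\le k$ is bounded. Given a bounded-volume admissible $g\colon S^n\to\tilde V$ with $[g]\in V_p$ (using Lemma \ref{lem:BBFS} for $n\ge3$), project to $\bar g=p\circ g\colon S^n\to\tilde U$; by Theorem \ref{thm:genDI} and normality, $[g]$ equals $\langle\omega_j,\Fill(\bar g)\rangle$ up to a bounded error. Decompose $\tilde U$ into its layers $\tilde F\times[i,i+1]$ (the bi-infinite mapping telescope of $\phi$) and $\Fill(\bar g)$ into slab pieces. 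A bounded-volume map $\bar g$ contains only boundedly many top cells but may wander between distant layers through the lower skeleton; in slabs where $\bar g$ has no top cells the filling is a ``cylinder'' whose interface $n$-cycle is $\phi_*$-translated from slab to slab, and $\langle\omega_j,\cdot\rangle$ on such a cylinder over layers $[a,b]$ is $\sum_{i=a}^{b}\pm\rho^{i}(\vec u)$ for a fixed vector $\vec u\in V_p$ read off from the interface (here one uses $\omega_j\vert_F=0$, forced by $p^*\eu=0$). Since $\rho$ is elliptic the terms have constant norm; for the sum over an unbounded range to be unbounded one needs $\vec u$ to lie in a $\rho^m$-invariant direction for some $m$, i.e.\ over a cyclotomic summand of $M_\rho$, and the interface class to be $\phi^m$-invariant so that it can in fact persist over unboundedly many slabs inside a filling of a bounded-volume boundary --- and the latter persistence is precisely what fails for the irrational part, by the non-cancellation estimate behind Lemma \ref{lem:NT} (a relation $\sum_j\phi^{t_j}\vec v_j=0$ with $\sum_j\lVert\vec v_j\rVert$ bounded forces the $t_j$ into a bounded interval, as in Example \ref{exs:oddD}(1)). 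Under our standing assumption every such cylinder contribution is therefore bounded, so $\lVert[g]\rVert$ is bounded in terms of $\vol g$ alone. The main obstacle is making this last paragraph rigorous: one must turn ``bounded-volume map $g$'' into a genuinely bounded-range alternating sum in $M_\rho$, which requires a quantitative, homotopy-theoretic control of how the filling of $\bar g$ threads the layers of $\tilde U$ when the lower-skeletal part of $\bar g$ travels between far-apart layers --- essentially a sharpened version of the telescope/transversality bookkeeping from Theorems \ref{thm:inf} and \ref{thm:genDI}, fused with the algebraic non-cancellation lemma.
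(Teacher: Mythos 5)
Your direction (2)$\Rightarrow$(1) is essentially the paper's: after passing to the finite cover one finds inside $H^{n+1}(\hat U;\ph^*M_\rho)$ a trivial rank-one piece carrying the Euler class, and the telescoping chains $W_k$ with bounded boundary and $\langle\eu_L,W_k\rangle\to\infty$ are exactly the construction of Example \ref{ex:S1}, which is all the paper invokes for this implication. Your preliminary reduction of condition (2) to the nonvanishing of the ``cyclotomic part'' of $\eu$ in $\Ext_{\mathbb{Q}[t^{\pm1}]}(H_n(\tilde U;\mathbb{Q}),M_\rho)$ is also consistent with how the paper sets things up.

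The gap is in (1)$\Rightarrow$(2), and it is the one you flag yourself: you never convert ``admissible map of volume $k$'' into a bounded-range sum $\sum_j\rho^{t_j}(\vec u_j)$, and the layer-by-layer transversality bookkeeping you propose for doing so is not how the obstruction is actually controlled. The paper sidesteps the geometry of how the filling of $\bar g$ threads the telescope entirely. The key structural input you are missing is the \emph{homological} monodromy $\hat\rho$ on $H_n(\hat F;\mathbb{Q})$ and its generalized Jordan block form $\hat A_i$ over the simple summand $M$: the lifting homomorphism $j_n$ induces $J:Z_n(\tilde U;\mathbb{Q})\to\hat M\subset H_n(\tilde V;\mathbb{Q})$ which carries \emph{boundaries} into the bottom block $M_i\subset\hat M_i$. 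An integral boundary $b$ of volume $\le k$ is written as $\sum_j\hat A^{t_j}p_iJ(z_j)$ with the $z_j$ integral cycles supported in a fundamental domain, and the constraint that this sum is a boundary becomes the purely algebraic constraint that its projection to the upper $(r-1)d$ coordinates of the Jordan block vanishes. That vanishing, fed into Lemma \ref{lem:NT} applied to the polynomially-weighted block entries $P_{a,s}(t_j)A^{t_j-a}\vec u_{j,a}$, forces $t_k\lesssim k\log k$ (or a splitting into shorter sub-sums handled by induction on $k$ and on the number of blocks), whence only finitely many configurations and a bound $L_r(k)$ on $\lVert J(b)\rVert$. In other words, the non-cancellation lemma is applied not to an ``interface cycle persisting across slabs'' extracted from the filling, but directly to the cycle decomposition of the boundary, with the Jordan structure of $\hat\rho$ supplying the equation to which the lemma applies. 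Without that reduction, the quantitative control you ask for in your last paragraph is not available, and the argument as written does not close.

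For completeness: your intermediate claim that Theorem \ref{thm:sph} applies to $\hat V_L\to\hat U$ is harmless but unnecessary for (2)$\Rightarrow$(1); the explicit chain construction already produces arbitrarily large classes of bounded volume, which is all that infinite distortion requires.
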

\begin{proof}
  To show that (2)$\Rightarrow$(1), note that $H_{n+1}(\hat U;\ph^*M_\rho)$ has a
  submodule of the form in Example \ref{ex:S1}.  Therefore, we can construct a
  sequence of $(n+1)$-chains as in that example.

  Now suppose that (2) is not true, and choose a simple direct summand $M$ of
  $M_\rho$ with projection map $p_M:M_\rho \to M$ and dimension $d=\dim M$.
  Since any elliptic element of $GL(m,\mathbb{Z})$ is of finite order, the
  transformation $p_M \circ \rho(1)$ does not conjugate into $GL(m,\mathbb{Z})$.
  Let $A$ be the $d \times d$ matrix of $p_M \circ \rho(1)$ in some basis.

  To show that $M$ is not infinitely distorted, we will reproduce in greater
  generality the argument in the example given above.  Let $\hat\rho:\mathbb{Z}
  \to GL(H_n(\hat F;\mathbb{Q}))$ be the homological monodromy of the total
  fibration $\hat F \to V \to S^1$, and let $\hat M$ be the smallest direct
  summand of the module $H_n(\hat F;\mathbb{Q})$ which contains $M$.  Then we
  can find a basis for $\hat M$ such that the action of $\rho(1)$ on $\hat M$ is
  given in this basis by a matrix which consists of generalized Jordan blocks of
  the form
  $$\hat A_i=\begin{pmatrix}
    A&0&\cdots&0\\I&A&\ddots&\vdots\\\vdots&\ddots&\ddots&0\\0&\cdots&I&A
  \end{pmatrix},$$
  each acting on an $r_id$-dimensional submodule $\hat M_i$.  In this
  formulation, we can write $M=\sum_i t_iM_i$, where the $t_i$ are fixed
  constants and the $M_i$ are the modules isomorphic to $M$ generated by the
  last $d$ coordinates of each $\hat M_i$.  It is enough for our purposes to
  show that each $M_i$ is only finitely distorted inside $\hat M_i$; the next
  two paragraphs make this idea more precise.

  Perhaps after taking a mapping cylinder so that $V \subset U$, we can choose a
  lifting homomorphism $j_*:C_{\leq n}(\tilde U;\mathbb{Q}) \to
  C_{\leq n}(\tilde V;\mathbb{Q})$.  In particular, $j_n$ takes cycles to cycles
  and induces a surjective homomorphism $J:Z_n(\tilde U;\mathbb{Q}) \to \hat M
  \subset H_n(\tilde V;\mathbb{Q})$.  Note also that $J$ takes boundaries into
  $M$.

  Consider a fundamental domain $B \subset \tilde U$ large enough that chains
  contained in $B$ generate $C_n(\tilde U;\mathbb{Q})$ as a
  $\mathbb{Q}[\mathbb{Z}]$-module.  Fix a norm $\lVert\cdot\rVert$ on $M$ which
  is invariant under the action of $\pi_1U$.  We want to show that $M \subseteq
  \pi_n(V)$ is not infinitely distorted, i.e.\ that there is a function
  $L:\mathbb{N} \to \mathbb{N}$ such that for every $k$ and every integral
  boundary $b \in B_n(\tilde U)$ with $\vol b \leq k$, $\lVert J(b) \rVert \leq
  L(k)$.  In fact, it is enough to show that for each $i$, vectors of the form
  $$\vec u=\hat A^{t_0}p_iJ(z_0)+\hat A^{t_1}p_iJ(z_1)+\cdots+\hat A^{t_k}p_iJ(z_k)
  \in M_i \subseteq \hat M_i,$$
  where $p_i:H_n(\tilde V;\mathbb{Q}) \to \hat M_i$ is a projection map and each
  $z_j \in C_n(B)$ is an integral cycle of size at most $k$, have norm bounded
  as a function of $k$.

  This last condition implies that the vectors $\vec u_j=p_iJ(z_j)$ are of size
  linear in $k$ in some lattice $\hat\Lambda \subset \hat M_i$, and so this is
  now purely a linear algebra problem similar, but not identical, to Lemma
  \ref{lem:oddD}.  We solve it by induction on $k$ and the number $r$ of Jordan
  blocks in $\hat A_i$.

  When $r=1$, then $\hat A_i=A$ preserves the norm, and so $\lVert\vec u\rVert
  \leq Ck^2$ for some constant $C$.

  When $k=0$, then $\vec u=\hat A^{t_0}\vec u_0$ and $\vec u_0 \in M_i$ is one of
  a finite number of vectors, and once again $\hat A_i$ preserves the norm of
  such a vector, so $\lVert\vec u\rVert \leq C$ for some constant $C$.  This
  completes the base case.

  Now fix $r \geq 2$ and $k \geq 2$, and suppose we have a bound $L_r(k^\prime)$
  for every $k^\prime<k$.  We will show that there is a bound $L_r(k)$.  Note
  that we can assume that the $t_j$ are increasing and $t_0=0$, since
  multiplying by $\hat A_i^t$ doesn't change the length of a vector in $M_i$.
  Moreover, since the $M_i$-coordinates of the $\vec u_j$ only change
  $\lVert \vec u \rVert$ by a linear amount, we can assume that they are zero.

  Now, let $p_{r-1}:\hat M_i \to \hat M_i$ be the projection onto the first
  $(r-1)d$ coordinates.  By assumption, $p_{r-1}\vec u=\vec 0$, and therefore
  either (1) $\sum_{i=0}^\ell \hat A_i^{t_j} \vec u_j \neq \vec 0$ for every
  $\ell<k$, or (2) we know that $\lVert u\rVert \leq L_r(k_1)+L_r(k_2)$ for some
  $k_1+k_2=k$.
  \begin{lem}
    In case (1), for every $1 \leq \ell \leq k$, $t_k \leq kf_r(k)$,
    where $f_r(k) \sim \log k$ is a function depending on $A$ and $r$.
  \end{lem}
  \begin{proof}
    Given $\ell$, let $1 \leq s \leq r-1$ be the index of the first
    $d$-dimensional block such that the coordinates of
    $$\sum_{j=0}^{\ell-1} p_{r-1}\hat A_i^{t_j}\vec u_j=-\sum_{j=\ell}^k
    p_{r-1}\hat A_i^{t_j}\vec u_j$$
    in that block are nonzero, and let $\vec v \in \mathbb{Q}^d$ be the
    coordinates of this vector in the $s$th block.  Then we can write
    $$\vec v=\sum_{j=0}^{\ell-1} \sum_{a=0}^{s-1} P_{a,s}(t_j)A^{t_j-a})\vec u_{j,a}
    \cap \sum_{j=\ell}^k \sum_{a=0}^{s-1} P_{a,s}(t_j)A^{t_j-a}\vec u_{j,a},$$
    where $\vec u_{j,a}$ is the projection of $\vec u_j$ to the $a$th block and
    $P_{a,s}$ is a polynomial of degree $s-a$.  In particular,
    $\lVert\vec u_{j,a}\rVert \leq k$ and $\vec u_{j,a} \in \Lambda$, the lattice
    generated by the projections of $\hat\Lambda$ to each block.  Applying Lemma
    \ref{lem:NT} and maximizing over $s$, we see that for large $k$,
    $$t_k \leq k\big[f\big(|P_{1,r-1}(t_k)| \cdot (r-1)k\big)+r-2\big]$$
    where $f$ is logarithmic.  This forces $t_k \lesssim k\log k$.
  \end{proof}
  In case (1), this means that there is a finite number of choices for the
  vector $\vec u$, and so they are bounded by some $L_r^\prime(k)$.  Thus we can
  set
  $$L_r(k)=\max\{L_r^\prime(k),L_r(k-1)+L_r(1),L_r(k-2)+L_r(2),\ldots\}.$$

  Having shown this for every factor $M$, we see that (1) does not hold.
\end{proof}

\appendix
\section{A number-theoretic lemma}
The following lemma is crucial for proving that certain kinds of twisting
preclude distortion from monodromy from being infinite.  Because it is somewhat
technical and requires tools which have little to do with the rest of the paper,
we have sequestered it here.
\begin{lem} \label{lem:NT}
  Let $A$ be an irreducible $n \times n$ matrix over $\mathbb{Q}$ which is of
  infinite order, diagonalizable over $\mathbb{C}$, and all of whose eigenvalues
  are on the unit circle, and let $\Lambda \subset \mathbb{Q}^n$ be a lattice.
  Then there is a function $f(V)=a+b\log V$, where $a$ and $b$ depend on $A$ and
  $\Lambda$, such that if for some $k$ and $\ell$ and vectors $\vec u_i \in
  \Lambda$ with $\sum_{i=1}^\ell \lVert\vec u_i\rVert \leq V$,
  $$\vec v_1+\vec v_2:=\sum_{i=0}^k A^i\vec u_i+\sum_{i=k+m}^\ell A^i\vec u_i=
  \vec 0,$$
  then either $m \leq f(V)$ or $\vec v_1=\vec v_2=\vec 0$.
\end{lem}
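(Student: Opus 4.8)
The plan is to transfer the problem into a number field and run a product‑formula (height) argument. Since $A$ is irreducible over $\mathbb{Q}$, its characteristic polynomial $\chi_A$ equals its minimal polynomial, so the $\mathbb{Q}[A]$‑module $\mathbb{Q}^n$ is isomorphic to the number field $K:=\mathbb{Q}[x]/(\chi_A)=\mathbb{Q}(\theta)$ with $A$ acting as multiplication by $\theta$; under this isomorphism $\Lambda$ becomes a finitely generated $\mathbb{Z}$‑submodule $\Lambda'\subset K$. The eigenvalues of $A$ are exactly the Galois conjugates of $\theta$, and they lie on the unit circle by hypothesis, so every archimedean absolute value of $\theta$ equals $1$. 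Because $A$ has infinite order and is diagonalizable, not every eigenvalue is a root of unity, hence (they are all conjugate) $\theta$ is not a root of unity, and then by Kronecker's theorem $\theta$ is not an algebraic integer. Writing $\lVert\cdot\rVert_v$ for the normalized absolute values of $K$ (so $\prod_v\lVert x\rVert_v=1$ for $x\in K^\times$), I get that the finite sets $\mathcal{P}=\{v:\lVert\theta\rVert_v>1\}$ and $\mathcal{Q}=\{v:\lVert\theta\rVert_v<1\}$ are both nonempty, and that $S:=\sum_{v\in\mathcal{P}}\log\lVert\theta\rVert_v=\sum_{v\in\mathcal{Q}}\log\lVert\theta\rVert_v^{-1}>0$, since all remaining places (archimedean ones, and finite ones where $\theta$ is a unit) contribute $0$ to the product formula for $\theta$.

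Now assume the hypothesis with $\vec v_1\neq\vec 0$ (if $\vec v_1=-\vec v_2=\vec 0$ there is nothing to prove), and let $w\in K^\times$ be the element corresponding to $\vec v_1=-\vec v_2$. Using that $A$ is power‑bounded in operator norm (diagonalizable, unit‑circle eigenvalues), the archimedean size of $\vec v_1=\sum_{i=0}^k A^i\vec u_i$ is $O(V)$, so $\sum_{v\mid\infty}\log^+\lVert w\rVert_v\le n\log V+O(1)$. For finite $v$ I bound $w$ using whichever cluster expression is favorable. If $\lVert\theta\rVert_v=1$, the expansion $w=\sum_{i=0}^k\theta^i u_i$ and the ultrametric inequality give $\lVert w\rVert_v\le\max_i\lVert u_i\rVert_v\le c_v$, a constant equal to $1$ for all but finitely many $v$. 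At $v\in\mathcal{P}$ the same expansion gives $\lVert w\rVert_v\le\lVert\theta\rVert_v^{k}c_v$, hence $\sum_{v\in\mathcal{P}}\log^+\lVert w\rVert_v\le kS+O(1)$. At $v\in\mathcal{Q}$ I instead use $w=-\sum_{i=k+m}^{\ell}\theta^i u_i$, whose dominant $v$‑adic term has the smallest exponent $k+m$, giving $\lVert w\rVert_v\le\lVert\theta\rVert_v^{k+m}c_v$ and therefore $\sum_{v\in\mathcal{Q}}\log\lVert w\rVert_v\le-(k+m)S+O(1)$.

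Finally I combine these via the product formula $\sum_v\log\lVert w\rVert_v=0$: moving the $\mathcal{Q}$‑sum to one side and bounding the rest by the above estimates gives $\sum_{v\in\mathcal{Q}}\log\lVert w\rVert_v\ge-kS-n\log V-O(1)$. Comparing with the upper bound $\sum_{v\in\mathcal{Q}}\log\lVert w\rVert_v\le-(k+m)S+O(1)$, the $kS$ terms cancel and I am left with $mS\le n\log V+O(1)$, i.e.\ $m\le(n/S)\log V+O(1)$, which is the desired $f(V)=a+b\log V$.

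The main obstacle — and the step that makes the argument work in general rather than only when $\theta$ has a unique finite place of each type — is exactly this last cancellation: one must resist trying to lower‑bound $\lVert w\rVert_v$ at a single place $v\in\mathcal{Q}$ (impossible, since $k$ is unbounded and the first cluster genuinely permits a great deal of $v$‑adic cancellation when $k$ is large), and instead feed the crude upper bounds at the places of $\mathcal{P}$ — whose total $\lVert\theta\rVert$‑contribution is precisely $S$ by the product formula for $\theta$ — back through the product formula for $w$ in order to control the $\mathcal{Q}$‑sum from below. Everything else is routine bookkeeping with the constants $c_v$ and with the power‑boundedness of $A$; the hypotheses that $A$ be irreducible, of infinite order, and diagonalizable with unit‑circle eigenvalues are each used once, respectively to identify $\mathbb{Q}^n$ with $K$, to know $\theta$ is not a root of unity, and to keep the archimedean contribution at $O(\log V)$.
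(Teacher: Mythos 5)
Your proof is correct, and while it rests on the same arithmetic engine as the paper's --- pass to the number field $K=\mathbb{Q}(\theta)$ generated by an eigenvalue, invoke Kronecker's theorem to see that $\theta$ is not an algebraic integer, and exploit the gap $m$ between the two clusters of exponents to force a large power of the ``denominator'' of $\theta$ into the picture --- the execution is genuinely different and, I would say, cleaner. The paper chooses a cyclic vector $\vec w$ so that everything becomes an integer polynomial identity $P_1(A)+P_2(A)=0$, fixes a single root $r=p/q$ written as a ratio of coprime algebraic integers (passing to a power $r^t$ to get around the failure of unique factorization via the finiteness of the class group), and runs a divisibility-versus-archimedean-size argument at that one prime. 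You instead work directly with the element $w\in K^\times$ corresponding to $\vec v_1$ and feed all places into the product formula at once: the places in $\mathcal{P}$ contribute at most $kS$, the places in $\mathcal{Q}$ at most $-(k+m)S$, the archimedean places at most $n\log V+O(1)$, and the $kS$ terms cancel to leave $mS\le n\log V+O(1)$. This avoids both the polynomial bookkeeping and the class-group detour, and it yields a slightly sharper constant ($b=n/S$ with $S$ the total ``denominator content'' of $\theta$, versus the paper's bound at a single prime, further degraded by the class-group exponent $t$). The one place to be slightly careful --- and you are --- is that the lower bound on $\sum_{v\in\mathcal{Q}}\log\lVert w\rVert_v$ must come from the product formula applied to $w$ together with the $\mathcal{P}$-side upper bounds, not from any local lower bound at a single $v\in\mathcal{Q}$, since the first cluster can cancel arbitrarily much $v$-adically when $k$ is large.
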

We start with the following observation.  Consider the matrix $A=
\begin{pmatrix}3/5&-4/5\\4/5&3/5\end{pmatrix}$, which represents multiplication
in $\mathbb{C}$ by $\frac{3+4i}{5}$.  Then if $P$ is an integer polynomial, we
can think of $P(A)\vec v$ as the complex number $P\left(\frac{3+4i}{5}\right)
\cdot (v_1+v_2i)$.  Then
$$A\begin{pmatrix}2\\-1\end{pmatrix}=\begin{pmatrix}2\\1\end{pmatrix};$$
this is directly related to the fact that this eigenvalue can be expressed as
$$\frac{3+4i}{5}=\frac{2+i}{2-i},$$
which is a ratio of relatively prime Gaussian integers.  Because the Gaussian
integers are a UFD, if $P$ has only high-degree terms but
$P\left(\frac{2+i}{2-i}\right)$ is a Gaussian integer, then coefficients of $P$
must contain large powers of $2-i$, forcing them to be large in absolute value.

The proof which follows generalizes this observation.
\begin{proof}
  We can find a single vector $\vec w \in \mathbb{Q}^n$ such that the lattice
  generated by $\vec w, A\vec w,\ldots,A^{n-1}\vec w$ contains $\Lambda$.  In
  other words, every vector $\lambda \in \Lambda$ can be expressed as
  $P_\lambda(A)\vec w$ for some integer polynomial $P_\lambda$ of degree $n-1$.
  Therefore we can reexpress the given sum as $(P_1(A)+P_2(A))\vec w$, where
  $P_1$ and $P_2$ are integer polynomials whose nonzero terms $p_i$ are in
  degrees $0$ through $k+n-1$ and degrees $k+m$ through $\ell$, respectively.
  Moreover, since $A$ is irreducible and $P_1(A)+P_2(A)$ does not have full
  rank, $P_1(A)+P_2(A)=0$, and therefore, as polynomials, $P_1(x)+P_2(x)=Q(x)
  \chi(x)$ where $\chi$ is the characteristic polynomial of $A$.  We will show
  that if $m$ is large enough, then $Q$ must break into two polynomials $Q_1$
  and $Q_2$ such that $Q_1\chi=P_1$ and $Q_2\chi=P_2$.  This then implies that
  $P_1(A)=P_2(A)=0$.

  Note that since $A$ is of infinite order, the roots of $\chi$ are not
  algebraic integers; let $r \in \mathbb{C}$ be one such root.  Let $K$ be the
  splitting field of $\chi$ and $\mathcal{O}_K$ its ring of integers.  We can
  express $r$ as the ratio of algebraic integers; suppose first that we can make
  these relatively prime, $r=p/q$.  Then we have
  $$P_1(p/q) \cdot q^{k+n-1}=-P_2(p/q) \cdot q^{k+n-1}.$$
  Here, the term on the left is evidently an algebraic integer.  Because $p$ and
  $q$ are relatively prime, the right side demonstrates that this integer must
  be divisible by $p^{k+m}$.  Therefore, either $P_1(p/q)=0$ or
  $$|P_1(p/q)|\cdot|q|^{k+n-1} \geq |p|^{k+f(V)}.$$
  Since $|p|=|q|$, this would mean that $V \geq |p|^{m-n+1}$.  So whenever $m>n-
  1+\log_p V$, $P_1$ must be divisible by $\chi$.  In other words, we can set
  $$f(V)=n-1+\log_p V.$$

  Since $\mathcal{O}_K$ may not have unique factorization, we may not be able to
  set $r=p/q$ with the $p$ and $q$ relatively prime.  However, the ideal class
  group of $\mathcal{O}_K$ is finite, meaning that every ideal has a power which
  is principal.  Therefore, if we express $r$ as the ratio of algebraic integers
  $p/q$, there is some $t$ such that the ideals $(p^t)$ and $(q^t)$ are products
  of principal primary ideals.  This means that we can express $r^t$ as the
  quotient of relatively prime algebraic integers $p^\prime/q^\prime$.  Then we
  can repeat the argument above to get
  $$f(V)=t(n-1+\log_{p^\prime} V),$$
  completing the proof.
\end{proof}

%\nocite{*}
\bibliographystyle{amsalpha}
\bibliography{distortion}
\end{document}